\newtheorem{lemma}{Lemma}[section]
\newtheorem{teo}[lemma]{Theorem}
\newtheorem{prop}[lemma]{Proposition}
\newtheorem{cor}[lemma]{Corollary}
\theoremstyle{definition}
\newtheorem{defn}[lemma]{Definition}
\newtheorem{quest}[lemma]{Question}
\theoremstyle{remark}
\newtheorem{rem}[lemma]{Remark}
\newcommand{\matN}{\ensuremath {\mathbb{N}}}
\newcommand{\matR} {\ensuremath {\mathbb{R}}}
\newcommand{\matZ} {\ensuremath {\mathbb{Z}}}
\newcommand{\matC} {\ensuremath {\mathbb{C}}}
\newcommand{\matD} {\ensuremath {\mathbb{D}}}
\newcommand{\calL} {\ensuremath {\mathcal{L}}}
\newcommand{\calM} {\ensuremath {\mathcal{M}}}
\newcommand{\calH}{\ensuremath {\mathcal{H}}}
\newcommand{\calO}{\ensuremath {\mathcal{O}}}
\newcommand{\isom}{\cong}
\newcommand{\Hom}{{\rm Hom}}
\newcommand{\End}{{\rm End}}
\newcommand{\St}{{\rm St}}
\newcommand{\cerchio}{\includegraphics[width = .4 cm]{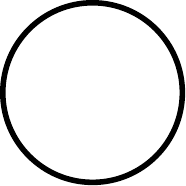}}
\newcommand{\teta}{\includegraphics[width = .4 cm]{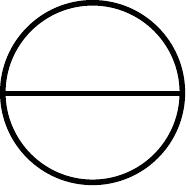}}
\newcommand{\tetra}{\includegraphics[width = .4 cm]{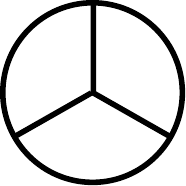}}
\newcommand{\tetracolored}{\includegraphics[width = 1 cm] {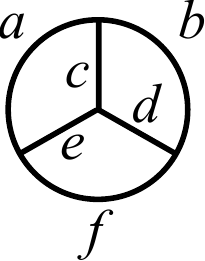}}
\newcommand{\MCG}{{\rm Mod}}
\newcommand{\id}{{\rm id}}
\newcommand{\SL}{{\rm SL}}
\newcommand{\GL}{{\rm GL}}
\newcommand{\SU}{{\rm SU}}
\newcommand{\YM}{\mathcal{Y}\!\mathcal{M}}
\newcommand{\Comm}{{\rm Comm}}
\author{Francesco Costantino}
\address{IRMA, 7 rue Ren\'e Descartes 67087, Strasbourg, France}
\email{costanti at math dot unistra dot fr}
\author{Bruno Martelli}
\address{Dipartimento di Matematica ``Tonelli'', Largo Pontecorvo 5, 56127 Pisa, Italy}
\email{martelli at dm dot unipi dot it}
\thanks{The first author was supported by the French ANR project ``Quantum G\&T'', ANR-08-JCJC-0114-01. The second author was supported by the Italian FIRB project ``Geometry and topology of low-dimensional manifolds'', RBFR10GHHH}
\title[Analytic family of representations]{An analytic family of representations for the mapping class group of punctured surfaces}
\begin{document}

\begin{abstract}
We use quantum invariants to define an analytic family of representations for the mapping class group $\MCG(\Sigma)$ of a punctured surface $\Sigma$. The representations depend on a complex number $A$ with $|A|\leqslant 1$ and act on an infinite-dimensional Hilbert space. They are unitary when $A$ is real or imaginary, bounded when $|A|<1$, and only densely defined when $|A| =1$ and $A$ is not a root of unity. When $A$ is a root of unity distinct from $\pm 1$ and $\pm i$ the representations are finite-dimensional and isomorphic to the ``Hom'' version of the well-known TQFT quantum representations.

The unitary representations in the interval $[-1,0]$ interpolate analytically between two natural geometric unitary representations, the \emph{$\SU(2)$-character variety} representation studied by Goldman and the \emph{multicurve} representation induced by the action of $\MCG(\Sigma)$ on multicurves.

The finite-dimensional representations converge analytically to the infinite-dimensional ones.
We recover March\'e and Narimannejad's convergence theorem, 
and Andersen, Freedman, Walker and Wang's asymptotic faithfulness, that states that 
the image of a non-central mapping class is always non-trivial after some level $r_0$. When the mapping class is pseudo-Anosov we give a simple polynomial estimate of the level $r_0$ in term of its dilatation.
\end{abstract}

\maketitle

\section{Introduction}

Let $\Sigma$ be a closed, oriented, connected surface with some marked points (or equivalently, some punctures) and let $\MCG(\Sigma)$ be the mapping class group of $\Sigma$. We assume that $\Sigma$ has at least one marked point and that by removing them from $\Sigma$ we get a surface of negative Euler characteristic. Recall that a \emph{multicurve} in $\Sigma$ is a finite collection of disjoint non-trivial simple closed curves, considered up to isotopy.\footnote{A curve is \emph{trivial} when it bounds a disc in $\Sigma$ that does not contain any marked point, hence puncture-parallel curves are admitted. We also consider the empty set as a multicurve.} The group $\MCG(\Sigma)$ acts on the set $\calM$ of all multicurves and hence induces a unitary representation 
$$\rho_0\colon \MCG(\Sigma) \to U(\calH)$$
on the Hilbert space $\calH=\ell^2(\calM)$ which we call the \emph{multicurve representation}. Let $\matD\subset \matC$ denote the open unit disc and $\overline \matD$ its closure. We construct here a family of representations $\{\rho_A\}_{A\in \overline \matD}$ that include $\rho_0$ and vary analytically in $A$ in an appropriate sense. 

\subsection{The analytic family}

Let $x$ be a fixed triangulation of $\Sigma$ having its vertices at the marked points.
For every integer $r \geqslant 2$ let $\calM_r\subset \calM$ be the set of all multicurves on $\Sigma$ having a representative that intersects every triangle of $x$ in at most $r-2$ arcs, and let $\calH_r\subset \calH = \ell^2(\calM)$ consist of all $\mathbb{C}$-valued functions supported on $\calM_r$. It is easy to check that $\calM_r$ is finite and hence $\calH_r$ is finite-dimensional; moreover if $r<r'$ then $\calH_r \subsetneq \calH_{r'}$ and $\check\calH = \cup_r \calH_r$ is the space of all finitely-supported functions on $\calM$, a dense subset in $\calH$. 

We denote by $B(\calH)$ and $U(\calH)$ respectively the space of all bounded invertible and unitary linear operators $\calH \to \calH$. If $A$ is a root of unity we set $r=r(A)$ to be the smallest integer such that $A^{4r}=1$.

We define here
for every complex number $A \in \overline \matD$ a linear representation $\rho_A$ of $\MCG(\Sigma)$ on a subspace of $\calH$ which depends on $A$ as follows:
\begin{itemize}
\item if $A\in \matD$ we define a bounded representation
$$\rho_A\colon \MCG(\Sigma) \to B(\calH),$$
\item if $A\in \overline \matD \cap (\matR \cup i \matR)$ we get a unitary representation
$$\rho_A\colon \MCG(\Sigma) \to U(\calH),$$
\item if $|A|=1$ and $A$ is not a root of unity we get a densely defined representation
$$\rho_A\colon \MCG(\Sigma) \to \GL(\check\calH),$$
\item if $A$ is a root of unity distinct from $\pm 1$ and $\pm i$ we define a finite-dimensional representation
$$\rho_A\colon \MCG(\Sigma) \to \GL(\calH_{r(A)})$$
which is also unitary when $A= \pm \exp(\frac{i\pi}{2r})$.
\end{itemize}

When $|A|=1$ and $A$ is not a root of unity we do not know if $\rho_A$ is bounded or unbounded: if it were bounded it would of course extend continuously to $\calH$. The various types of representations are summarized in Fig.~\ref{representationsB:fig}.

\begin{figure}
 \begin{center}
  \includegraphics[width = 12 cm]{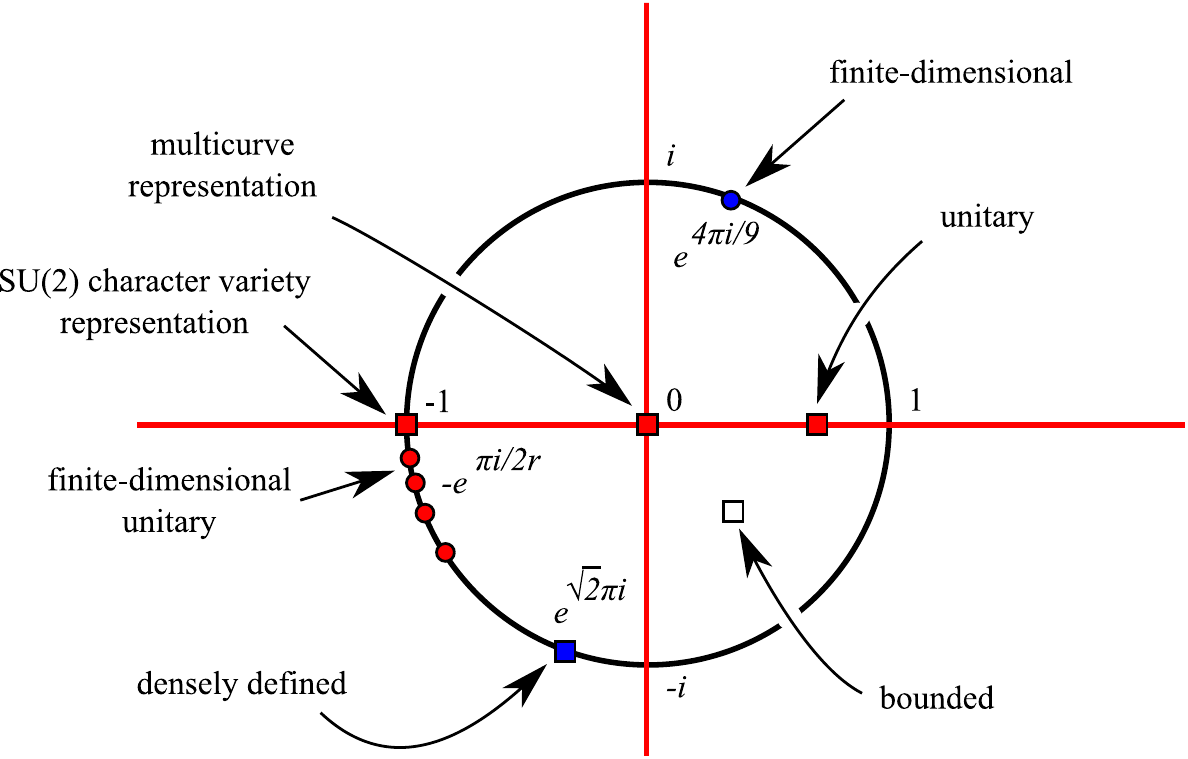}
 \end{center}
 \caption{The analytic family of representations.}
 \label{representationsB:fig}
\end{figure}

Throughout this paper we will denote by $S\subset \partial \matD$ the set of all roots of unity except $\pm 1$ and $\pm i$: the representation $\rho_A$ is finite-dimensional precisely when $A \in S$.
Although its domain changes dramatically with $A$, the representation $\rho_A$ varies analytically in $A\in\overline \matD$ in an appropriate sense: 

\begin{teo} \label{main2:teo}
For any pair of vectors $v,v'\in \calH$ and any element $g\in\MCG(\Sigma)$ the matrix coefficient 
$$\langle\rho_A(g)v,v'\rangle$$ 
depends analytically on $A\in \matD$.
Moreover, if $v,v'\in \check\calH$ there is a finite set $F \subset S$ such that 
$\langle\rho_A(g)v,v'\rangle$
is defined and varies analytically on $\overline \matD \setminus F$.
\end{teo}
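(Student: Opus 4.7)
The plan is to reduce the statement to the case of matrix entries between basis vectors, where the dependence on $A$ is essentially algebraic, and then to extend to arbitrary $v, v' \in \calH$ on $\matD$ via a density argument combined with uniform control of the operator norm. Let $\{e_\gamma\}_{\gamma\in\calM}$ denote the canonical orthonormal basis of $\calH$ indexed by multicurves, and fix $g \in \MCG(\Sigma)$.

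The first step is to show that for any two multicurves $\gamma, \gamma' \in \calM$, the matrix entry $P_{\gamma, \gamma'}(A) := \langle \rho_A(g) e_\gamma, e_{\gamma'}\rangle$ is a rational function of $A$ whose poles form a finite subset $F_{\gamma, \gamma'} \subset S$. This should follow directly from the skein-theoretic construction of $\rho_A$: picking a representative of $g$, transporting $\gamma$ by it, and expanding the result in the multicurve basis via the Kauffman bracket relations produces a finite sum of multicurves with Laurent-polynomial coefficients, while any normalization required to make $\rho_A$ unitary or bounded contributes denominators that vanish only at finitely many roots of unity. As an immediate consequence, if $v, v' \in \check\calH$ then $\langle \rho_A(g) v, v' \rangle$ is a finite $\matC$-linear combination of such rational functions and hence is defined and analytic on $\overline \matD \setminus F$, where $F$ is the finite union of the relevant $F_{\gamma, \gamma'}$. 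This proves the second assertion.

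To obtain the first assertion I would fix $g$, take general $v, v' \in \calH$, and approximate them in $\calH$-norm by finitely supported vectors $v_n, v'_n \in \check\calH$. Each $F_n(A) := \langle \rho_A(g) v_n, v'_n\rangle$ is analytic on $\matD$ by the previous step, and the bilinear estimate
\[
|F_n(A) - F_m(A)| \leqslant \|\rho_A(g)\| \bigl( \|v_n - v_m\|\,\|v'_n\| + \|v_m\|\,\|v'_n - v'_m\| \bigr)
\]
shows that $F_n$ converges to $\langle \rho_A(g) v, v'\rangle$ uniformly on every compact subset of $\matD$, provided that $A \mapsto \|\rho_A(g)\|$ is locally bounded on $\matD$. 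This local boundedness should be an explicit byproduct of the construction of $\rho_A$ as a bounded representation, with norm estimates in terms of $|A|$ and the combinatorial complexity of a representative of $g$. Since a uniform-on-compacta limit of analytic functions is analytic, this finishes the argument.

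The main obstacle is the local boundedness of $A \mapsto \|\rho_A(g)\|$ on $\matD$ — this is what enables the density argument and is exactly the property that makes $\rho_A$ a bounded representation in the first place, so it should be extractable from the previous sections. A secondary technical point is the identification of the finite exceptional set $F$, which amounts to tracking precisely which cyclotomic denominators appear in the normalizations used to define $\rho_A$; these are typically finite products of quantum integers and thus vanish only at finitely many roots of unity.
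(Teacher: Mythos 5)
Your strategy matches the paper's: Lemma~\ref{lem:analyticity} proves the first assertion by exactly your density-plus-operator-norm argument, where the local boundedness you flag as the ``main obstacle'' is supplied by Lemma~\ref{bounded:lemma} (a Frohman--Kania-Bartoszy\'nska-type estimate on the renormalized $6j$-symbols), and the second assertion is Corollary~\ref{cor:main2teo} via Lemma~\ref{analytic:lemma}, reducing to matrix entries on basis vectors.

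Two imprecisions are worth correcting, though neither sinks the argument. First, the entries $\langle\rho_A(g)\delta_\gamma,\delta_{\gamma'}\rangle$ are \emph{not} rational in $A$: the cocycle is built from \emph{renormalized} $6j$-symbols, and the renormalization multiplies the (rational) quantum $6j$-symbol by factors $\sqrt{\cerchio_c\cerchio_f}/\sqrt{\teta_{a,b,c}\cdots}$, so each matrix entry is a finite sum of products of square roots of rational functions (Lemma~\ref{roots:lemma} makes this precise). They are nonetheless single-valued analytic functions on $\overline\matD\setminus F$, which is what the argument actually needs. Second, your heuristic that finiteness of $F$ comes from ``cyclotomic denominators'' underplays a genuinely separate issue: for $A\in S$ the representation $\rho_A$ is defined only on the finite-dimensional subspace $\calH_{r(A)}$, so $\langle\rho_A(g)\delta_\gamma,\delta_{\gamma'}\rangle$ is not even \emph{defined} unless $\delta_\gamma,\delta_{\gamma'}\in\calH_{r(A)}$, i.e.\ $r(A)$ is large enough. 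The paper's Lemma~\ref{analytic:lemma} handles this by taking $r$ large enough that both vectors lie in $\calH_r$ and then observing that the surviving renormalized $6j$-symbols are pole-free at those $A$. Remark~\ref{poles:rem} points out that at the excluded points $A_0\in F$ the behaviour can be genuinely discontinuous, which shows the domain issue is not merely a cosmetic restatement of pole cancellation. You should fold this into your identification of $F$.
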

Therefore the family $\{\rho_A\}_{A\in\matD}$ of bounded representations is analytic in $\calH$ in the usual sense \cite{PS}. The whole family $\{\rho_A\}_{A\in \overline \matD}$ includes densely defined and finite representations and is ``analytic'' in the strongest possible sense: note that we necessarily need to exclude a finite set $F$ of points $A\in S$ since the matrix coefficient makes sense only if $v$ is contained in $\calH_{r(A)}$ and this holds only for sufficiently big values of $r(A)$. 
The finite set $F$ of course depends on $v, v',$ and $g$. 

Theorem \ref{main2:teo} says in particular that the finite-dimensional representations converge analytically to the infinite-dimensional ones:
\begin{cor}
Let $A_i\in S$ be a sequence that converges to an element $A_\infty\in \partial\matD\setminus S$. 
For any pair $v,v'\in \check\calH$ of vectors and any element $g\in\MCG(\Sigma)$ there is a number $N>0$ such that the matrix coefficient 
$$\langle\rho_{A_i}(g)v,v'\rangle$$
is defined and varies analytically on $\{A_i\}_{i\geqslant N} \cup \{A_\infty\}$. In particular, the matrix coefficients converge 
$$\langle\rho_{A_i}(g)v,v'\rangle \longrightarrow \langle\rho_{A_\infty}(g)v,v'\rangle$$
as $A_i \to A_\infty$.
\end{cor}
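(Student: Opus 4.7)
The plan is to derive the Corollary directly from Theorem~\ref{main2:teo} by a short bookkeeping argument: every ingredient of ``defined'', ``analytic'' and ``convergent'' is already packaged in the theorem, and the only thing left to check is that the sequence $\{A_i\}$ eventually avoids the finite exceptional set produced there.

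First I would apply Theorem~\ref{main2:teo} to the given data $v,v'\in\check\calH$ and $g\in\MCG(\Sigma)$. This yields a finite subset $F\subset S$ such that $\langle\rho_A(g)v,v'\rangle$ is defined and depends analytically on $A\in\overline{\matD}\setminus F$.

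Next I would use the hypothesis $A_\infty\in\partial\matD\setminus S$ together with the inclusion $F\subset S$ to conclude $A_\infty\notin F$. Since $F$ is finite (hence closed in $\overline{\matD}$) its complement is open in $\overline{\matD}$, and from $A_i\to A_\infty$ I would extract an integer $N$ such that $A_i\in\overline{\matD}\setminus F$ for every $i\geqslant N$. This places the entire set $\{A_i\}_{i\geqslant N}\cup\{A_\infty\}$ inside the domain of definition and analyticity of the matrix coefficient, which is the first half of the statement.

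For the convergence I would simply restrict the analytic function $A\mapsto\langle\rho_A(g)v,v'\rangle$ to this set and invoke continuity at $A_\infty$. There is no real obstacle here: the analytic work was already carried out in Theorem~\ref{main2:teo}, and the role of the two hypotheses $v,v'\in\check\calH$ and $A_\infty\notin S$ is precisely to guarantee, respectively, that an exceptional set exists and is finite, and that the limit point is safely away from it. The only mild subtlety worth emphasizing is that both $F$ and $N$ depend on the triple $(v,v',g)$, so no uniform estimate is claimed or needed; this is exactly the sense in which finite-dimensional representations converge analytically to the infinite-dimensional one at boundary points that are not roots of unity.
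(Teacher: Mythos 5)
Your proposal is correct and is precisely the intended derivation: the Corollary is an immediate consequence of the second statement of Theorem~\ref{main2:teo}, and the paper accordingly offers no separate proof. You correctly identify that the only work is to note $A_\infty\notin F$ (since $F\subset S$ and $A_\infty\notin S$), extract $N$ so that the tail $\{A_i\}_{i\geqslant N}$ avoids the finite set $F$, and then read off convergence from continuity of the analytic function on $\overline{\matD}\setminus F$.
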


When $A_j= \pm \exp (\frac{i\pi}{2j}) \to \pm 1$ the finite-dimensional unitary representations $\rho_{A_j}$ converge to the unitary infinite-dimensional representation $\rho_{\pm 1}$ in an analytic sense, which implies the usual Fell convergence\footnote{Here we do not assume the representations to be irreducible as in the standard Fell topology.}. We recover here (for punctured surfaces) an important convergence result of March\'e and Narimannejad \cite{MN}.

The family of representations is constructed combining an important ingredient of quantum topology called \emph{quantum $6j$-symbol} with Valette's \emph{cocycle technique} \cite{V}. The cocycle technique was introduced to produce an analytic family of representations for groups acting on trees \cite{V} and was later applied successfully to some groups acting on more complicate complexes, see the discussion in Section \ref{introduction:cocycle:subsection}. Here we use the action of $\MCG(\Sigma)$ on the \emph{triangulation graph} of $\Sigma$ and construct a cocycle using the (suitably renormalized) quantum $6j$-symbols. We use and slightly improve a crucial estimate of Frohman and Kania-Bartoszy\'nska \cite{FK2} to show that the cocycle indeed perturbs $\rho_0$ and yields bounded representations when $A\in\matD\setminus (\matR\cup i \matR)$.

The cocycle technique typically produces an analytic family of representations on a fixed Hilbert space $\calH$. At the best of our knowledge, the family $\{\rho_A\}_{A\in \overline\matD}$ constructed here is the first analytic family of representations of a discrete group which include finite-dimensional and infinite-dimensional representations.

As usual with cocycles, the family $\rho_A$ mildly depends on the initial choice of a fixed element of the complex, that is the triangulation $x$ for $\Sigma$: see Remark \ref{dependenceonx:rem}. 

\subsection{Relations with the Kauffman algebra}
Recall that two representations $\rho\colon G \to \GL(V)$ and $\rho'\colon G \to \GL(V')$ are \emph{isomorphic} (\emph{isometric}) if they are connected by a linear isomorphism (isometry) $V \to V'$. 

When $A\in \matC^*$ the group $\MCG(\Sigma)$ acts naturally on a rich algebraic object $K_A(\Sigma)$ called the \emph{skein algebra} of $\Sigma$. This object is a $\matC$-algebra equipped with a trace (known as Yang-Mills trace) and hence a complex bilinear form \cite{YM}, see Section \ref{relationskein:sec}. We show here the following.
\begin{prop} \label{skein:prop}
If $A\in \matR^*$ the Yang-Mills trace induces a positive definite hermitian form and hence a pre-Hilbert space structure on $K_A(\Sigma)$. If $A\in [-1,0)\cup (0,1]$ the representation $\rho_A$ is isometric to the natural action of $\MCG(\Sigma)$ on the Hilbert space $\overline{K_A(\Sigma)}$ obtained by completing $K_A(\Sigma)$.
\end{prop}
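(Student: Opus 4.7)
The plan is to identify $\overline{K_A(\Sigma)}$ with $\calH$ via a common ``fusion basis'' induced by the fixed triangulation $x$, and to show that under this identification the natural skein-theoretic action of $\MCG(\Sigma)$ on $K_A(\Sigma)$ translates into $\rho_A$.

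First I would use the standard fact that the set $\calM$ of multicurves forms a $\matC$-basis of $K_A(\Sigma)$, giving a canonical linear isomorphism $\Psi\colon K_A(\Sigma)\to\check\calH$ sending each multicurve $\gamma$ to its characteristic function $e_\gamma$. This identification is in general not an isometry for the Yang-Mills form, because the multicurve basis is not orthogonal for it. To make the form transparent, I would expand multicurves along the dual spine of $x$: by repeated Kauffman bracket resolutions, every multicurve becomes a $\matC$-linear combination of admissibly colored trivalent graphs dual to $x$, the change of basis being given by quantum $6j$-symbols and depending polynomially on $A$. A classical skein computation (of the type performed by Frohman and Kania-Bartoszy\'nska~\cite{FK2}) shows that the Yang-Mills trace diagonalizes on this fusion basis, with diagonal coefficients being explicit products of quantum dimensions $\Delta_c$ over the edges and faces of $x$. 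For every $A\in\matR^*$ these coefficients $\Delta_c$ are positive real numbers, which proves the first assertion.

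For the second assertion, I would construct an isometry $\Phi\colon\overline{K_A(\Sigma)}\to\calH$ as the composition of three steps: expand a multicurve along the fusion basis using $6j$-symbols, rescale the fusion basis vectors by $1/\sqrt{\Delta_c}$ so that they become orthonormal, and then recontract them against the orthonormal multicurve basis of $\calH$ via the inverse fusion change of basis. The action of a mapping class $f\in\MCG(\Sigma)$ on $K_A(\Sigma)$ is the permutation $\gamma\mapsto f(\gamma)$, which preserves the Yang-Mills form by naturality. Conjugating this permutation by $\Phi$ amounts to relating the fusion diagonalization associated to $x$ with that associated to $f(x)$, and the matrix performing this comparison should be precisely the $6j$-symbol cocycle used to define $\rho_A$.

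The main obstacle will be matching this fusion-basis conjugation to the explicit cocycle formula defining $\rho_A(f)$. Since $f$ sends $x$ to $f(x)$ and these two triangulations are connected by a finite sequence of Pachner flips, each of which corresponds in the skein algebra to a $6j$-symbol relation, the composition of the associated $6j$-symbols should match the cocycle $c(x,f(x))$ appearing in the construction of $\rho_A$. The bookkeeping of normalization conventions, in particular the signs and fractional powers of quantum dimensions hidden in the rescaling step of $\Phi$, is tedious but follows routinely from the skein-theoretic interpretation of $6j$-symbols once the conventions fixed in the definition of $\rho_A$ are carefully unpacked.
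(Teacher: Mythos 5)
Your strategy matches the paper's: pass to a spine-dual fusion basis $\{Y_\sigma\}$ for $K_A(\Sigma)$, diagonalize the Yang-Mills form on it, renormalize to an orthonormal basis $\{\hat Y_\sigma\}$, and observe that the change-of-triangulation relations are governed by the renormalized $6j$-symbol cocycle. Three details, however, deserve more care than your sketch gives them.

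First, the positivity of the diagonal Yang-Mills values is not as immediate as you state. Proposition \ref{orthogonal:prop} gives $\langle Y_\sigma, Y_\sigma\rangle = \prod_{e}\cerchio_{e}^{-1}\prod_{v}\teta_{v}$, and for $A\in\matR^*$ the individual evaluations $\cerchio_a = (-1)^a[a+1]$ and $\teta_{a,b,c}$ (which carries the sign $(-1)^{(a+b+c)/2}$) need not be positive. Positivity holds only after a global sign count: since $Y$ is trivalent, each edge touches exactly two vertices, so the signs contributed by the theta factors amount to $(-1)^{\sum_e a_e}$, cancelling the $(-1)^{\sum_e a_e}$ coming from the circle factors. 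A bare appeal to ``positive quantum dimensions'' does not cover this, since the quantities appearing here do carry signs individually.

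Second, the Yang-Mills pairing is a complex \emph{bilinear} form, not a sesquilinear one, so it does not directly define a pre-Hilbert space. One first has to note that for $A\in\matR^*$ the Kauffman relations have real coefficients, giving a real form $K_A^{\matR}(\Sigma)$ with $K_A(\Sigma)=K_A^{\matR}(\Sigma)\otimes_\matR\matC$; the bilinear form restricts to a genuine real inner product on $K_A^{\matR}(\Sigma)$, which then extends uniquely to a hermitian form on the complexification. Your proposal jumps from ``diagonal and positive'' to ``positive-definite hermitian'' without this step.

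Third, the change of basis from multicurves to $\{Y_\sigma\}$ is a unipotent triangular substitution coming from the Jones--Wenzl recursion $f_n = 1 + (\text{lower-order terms})$, not a $6j$-symbol relation; the $6j$-symbols enter only when comparing fusion bases attached to two flip-related triangulations. Likewise, the ``recontraction'' in your third step is superfluous: once $\{\hat Y_\sigma\}$ is orthonormal, the isometry is simply $\hat Y_\sigma\mapsto\delta_\gamma$, where $\gamma$ is the multicurve whose normal coordinates with respect to $x$ are $\sigma$ (Proposition \ref{normal:prop}); no inverse fusion move is needed on the $\ell^2(\calM)$ side. With those repairs your outline reproduces the paper's argument.
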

As a consequence, the unitary representation $\rho_{-1}$ is isometric to a well-known geometric  representation, which we now describe. The group $\MCG(\Sigma)$ acts naturally on the $\SU(2)$-character variety 
$$X=\Hom\big(\pi_1(\Sigma),\SU(2)\big)/_{\SU(2)}$$
which is equipped with a natural Haar measure $\mu$; hence $\MCG(\Sigma)$ acts on the Hilbert space $L^2(X, \mu)$. We call this action the \emph{$\SU(2)$-character variety representation}. The Hilbert space $L^2(X, \mu)$ contains as a dense subspace the algebra $T$ of all \emph{trace functions} on $X$, and Bullock and Charles-March\'e \cite{Bu, ChMa} have constructed a $\MCG(\Sigma)$-equivariant algebra isomorphism between $K_{-1}(\Sigma)$ and $T$. In Section \ref{subsec:unitaryreps} we provide a proof of the following well-known:

\begin{cor} \label{character:cor}
The representation $\rho_{-1}$ is isometric to the $\SU(2)$-character variety representation.
\end{cor}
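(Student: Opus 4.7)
The plan is to chain together three equivariant isometric identifications, each of which is either already stated in the paper or is a classical fact. First I would invoke Proposition~\ref{skein:prop} with $A=-1$: it reduces the statement to showing that the natural action of $\MCG(\Sigma)$ on the Hilbert completion $\overline{K_{-1}(\Sigma)}$, with respect to the inner product induced by the Yang--Mills trace, is isometrically isomorphic to the action on $L^2(X,\mu)$.

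Next I would use the Bullock / Charles--March\'e theorem cited immediately before the corollary statement: it gives a $\MCG(\Sigma)$-equivariant $\matC$-algebra isomorphism $\Phi\colon K_{-1}(\Sigma)\to T$, where $T$ is the algebra of $\SU(2)$-trace functions on $X$, sending the class of a simple closed curve $\gamma$ to the function $\rho\mapsto -\mathrm{tr}(\rho(\gamma))$. Since $T$ is dense in $L^2(X,\mu)$ (by Peter--Weyl on $\SU(2)$, applied handle by handle, or equivalently by Stone--Weierstrass since $T$ separates points of $X$ and contains the constants), it remains to show that $\Phi$ is an isometry, because any equivariant isometry between dense subspaces extends uniquely to an equivariant isometry of the completions, and the right completion is exactly $L^2(X,\mu)$.

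The key step, and the one I expect to require the most care, is therefore showing
\[
\mathrm{YM}\text{-}\mathrm{tr}(a\bar b)\;=\;\int_X \Phi(a)\,\overline{\Phi(b)}\,d\mu\qquad (a,b\in K_{-1}(\Sigma)).
\]
This is the statement that the Yang--Mills trace coincides with integration against the pushforward Haar measure on the character variety. Since both sides are $\matC$-bilinear and continuous, it suffices to verify it on a generating family, namely on products of simple multicurves. The Yang--Mills trace of such a product can be computed combinatorially in the skein module by the standard recipe (gluing a handlebody and evaluating via the $A=-1$ skein pairing), and the corresponding integral against $\mu$ factors through characters of $\SU(2)$ tensor products by Peter--Weyl; the matching of the two sides is precisely the content of the Witten / Fock--Rosly description of the Yang--Mills measure, worked out rigorously by Bullock, Frohman and Kania-Bartoszy\'nska. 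For a punctured surface no boundary condition is needed and the identification is clean.

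Finally I would check equivariance of the extension: $\Phi$ intertwines the $\MCG(\Sigma)$-actions on $K_{-1}(\Sigma)$ and $T$ by construction of the Bullock / Charles--March\'e map, the Yang--Mills trace is $\MCG(\Sigma)$-invariant, and the Haar measure $\mu$ on $X$ is $\MCG(\Sigma)$-invariant, so the extension $\overline\Phi\colon \overline{K_{-1}(\Sigma)}\to L^2(X,\mu)$ is an equivariant unitary. Composing with the isometry provided by Proposition~\ref{skein:prop} yields the desired isometric equivalence between $\rho_{-1}$ and the $\SU(2)$-character variety representation.
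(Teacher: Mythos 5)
Your overall architecture matches the paper's proof exactly: reduce via Proposition~\ref{skein:prop} to an isometry between $\overline{K_{-1}(\Sigma)}$ and $L^2(X,\mu)$, invoke the Bullock / Charles--March\'e equivariant algebra isomorphism $i\colon K_{-1}(\Sigma)\to T$, use Peter--Weyl to see that $T$ is dense, and observe that the whole thing boils down to showing that $i$ matches the Yang--Mills trace with integration over $X$. Where you diverge is in \emph{how} the trace matching is established. You verify the two-variable pairing on products of multicurves and then outsource the equality to the Witten / Fock--Rosly picture of the 2D Yang--Mills measure as ``worked out rigorously'' in the Bullock--Frohman--Kania-Bartoszy\'nska reference \cite{YM}. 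The paper instead reduces to the one-variable identity $\YM(\alpha)=\int_X i(\alpha)\,d\mu$ and checks it on the orthogonal basis $\{Y_\sigma\}$, which is much cleaner because Corollary~\ref{YM:cor} gives $\YM(Y_\sigma)=\delta_{\sigma,0}$ in one line, so the whole verification collapses to showing $\int_X i(Y_\sigma)\,d\mu=0$ for $\sigma\neq 0$; this last vanishing is then computed directly from the explicit formula for trace functions of coloured graphs in \cite{CoMa}, using $\int_{\SU(2)}\rho_n(g)\,dg=0$ for $n\neq 0$. The trade-off: your route is shorter on the page but leans on a precise identification (skein Yang--Mills trace at $A=-1$ equals the pushforward of Haar measure) that is not stated verbatim in the form you need in \cite{YM}, which is presumably why the authors chose the self-contained computation on $\{Y_\sigma\}$; if you keep your route you should pin down exactly which theorem of \cite{YM} supplies the equality, or else be prepared to redo the computation on the orthogonal basis as the paper does. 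Multicurves are also an awkward test family compared to $\{Y_\sigma\}$, since $\YM$ of a multicurve is not as transparent.
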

As a consequence we see that the representation $\rho_A$ is unitary for $A\in\matR$ and so the segment $[-1,0]$ defines a path connecting the $\SU(2)$-character variety representation to the multicurve representation:
\begin{cor} 
There is an analytic path of unitary representations connecting the $\SU(2)$-character variety representation to the multicurve representation.
\end{cor}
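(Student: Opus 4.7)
The plan is to take the real segment $A \in [-1,0]$, parametrized by $A$ itself, as the desired path. First I would observe that every $A \in [-1,0]$ lies in $\overline{\matD} \cap \matR$, so by the case analysis defining the family the representation $\rho_A$ is unitary throughout the segment. Next, I would identify the endpoints: $\rho_0$ is by construction the multicurve representation, while Corollary~\ref{character:cor} identifies $\rho_{-1}$, up to isometry, with the $\SU(2)$-character variety representation. This already realizes the two geometric representations as the two ends of a continuous one-parameter family of unitary operators on $\calH$.

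For analyticity I would invoke Theorem~\ref{main2:teo}. For any $A$ in the open half-segment $(-1,0] \subset \matD$ and any $v,v' \in \calH$, the matrix coefficient $\langle \rho_A(g) v, v'\rangle$ depends analytically on $A$ by the first statement of that theorem, giving at once an analytic family of bounded unitary operators on the open portion of the path. To include the boundary endpoint $A=-1$ I would switch to vectors $v,v' \in \check\calH$ (a dense subspace) and use the second statement: the matrix coefficients then extend analytically to $\overline{\matD} \setminus F$ for some finite $F \subset S$. Since $S$ excludes $\pm 1, \pm i$ by definition, the point $-1$ lies in the domain of analyticity for every choice of $v,v',g$, so we obtain analytic matrix coefficients on a neighborhood of the entire closed segment $[-1,0]$.

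No genuine obstacle appears in the proof of the corollary itself: the heavy lifting has already been carried out in Theorem~\ref{main2:teo} (analyticity of the whole family) and in Corollary~\ref{character:cor} (identification of $\rho_{-1}$). The one point I would flag explicitly is that the two conditions needed simultaneously, unitarity and analyticity up to the boundary, are controlled precisely on a real segment of $\overline{\matD}$ that avoids the exceptional finite set on $\partial\matD$; this is exactly why the segment $[-1,0]$, rather than, say, an arc in $\partial\matD$, is the natural choice of path. Finally, I would make explicit the sense in which the path is \emph{analytic}: namely, that the matrix coefficients on the dense subspace $\check\calH$ vary analytically in $A$, which is the appropriate notion here given that the ambient unitarity is with respect to a single fixed Hilbert space $\calH$.
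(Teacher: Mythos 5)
Your proposal is correct and follows essentially the same route as the paper: take the real segment $[-1,0]$, use the construction (or Theorem~\ref{unitary:teo}) to get unitarity for real $A$, use Theorem~\ref{main2:teo} to get analyticity (noting $-1\notin S$), and invoke Corollary~\ref{character:cor} to identify the endpoint $\rho_{-1}$. The paper records exactly this argument, only more tersely, in the sentence preceding the corollary.
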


We also mention that the finite-dimensional representations $\{\rho_A\}_{A\in S}$ are isomorphic to the ``Hom'' version of the well-known \emph{finite} representations arising from $\SU(2)$ topological quantum field theory \cite{BHMV}, see Section \ref{finite:subsection} for a precise statement.

Following the same line it is possible to relate $\rho_{\sqrt {-1}}$ to a representation studied by March\'e in \cite{Ma}. Note that $\{\rho_{t\sqrt{-1}}\}_{t \in [0,1]}$ interpolate analytically between $\rho_{\sqrt {-1}}$ and $\rho_0$.

\subsection{Faithfulness}
Recall that the center of $\MCG(\Sigma)$ is always trivial, except when $\Sigma$ is a punctured torus, and in that case it is a cyclic group of order 2 generated by the elliptic involution.

\begin{teo}\label{faithfullness:teo}
For every non-central element $g\in \MCG(\Sigma)$ there is a finite set $F\subset S$ such that $\rho_A(g)\neq \id$ for every $A\in\overline \matD\setminus F$.
\end{teo}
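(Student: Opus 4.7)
The plan is to reduce the problem to a single matrix coefficient and invoke Theorem~\ref{main2:teo}. Since $g$ is non-central in $\MCG(\Sigma)$, the classical faithfulness modulo centre of the $\MCG(\Sigma)$-action on isotopy classes of essential simple closed curves (checked directly in the punctured-torus case, where the centre is generated by the elliptic involution) produces a multicurve $\gamma\in\calM$ with $g(\gamma)\neq\gamma$.

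I would then form the matrix coefficient
$$f(A)\ =\ \langle \rho_A(g)\,e_\gamma,\, e_{g(\gamma)}\rangle$$
between the two basis vectors of $\check\calH$. By Theorem~\ref{main2:teo}, $f$ is defined and analytic on $\overline\matD\setminus F_0$ for some finite $F_0\subset S$. At $A=0$ the representation is the multicurve representation, so $\rho_0(g)\,e_\gamma=e_{g(\gamma)}$ and $f(0)=1$. Conversely, whenever $\rho_A(g)=\id$ one has $f(A)=\langle e_\gamma, e_{g(\gamma)}\rangle=0$, because $\gamma\neq g(\gamma)$ index distinct elements of the orthonormal basis $\calM$ of $\calH$. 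The statement of the theorem therefore reduces to showing that $f$ vanishes on at most a finite subset of $\overline\matD$, contained in $S$.

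The natural way is to exploit the cocycle construction of $\rho_A$: the representation is built by twisting $\rho_0$ with a cocycle valued in renormalized quantum $6j$-symbols along paths in the triangulation graph of $\Sigma$. I would expect this cocycle to act \emph{unipotently} with respect to the combinatorial filtration of $\calM$ given by intersection number with the fixed triangulation $x$, so that the coefficient of $e_{g(\gamma)}$ in $\rho_A(g)\,e_\gamma$ coincides with the corresponding coefficient of $\rho_0(g)$, namely $1$. Granting this upper-triangularity, $f\equiv 1$ on $\overline\matD\setminus F_0$ and the theorem follows at once with $F=F_0$.

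The main obstacle is precisely the verification of this unipotency of the $6j$-cocycle, which is a combinatorial statement about the renormalized $6j$-symbols rather than a soft consequence of the cocycle formalism. Should a direct argument prove elusive, a fallback would combine Proposition~\ref{skein:prop} (which for real $A\in[-1,0)\cup(0,1]$ identifies $\rho_A$ with the $\MCG(\Sigma)$-action by algebra automorphisms on $\overline{K_A(\Sigma)}$, hence non-trivial since $g(\gamma)\neq\gamma$ in $K_A(\Sigma)$) with the asymptotic faithfulness estimate for the finite-dimensional $\{\rho_A\}_{A\in S}$ at large level $r(A)$ promised elsewhere in the paper; propagated into $\matD$ via the analyticity of the matrix coefficients, these two ingredients confine the possible exceptional locus to a finite set $F\subset S$.
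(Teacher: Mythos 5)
Your opening moves are sound and parallel the paper: pick a multicurve $\gamma$ with $g(\gamma)\neq\gamma$ (as in the paper's proof of the $A\in S$ half), and observe that a suitable matrix coefficient equals $1$ at $A=0$ but would equal $0$ whenever $\rho_A(g)=\id$. You also correctly diagnose that mere analyticity of $f(A)=\langle\rho_A(g)e_\gamma,e_{g(\gamma)}\rangle$ on $\overline\matD\setminus F_0$ cannot, by itself, confine the zero set of $f$ to finitely many points (an analytic function on the disc can vanish on an infinite set accumulating at the boundary), nor force those zeros to lie inside $S$ as the theorem requires. This is precisely the gap that needs filling.

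Unfortunately the proposed fix --- unipotency of the cocycle --- is false, and for a structural reason. You have $f(A)=\langle c_A(x,gx)\,e_{g(\gamma)},e_{g(\gamma)}\rangle$, a diagonal entry of the cocycle. For $A\in\matR^*$ the cocycle $c_A(x,gx)$ is a real orthogonal operator (Lemma \ref{orthogonal:lemma}); an orthogonal operator has diagonal entries of absolute value at most $1$, with equality only if it fixes the corresponding basis vector. So $f(A)\equiv 1$ on the real axis would force $c_A(x,gx)$ to fix $e_{g(\gamma)}$ for all $A\in\matR^*$, which would essentially trivialize the whole deformation. The same objection kills any ``upper-triangular with unit diagonal'' structure for the flip matrices: an orthogonal unipotent matrix is the identity, but $c_A(y,x)\neq\id$ for $A\neq 0$. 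Corollary \ref{analyticat0:cor} gives only $1+O(A)$, not an exact $1$.

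The paper gets the result by two genuinely different arguments, neither of which is a soft analyticity statement. For $A\in\overline\matD\setminus S$ it uses Corollary \ref{psi:cor} to identify $\rho_A$ with the $\MCG(\Sigma)$-action on the skein algebra $K_A(\Sigma)$, where multicurves form a linear basis by Przytycki's theorem (Proposition \ref{prop:multicurvebasis}); since any non-central $g$ moves some multicurve, $\rho_A(g)\neq\id$ with \emph{no} exceptions, for \emph{all} such $A$ including the nonreal ones (your fallback invokes Proposition \ref{skein:prop} and so is implicitly limited to real $A$). For $A\in S$ it uses the much stronger structural input of Lemma \ref{roots:lemma}: outside finitely many values of $S$, the matrix coefficient $h(A)=\langle\rho_A(g)\delta_\gamma,\delta_\gamma\rangle$ is the square root of a single rational function of $A$, hence either constant or finite-to-one; since $h(0)=0$ it is not constantly $1$, so $h(A)=1$ for at most finitely many $A\in S$. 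Both halves rest on the algebraic, not merely analytic, shape of the renormalized $6j$-symbols, which is the ingredient missing from your proposal.
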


In particular every infinite-dimensional representation $\rho_A$ is faithful (modulo the center), and for finite-dimensional representations we recover the well-known \emph{asymptotic faithfulness} proved by Andersen \cite{A} and Freedman, Walker and Wang \cite{FWW}, which may be restated as follows: for every non-central element $g\in\MCG(\Sigma)$ there is a level $r_0$ such that $\rho_A(g)\neq \id$ for all $A$ with $r(A)\geqslant r_0$. When $g$ is pseudo-Anosov we may estimate the level $r_0$ polynomially on the dilatation $\lambda$:

\begin{teo} \label{main:dilatation:teo}
Let $\Sigma$ be a punctured surface and $g\colon \Sigma \to \Sigma$ be a pseudo-Anosov map with dilatation $\lambda > 1$. If $A$ is a primitive $(4r)^{\rm th}$ root of unity with
$$r>-6\chi(\Sigma)\big(\lambda^{-9\chi(\Sigma)}-9\chi(\Sigma)-1\big) +1$$
then
$$\rho_A(g)\neq \id.$$
\end{teo}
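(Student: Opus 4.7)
The strategy is to produce a multicurve $\gamma\in\calM_r$ such that $g(\gamma)$ also lies in $\calM_r$, is a different multicurve, and appears with a non-zero coefficient when $\rho_A(g)(\gamma)$ is expanded in the multicurve basis. Since $g$ is pseudo-Anosov it fixes no essential isotopy class, so $g(\gamma)\neq\gamma$ as multicurves; the non-vanishing coefficient then forces $\rho_A(g)(\gamma)\neq\gamma$, hence $\rho_A(g)\neq\id$.

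The first step is to exhibit a short multicurve. Using only the combinatorics of the triangulation $x$ of $\Sigma$ (which has of the order of $-2\chi(\Sigma)$ triangles and $-3\chi(\Sigma)$ edges), one can produce an essential simple closed curve $\gamma$ with $i(\gamma,\Delta)\leqslant -9\chi(\Sigma)$ on every triangle $\Delta$ by a combinatorial argument on the dual graph of $x$. The second step is to bound $i(g(\gamma),\Delta)$ using the pseudo-Anosov structure: after choosing $\gamma$ to be carried by a $g$-invariant train track $\tau$ approximating the unstable foliation, the transverse weights on $\tau$ get multiplied by $\lambda$ under each application of $g$, and re-expressing $g(\gamma)$ as a multicurve carried by $x$ requires at most $-9\chi(\Sigma)$ elementary splittings of $\tau$, each multiplying weights by at most $\lambda$. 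Summing the resulting weights over the $\sim -6\chi(\Sigma)$ corners of $x$ produces the bound
$$i(g(\gamma),\Delta)\leqslant -6\chi(\Sigma)\bigl(\lambda^{-9\chi(\Sigma)}-9\chi(\Sigma)-1\bigr)$$
appearing in the statement, and the hypothesis on $r$ then ensures that both $\gamma$ and $g(\gamma)$ belong to $\calM_r$.

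The third step is to show that the matrix coefficient $\langle\rho_A(g)\gamma,g(\gamma)\rangle$ does not vanish at the primitive $(4r)^{\rm th}$ root of unity $A$. By the cocycle construction of $\rho_A$ on the triangulation graph of $\Sigma$, this matrix coefficient is a product of quantum $6j$-symbols indexed by a sequence of flips relating $x$ to $g^{-1}(x)$, evaluated on the colors induced by $\gamma$ and $g(\gamma)$. As a polynomial in $A$ it has constant term $1$ (because $\rho_0$ simply permutes the multicurve basis). Since $\gamma$ and $g(\gamma)$ both lie in $\calM_r$, every color appearing in the relevant $6j$-symbols is at most $r-2$, so all quantum integers entering those $6j$-symbols are admissible and non-zero at $A$; this keeps the matrix coefficient non-zero and concludes the proof.

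The main obstacle is the second step: extracting the precise constants $6$ and $9$ in the bound requires delicate book-keeping of the splitting sequence needed to carry $g(\gamma)$ on $x$, and showing that all constants depend only on $\chi(\Sigma)$ (and not on $g$ or $x$ themselves) is the real technical heart of the statement. The non-vanishing of the matrix coefficient is, in comparison, essentially an admissibility check that follows automatically once $\gamma,g(\gamma)\in\calM_r$.
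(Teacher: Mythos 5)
Your high-level plan (produce a short essential curve $\gamma$ whose image $g(\gamma)$ is still short and distinct, then conclude $\rho_A(g)\neq\id$) is the right one, but both the quantitative estimate and the final non-triviality argument diverge from what actually works, and the final step contains a genuine gap.

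On the estimate (steps 1--2): the paper does not pick $\gamma$ by a generic dual-graph argument and does not count ``elementary splittings.'' It invokes the Papadopoulos--Penner theorem to get a train track $\tau$ carrying $g(\tau)$ with a Perron--Frobenius incidence matrix $M$, punctures $\Sigma$ at the complementary regions of the invariant lamination so that $\tau$ becomes a spine of a surface $\Sigma^\circ$ with $-3\chi(\Sigma^\circ)$ branches, takes $\gamma$ carried by $\tau$ with weights in $\{0,1,2\}$, and then bounds $|M\gamma|\leqslant 2|M|$ via the Ham--Song inequality $\lambda^n\geqslant |M|-n+1$. The exponent $-9\chi(\Sigma)$ and the prefactor $-6\chi(\Sigma)$ come respectively from $\chi(\Sigma^\circ)\geqslant 3\chi(\Sigma)$ and from the cost of collapsing the triangulation $x^\circ$ of $\Sigma^\circ$ back to an ideal triangulation of $\Sigma$ (bounded by twice the number of edges, $-6\chi(\Sigma)$). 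Your heuristic that ``each splitting multiplies weights by at most $\lambda$'' is not how the $\lambda^{-9\chi(\Sigma)}$ arises and would not produce a correct bound; the essential ingredient you are missing is the Ham--Song Perron--Frobenius estimate.

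On the non-triviality (step 3): this is where the gap is fatal. You reduce the problem to showing a certain matrix coefficient --- a sum of products of renormalized $6j$-symbols along a flip path --- does not vanish at the specific primitive $(4r)^{\rm th}$ root of unity $A$. Knowing the polynomial has constant term $1$ at $A=0$ and that the individual quantum integers are non-zero at $A$ tells you nothing about the value at $A$: polynomials with constant term $1$ vanish all the time at roots of unity, and the analyticity argument from Section 5 only excludes an unspecified finite set $F\subset S$, which defeats the purpose of proving an explicit bound on $r$. The paper sidesteps this entirely: under the hypothesis on $r$ both $\gamma$ and $g(\gamma)$ are $r$-admissible multicurves, hence are \emph{elements of the multicurve basis} of $K_A^{\rm red}(\Sigma)$ (Proposition on finite bases), and $\rho^K_A(g)$ tautologically sends the basis vector $[\gamma]$ to the distinct basis vector $[g(\gamma)]$. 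No $6j$-symbol evaluation and no non-vanishing of matrix coefficients is needed --- this is exactly the content of the paper's key lemma for this theorem, and it is what makes the explicit estimate possible. Your proposal as written does not reach that conclusion.
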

The proof of Theorem \ref{main:dilatation:teo} (and of all the preceding theorems) makes an essential use of punctures: we do not know if a similar lower bound holds for closed surfaces.

\subsection{Irreducible components}
Recall that $\calH = \ell^2(\calM)$ where $\calM$ is the set of all multicurves seen up to isotopy. Split $\calM$ as $\calM = \calM^0 \cup \calM^{\neq 0}$ where $\calM^0$ (resp.~$\calM^{\neq0}$) consists of all multicurves that are trivial (resp.~non-trivial) in $H_1(\Sigma, \matZ_2)$.

The Hilbert space $\calH$ splits accordingly into two infinite-dimensional orthogonal subspaces:
$$\calH = \calH^0\oplus \calH^{\neq 0}$$
where $\calH^0$ (resp.~$\calH^{\neq0}$) consists of all functions that are supported on $\calM^0$ (resp.~$\calM^{\neq 0}$).
Each finite-dimensional subspace $\calH_r$ decomposes similarly as
$$\calH_r = \calH^{0}_r\oplus \calH^{\neq 0}_r$$
with $\calH_r^0 = \calH^0 \cap \calH_r$ and $\calH_r^{\neq 0} = \calH^{\neq 0} \cap \calH_r$.
\begin{prop}\label{orthogonalsubspaces:prop}
The closed subspaces $\calH^0$ and $\calH^{\neq 0}$ are $\rho_A$-invariant for every $A \in \overline \matD\setminus S$. Similarly $\calH_r^0$ and $\calH^{\neq 0}_r$ are $\rho_A$-invariant for any $A\in S$ with $r(A)=r$.
\end{prop}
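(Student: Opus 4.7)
My approach combines the skein-theoretic description of $\rho_A$ on the real axis with the analyticity of Theorem~\ref{main2:teo}.

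I start with the elementary case. The splitting $\calM = \calM^0 \sqcup \calM^{\neq 0}$ is determined by whether a multicurve is trivial in $H_1(\Sigma, \matZ_2)$. Any diffeomorphism acts $\matZ_2$-linearly on $H_1(\Sigma, \matZ_2)$, fixes $0$, and permutes the non-zero classes among themselves, so the permutation representation $\rho_0$ preserves both $\calH^0$ and $\calH^{\neq 0}$. I then upgrade this to $A \in [-1,0) \cup (0,1]$ via Proposition~\ref{skein:prop}: the Kauffman skein relation preserves $H_1(\Sigma,\matZ_2)$ since the two resolutions of a crossing differ by the boundary of an embedded disk, so the skein algebra $K_A(\Sigma)$ carries an $H_1(\Sigma,\matZ_2)$-grading compatible with the multicurve basis; the $\MCG(\Sigma)$-action on $K_A(\Sigma)$ respects this grading through the induced action on $H_1(\Sigma,\matZ_2)$, which fixes $0$. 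This gives the desired invariance for real $A \in [-1,1] \setminus \{0\}$.

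Next I bootstrap to all $A$ by analytic continuation. Fix finitely supported vectors $v \in \calH^0 \cap \check\calH$ and $v' \in \calH^{\neq 0} \cap \check\calH$, and an element $g \in \MCG(\Sigma)$. By Theorem~\ref{main2:teo}, the matrix coefficient $f(A) = \langle \rho_A(g) v, v' \rangle$ is analytic on $\overline{\matD} \setminus F$ for some finite $F \subset S \subset \partial\matD$. Since the interior of $\overline{\matD} \setminus F$ is the connected open disk $\matD$, and $f$ vanishes on $(0,1) \subset \matD$ by the previous step, the identity theorem forces $f \equiv 0$ on $\matD$; continuity on $\overline{\matD} \setminus F$ extends the vanishing to that whole set.

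For $A \in \overline{\matD} \setminus S$ we have $A \notin F$, so $f(A) = 0$; hence $\rho_A(g) v$ is orthogonal to every such $v'$ and therefore to the closure $\calH^{\neq 0}$, giving $\rho_A(g) v \in \calH^0$. Density of $\calH^0 \cap \check\calH$ in $\calH^0$, together with the boundedness of $\rho_A$ when $A \in \matD$ or its densely defined nature on $\check\calH$ when $A \in \partial\matD \setminus S$, yields $\rho_A$-invariance of $\calH^0$; the argument for $\calH^{\neq 0}$ is symmetric. For the finite-dimensional case $A \in S$ with $r = r(A)$, pick $v \in \calH^0_r$ and $v' \in \calH^{\neq 0}_r$; both lie in $\calH_r \subset \check\calH$, so the matrix coefficient is defined at $A$ via the finite-dimensional representation, which forces $A \notin F$ and hence $f(A) = 0$; the orthogonal decomposition $\calH_r = \calH^0_r \oplus \calH^{\neq 0}_r$ then gives the required invariance. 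The main obstacle is the first step: one must carefully verify the $H_1(\Sigma,\matZ_2)$-grading of the skein algebra and its compatibility with the isometry of Proposition~\ref{skein:prop}; once this is in place, the identity-theorem argument handles both the infinite- and finite-dimensional cases uniformly.
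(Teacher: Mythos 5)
Your overall strategy is valid in spirit but genuinely different from the paper's, and it has a real gap in the finite-dimensional case.

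The paper's proof is far more direct: it observes that when two triangulations are related by a single flip, the two multicurves corresponding to the left and right colorings in Fig.~\ref{flip_colored:fig} differ only inside the flipped quadrilateral, which is a disc, hence they are $\matZ_2$-homologous. Therefore the cocycle $c_A(y,x)$ preserves each graded piece $\calH^\alpha$ (and $\calH_r^{x,\alpha}\to\calH_r^{y,\alpha}$ in the finite case) for every $A$ simultaneously, and since $\rho_0(g)$ acts on the grading by $g_*$ on $H_1(\Sigma,\matZ_2)$, which fixes $0$ and permutes nonzero classes, the factors $\calH^0$ and $\calH^{\neq 0}$ are preserved. This is a one-line topological observation that works uniformly for all $A\in\overline\matD$ and requires neither the skein isomorphism nor analytic continuation. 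Your route through Proposition~\ref{skein:prop} and the identity theorem is considerably heavier machinery for the same conclusion; on the other hand it is a nice illustration that the analyticity of the family can be used to propagate algebraic identities from the real axis.

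The gap: for $A\in S$ with $r(A)=r$, you assert that $v,v'\in\calH_r$ and definedness of the matrix coefficient at $A$ ``forces $A\notin F$.'' This is false. The finite set $F$ of Theorem~\ref{main2:teo} (equivalently Corollary~\ref{cor:main2teo}) is defined as the exceptional set outside of which the matrix coefficient is \emph{both} defined \emph{and} agrees with the analytic function $h$. As Remark~\ref{poles:rem} explicitly points out, it can happen that $A_0\in F$ while the matrix coefficient $f(A_0)$ is perfectly well-defined and nevertheless $f(A_0)\neq h(A_0)$: the finite-dimensional value sums only over $r$-admissible intermediate colorings, whereas the analytic extension sums over all admissible ones, and for a general word of flips these two sums may disagree at finitely many roots of unity even when the endpoints $v,v'$ lie in $\calH_r$. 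So your identity-theorem argument yields $h\equiv 0$ on $\overline\matD\setminus F$ but does not yield $f(A)=0$ at those $A\in S\cap F$ with $r(A)=r$, which are precisely the roots of unity the second sentence of the proposition is about. To close the gap you would have to either characterize $F$ more carefully in terms of intermediate colorings (nontrivial) or fall back on the paper's direct observation that a single flip preserves the $\matZ_2$-homology class, which handles the finite-dimensional cocycle without any appeal to analyticity.
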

Therefore every representation splits into two factors. Such a splitting was already noticed by Goldman in \cite{GoRed} for the $\SU(2)$-character variety representation, \emph{i.e.}~for $\rho_{-1}$. Goldman also asked whether it is possible to further decompose each factor into irreducibles. We extend this question to every real value $A$:

\begin{quest}
Do the representations $\rho_A$ restricted to $\calH^0$ and $\calH^{\neq 0}$ split into irreducible components when $A\in [-1,1]$? What is this decomposition?
\end{quest}

We can answer this question only when $A=0$, \emph{i.e.}~for the multicurve representation $\rho_0$ on $\ell^2(\calM)$. The action of $\MCG(\Sigma)$ on $\calM$ decomposes into infinitely many orbits $\{\calM_O\}_{O\in \calO}$, and $\calH$ decomposes accordingly as
$$\calH = \bigoplus_{O\in \calO} \calH_O$$
where $\calH_O$ consists of all functions supported on $\calM_O$. Each $\calH_O$ is obviously $\MCG(\Sigma)$-invariant. The action of $\MCG(\Sigma)$ on a factor $\calH_O$ needs not to be irreducible, but a result of Paris \cite{Pa} based on Burger-De La Harpe \cite{BD} easily implies the following:

\begin{teo} \label{irreducibles:teo}
Each factor $\calH_{O}$ splits into finitely many orthogonal irreducibles. 
\end{teo}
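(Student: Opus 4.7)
The plan is to recognize $\calH_O$ as a quasi-regular representation of $\MCG(\Sigma)$ and then apply a commensurator criterion due to Burger and De La Harpe, whose hypothesis is verified in the mapping-class-group setting by a theorem of Paris.

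First, I would fix a representative $c\in \calM_O$ and set $H=\mathrm{Stab}_{\MCG(\Sigma)}(c)$. Since $\MCG(\Sigma)$ acts transitively on $\calM_O$ by definition of an orbit, the orbit--stabilizer correspondence produces a $\MCG(\Sigma)$-equivariant bijection $\calM_O\leftrightarrow \MCG(\Sigma)/H$, which in turn gives an isometric isomorphism of unitary $\MCG(\Sigma)$-representations
$$\calH_O=\ell^2(\calM_O)\isom \ell^2\bigl(\MCG(\Sigma)/H\bigr)=\mathrm{Ind}_H^{\MCG(\Sigma)}\mathbf{1}.$$
Thus the problem is reduced to decomposing a quasi-regular representation into irreducibles.

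Next I would invoke the Burger--De La Harpe criterion \cite{BD}: for a countable discrete group $G$ and a subgroup $K\leqslant G$ with $[\Comm_G(K):K]<\infty$, the quasi-regular representation $\ell^2(G/K)$ splits as a direct sum of at most $[\Comm_G(K):K]$ irreducible unitary representations. In view of Step~1 it therefore suffices to show that $[\Comm_{\MCG(\Sigma)}(H):H]<\infty$, and this is precisely the content of Paris's theorem \cite{Pa}, which asserts that the stabilizer in $\MCG(\Sigma)$ of any multicurve on a punctured surface is virtually self-commensurating. Combining these two inputs yields the decomposition of $\calH_O$ into finitely many orthogonal irreducibles.

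The main obstacle, as the author's phrase ``easily implies'' suggests, is not analytical but bookkeeping: one must check that Paris's hypotheses apply to every orbit $O\in\calO$ occurring in our setting, including the degenerate cases where $c$ is empty or contains puncture-parallel components. Once this is verified, no further topological or quantum ingredient from the rest of the paper is needed; the conclusion follows from a purely representation-theoretic argument combining unitary induction with the commensurator criterion.
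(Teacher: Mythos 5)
Your proposal follows the same overall route as the paper: identify $\calH_O$ with the quasi-regular representation $\ell^2(\MCG(\Sigma)/H)$ for $H=\mathrm{Stab}(c)$, and then deduce the finite decomposition from the Burger--De La Harpe commensurator criterion combined with Paris's theorem. However, there is a genuine gap exactly where you yourself flag it and then move on. Paris's theorem does \emph{not} say that the stabilizer of ``any multicurve'' is (virtually) self-commensurating; it says that the stabilizers of \emph{simplices in the curve complex} $C(\Sigma)$ are self-commensurating, i.e.~stabilizers of isotopy classes of multicurves whose components are all essential, non-puncture-parallel, and pairwise non-isotopic. In this paper multicurves are explicitly allowed to have puncture-parallel components and parallel duplicates (see the footnote in the introduction), so a generic $\gamma\in\calM_O$ need not give a simplex of $C(\Sigma)$, and Paris's theorem does not apply directly to $\St_\gamma$. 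Your observation that this ``bookkeeping'' must be done is correct, but you do not do it, and it is not automatic.

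The paper fills this gap as follows: from $\gamma$ one produces a reduced multicurve $\gamma^\circ$ by deleting puncture-parallel components and keeping a single representative from each parallel family, so $\gamma^\circ$ does determine a simplex of $C(\Sigma)$ and Paris gives $\Comm_{\MCG(\Sigma)}(\St_{\gamma^\circ})=\St_{\gamma^\circ}$. One then observes that $\St_\gamma$ has finite index in $\St_{\gamma^\circ}$ (passing to $\gamma$ only imposes finitely many extra constraints on how the parallel copies and peripheral annuli may be permuted), and that a finite-index subgroup has the same commensurator as the ambient group. Hence $\Comm_{\MCG(\Sigma)}(\St_\gamma)=\St_{\gamma^\circ}$, so $\St_\gamma$ has finite index in its commensurator and the Burger--De La Harpe criterion applies. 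You should insert this $\gamma^\circ$ reduction and the finite-index/commensurator-invariance argument; without it, the appeal to Paris is not justified for the multicurves actually occurring in $\calM$.
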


\subsection{Structure of the paper}
In Section \ref{repres:section} we construct the family $\{\rho_A\}_{A\in \overline\matD\setminus S}$ of infinite-dimensional representations using the cocycle technique, then we prove the first statement of Theorem \ref{main2:teo}: to do so we assume some properties and estimates of $6j$-symbols summarized in Proposition \ref{exist:renormalized:prop} and proved in Section \ref{estimate:section}. 

Section \ref{Kauffman:section} introduces the skein module of a 3-manifold and quantum $6j$-symbols following \cite{L_book}: the expert reader may skip it or just refer to it for notations. In Section \ref{estimate:section} we introduce the \emph{renormalized $6j$-symbol} and prove Proposition \ref{exist:renormalized:prop}. 

The finite-dimensional representations are then introduced in Section \ref{finite:section} and in Section \ref{sub:teo1.5} we prove the second statement of Theorem \ref{main2:teo} (see Corollary \ref{cor:main2teo}) and half of Theorem \ref{faithfullness:teo}. The representations are reinterpreted in terms of the Kauffman skein algebra in Section \ref{relationskein:sec}, where in Section \ref{revisited:subsection} we prove the other half of Theorem \ref{faithfullness:teo} and at the end of Section \ref{subsec:unitaryreps} we prove Proposition \ref{skein:prop}. 

In Section \ref{sec:anosov} we deal with pseudo-Anosov mapping classes and prove Theorem \ref{main:dilatation:teo}. In Section \ref{sec:irreducibles} we prove Proposition \ref{orthogonalsubspaces:prop} and Theorem \ref{irreducibles:teo}.

\subsection{Acknowledgements} We thank Julien March\'e, Gregor Masbaum, and Alain Valette for useful conversations. The first author was supported by the French ANR Project ANR-08-JCJC-0114-01.

\section{The infinite-dimensional representations}\label{repres:section}
We construct here the analytic family $\{\rho_A\}_{A\in \overline \matD \setminus S}$ of infinite-dimensional representations by combining Pimsner and Valette's cocycle technique \cite{V, Pim} with quantum 6j-symbols \cite{KL}. We prove the first statement of Theorem \ref{main2:teo} (see Theorem \ref{proofthm1:sub}) using the properties of quantum $6j$-symbols that we summarize in Proposition \ref{exist:renormalized:prop} and prove in Section \ref{estimate:section}.

The finite-dimensional representations are constructed in Section \ref{finite:section}. The relations between these representations and the well-known skein module representations are later depicted in Section \ref{relationskein:sec}. 

\subsection{Analytic families of representations}
\label{introduction:cocycle:subsection}

The literature contains some interesting examples of analytic families of representations, the most influential one being probably the family $\{\rho_z\}_{z \in \matD}$ constructed by Pytlik and Szwarc in 1986 for free non-abelian groups \cite{PS}. In their example $\rho_0$ is the regular representation of $G$ in $\ell^2(G)$, and the family $\{\rho_z\}_{z\in \matD}$ may thus be interpreted as an analytic perturbation of $\rho_0$; the representation $\rho_z$ is unitary for real values of $z$ and uniformly bounded elsewhere. 

Valette \cite{V} then noticed that when $G$ acts on some space $X$ one may try to perturb a given representation by constructing a suitable \emph{cocycle} 
$$c\colon X\times X \to B(\calH),$$
first considered in \cite{Pim}.
Valette reinterpreted Pytlik and Szwarc's construction using cocycles and extended it to any group $G$ acting on a tree. 
This cocycle technique (described below) has later proved successful on some other groups acting on more complicate cell complexes, including: 
\begin{itemize}
\item right-angled Coxeter groups, by Januszkiewicz \cite{J},
\item groups acting on a CAT(0) cubical complex, by Guenter and Higson \cite{GH}. 
\end{itemize}

We show here that the cocycle technique also applies to the mapping class group $G=\MCG(\Sigma)$ of a punctured surface $\Sigma$. The base representation $\rho_0$ is the multicurve representation on $\calH = \ell^2(\calM)$ and we perturb it by considering the action of $G$ on the \emph{triangulations graph} $X$ of $\Sigma$.

\subsection{The cocycle technique} \label{technique:subsection}
Recall that $B(\calH)$ is the space of all bounded linear operators and $U(\calH)$ is the multiplicative group of all isometric linear operators on a complex Hilbert space $\calH$. Let $G$ be a group acting on a space $X$ and $\rho\colon G \to U(\calH)$ be a unitary representation. 
\begin{defn} \label{cocycle:defn}
A \emph{cocycle} is a map $c\colon X \times X \to B(\calH)$ such that:
\begin{enumerate}
\item $c(x,x) = \id$,
\item $c(x,y)\cdot c(y,z) = c(x,z)$,
\item $c(gx, gy) = \rho(g) c(x,y) \rho(g)^{-1}$
\end{enumerate}
for all $x,y \in X$ and $g\in G$. 
\end{defn}

If $c$ is a cocycle and $x \in X$ is a point, then the formula
$$\rho_c(g) = c(x,gx) \rho(g)$$
defines a representation $\rho_c\colon G\to B(\calH)$. If the cocycle is \emph{unitary}, meaning that $c(x,y) = c(y,x)^*$ for all $x$ and $y$, then the representation $\rho_c$ is unitary. 

A family $\{c_z\}_{z\in \matD}$ of cocycles is \emph{analytic} if the function
$$z\mapsto \langle c_z(x,y)v,w\rangle $$ 
is analytic in $z$ for any $x,y \in X$ and any vectors $v,w\in\calH $.
An analytic family $c_z$ of cocyles gives rise to an analytic family $\rho_z$ of representations. If $c_0$ is the trivial cocycle, \emph{i.e.}~$c_0(x,y) = \id$ for all $x,y$, then $\rho_0 = \rho$ and we have obtained an analytic perturbation of $\rho$.

A basic example is the construction of Valette \cite{V} for groups acting on trees, which is worth recalling here. Let $G$ be a group that acts on a tree $T$, and $X=T^0$ be the set of its vertices; the Hilbert space is $\calH = \ell^2(X)$ and the action $\rho$ on $\calH$ is the permutation representation. The space $\calH$ has an obvious Hilbert basis $\{\delta_x\}_{x\in X}$.

Consider the holomorphic function $w=f(z) = \sqrt {1-z^2}$ on the open unit disc $\matD$ defined unambiguously by setting $f(0)=1$. For any pair $x,y\in X$ of adjacent vertices (\emph{i.e.}~two endpoints of an edge) define the cocycle $c_z(x,y)\in B(\ell^2(X))$ as the endomorphism which acts as the matrix
$$\begin{pmatrix} w & z \\ -z & w \end{pmatrix}$$
on the plane spanned by the basis $(\delta_x, \delta_y)$ and acts as the identity on its orthogonal complement. Note that $c_z(x,y) = c_z(y,x)^{-1}$, and that $c_z(x,y) = c_z(y,x)^*$ precisely when $z$ is real. We then define $c_z(x,y)$ on an arbitrary pair of edges by setting
$$c_z(x,y) = c_z(x_1,x_2) \cdots c_z(x_{n-1},x_n)$$
for any path of adjacent vertices $x=x_1, x_2, \ldots, x_{n-1}, x_n=y$ connecting $x$ and $y$. We have defined an analytic family of cocycles and hence an analytic family $\rho_z$ of representations for $G$, which are unitary when $z$ is real. The cocycle $c_0$ is trivial and hence the representation $\rho_0$ is the permutation representation on $X=T^0$.

\subsection{The triangulations graph}
We apply the cocycle technique to our setting. Here $\Sigma$ is a surface with marked points and $G=\MCG(\Sigma)$ is its mapping class group (\emph{i.e.}~the group of all orientation preserving diffeomorphisms of $\Sigma$ fixing the set of marked points, seen up to isotopies also fixing that set). We define the space $X$ as the \emph{triangulations graph} of $\Sigma$, \emph{i.e.}~the space of all triangulations of $\Sigma$ having the marked points as vertices, seen up to isotopy keeping the vertices fixed. The set $X$ is the vertex set of a natural connected graph, constructed by joining with an edge any two triangulations that differ only by a \emph{flip} as in Fig.~\ref{flip:fig}. The mapping class group $\MCG(\Sigma)$ clearly acts on the graph $X$ with compact quotient. However, the graph of triangulations is not a tree, and hence the construction described above does not apply \emph{as is}.

\begin{figure}
 \begin{center}
  \includegraphics[width = 6 cm]{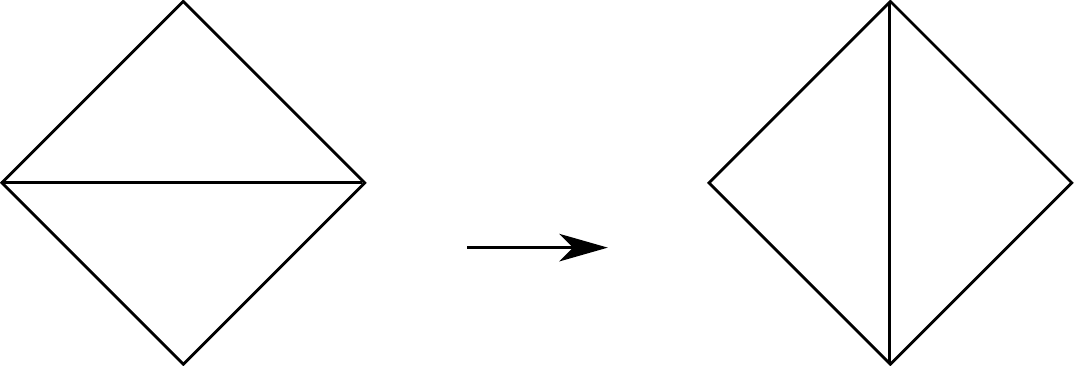}
 \end{center}
 \caption{A flip.}
 \label{flip:fig}
\end{figure}

Constructing a non-trivial cocycle in this setting is a non-trivial problem. Let $x$ and $y$ be two triangulations and $x=x_1, x_2, \ldots, x_{n-1}, x_n=y$ be a path in the graph $X$ joining $x$ and $y$. The following equality must hold
$$c(x,y) = c(x_1,x_2) \cdots c(x_{n-1},x_n)$$
which decomposes $c(x,y)$ into factors $c(x_i,x_{i+1})$, each corresponding to a flip. The issue here is that the path joining $x$ to $y$ is far from being unique. There are three well-known moves that relate any two different paths, see \cite{Pen0} for a proof:

\begin{teo} \label{pentagon:teo}
Two different paths joining two fixed triangulations $x$ and $y$ are connected by the following types of  moves:
\begin{enumerate}
\item
addition or removal of a pair of consecutive flips in the same quadrilateral,
\item
two commuting flips on disjoint quadrilaterals,
\item
the \emph{pentagon relation} in Fig~\ref{pentagon:fig}.
\end{enumerate}
\end{teo}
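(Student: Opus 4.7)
The plan is to reformulate the statement as a simple connectivity result. Let $\tilde X$ be the 2-dimensional CW complex whose 1-skeleton is the triangulations graph $X$ and whose 2-cells are glued along the three types of loops appearing in (1)--(3): a bigon for each pair of flips in the same quadrilateral, a square for each pair of disjoint commuting flips, and a pentagon for each configuration that produces a pentagon relation. The statement that any two paths joining $x$ and $y$ differ by a sequence of the three moves is equivalent to the assertion that every loop in $X$ is null-homotopic in $\tilde X$, i.e.~$\tilde X$ is simply connected.

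To show simple connectivity I would invoke Penner's cell decomposition of decorated Teichm\"uller space $\tilde{\calT}(\Sigma)$. The top-dimensional cells of this decomposition are indexed by ideal triangulations of $\Sigma$, so its dual cell structure has the same vertex set as $X$. Codimension-one faces arise when exactly one edge of a triangulation is about to be flipped, so the dual 1-skeleton is precisely $X$. The central computation is then to classify codimension-two faces: by local analysis of the convex hull construction one sees that such a face is either (a) a locally trivial situation giving a bigon between $x$ and the result of two flips on the same quadrilateral, (b) the product of two independent codimension-one degenerations, producing a commutation square, or (c) a single genuine codimension-two degeneration supported on five triangles, giving a pentagon. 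Thus the dual 2-skeleton of $\tilde{\calT}(\Sigma)$ is exactly $\tilde X$.

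Since decorated Teichm\"uller space is homeomorphic to $\matR^N$ for $N = -3\chi(\Sigma)+(\#\text{punctures})$, it is contractible, and in particular its dual 2-skeleton is simply connected. A standard transversality/general position argument upgrades this to simple connectivity of $\tilde X$: any loop $\gamma\subset X$ bounds a disc $D$ in $\tilde{\calT}(\Sigma)$, which after perturbation meets only codimension $\leq 2$ strata transversely, and can then be triangulated into 2-cells of the types listed. Reading off the boundary of the disc expresses $\gamma$ as a product of the three elementary relations.

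The main obstacle is the explicit verification that the codimension-two strata of Penner's decomposition are of exactly the three stated types, with no hidden further relation. One has to check that whenever two simultaneous degenerations occur, they are either supported on disjoint portions of $\Sigma$ (yielding a square), or on overlapping portions that necessarily share a single common triangulation structure on five faces (yielding the pentagon). This is a finite local calculation but must be done carefully, since an a priori miscount of local configurations would manifest as a missing relation in $\MCG(\Sigma)$ acting on $X$.
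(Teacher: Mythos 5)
The paper does not give its own proof of this theorem; it simply cites Penner \cite{Pen0}, and your sketch is in fact a faithful outline of Penner's argument via the convex-hull cell decomposition of decorated Teichm\"uller space, its dual 2-skeleton, and contractibility. So you have reconstructed essentially the intended proof.

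One small caveat worth noting: your item (a) treats the ``flip and flip back'' move as a codimension-two stratum (a bigon cell), but in Penner's decomposition the genuine codimension-two degenerations give only the square (two disjoint quadrilaterals degenerating independently) and the pentagon (two quadrilaterals sharing a triangle). The backtracking move (1) is not a 2-cell at all; it is the purely combinatorial insertion/deletion of $ee^{-1}$ that is always available when comparing edge-paths in a CW complex, independent of the 2-skeleton. Keeping this distinction straight is important precisely for the worry you flag at the end, namely that the local count of codimension-two configurations be exhaustive: there are exactly two, not three, nontrivial types, and the bigon should be excluded from that count rather than verified as one of the cases. With that correction, the argument is the standard (and correct) one.
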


\begin{figure}
 \begin{center}
  \includegraphics[width = 10cm]{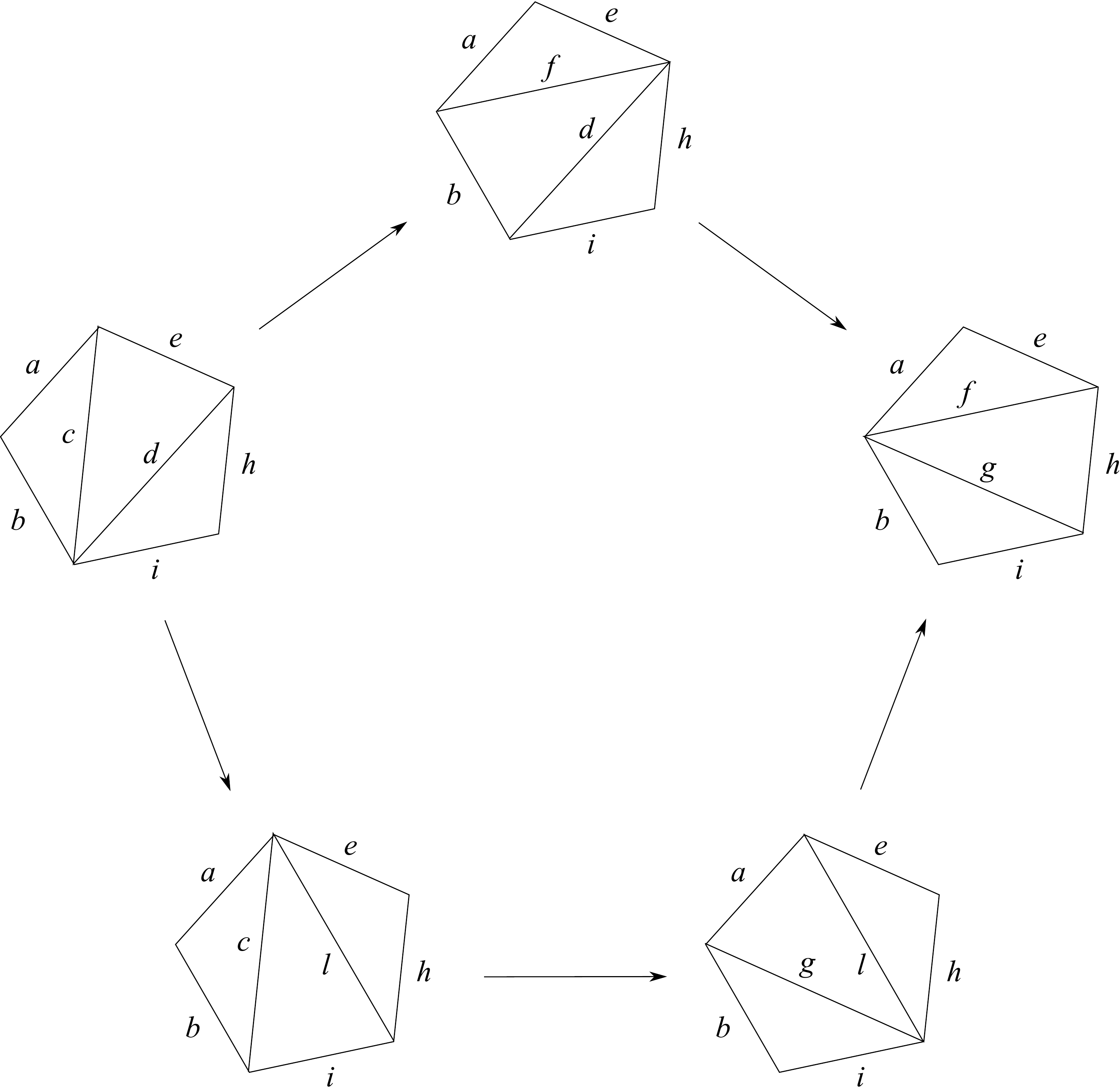}
 \end{center}
 \caption{The pentagon relation.}
 \label{pentagon:fig}
\end{figure}

A cocycle on the graph $X$ of triangulations must be invariant under these three moves, the most important one being the pentagon relation. Quantum invariants indeed provide an analytic family of objects that are invariant under moves (1)-(3), the \emph{quantum 6j-symbols} \cite{KL}. 

\subsection{Quantum $6j$-symbols}
From now on we will use the symbol $A$ instead of $z$ to denote a complex variable typically belonging to the closed unit disc $\overline \matD$, as customary in quantum topology. Recall that $S$ is the set of all roots of unity except $\pm 1$ and $\pm i$.
The \emph{quantum $6j$-symbol} is a rational function in $A$ with poles contained in $S\cup \{0,+\infty\}$ denoted by
$$\begin{Bmatrix}  a & b & c \\ d & e & f \end{Bmatrix}$$
which depends on six non-negative integers $a,b,c,d,e,f$ (an explicit formula is provided before Definition \ref{6j:defn}). The $6j$-symbols satisfy the \emph{orthogonality relations}:
$$\sum_{f} \begin{Bmatrix}  a & b & c \\ d & e & f \end{Bmatrix}
\begin{Bmatrix}  a & b & g \\ d & e & f \end{Bmatrix} = \delta_{cg}
$$
and the \emph{Biedenharn-Elliot identity:}
$$\begin{Bmatrix}  a & b & c \\ d & e & f \end{Bmatrix}
\begin{Bmatrix}  b & f & d \\ h & i & g \end{Bmatrix}
= \sum_l 
\begin{Bmatrix}  c & e & d \\ h & i & l \end{Bmatrix}
\begin{Bmatrix}  b & a & c \\ l & i & g \end{Bmatrix}
\begin{Bmatrix}  a & g & l \\ h & e & f \end{Bmatrix}.
$$

In this paper we will use a slightly modified version of the quantum $6j$-symbols, which we will call \emph{renormalized $6j$-symbols}, that have no poles in zero:

\begin{prop} \label{exist:renormalized:prop}
There is a family of analytic functions
$$\begin{Bmatrix}  a & b & c \\ d & e & f \end{Bmatrix}^R$$
defined on $\overline \matD\setminus S$ and determined by six parameters $a,b,c,d,e,f \in \matN$ which satisfy the orthogonality relations and the Biedenharn-Elliot identities for every $A\in\overline \matD\setminus S$. Moreover:
\begin{align}
\label{symmetry:eqn}
\begin{Bmatrix}  a & b &c \\ d & e & f \end{Bmatrix}^R & =
\begin{Bmatrix}  a & e & f \\ d & b & c \end{Bmatrix}^R, \\
\label{real:eqn}
\begin{Bmatrix} a & b & c \\ d & e & f \end{Bmatrix}^R(A) & \in \matR \ {\rm if}\ A\in \matR\cup i\matR, \\
\label{zero:eqn}
\begin{Bmatrix} a & b & c \\ d & e & f \end{Bmatrix}^R(0) & = \left\{
\begin{array}{rl}
1 & {\rm if }\ c + f = \max\{a+d, b+e\}, \\
0 & {\rm if }\ c + f \neq \max\{a+d,b+e\}
\end{array} \right. \\
\label{Q:eqn}
\left| \begin{Bmatrix}  a & b & c \\ d & e & f \end{Bmatrix}^R(A)\right| & \leqslant |A|^{Q(a,b,c,d,e,f)}K(A) \quad {\rm if} A\in \matD
\end{align}
for some non-negative functions $Q\colon \matN^6 \to \matN$ and a continuous function 
$K\colon \matD\to \matR$ such that for every $a,b,c,d,e,n\in \matN$ the following inequality holds: 
$$\#\big\{f\ \big|\ Q(a,b,c,d,e,f)=n\big\}\leqslant 12.$$
\end{prop}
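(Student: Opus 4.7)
Plan: I would obtain the renormalized $6j$-symbol from the Kauffman--Lins $6j$-symbol \cite{KL} by an $A$-dependent rescaling of the basis vectors at each trivalent vertex of the dual tetrahedron. Concretely, the Kauffman--Lins symbol is a quotient of a Racah-type sum $\mathrm{TET}$ by a product of two theta-graph values, a rational function of $A$ whose poles lie only at $A=0$ and at roots of unity in $S$. I would set
$$\begin{Bmatrix} a & b & c \\ d & e & f \end{Bmatrix}^R := A^{\nu(a,b,c)+\nu(c,d,e)-\nu(a,e,f)-\nu(b,d,f)}\,\begin{Bmatrix} a & b & c \\ d & e & f \end{Bmatrix}$$
for a suitable integer-valued symmetric function $\nu$ on admissible triples. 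Such a rescaling amounts to a gauge transformation of the basis vectors of each trivalent fusion space, so it preserves both the orthogonality relations and the Biedenharn--Elliot identity automatically: each trivalent vertex of the dual diagram contributes an equal factor to both sides of these identities, and the factor on the $\delta_{cg}$ of the orthogonality relation is absorbed symmetrically in the sum.

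The exponent $\nu(i,j,k)$ would be chosen as the minimal piecewise-linear function that both kills the pole of the Kauffman--Lins symbol at $A=0$ and produces the normalization \eqref{zero:eqn}; it can be read off from the leading behavior $[n] \sim A^{-2(n-1)}$ in the explicit Kauffman--Lins expressions for $\theta$ and $\mathrm{TET}$. The symmetry \eqref{symmetry:eqn} then follows from the tetrahedral symmetry of the Kauffman--Lins symbol together with the symmetry of $\nu$, and \eqref{real:eqn} is immediate since each $[n]$ is real on $\mathbb{R}$ and on $i\mathbb{R}$ (up to an overall sign depending on $n$) while the renormalizing exponent is an integer. For \eqref{zero:eqn} I would evaluate the leading $A$-order coefficient of the Racah sum defining $\mathrm{TET}$: a combinatorial argument shows that exactly one summand survives at the leading order precisely when $c+f = \max\{a+d,b+e\}$, in which case this surviving term contributes $1$ after the renormalization; otherwise the leading coefficient vanishes.

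The main obstacle is the estimate \eqref{Q:eqn}. Setting $Q(a,b,c,d,e,f)$ to be the order of vanishing at $A=0$ of the renormalized symbol, the bound would follow by adapting the estimate of Frohman--Kania-Bartoszy\'nska \cite{FK2} termwise to the Racah sum: each summand is dominated on $\overline{\mathbb{D}}$ by $|A|^{Q}$ times a continuous function of $A$, uniformly in the parameters, and the number of summands in the Racah sum is uniformly bounded in terms of the six parameters. Finally, once $a,b,c,d,e$ are fixed, admissibility confines $f$ to the finite integer interval $[\max(|a-d|,|b-e|),\,\min(a+d,b+e)]$. On this interval $Q(a,b,c,d,e,\,\cdot)$ is a piecewise-affine function of $f$ whose slopes lie in a small finite set and which has only a bounded number of breakpoints, coming from the maxima and minima appearing in $\nu$ and in the definition of $Q$; a direct case analysis then bounds the cardinality of each fiber $\{f : Q=n\}$ by $12$.
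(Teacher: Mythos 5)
The central step in your plan is to renormalize the Kauffman--Lins symbol by a monomial $A^{\nu(a,b,c)+\nu(c,d,e)-\nu(a,e,f)-\nu(b,d,f)}$, and you assert that such a gauge rescaling preserves both the orthogonality relation and the Biedenharn--Elliot identity ``automatically.'' For Biedenharn--Elliot this is correct: the $\nu$-exponents telescope around the pentagon. But for the orthogonality
$\sum_{f} \begin{Bmatrix} a & b & c \\ d & e & f \end{Bmatrix}\begin{Bmatrix} a & b & g \\ d & e & f \end{Bmatrix} = \delta_{cg}$
it fails. The two symbols in the sum share the triangles $(a,e,f)$ and $(b,d,f)$, so your rescaling introduces the $f$-dependent factor $A^{-2\nu(a,e,f)-2\nu(b,d,f)}$ inside the sum, which does not pull out. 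Equivalently, your rescaling is a two-sided diagonal change $M\mapsto D_1 M D_2$, and such a change preserves $M\,{}^{t}M = \id$ only when $D_1^2=D_2^2=\id$, i.e.\ only when $\nu$ is trivial. A second problem: the Kauffman--Lins symbol is \emph{not} tetrahedrally symmetric --- only the tetrahedral evaluation is, and the normalization by $\cerchio_c/(\teta_{a,e,f}\teta_{d,b,f})$ breaks it --- so ``the tetrahedral symmetry of the Kauffman--Lins symbol together with the symmetry of $\nu$'' does not yield \eqref{symmetry:eqn}; correcting the asymmetry from the Kauffman--Lins form requires multiplying by $\sqrt{\cerchio_f\teta_{a,e,f}\teta_{d,b,f}/\big(\cerchio_c\teta_{a,b,c}\teta_{d,e,c}\big)}$, which is not a monomial in $A$.

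The paper's construction is precisely such a square-root renormalization:
$\begin{Bmatrix} a & b & c \\ d & e & f \end{Bmatrix}^R = \dfrac{\text{(tetrahedral evaluation)}\,\sqrt{\cerchio_c\cerchio_f}}{\sqrt{\teta_{a,b,c}\teta_{a,e,f}\teta_{d,b,f}\teta_{d,e,c}}}$,
with analytic branches of $\sqrt{\cerchio}$ and $\sqrt{\teta}$ on $\overline{\matD}\setminus S$ fixed by their leading behaviour at $A=0$. Conceptually this is the change-of-basis matrix between two \emph{orthonormal} bases $\{\hat Y_\sigma\}$ of the skein module (Section \ref{revisited:subsection}), so orthogonality is built in; the symmetry under $(b,c)\leftrightarrow(e,f)$ is manifest; and the $i$-powers coming from the chosen branches of the square roots are exactly what make the leading coefficient in \eqref{zero:eqn} equal $+1$ rather than an uncontrolled $\pm 1$ --- a point your monomial rescaling does not address. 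Two smaller remarks: the exponent $Q$ in \eqref{Q:eqn} is $\tfrac 12(C_1-C_2)(C_1-C_3)+C_1-c-f$ with $C_1\geqslant C_2\geqslant C_3$ the sorted opposite-edge sums, so it is piecewise \emph{quadratic} in $f$ (not piecewise-affine); the constant $12$ comes from $6$ orderings of the $C_i$ times at most $2$ preimages of a non-constant polynomial of degree $\leqslant 2$. And the Frohman--Kania-Bartoszy\'nska estimate is proved for the unitary symbol and then transferred to $\{\cdots\}^R$ by estimating the extra factor $\sqrt{\cerchio_c\cerchio_f}$, so the ``termwise adaptation'' you sketch must keep track of these square-root factors.
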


The renormalized $6j$-symbol is a square root of a rational function (note the analogy with the function $\sqrt{1-z^2}$ from Section \ref{technique:subsection}). We will define the renormalized $6j$-symbols in Section \ref{estimate:section}, where we will prove \eqref{zero:eqn} and \eqref{Q:eqn} by making use of some crucial estimates of Frohman and Kanya-Bartoszy\'nska \cite{FK2}. 

\subsection{Multicurves}
Recall that $X$ is the set of all triangulations on $\Sigma$ with vertices at the marked points and $\calM$ is the set of all multicurves on $\Sigma$;  
we want to introduce an analytic family of cycles 
$$c_A\colon X\times X \to B(\calH)$$
with $\calH = \ell^2(\calM)$. To do so we need to link multicurves, triangulations, and quantum $6j$-symbols.

\begin{defn} 
Let $x$ be a triangulation. A multicurve is said to be in \emph{normal form} with respect to $x$ if it intersects every triangle transversely in arcs joining distinct edges. An \emph{admissible coloring} of $x$ is the assignment of a non-negative integer at each edge of $x$ such that the three numbers $i,j,k$ coloring the three edges of any triangle satisfy the triangle inequalities and have an even sum $i+j+k$.
\end{defn}

A multicurve put in normal form induces an admissible coloring on the triangulation $x$, defined simply by counting the intersections of every edge of $x$ with the multicurve. 

\begin{prop} \label{normal:prop}
Normal form induces a natural bijection between $\calM$ and the set of all admissible colorings on $x$.
\end{prop}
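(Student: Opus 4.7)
The plan is to construct two maps $\Phi\colon \calM \to \adm(x)$ and $\Psi\colon \adm(x) \to \calM$ (writing $\adm(x)$ for the set of admissible colorings of $x$) and show they are inverse to each other. The map $\Phi$ will send a multicurve $\gamma$ to the coloring $e\mapsto |\gamma\cap e|$ computed on a normal-form representative. The map $\Psi$ will construct, from an admissible coloring $(i,j,k)$ on each triangle, a family of $a=\tfrac12(j+k-i)$, $b=\tfrac12(i+k-j)$, $c=\tfrac12(i+j-k)$ pairwise disjoint arcs joining pairs of distinct edges and avoiding the vertices, and then glue these local pictures along each edge by matching the $i$ endpoints on the two sides in order (``parallel matching'').

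First I would check that $\Psi$ is well-defined. Non-negativity and integrality of $a,b,c$ are exactly the triangle inequalities and parity condition in the definition of admissibility, and the local arc pattern on each triangle is determined up to ambient isotopy rel its boundary. The parallel gluing produces a smooth closed $1$-submanifold of $\Sigma$ disjoint from the marked points, hence a disjoint union of simple closed curves. To see that each component is non-trivial (it does not bound a disk devoid of marked points), I would argue by contradiction: such a disk $D$ would avoid all vertices, so $D\cap x^{(1)}$ would be a family of properly embedded arcs in $D$; an innermost one together with a sub-arc of $\partial D$ would bound a sub-disk contained in a single triangle $T$, giving an arc of the multicurve returning to the same edge of $T$, contradicting the output of $\Psi$.

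Next I would address well-definedness of $\Phi$, which requires existence and uniqueness (up to isotopy) of a normal-form representative. Existence is by a standard bigon and ``ear'' removal argument: put $\gamma$ transverse to $x^{(1)}$ in minimal position, then an arc of $\gamma$ in a triangle returning to the same edge would bound an ``ear'' whose interior contains at most a single vertex (a marked point); in either case an isotopy of $\gamma$ strictly decreases $|\gamma\cap x^{(1)}|$ or turns an ear into an arc joining distinct edges. Termination yields a normal form. Uniqueness up to isotopy rel marked points follows from the bigon criterion applied to two normal-form representatives of the same isotopy class, there being no bigons between them and edges of $x$ (such a bigon would give an ear in one of the triangles).

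Finally I would verify $\Phi\circ\Psi=\id$ and $\Psi\circ\Phi=\id$. The first identity is immediate from the construction of $\Psi$, which places exactly $i$ transverse intersection points on every edge of color $i$. For the second, starting from $\gamma$ in normal form with coloring $c=\Phi(\gamma)$, the numbers of arcs in each of the three parallel families inside a triangle are forced by $c$ via the same linear system, so $\gamma$ agrees with $\Psi(c)$ triangle-by-triangle up to an isotopy rel triangle boundary; these local isotopies assemble into a global isotopy. The main obstacle I expect is the uniqueness of normal form; the admissibility verification and the local-to-global gluing are routine linear algebra and combinatorics.
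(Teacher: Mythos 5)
Your proof is correct and takes essentially the same route as the paper's: the paper's (terse) argument likewise rests on the existence of a normal-form representative together with the bigon criterion, which shows the edge-intersection numbers of a normal-form representative equal the geometric intersection numbers and are hence isotopy invariants. You simply make explicit the inverse construction $\Psi$ from admissible colorings to normal multicurves and the check that $\Phi$ and $\Psi$ are mutually inverse, all of which the paper leaves implicit as standard normal-curve theory.
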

\begin{proof}
Every multicurve can be put in normal form. By the \emph{bigon criterion} \cite{FM} the intersection number between a multicurve in normal form and an edge of the triangulation is their minimum geometric intersection and is hence uniquely determined by the isotopy class of the multicurve.
\end{proof}

When two triangulations $x$ and $x'$ are related by a flip the coordinates of multicurves change in a  very simple manner. We leave the following proposition as an exercise.

\begin{figure}
 \begin{center}
  \includegraphics[width = 7 cm]{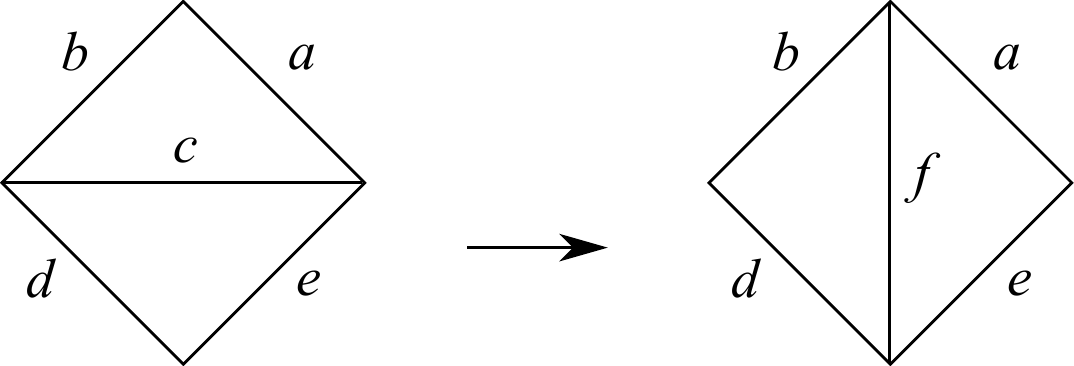}
 \end{center}
 \caption{A flip on a colored triangulation.}
 \label{flip_colored:fig}
\end{figure}

\begin{prop} \label{related:prop}
Let two triangulations be related via a flip as in Fig.~\ref{flip_colored:fig}. The left and right colorings determine the same multicurve provided that
$$c+f = \max \{a+d, b+e\}.$$
\end{prop}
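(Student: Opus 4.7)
My plan is to invoke Proposition~\ref{normal:prop}, which reduces the claim to a local count inside the quadrilateral: since a multicurve is determined by its edge-intersection numbers, it suffices to show that if a multicurve $\gamma$ has outer intersections $a, b, d, e$ and diagonal intersection $c$ with the left triangulation, then the intersection $f$ with the flipped diagonal is exactly $\max\{a+d, b+e\} - c$.

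Label the vertices of the quadrilateral $P_1, \dots, P_4$ cyclically, with $a=|P_1P_2|$, $b=|P_2P_3|$, $d=|P_3P_4|$, $e=|P_4P_1|$, and diagonals $c=|P_1P_3|$, $f=|P_2P_4|$. I would first isotope $\gamma$ so that it lies in normal form with respect to \emph{both} triangulations simultaneously; this is possible because the flip only affects one edge and the normal-form arcs inside each triangle are rigid up to isotopy. The arcs of $\gamma$ inside the quadrilateral then belong to six topological types: four \emph{corner} arcs with multiplicities $T_1,\dots,T_4$, where $T_i$ joins the two edges adjacent to $P_i$, and two \emph{transverse} arcs with multiplicities $p$ (joining $P_1P_2$ to $P_3P_4$) and $q$ (joining $P_2P_3$ to $P_4P_1$). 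Disjointness of arcs inside the disk forces $pq=0$.

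Intersection counts on the six edges yield
\begin{align*}
a &= T_1+T_2+p, & b &= T_2+T_3+q, \\
d &= T_3+T_4+p, & e &= T_4+T_1+q, \\
c &= T_1+T_3+p+q, & f &= T_2+T_4+p+q,
\end{align*}
the last two lines using that corner arcs at $P_1,P_3$ cross $P_1P_3$ exactly once (their endpoints lie in distinct triangles of the left triangulation, and normal form allows only arcs joining distinct edges inside a triangle), while corner arcs at $P_2,P_4$ do not cross $P_1P_3$, and each transverse arc crosses both diagonals once; the formula for $f$ follows by the symmetric analysis applied to the other triangulation. Setting $S := T_1+T_2+T_3+T_4$, this gives $a+d = S + 2p$, $b+e = S + 2q$, and $c+f = S + 2(p+q)$. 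From $pq=0$ we have $\max\{p,q\} = p+q$, hence $\max\{a+d, b+e\} = S + 2\max\{p,q\} = c+f$, which is the desired identity.

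The only genuinely delicate point is the diagonal-crossing count: it relies on the normal-form constraint that each arc restricted to a single triangle joins two distinct edges, so a corner arc whose endpoints land in different triangles of the chosen triangulation must cross the separating diagonal exactly once. Everything else is bookkeeping.
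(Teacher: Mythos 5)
Your proof is correct, and since the paper explicitly leaves this proposition as an exercise, there is no proof of record to compare against; your arc-type decomposition inside the quadrilateral, together with the observation that $pq=0$ so $\max\{p,q\}=p+q$, is exactly the expected solution. One small point worth tightening if this were to be written out in full: the assertion that a $T_1$-arc crosses the diagonal $P_1P_3$ \emph{exactly} once (and a $T_2$-arc zero times) needs the remark that any second crossing would produce an innermost bigon between the arc and $P_1P_3$ lying inside the quadrilateral, contradicting the bigon criterion; your parenthetical only explains why the crossing number is at least one for $T_1,T_3$ and does not immediately rule out extra back-and-forth crossings.
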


\subsection{The cocycle} \label{cocycle:subsection}
We fix for the rest of this section a complex number $A\in \overline\matD \setminus S$. We can finally define the cocycle $c_A$. As customary with the cocycle technique we first define the operator
$$c_A(y,x)\colon \calH \to \calH$$
on a pair $x$, $y$ of vertices of $X$ connected by an edge, that is on two triangulations related by a flip. The Hilbert space $\calH = \ell^2(\calM)$ has an obvious Hilbert basis $\{\delta_\gamma\}_{\gamma \in \calM}$ and we set 
$$c_A(y,x)(\delta_\gamma) = \lambda_1\delta_{\gamma_1} + \ldots + \lambda_k\delta_{\gamma_k}$$
where the multicurves $\gamma_i$ and the complex coefficients $\lambda_i$ depend on $\gamma$ as follows. Represent $\gamma$ as an admissible coloring on the triangulation $x$ as in Fig.~\ref{flip_colored:fig}-(left). Corresponding to that coloring, there are finitely many colorings on $y$ as in Fig.~\ref{flip_colored:fig}-(right) that differ only by the value $f$ assigned on the new edge: these are finite in number because of the triangle inequality $f\leqslant a+e$. 

We define the multicurves $\gamma_1,\ldots, \gamma_k$ as those corresponding to that finitely many colorings on $x$. For each such $\gamma_i$ we define the corresponding coefficient $\lambda_i$ as the renormalized $6j$-symbol
$$\begin{Bmatrix}  a & b & c \\ d & e & f \end{Bmatrix}^R.$$
Recall that $\check\calH\subset \calH$ consists of all finitely supported functions on $\calM$, \emph{i.e.}~it is the linear span of the orthonormal basis $\{\delta_\gamma\}_{\gamma \in \calM}$. We have just defined a map
$$c_A(y,x)\colon \check\calH \to \check\calH.$$
For a generic pair $x$, $y$ of triangulations we choose as usual a path $x=x_1, \ldots,$ $x_{n-1}, x_n=y$ in the graph $X$ joining $x$ and $y$ and define
$$c_A(y,x) = c_A(x_n,x_{n-1}) \circ \cdots \circ c_A(x_2,x_1).$$
Recall that $S\subset\partial \matD$ is the set of all roots of unity except $\pm 1$ and $\pm i$. 
\begin{lemma} \label{bounded:cor}
The map $c_A(y,x)\colon \check\calH\to \check\calH$ is well-defined for every $A\in \overline \matD \setminus S$. 
\end{lemma}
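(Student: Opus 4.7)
The plan is to verify two things: first, that for $x,y$ related by a single flip the elementary operator $c_A(y,x)$ maps $\check\calH$ into itself; second, that the composition along a path $x=x_1,\ldots,x_n=y$ is independent of the choice of path. The first ensures that each factor in the definition makes sense, and the second that $c_A(y,x)$ depends only on the endpoints, not on the auxiliary path used to connect them.

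For the first point I would fix $\gamma\in\calM$ and represent it as an admissible coloring of $x$ as in Fig.~\ref{flip_colored:fig}-(left). The admissible colorings of $y$ compatible with the unchanged boundary colors $a,b,d,e$ are parametrized by the color $f$ of the new diagonal, and $f$ must satisfy the triangle inequalities and the parity constraints of the two new triangles; hence $f$ ranges over a finite set. Since the renormalized $6j$-symbols are defined for every $A\in\overline\matD\setminus S$ by Proposition \ref{exist:renormalized:prop}, the elementary cocycle is a well-defined endomorphism of $\check\calH$, and a finite composition still maps $\check\calH$ into itself.

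For path independence I would invoke Theorem \ref{pentagon:teo}: any two paths between fixed endpoints differ by a finite sequence of the three elementary moves, so it suffices to show that $c_A$ is invariant under each. Disjoint commuting flips commute because the two elementary cocycles act on the colors of disjoint edges, which is immediate basis-vector by basis-vector. The addition or removal of a pair of consecutive flips in the same quadrilateral produces, on each basis vector, a sum over the intermediate color $f$ of a product of two renormalized $6j$-symbols; this sum collapses to the original coloring by the orthogonality relation of Proposition \ref{exist:renormalized:prop}. The pentagon move of Fig.~\ref{pentagon:fig} translates, after identifying the edge colors appearing in the five intermediate triangulations with the nine indices in the Biedenharn-Elliot identity, into exactly that identity.

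The main obstacle I expect is the last point: setting up the precise dictionary between the labels appearing in the Biedenharn-Elliot identity of Proposition \ref{exist:renormalized:prop} and the edge colorings of the five triangulations along the pentagon. Once this correspondence is fixed, the identity applies verbatim; but the diagrammatic bookkeeping---keeping track of which $6j$-symbol corresponds to which flip, in which order, and on which quadrilateral, together with the correct use of the symmetry \eqref{symmetry:eqn}---is where most of the verification lies. The other two moves are essentially formal.
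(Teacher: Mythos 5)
Your proposal is correct and follows essentially the same route as the paper: the paper's proof also reduces the lemma to path-independence via Theorem~\ref{pentagon:teo} and checks invariance under the three moves by (1) orthogonality, (2) obvious disjointness, and (3) the Biedenharn--Elliot identity. Your preliminary observation that a single flip maps $\check\calH$ into itself (because only finitely many $f$ are admissible) and your remark about the bookkeeping for the pentagon identity are sound elaborations of the same argument, not a different approach.
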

\begin{proof}
We need to show that choosing a different path connecting $x$ to $y$ we get the same map. By Theorem \ref{pentagon:teo} two sequences differ by a combination of moves (1)-(3). Addition of removal of a pair (1) leaves the map unchanged because of the orthogonality identity of the renormalized $6$-symbols, while the pentagon relation (3) also leaves it unchanged by the Biedenharn-Elliot identity. Invariance under commuting flips (2) is obvious.
\end{proof}
We can finally use the cocycle to define our family of infinite-dimensional representations.
\begin{defn}
Fix a triangulation $x\in X$. For every $A \in \overline \matD \setminus S$ define
$$\rho_A\colon \MCG(\Sigma) \to \GL(\check\calH)$$
by setting $\rho_A(g) = c_A(x,gx) \rho_0(g)$.
\end{defn}
The family mildly depends on the choice of the fixed triangulation $x$, see Remark \ref{dependenceonx:rem} below.

\subsection{Bounded and unitary representations} \label{proofthm1:sub}
We have defined a family $\{\rho_A\}_{A\in \overline \matD\setminus S}$ of representations on the dense subspace $\check \calH\subset\calH$. We prove here the following.

\begin{teo} \label{unitary:teo}
The representation $\rho_A$ is bounded for all $A\in \matD$ hence it extends continuously to $\calH$. In particular if $A\in\matR\cup i\matR$ it is unitary and if $A=0$ it is the multicurve representation. Moreover, for any $v,v'\in \calH$ and any $g\in \MCG (\Sigma)$ the function 
$\langle \rho_A(g)v,v'\rangle$ is analytic in $A\in \matD$.
\end{teo}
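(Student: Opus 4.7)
The plan has three steps: bound the single-flip cocycle via a Schur test, extract unitarity and the $A=0$ identification from the symbol identities, and prove analyticity by approximation.

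First, fix two triangulations $x,y$ related by a flip and consider the matrix of $c_A(y,x)$ on the basis $\{\delta_\gamma\}$. Its entry $(\gamma',\gamma)$ vanishes unless the coloring of $\gamma'$ agrees with that of $\gamma$ on every edge outside the flipped one; in that case, if the colors surrounding the flip are $(a,b,d,e)$ and the flipped edge has color $c$ in $x$ and $f$ in $y$, the entry equals $\begin{Bmatrix}a&b&c\\ d&e&f\end{Bmatrix}^R$. Fixing $\gamma$, the row sum is
\[
\sum_{\gamma'}\left|\bigl(c_A(y,x)\bigr)_{\gamma',\gamma}\right|=\sum_f\left|\begin{Bmatrix}a&b&c\\ d&e&f\end{Bmatrix}^R\right|\leqslant K(A)\sum_n 12\,|A|^n=\frac{12\,K(A)}{1-|A|}
\]
by \eqref{Q:eqn} and the cardinality bound. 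Fixing instead $\gamma'$, the symmetry \eqref{symmetry:eqn} reidentifies the column sum over $c$ with a row sum of the $6j$-symbol $\begin{Bmatrix}a&e&f\\ d&b&c\end{Bmatrix}^R$ in its last variable, and the same bound applies. Schur's test then gives $\|c_A(y,x)\|\leqslant 12\,K(A)/(1-|A|)$, locally uniformly in $A\in\matD$. For $g\in\MCG(\Sigma)$, $c_A(x,gx)$ is a finite product of flip operators along a fixed path from $x$ to $gx$; since $\rho_0(g)$ is a unitary permutation, $\rho_A(g)$ is bounded with norm locally uniformly controlled on $\matD$ and extends continuously to $\calH$.

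Second, when $A\in\matR\cup i\matR$ all entries are real by \eqref{real:eqn}, and the orthogonality relation says precisely that each block of $c_A(y,x)$ obtained by freezing the coloring outside the flipped edge is an orthogonal matrix, hence unitary; the full flip operator is the orthogonal direct sum of such blocks, and so is $\rho_A(g)$. At $A=0$, relation \eqref{zero:eqn} combined with Proposition~\ref{related:prop} says that the unique nonzero entry of $c_0(y,x)\delta_\gamma$ is the basis vector indexed by the same multicurve $\gamma$ read on $y$, with coefficient $1$; hence $c_0\equiv\id$ on $\check\calH$, and $\rho_0$ is the multicurve representation.

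Third, for $v,v'\in\check\calH$ the matrix coefficient $\langle\rho_A(g)v,v'\rangle$ expands along any fixed path from $x$ to $gx$ as a finite sum of finite products of renormalized $6j$-symbols, each analytic on $\matD$ by Proposition~\ref{exist:renormalized:prop}, hence analytic. For general $v,v'\in\calH$ approximate by truncations $v_n,v'_n\in\check\calH$; the locally uniform norm bound from the first step gives locally uniform convergence $\langle\rho_A(g)v_n,v'_n\rangle\to\langle\rho_A(g)v,v'\rangle$ on $\matD$, and a uniform limit of analytic functions is analytic. The technical heart is the Schur estimate: row summability follows immediately from \eqref{Q:eqn} and the cardinality bound, but no direct bound on columns is visible, and it is the tetrahedral symmetry \eqref{symmetry:eqn} that rescues the argument by converting each column sum into a row sum of another $6j$-symbol of the same shape.
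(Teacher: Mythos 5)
Your proposal matches the paper's proof essentially line for line: the paper also splits the result into three lemmas, bounds the single-flip operator via the interpolation inequality $\|M\|_2\leqslant\sqrt{\|M\|_1\|M\|_\infty}$ (which is exactly your Schur test) using estimate \eqref{Q:eqn} with the cardinality bound for row sums and symmetry \eqref{symmetry:eqn} to convert column sums into row sums, derives $c_0=\id$ from \eqref{zero:eqn} and Proposition~\ref{related:prop} and unitarity on $\matR\cup i\matR$ from orthogonality plus \eqref{real:eqn}, and establishes analyticity by the same locally uniform approximation from $\check\calH$. The only cosmetic difference is that the paper first decomposes $c_A(y,x)$ explicitly into finite blocks $V_\sigma$ before estimating each block, while you apply the Schur test to the whole operator directly; the estimate and its dependence on the symmetry are identical.
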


We prove Theorem \ref{unitary:teo} as a sequence of lemmas, assuming Proposition \ref{exist:renormalized:prop}.

\begin{lemma} \label{orthogonal:lemma}
The map $c_A(y,x)\colon \check\calH \to \check\calH$ is unitary when $A\in \matR\cup i\matR$. Moreover $c_0(y,x)=\id$.
\end{lemma}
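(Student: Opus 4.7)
The plan is to analyze the matrix of $c_A(y,x)$ in the distinguished basis $\{\delta_\gamma\}_{\gamma \in \calM}$ for the case of a single flip $x \to y$, and then extend to general pairs by composition. By the construction, only multicurves whose colorings on $x$ and $y$ agree on the four outer edges $a,b,d,e$ of the flipped quadrilateral can interact: $c_A(y,x)$ sends $\delta_\gamma$, where $\gamma$ has outer colors $(a,b,d,e)$ and central color $c$ on $x$, to $\sum_f \begin{Bmatrix} a & b & c \\ d & e & f \end{Bmatrix}^R \delta_{\gamma_f}$, where $\gamma_f$ is the multicurve with colors $(a,b,d,e,f)$ on $y$. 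Therefore the matrix of $c_A(y,x)$ is block-diagonal, with blocks $B_{(a,b,d,e)}$ indexed by the admissible tuples $(a,b,d,e)$ of outer colors. Each such block is finite-dimensional, since the admissibility of colorings forces $c$ and $f$ to lie in the finite ranges $\max\{|a-d|,|b-e|\} \leqslant c \leqslant \min\{a+d,b+e\}$ and analogously for $f$. In particular, $c_A(y,x)$ is a genuine map $\check\calH \to \check\calH$ preserving each finite block.

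To prove unitarity for $A \in \matR \cup i\matR$, I compute $(c_A^* c_A)$ on a fixed block. The $(g,c)$-entry is $\sum_f \overline{\begin{Bmatrix} a & b & g \\ d & e & f \end{Bmatrix}^R} \, \begin{Bmatrix} a & b & c \\ d & e & f \end{Bmatrix}^R$. By property \eqref{real:eqn} the renormalized $6j$-symbols are real for such $A$, so the conjugation is trivial and the orthogonality relations from Proposition \ref{exist:renormalized:prop} give $\delta_{cg}$, showing $c_A^* c_A = \id$. For the reverse identity $c_A c_A^* = \id$, I apply the symmetry \eqref{symmetry:eqn}, which converts summation over $c$ (with fixed $f,h$) into a summation over the last entry of a $6j$-symbol whose first five entries are $(a,e,\cdot,d,b,f)$ and $(a,e,\cdot,d,b,h)$, and a second application of orthogonality yields $\delta_{fh}$. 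Thus each block is an orthogonal (resp. unitary) matrix, so $c_A(y,x)$ extends uniquely to a unitary on all of $\calH$. For a general pair $x,y$, $c_A(y,x)$ is a product of such unitaries and hence unitary.

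For the identity $c_0(y,x) = \id$, it suffices by the same composition argument to treat adjacent triangulations. Property \eqref{zero:eqn} evaluates the renormalized $6j$-symbol at $A=0$ to $1$ exactly when $c+f = \max\{a+d,b+e\}$ and $0$ otherwise. By Proposition \ref{related:prop}, this numerical relation is precisely the criterion for the colorings $(a,b,c,d,e)$ on $x$ and $(a,b,d,e,f)$ on $y$ to represent the \emph{same} multicurve. Consequently, on each basis vector $\delta_\gamma$ exactly one term in the defining sum survives, and its coefficient $1$ is attached to the same multicurve $\gamma$, so $c_0(y,x)(\delta_\gamma) = \delta_\gamma$.

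The only delicate point is making the two-sided unitarity argument air-tight: the orthogonality relation stated in Proposition \ref{exist:renormalized:prop} is of one type (sum over $f$), and obtaining the companion relation (sum over $c$) forces us to invoke the tetrahedral symmetry \eqref{symmetry:eqn}. Once this is noted the remainder is bookkeeping, and the finiteness of each block ensures that no functional-analytic subtleties arise when passing from $\check\calH$ to $\calH$.
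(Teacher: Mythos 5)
Your proof is correct and follows essentially the same path as the paper's: reduce to a single flip, observe the block decomposition by the outer colors of the flipped quadrilateral, derive $c_0 = \id$ from property~\eqref{zero:eqn} together with Proposition~\ref{related:prop}, and deduce unitarity for $A \in \matR \cup i\matR$ from the reality~\eqref{real:eqn}, symmetry~\eqref{symmetry:eqn}, and orthogonality of the renormalized $6j$-symbols. The only difference is cosmetic: since each block is a finite square matrix, verifying $c_A^* c_A = \id$ already forces $c_A c_A^* = \id$, so your separate computation of the second identity (while correct) is not strictly necessary and the paper omits it.
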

\begin{proof}
We can suppose without loss of generality that $x$ and $y$ are related by a flip as in Fig~\ref{flip_colored:fig}. Proposition \ref{related:prop} shows how the same multicurve is expressed as an admissible coloring on $x$ and $y$, and Proposition \ref{exist:renormalized:prop}-(\ref{zero:eqn}) then implies that $c_0(x,y)=\id$. The symmetry in Proposition \ref{exist:renormalized:prop}-(\ref{symmetry:eqn}) and the orthogonality of the renormalized $6j$-symbol together show that 
$$^tc_A(y,x)c_A(y,x) = \id.$$
When $A\in \matR\cup i \matR$ we have $c_A(y,x)\in \matR$ by Proposition \ref{exist:renormalized:prop}-(\ref{real:eqn}) and therefore $^tc_A(y,x) = c_A^*(y,x)$; hence 
$c_A(y,x)$ is unitary in that case.
\end{proof}

\begin{lemma} \label{bounded:lemma}
We have 
$$\|c_A(y,x)\| < f(A)$$
for some continuous function $f\colon \matD \to \matR_+$.
In particular when $|A|<1$ the map $c_A(y,x)$ is 
bounded and therefore extends to $\calH$. 
\end{lemma}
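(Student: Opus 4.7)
My plan is to first reduce to the case when $x$ and $y$ are related by a single flip, since the general case follows from the multiplicative decomposition $c_A(y,x) = c_A(x_n,x_{n-1}) \circ \cdots \circ c_A(x_2,x_1)$ along a path, using submultiplicativity of the operator norm. This gives a bound of the form $\|c_A(y,x)\| \leqslant f_1(A)^{n-1}$ where $n$ is the length of the chosen path and $f_1$ is the bound for a single flip; such a power of a continuous function is still continuous in $A \in \matD$.

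For a single flip, I would exploit the block structure of $c_A(y,x)$. Fix a pair of triangulations $x$, $y$ differing by a flip on a quadrilateral $Q$ with boundary edges colored $(a,b,d,e)$ on both triangulations, and with diagonal colored $c$ on $x$ and $f$ on $y$. Since the operator only modifies the color on the flipped diagonal and acts trivially outside $Q$, the matrix of $c_A(y,x)$ in the basis $\{\delta_\gamma\}$ splits as an orthogonal direct sum, one block for each admissible coloring of the edges outside $Q$ together with a choice of $(a,b,d,e)$. Each block is an (a priori finite, by the triangle inequality) matrix $T_{f,c} = \begin{Bmatrix} a & b & c \\ d & e & f \end{Bmatrix}^R$.

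I would then bound each block by Schur's test, which says $\|T\|^2 \leqslant \bigl(\sup_c \sum_f |T_{f,c}|\bigr)\bigl(\sup_f \sum_c |T_{f,c}|\bigr)$. The column sum is estimated directly using \eqref{Q:eqn}: for fixed $(a,b,c,d,e)$,
$$\sum_f \left|\begin{Bmatrix} a & b & c \\ d & e & f \end{Bmatrix}^R\right| \leqslant K(A) \sum_f |A|^{Q(a,b,c,d,e,f)} \leqslant K(A) \sum_{n\geqslant 0} 12\, |A|^n = \frac{12\,K(A)}{1-|A|},$$
where I reorganized the sum according to the value of $Q$ and used the cardinality bound $\#\{f : Q(a,b,c,d,e,f)=n\} \leqslant 12$. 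For the row sum, the symmetry \eqref{symmetry:eqn} lets me rewrite $T_{f,c} = \begin{Bmatrix} a & e & f \\ d & b & c \end{Bmatrix}^R$, which reduces it to a sum over ``$c$'' in the last slot of a renormalized $6j$-symbol with the first five arguments fixed; the same Schur-type estimate gives the bound $12\,K(A)/(1-|A|)$.

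Combining these estimates, each block has norm at most $12\,K(A)/(1-|A|)$, so for a single flip
$$\|c_A(y,x)\| \leqslant \frac{12\,K(A)}{1-|A|},$$
which is continuous in $A \in \matD$ since $K$ is continuous there and $|A|<1$. The general case follows by composition along a path, producing a continuous $f$ depending on $x,y$. The main obstacle in this plan is managing the Schur-test column sum via the symmetry \eqref{symmetry:eqn} correctly, since one must verify that the finite-multiplicity condition on $Q$ applies equally well after permuting the arguments of the renormalized $6j$-symbol; this is exactly what \eqref{symmetry:eqn} combined with the universality of the cardinality bound in Proposition \ref{exist:renormalized:prop} provides.
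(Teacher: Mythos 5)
Your proposal is correct and follows essentially the same approach as the paper: reduce to a single flip, decompose $c_A(y,x)$ into finite-dimensional blocks indexed by the colors outside the flipped diagonal, and apply Schur's test (equivalently, the interpolation bound $\|M\|_2 \leqslant \sqrt{\|M\|_1\|M\|_\infty}$) using \eqref{Q:eqn} for one sum and \eqref{symmetry:eqn} for the other. If anything, your bookkeeping is slightly cleaner than the paper's in identifying which of the row/column sums follows directly from the cardinality bound on $Q$ in the $f$-slot and which requires first invoking the symmetry.
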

\begin{proof}
We can suppose that $x$ and $y$ are connected by a flip as in Fig.~\ref{flip_colored:fig}. Let a \emph{partial coloring} $\sigma$ on $x$ be an admissible coloring of all edges of $x$ except the flipped one $c$, which we leave uncolored. A partial coloring can be completed only to finitely many full colorings of $x$ since $c\leqslant a+b$, and these finitely many colorings identify finitely many multicurves $\gamma_1,\ldots, \gamma_k$ and hence a finite-dimensional subspace $V_\sigma\subset\calH$ spanned by $\delta_{\gamma_1},\ldots, \delta_{\gamma_k}$. 

Proposition \ref{related:prop} implies easily that $V_\sigma$ is an invariant subspace of $c_A(y,x)$. We have decomposed $c_A(y,x)$ in an orthogonal sum of finite-dimensional operators acting on
$$\check\calH = \oplus_\sigma V_\sigma$$
The finite-dimensional operator acting on a factor $V_\sigma$ is represented as a matrix
$$M_{ij} = \begin{Bmatrix} a & b & i \\ c & d & j \end{Bmatrix}^R$$
for some fixed integers $a,b,c,d$ determined by $\sigma$. To prove that $c_A(y,x)$ is bounded we will show that the operator norm $\|M\|$ of $M_{ij}$ is bounded by a constant $f(A)$ which depends (continuously) only on $A$ and \emph{not} on the integers $a,b,c,d$. Note that the dimension of $V_\sigma$ depends on $a,b,c,d$ and is not uniformly bounded.

Recall that the $p$-norm of a vector $x\in \matR^n$ is $\|x \|_p = \sqrt[p]{|x_1|^p + \ldots + |x_n|^p}$ and the corresponding operator $p$-norm $\|M\|_p$ of a $n\times n$ matrix $M$ is the infimum of $\frac{\|Mv\|_p}{\|v\|_p}$ on all vectors $v\neq 0$. Of course when $p=\infty$ we set $\|x\|_\infty = \max \{|x_i|\}$ and the following inequality holds for any square matrix:
$$\|M\|_2 \leqslant \sqrt{\|M\|_1\|M\|_\infty}.$$
Therefore in order to prove that our operator norm $\|M\| = \|M\|_2$ is uniformly bounded we will bound both $\|M\|_1$ and $\|M\|_\infty$.
These two norms are easier to estimate since $\|M\|_1$ (resp.~$\|M\|_\infty$) is the maximum value of $\sum_i |M_{ij}|$ (resp.~$\sum_j |M_{ij}|$) among all columns $j$ (resp.~rows $i$) of $M$.

Fix a column $j$. Proposition \ref{exist:renormalized:prop}-(\ref{Q:eqn}) implies that there is a continuous function $K(A)$ such that
$$\sum_{i} |M_{ij}| \leqslant 12(|A| + |A|^2 + |A|^3 + \ldots) K(A) \leqslant 12\frac{K(A)}{1-|A|}$$
and therefore $\|M\|_1 \leqslant 12\frac{K(A)}{1-|A|}$. For $\sum_j |M_{ij}|$ we get the same inequality using the symmetry in Proposition \ref{exist:renormalized:prop}-(\ref{symmetry:eqn}) which transposes $M$ and permutes the values $a,b,c,d$. Therefore $\|M\|_\infty$ is bounded analogously and we get
$$\|M\|_2 \leqslant 12\frac{K(A)}{1-|A|}.$$
This finally implies that
$$\|c_A(y,x)\| \leqslant 12\frac{K(A)}{1-|A|}.$$
\end{proof}

\begin{cor}
For any $g\in \MCG(\Sigma)$ there is a continuous function $f\colon \matD \to \matR_+$ such that
$$\|\rho_A(g)\| < f(A).$$
\end{cor}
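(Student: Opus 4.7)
The plan is to reduce the claim to the already-proved bound on the single-flip operator norm in Lemma \ref{bounded:lemma} and then exploit the cocycle/composition structure of $c_A$ along a path of flips from $x$ to $gx$.

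First I would observe that, since $\rho_0(g)$ is a unitary operator on $\calH$ (it is the permutation representation on multicurves), we have the identity of operator norms
$$\|\rho_A(g)\| = \|c_A(x,gx)\rho_0(g)\| = \|c_A(x,gx)\|.$$
So it suffices to bound $\|c_A(x,gx)\|$ by a continuous function of $A\in \matD$.

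Next I would fix, once and for all, a combinatorial path $x = x_0, x_1, \ldots, x_{n(g)} = gx$ in the triangulations graph joining $x$ to $gx$; such a path exists since the graph is connected, and its length $n(g)$ depends only on $g$ (and on the base triangulation $x$). By the definition of $c_A$ on a generic pair of triangulations (Section \ref{cocycle:subsection}),
$$c_A(x,gx) = c_A(x_{n(g)},x_{n(g)-1}) \circ \cdots \circ c_A(x_1,x_0),$$
each factor being a flip cocycle. Submultiplicativity of the operator norm together with Lemma \ref{bounded:lemma} then gives
$$\|c_A(x,gx)\| \leqslant \prod_{i=0}^{n(g)-1} \|c_A(x_{i+1},x_i)\| \leqslant \left(12\,\frac{K(A)}{1-|A|}\right)^{n(g)}.$$

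Setting
$$f(A) := \left(12\,\frac{K(A)}{1-|A|}\right)^{n(g)} + 1,$$
we obtain a function $f\colon \matD \to \matR_+$ which is continuous in $A$ (since $K$ is continuous on $\matD$ by Proposition \ref{exist:renormalized:prop} and $1-|A|>0$ on $\matD$) and satisfies $\|\rho_A(g)\|<f(A)$. No real obstacle arises: the entire argument is a routine application of the single-flip estimate of Lemma \ref{bounded:lemma} combined with the multiplicativity of $c_A$ along flip paths, and the $+1$ is inserted only to turn the non-strict inequality into the strict one demanded by the statement.
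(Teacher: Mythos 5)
Your proof is correct and is exactly the argument the paper leaves implicit: reduce to $\|c_A(x,gx)\|$ using unitarity of $\rho_0(g)$, write $c_A(x,gx)$ as a composition of flip cocycles along a chosen path, and apply Lemma~\ref{bounded:lemma} together with submultiplicativity of the operator norm. No gaps; the dependence of $f$ on $g$ through the path length $n(g)$ is allowed by the statement.
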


The representations $\{\rho_A\}_{A\in \matD}$ are bounded and therefore extend to $\calH$. It remains to prove that they vary analytically:
\begin{lemma}\label{lem:analyticity}
For any $v,v'\in \calH$ and any $g\in \MCG(\Sigma)$ the function 
$\langle \rho_A(g)v,v\rangle$ is analytic in $A\in \matD$.
\end{lemma}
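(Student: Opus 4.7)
The plan is to prove analyticity first on the dense subspace $\check\calH$, where the matrix coefficient is a finite algebraic expression in renormalized $6j$-symbols, and then to extend to all of $\calH$ by a density argument using the uniform operator bound supplied by the corollary just preceding the lemma.

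By sesquilinearity it suffices to treat the basis vectors $v = \delta_\gamma$, $v' = \delta_{\gamma'}$ for $\gamma, \gamma' \in \calM$. Fix a path $x = x_1, x_2, \ldots, x_n = gx$ in the triangulations graph, so that by definition
\[
\rho_A(g)\delta_\gamma \;=\; c_A(x_n, x_{n-1}) \circ \cdots \circ c_A(x_2, x_1)\, \delta_{g\gamma}.
\]
Each elementary cocycle $c_A(x_{i+1}, x_i)$ sends a basis vector to a finite linear combination of basis vectors whose coefficients are individual renormalized $6j$-symbols. Iterating $n-1$ times, $\rho_A(g)\delta_\gamma$ is a finite linear combination of basis vectors whose coefficients are products of $n-1$ renormalized $6j$-symbols. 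Taking the inner product with $\delta_{\gamma'}$ isolates a finite subsum, so $\langle \rho_A(g)\delta_\gamma, \delta_{\gamma'}\rangle$ is a finite sum of finite products of renormalized $6j$-symbols evaluated at $A$. By Proposition \ref{exist:renormalized:prop} each such $6j$-symbol is analytic on $\overline\matD \setminus S$, and since $S \subset \partial \matD$ it is in particular analytic on $\matD$. Finite sums and products of analytic functions being analytic, the matrix coefficient is analytic in $A \in \matD$; by sesquilinearity this extends to all $v, v' \in \check\calH$.

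To pass from $\check\calH$ to $\calH$, I would invoke the uniform bound $\|\rho_A(g)\| \leqslant f(A)$ with $f\colon \matD \to \matR_+$ continuous, which is precisely the content of the corollary above (itself a direct consequence of Lemma \ref{bounded:lemma}). Choose sequences $v_k, v'_k \in \check\calH$ with $v_k \to v$ and $v'_k \to v'$ in $\calH$. For any compact $K \subset \matD$, set $M_K := \max_{A \in K} f(A) < \infty$; then
\[
\bigl|\langle \rho_A(g) v, v'\rangle - \langle \rho_A(g) v_k, v'_k\rangle \bigr|
\leqslant M_K \bigl( \|v - v_k\|\,\|v'\| + \|v_k\|\,\|v' - v'_k\| \bigr),
\]
and the right-hand side tends to $0$ uniformly in $A \in K$ as $k \to \infty$. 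Each $\langle \rho_A(g) v_k, v'_k\rangle$ is analytic on $\matD$ by the previous step, so the Weierstrass theorem on uniform limits of holomorphic functions guarantees that the limit $\langle \rho_A(g) v, v'\rangle$ is analytic on $\matD$.

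There is no serious obstacle: the only step requiring any care is the extension from $\check\calH$ to $\calH$, and this is handled entirely by the local uniform boundedness of $\rho_A(g)$ on $\matD$, which lets us swap the Hilbert-space limit with the analytic limit. All analyticity ultimately traces back to Proposition \ref{exist:renormalized:prop}, while Lemmas \ref{orthogonal:lemma} and \ref{bounded:lemma} furnish exactly the norm control needed to make this passage rigorous.
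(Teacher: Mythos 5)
Your proof is correct and follows essentially the same two-step structure as the paper's: first observe that on $\check\calH$ the matrix coefficient is a finite sum of products of renormalized $6j$-symbols, hence analytic on $\matD$; then pass to all of $\calH$ by approximating with $\check\calH$-vectors and using the locally uniform bound $\|\rho_A(g)\|\leqslant f(A)$ to get uniform convergence on compacts, so that the limit is analytic by Weierstrass. The only (harmless) slip is a reversed path direction in your formula for $\rho_A(g)\delta_\gamma$ — with $x_1=x$ and $x_n=gx$ the composite $c_A(x_n,x_{n-1})\circ\cdots\circ c_A(x_2,x_1)$ is $c_A(gx,x)$ rather than the $c_A(x,gx)$ appearing in the definition of $\rho_A$, but this does not affect the analyticity argument.
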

\begin{proof}
Observe first that the statement is true for $v,v'\in \check \calH$: indeed $\langle \rho_A(g)v,v'\rangle$ is (by construction) a finite linear combination of products of renormalized $6j$-symbols each of which depends analytically on $A\in \matD$.  Let now $v,v'\in \calH$ and $K\subset \matD$ be a compact set; we will prove now analyticity on $K$.
Let $(v_n)_{n\in \matN}$ and $(v'_n)_{n\in \matN}$ be sequences of vectors in $\check \calH$ tending to $v,v'$ respectively, and let $C=\max\{||\rho_A(g)||\  |A\in K\}$. 
For any $\epsilon>0$ there exists $n_0 \in \matN$ such that $||v-v_n||<\epsilon, ||v'-v'_n||<\epsilon,\ \forall n>n_0$; so the following two inequalities hold:
$$| \langle\rho_A(g)v,v'\rangle-\langle \rho_A(g)v_n,v'_n\rangle|\leqslant C(||v'||+||v|| + \epsilon)\epsilon,\ \forall A\in K,$$
$$| \langle\rho_A(g)v_n,v'_n\rangle|\leqslant C(||v||+\epsilon)(||v'||+\epsilon),\ \forall A\in K.$$
Then $\langle\rho_A(g)v,v'\rangle$ is a uniform limit on $K$ of analytic functions and is hence also analytic.
\end{proof}
Theorem \ref{unitary:teo} is now an easy corollary of the previous lemmas. We end this section with a couple of remarks.

\begin{rem} \label{dependenceonx:rem}
The analytic family of representations depends on a choice of a triangulation $x$ for $\Sigma$. This dependence is however very mild: two families corresponding to different triangulations $x$ and $y$ are related by a \emph{canonical} analytic family of operators $\psi_A\colon \check\calH \to \check\calH$ that are bounded on $|A|<1$, unitary if $A\in \matR\cup i \matR$, and with $\psi_0 = \id$. In particular the unitary representations $\rho_A$ on the real and imaginary axis are uniquely determined up to isometries. This is a general consequence of the cocycle technique.
\end{rem}

\begin{rem}
The proof of Lemma \ref{bounded:lemma} shows that when $x$ and $y$ are connected by a flip the map $c_A(y,x)$ decomposes into (infinitely many) \emph{finite}-dimensional uniformly bounded operators. Note the analogy with Valette's cocycle in Section \ref{technique:subsection}, where $c_z(y,x)$ also decomposes into finite-dimensional operators: a crucial difference is that Valette's operator acts non-trivially only on a single plane while $c_A(y,x)$ may act non-trivially on subspaces of arbitrarily big dimension (although in a uniformly bounded way).
\end{rem}

\section{The Kauffman bracket} \label{Kauffman:section}

We introduce here the quantum $6j$-symbols via the Kaufmann bracket following \cite{L_book}.  All the material here is standard: the renormalized $6j$-symbols will be introduced only in the next section.

\subsection{The Kauffman module} \label{module:subsection}
\begin{figure}
 \begin{center}
  \includegraphics[width = 5.5 cm]{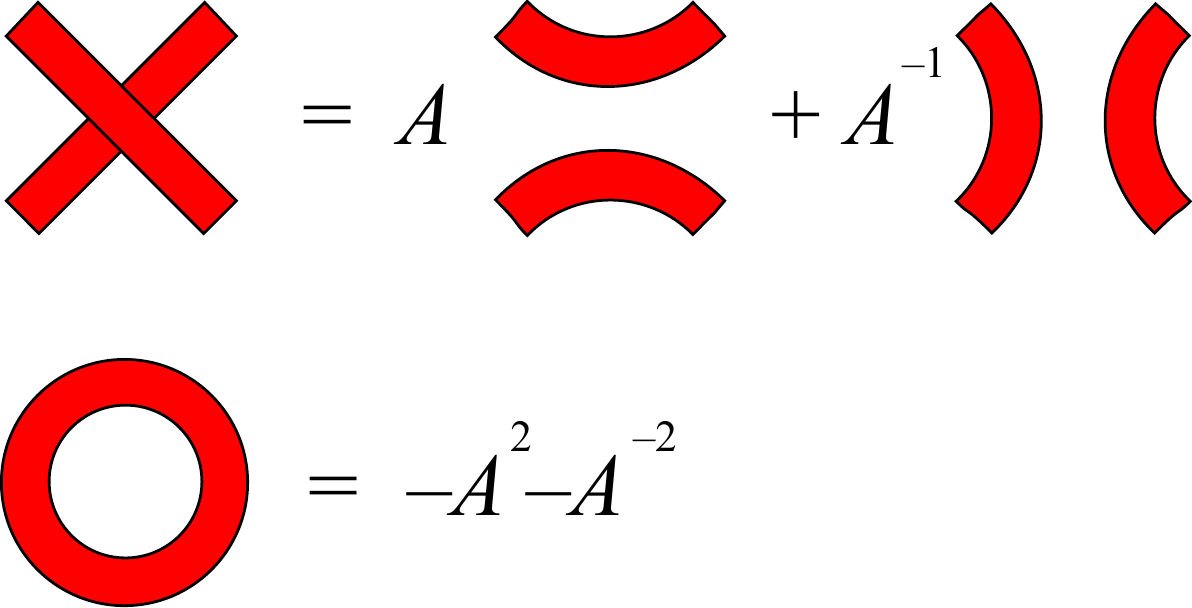}
 \end{center}
 \caption{The Kauffman bracket relations.}
 \label{Kauffman_bracket:fig}
\end{figure}
Let $A\in\matC^*$ be a non-zero complex number. The \emph{Kauffman skein module} of an oriented $3$-manifold $M$ is the $\matC$-vector space $K_A(M)$ generated by all isotopy classes of framed links in $M$, modulo the usual Kauffman bracket relations shown in Fig.~\ref{Kauffman_bracket:fig}. An element of $K_A(M)$ is called a \emph{skein}.

\begin{prop}[Kauffman \cite{Ka}]  \label{canonical:prop}
The space $K_A(S^3)$ is one dimensional and spanned by the class of the empty link.
\end{prop}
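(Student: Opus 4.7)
The plan is to show that the Kauffman skein relations are powerful enough to reduce any framed link in $S^3$ to a scalar multiple of the empty link, and that the resulting scalar is a well-defined invariant of the skein class. Concretely, I would proceed in three reductions followed by a consistency check.

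First, given a framed link $L \subset S^3$, I would project it to a generic plane (equivalently, to $S^2$) to obtain a diagram with blackboard framing; after a small isotopy every framed link admits such a presentation. I would then iteratively apply the first Kauffman bracket relation in Fig.~\ref{Kauffman_bracket:fig} at each crossing, writing each crossing as an $A$-combination of its two planar smoothings. Since the number of crossings strictly decreases at each step, after finitely many applications the class $[L] \in K_A(S^3)$ is expressed as a finite $\matC$-linear combination of diagrams consisting of a disjoint union of simple closed curves on $S^2$.

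Second, every simple closed curve on $S^2$ bounds a disc, and a disjoint union of such curves can be pushed off one another so that one of them is innermost and unknotted with zero framing. Applying the second Kauffman relation removes each such component, contributing a factor of $-A^2 - A^{-2}$. Iterating this, every term in our linear combination collapses to a scalar multiple of the class of the empty link, and hence so does $[L]$. This shows $K_A(S^3)$ is spanned by the empty link.

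The remaining, and genuinely substantive, step is to show that $K_A(S^3)$ is not zero, \emph{i.e.}~that the empty link is not a relation. The main obstacle is therefore well-definedness: exhibiting a non-trivial linear functional on the free vector space on framed links that vanishes on the Kauffman relations and sends the empty link to $1$. This is exactly Kauffman's original invariance theorem: one defines $\langle L \rangle \in \matC$ for a diagram by the above smoothing procedure and checks invariance under Reidemeister moves II and III by direct computation with the skein relations, while the blackboard framing handles Reidemeister I. The resulting scalar $\langle L \rangle$ descends to a well-defined map $K_A(S^3) \to \matC$ which is the inverse of the map $\matC \to K_A(S^3)$ sending $1$ to the empty link, proving that $K_A(S^3) \cong \matC$.
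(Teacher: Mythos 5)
Your argument is correct and is precisely the content of the cited reference: the paper states this result without proof, attributing it to Kauffman's state-sum construction of the bracket polynomial, and your proposal reconstructs exactly that argument (spanning via crossing-smoothing and unknot removal, injectivity via the Reidemeister-invariance of the bracket on framed diagrams). The only point worth stating a touch more carefully is that invariance of the scalar $\langle L\rangle$ must be checked not only under R2 and R3 but also under the framed version of R1 (two opposite kinks, whose contributions $-A^{3}$ and $-A^{-3}$ cancel), and that the state-sum formula makes the value manifestly independent of the order in which crossings are resolved.
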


In other words every skein in $K_A(S^3)$ is equivalent to $k\cdot \emptyset$ for a well-defined complex number $k$, which is the \emph{evaluation} of the skein. 

\subsection{The Jones-Wenzl projectors}\label{sub:jw}
For the rest of this section we will suppose that $A\not \in S$, that is $A$ is not a root of unity except $\pm 1$ and $\pm i$. We define the  \emph{quantum integers}
$$[n] = \frac{A^{2n}- A^{-2n}}{A^2 - A^{-2}} = A^{-2n+2} + A^{-2n+6} + \ldots + A^{2n-6} + A^{2n-2}$$
and note that $[n]$ is a Laurent polynomial in $A$ whose zeroes are contained in the set $S$. Therefore our assumption $A\not\in S$ guarantees that $[n]\neq 0$.
The existence of $[n]^{-1}$ allows us to define some particularly useful skeins, the \emph{Jones-Wenzl projectors}, as follows.

\begin{figure}
 \begin{center}
  \includegraphics[width = 8 cm]{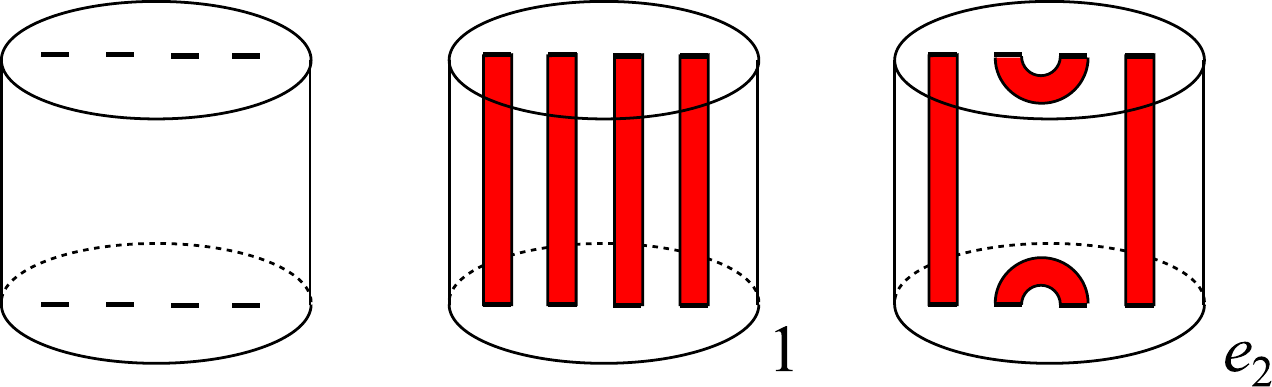}
 \end{center}
 \caption{The Kauffman bracket $K_A(M)$ of the cylinder witn $2n$ marked points: here $n=4$ (left). The space $K_A(M)$ is an algebra generated by the elements $1, e_1,\ldots, e_{n-1}$: here we draw $e_2$ (right).}
 \label{cylinder:fig}
\end{figure}
There is a natural boundary version of the skein module. Let $M$ be an oriented manifold with boundary and $\partial M$ contain some disjoint oriented segments as in Fig.~\ref{cylinder:fig}-(left). The skein module $K_A(M)$ is then defined as above by taking framed links and rectangles intersecting $\partial M$ in those segments.

For instance, we may take $M$ to be a cylinder with $2n$ segments as in Fig.~\ref{cylinder:fig}-(left). Cylinders can be stacked over each other, and hence $K_A(M)$ has a natural algebra structure (called the \emph{Temperly-Lieb algebra}) whose multiplicative identity element is the skein $1$ shown in Fig.~\ref{cylinder:fig}-(centre). We define the elements $e_1,\ldots, e_{n-1}$ as suggested in Fig.~\ref{cylinder:fig}-(right): it is easy to prove that $K_A(M)$ is generated as an algebra by the elements $1, e_1, \ldots, e_{n-1}$.

The $n$-th \emph{Jones-Wenzl projector} $f_n$ is a skein in the cylinder defined inductively as in Fig.~\ref{JW:fig}. It satisfies the following remarkable properties \cite[Lemma 2]{Li}:
$$f_n\circ f_n = f_n, \quad f_n\circ e_i = e_i\circ f_n = 0 \quad \forall i.$$
So $f_n$ is a projector which  ``kills'' the skeins with short returns like the $e_i$'s. Let $I_n$ be the ideal generated by $e_1,\ldots, e_{n-1}$: it follows from the recursive definition that 
$$f_n = 1 + i_n \quad {\rm for\ some\ } i_n \in I_n.$$

\begin{figure}
 \begin{center}
   \includegraphics[width = 10 cm]{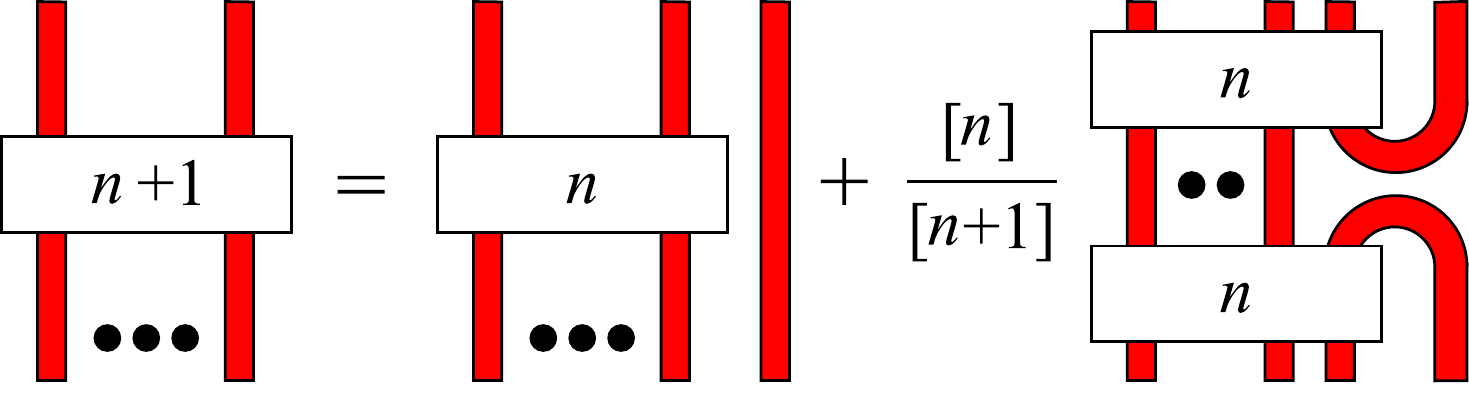}
   \end{center}
 \caption{The $(n+1)^{th}$ Jones-Wenzl projector is defined recursively with this formula.}
 \label{JW:fig}
\end{figure}
 
\subsection{Ribbon graphs} \label{ribbon:subsection}
The Jones-Wenzl projectors are used as building blocks to construct skeins in a simple combinatorial way. A \emph{ribbon graph} $Y\subset M$ is a 3-valent graph with a two-dimensional oriented thickening considered up to isotopy (it is the natural generalization of a framed link). An \emph{admissible coloring} of $Y$ is the assignment of a natural number (a \emph{color}) at each edge of $Y$ such that the three numbers $i,j,k$ coloring the three edges incident to a vertex satisfy the triangle inequalities, and their sum $i+j+k$ is even. These admissibility requirements allow to associate uniquely to $Y$ a skein as suggested in Fig.~\ref{ribbon:fig}. A framed link can be viewed as a colored ribbon graph without vertices whose components are colored with 1.

\begin{figure}
 \begin{center}
  \includegraphics[width = 9 cm]{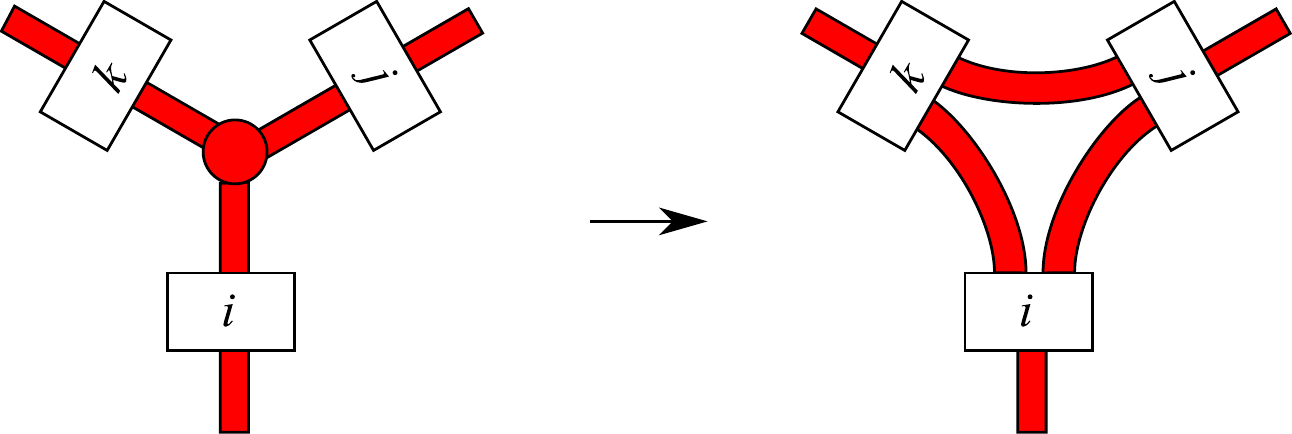}
 \end{center}
 \caption{A colored ribbon graph determines a skein: replace every edge with a projector, and connect them at every vertex via non intersecting strands contained in the depicted bands. For instance there are exactly $i+j-k$ bands connecting the projectors $i$ and $k$.}
 \label{ribbon:fig}
\end{figure}

\begin{figure}
 \begin{center}
  \includegraphics[width = 10 cm]{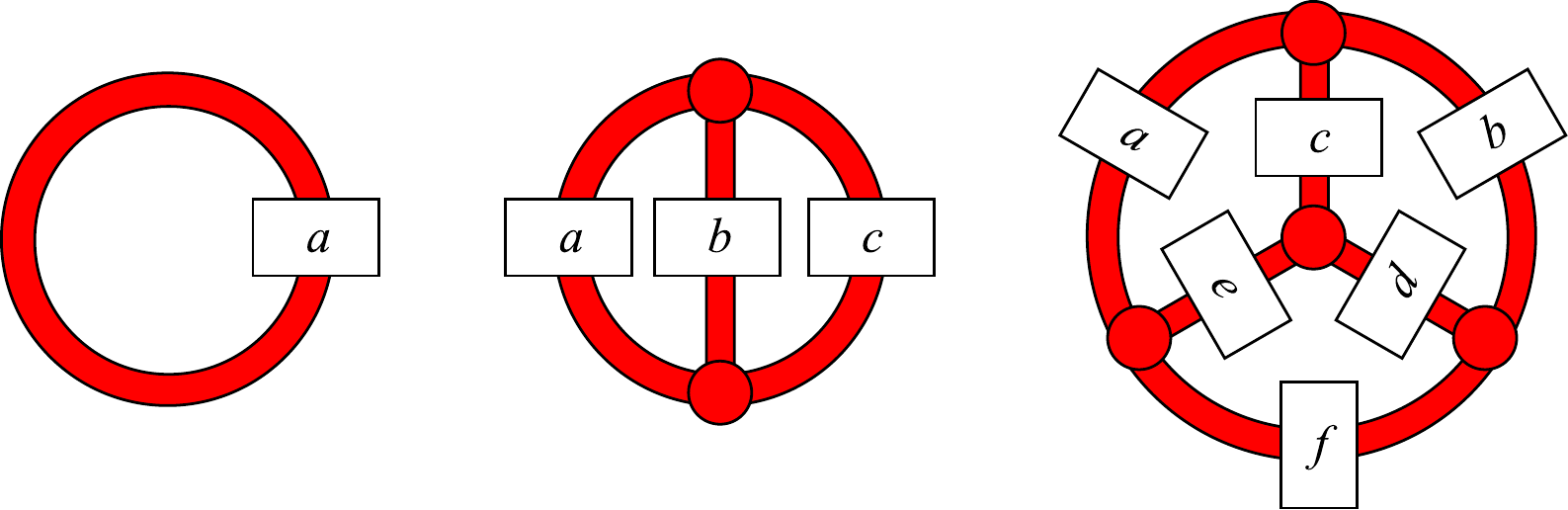}
 \end{center}
 \caption{Three important planar ribbon graphs in $S^3$.}
 \label{graphs:fig}
\end{figure}

Three basic planar ribbon graphs in $S^3$ are shown in Fig.~\ref{graphs:fig}. Since $K_A(S^3) =\matC$, every such ribbon graph provides a complex number which can be expressed as a rational function in the variable $A$. These functions are typically expressed in terms of the quantum integers $[n]$. We take from \cite{KL} the evaluations of the graphs $\cerchio$, $\teta$, and $\tetra$. 
We recall the usual factorial notation 
$$[n]! = [1]\cdots [n]$$ 
with the convention $[0]! = 1$. Similarly one defines multinomial coefficients replacing standard factorials with quantum factorials:
$$\begin{bmatrix} n \\ n_1, \ldots n_k \end{bmatrix} = \frac{[n]!}{[n_1]!\cdots [n_k]!}.$$
When using multinomial coefficients we always suppose that $n = n_1+ \ldots + n_k$. The evaluations of $\cerchio$, $\teta$ and $\tetra$ are:
\begin{align*}
\cerchio_a  & = (-1)^{a} [a+1], \\
\teta_{a,b,c}  & = (-1)^{\frac{a+b+c}2} \frac{\left[\frac{a+b+c}2+1\right]! \left[\frac{a+b-c}2\right]! \left[\frac{b+c-a}2\right]! \left[\frac{c+a-b}2\right]!}{[a]![b]![c]!},  \\ 
\raisebox{-0.5cm}{\tetracolored } & = \frac{ \prod_{i=1}^3\prod_{j=1}^4 [\Box_i-\triangle_j]}{[a]![b]![c]![d]![e]![f]!} \times \\
& \quad \sum_{z = \max \triangle_j }^{\min \Box_i}\!\!\! (-1)^z 
\begin{bmatrix} z+1 \\
z-\triangle_1, z-\triangle_2, z-\triangle_3, z-\triangle_4, \Box_1-z, \Box_2-z, \Box_3-z, 1 \end{bmatrix}.
\end{align*}

In the latter equality, triangles and squares are defined as follows:
\begin{align*}
\triangle_1 = \frac{a+b+c}{2},\ \triangle_2 = \frac{a+e+f}{2},\ \triangle_3 =\frac{ d+b+f}{2},\ \triangle_4 = \frac{d+e+c}{2},\\
\Box_1 = \frac{a+b+d+e}{2},\ \Box_2 = \frac{a+c+d+f}{2},\ \Box_3 = \frac{b+c+e+f}{2}.
\end{align*}

The formula for the planar tetrahedron was first proved by Masbaum and Vogel \cite{MV}. We note that the evaluations are rational functions with poles in $S\cup\{0,\infty\}$. It is actually easy to check from the definitions that the evaluation of any ribbon graph in $S^3$ is a rational function with poles contained in $S\cup \{0,\infty\}$.

\subsection{The skein module of the disk}
We now follow \cite[Section 14]{L_book}. Recall that we suppose in this section that $A\in\matC^* \setminus S$, that is $A\in \matC^*$ is not a root of unity except $\pm 1$ and $\pm i$.

Let $a_1, \ldots, a_n$ be non-negative integers with even sum and consider the skein module $K_A(D^2) = K_A(D^2\times [0,1])$ of the 3-disk $D^2\times [0,1]$ with $a_1+\ldots + a_n$ marked points in the boundary $\partial D^2 \times \frac 12$. The points are partitioned into $n$ sets of consecutive points of cardinality $a_1,\ldots, a_n$, and by inserting the Jones-Wentzl projectors $f_{a_1}, \ldots, f_{a_n}$ at these sets of points we define an idempotent endomorphism (a projection) of $K_A(D)$. 
\begin{defn}
Let $T_{a_1,\ldots, a_n}$ be the image of this projection.
\end{defn}
Let now $Y\subset D$ be a tree with vertices of valence 1 and 3, intersecting $\partial D$ precisely in its 1-valent vertices. An admissible colouring $\sigma$ of $Y$ which extends the boundary colourings $a_1,\ldots, a_n$ determines an element of $T_{a_1,\ldots, a_n}$. Note that when $n=1$ such a tree does not exists, when $n=2$ it is a line which can be admissibly coloured only when $a_1=a_2$, and when $n=3$ the tree is $Y$-shaped and can be admissibly coloured (in a unique way) if and only if $a_1,a_2,a_3$ form an admissible triple. When $n=4$ the tree is a $H$-shaped graph which can be admissibly coloured in at most finitely many ways. We denote by $Y_\sigma$ the ribbon graph $Y$ equipped with an admissible colouring $\sigma$.

\begin{prop} \label{T:prop}
The elements $\{Y_\sigma\}$ where $\sigma$ varies among all admissible colourings of $Y$ extending the boundary colorings $a_1,\ldots, a_n$ form a basis of $T_{a_1,\ldots, a_n}$.
\end{prop}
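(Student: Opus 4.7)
The plan is to prove the statement by induction on the number $n$ of boundary leaves of the tree $Y$, using the fusion decomposition of two adjacent Jones--Wenzl projectors as in \cite{L_book}. For the base cases $n \leqslant 3$ the claim is direct from the properties $f_n\circ e_i = e_i \circ f_n = 0$, which kill all non-identity planar matchings of the projector inputs: this forces $T_{a_1}=0$ unless $a_1 = 0$, $T_{a_1,a_2} = \matC$ precisely when $a_1 = a_2$ (spanned by the straight line), and $T_{a_1,a_2,a_3} = \matC$ precisely when $(a_1,a_2,a_3)$ is admissible (spanned by the $Y$-graph). In each case the basis matches the claimed $\{Y_\sigma\}$.

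For the inductive step I would choose a trivalent vertex $v$ of $Y$ adjacent to two boundary leaves with colors $a_i$ and $a_j$, and let $e$ be its remaining edge. The key ingredient is the \emph{fusion identity}
\[
f_{a_i} \otimes f_{a_j} \;=\; \sum_{c}\; \frac{\cerchio_c}{\teta_{a_i,a_j,c}}\;\iota_c\circ \pi_c,
\]
where $c$ ranges over all values making $(a_i,a_j,c)$ admissible, and $\iota_c,\pi_c$ are the inclusion and dual projection associated with the $Y$-shaped trivalent ribbon colored $(a_i,a_j,c)$. Inserting this identity at $v$ decomposes any element of $T_{a_1,\ldots,a_n}$ as a sum over $c$ of a basic $Y$-skein at $v$ glued to an element of $T_{c,a_1,\ldots,\widehat{a_i},\ldots,\widehat{a_j},\ldots,a_n}$, where the hats indicate omission. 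Applying the induction hypothesis to the smaller tree $Y'$ with $n-1$ leaves (obtained from $Y$ by absorbing $v$ and its two leaves and promoting $e$ to a leaf colored $c$) produces a basis of the second factor indexed by admissible colorings of $Y'$; reassembling with the choice of $c$ reproduces exactly the collection $\{Y_\sigma\}$ and shows it spans $T_{a_1,\ldots,a_n}$.

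Linear independence follows from a Gram-matrix argument. Gluing two copies of $D$ along their boundaries produces a sphere $S^2 \subset S^3$, and gluing $Y_\sigma$ to $Y_{\sigma'}$ there yields a closed colored ribbon graph whose evaluation in $K_A(S^3)=\matC$ defines a bilinear form $\langle Y_\sigma, Y_{\sigma'}\rangle$. A standard orthogonality relation at each internal edge (two trivalent vertices sharing an edge admit a non-zero pairing only when both edge colors agree) forces the form to vanish unless $\sigma=\sigma'$; the diagonal entries $\langle Y_\sigma, Y_\sigma \rangle$ are explicit products of $\teta$-evaluations divided by $\cerchio$-values, which are nonzero because $A\notin S$. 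Hence the Gram matrix is diagonal and invertible, and the $Y_\sigma$ are linearly independent.

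The main obstacle is establishing the fusion identity with its precise coefficients. This is a direct but delicate skein-theoretic calculation based on the Wenzl recursion and the vanishing identities $f_n \circ e_i = 0$; the hypothesis $A \notin S$ enters exactly at the point where one divides by $\teta_{a_i,a_j,c}$ and by the quantum integers $[k]$ appearing in its denominators, both of which remain nonzero in that regime. Once the fusion identity is in place, the remainder of the argument is an essentially combinatorial bookkeeping of admissible colorings on subtrees.
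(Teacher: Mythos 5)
Your argument supplies the details behind the paper's one-line proof, which simply cites \cite[Section~14]{L_book} for $n\leqslant 4$ and asserts the extension to arbitrary $n$ is easy; your induction via fusion together with the $S^3$-pairing Gram matrix is exactly the standard extension Lickorish's treatment suggests, so the route is essentially the same.

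One place where the sketch should be sharpened is the orthogonality step. The move you invoke is not ``two trivalent vertices sharing an edge'' but the \emph{bubble} relation: two trivalent vertices joined by \emph{two} parallel edges colored $a$ and $b$, with remaining outgoing edges colored $c$ and $c'$, yield the zero skein unless $c=c'$, and when $c=c'$ the digon contracts to a single $c$-colored edge with coefficient $\teta_{a,b,c}/\cerchio_c$. In the doubled graph $Y_\sigma\cup\overline{Y_{\sigma'}}\subset S^3$ the two copies of a vertex incident to a pair of boundary leaves form exactly such a digon (the leaf edges are identified under the gluing, the interior edges are not), so iterating this move forces $\sigma=\sigma'$ on every interior edge and simultaneously produces the explicit nonzero diagonal Gram entries. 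With that clarification the plan is correct: the fusion coefficient $\cerchio_c/\teta_{a_i,a_j,c}$ is the right one, and every denominator that appears is a product of quantum integers that is nonzero precisely because $A\in\matC^*\setminus S$.
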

\begin{proof}
This is proved in \cite[Section 14]{L_book} for $n\leqslant 4$ and the proof easily extends to any $n$.
\end{proof}

When $n=4$ there are two possible graphs $Y$ and they give rise to different bases of $T_{a_1,a_2,a_3,a_4}$. The change of basis is easily seen \cite[Chapter 14]{L_book} to be as depicted in Fig.~\ref{whitehead2:fig}. 

\begin{defn}\label{6j:defn}
The coefficient of the $f^{th}$-summand in the change of basis from Fig.~\ref{whitehead2:fig} is the \emph{quantum $6j$-symbol}, denoted:
$$\begin{Bmatrix}  a & b & c \\ d & e & f \end{Bmatrix}.$$ 
\end{defn}

The relation between skeins in Fig.~\ref{whitehead2:fig} is also called a \emph{Whitehead move}. The special case $c=0$ gives rise to the \emph{fusion rule} shown in Fig.~\ref{fusion:fig}. (Recall that from the very definition of the skein associated to a colored ribbon graph, a $0$-colored edge can be deleted without changing the associated skein.)
In both identities the sum ranges over all the finitely many values giving an admissible coloring to the right-most graph.
\begin{figure}
 \begin{center}
  \includegraphics[width = 12.5 cm]{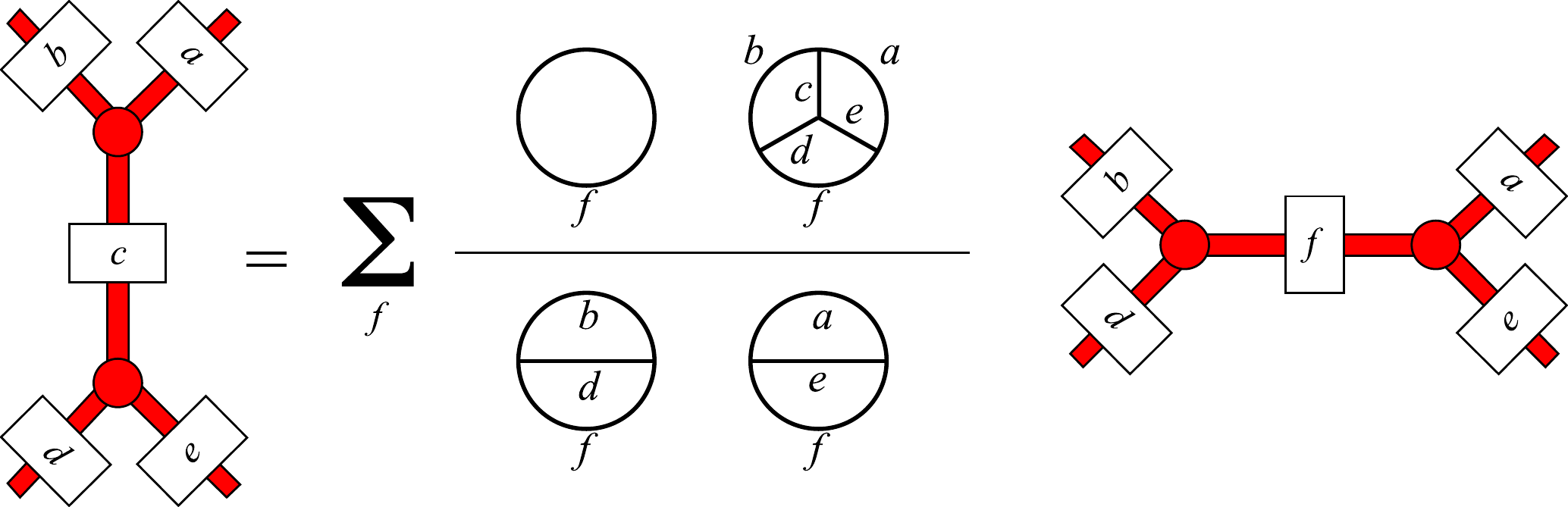}
 \end{center}
 \caption{The \emph{Whitehead move}: the summation is over all the admissible colors (and is hence finite).}
 \label{whitehead2:fig}
\end{figure}

\begin{figure}
 \begin{center}
  \includegraphics[width =9 cm]{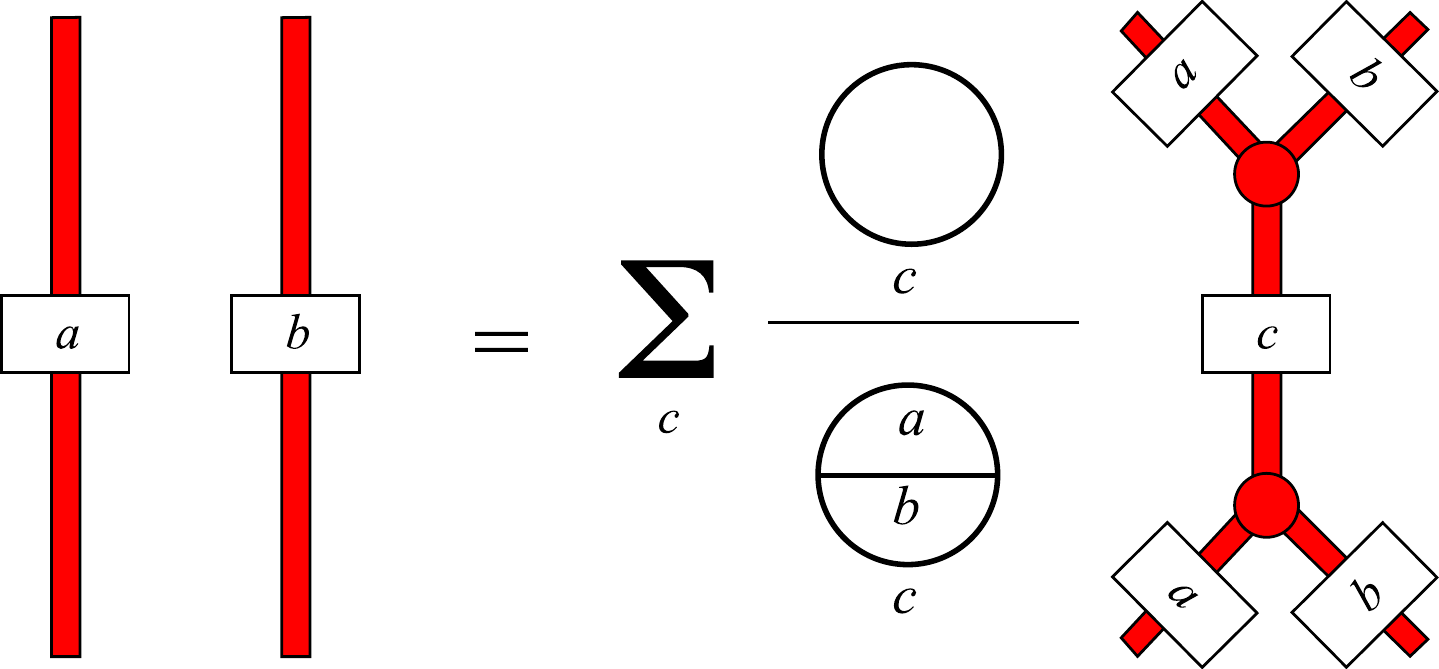}
 \end{center}
 \caption{The \emph{fusion rule}: it is a special case of the Whitehead move.}
 \label{fusion:fig}
\end{figure}

The orthogonal relation and Biedenharn-Elliot identity follow immediately.
\begin{prop} \label{identities:prop}
Let $A\in \matC^* \setminus S$. The quantum $6j$-symbols fulfill the orthogonal relation and the Biedenharn-Elliot identity.
\end{prop}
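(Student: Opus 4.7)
The plan is to exploit Proposition \ref{T:prop}: for any admissible boundary colouring $a_1,\ldots,a_n$ and any planar tree $Y$ with those leaves, the admissibly coloured trees $\{Y_\sigma\}$ form a basis of the space $T_{a_1,\ldots,a_n}$. By Definition \ref{6j:defn} the $6j$-symbol is the matrix entry of the change of basis between the two bases attached to the two possible shapes of $Y$ when $n=4$, and both identities to prove are then simply consistency relations on the resulting change-of-basis matrices.

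For the orthogonality relation, the strategy is to work in $T_{a,b,d,e}$, whose two natural bases (by Proposition \ref{T:prop}) are the horizontal trees $H_c$ indexed by the admissible colour $c$ of the inner edge connecting the pair $(a,b)$ to the pair $(d,e)$, and the vertical trees $V_f$ indexed by the colour $f$ of the inner edge connecting $(a,e)$ to $(b,d)$. By the definition of the $6j$-symbol the Whitehead move reads
$$H_c = \sum_f \begin{Bmatrix}  a & b & c \\ d & e & f \end{Bmatrix} V_f.$$
Performing the opposite Whitehead move (swapping the roles of the two tree shapes) produces a formula of exactly the same shape, namely
$$V_f = \sum_g \begin{Bmatrix}  a & b & g \\ d & e & f \end{Bmatrix} H_g.$$
Substituting the second relation into the first and using that $\{H_g\}_g$ is a basis one reads off the coefficient of $H_g$ and obtains the stated orthogonality identity.

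For the Biedenharn--Elliot identity, I would next move to a five-leaf space such as $T_{a,d,h,i,g}$ and use the classical pentagon of Whitehead moves. Two specific tree shapes with five leaves are related either by a single Whitehead move performed on a four-leaf sub-tree, or by three successive Whitehead moves each performed on a four-leaf sub-tree; the two composite changes of basis must coincide because they transform the first basis of $T_{a,d,h,i,g}$ into the same second basis. Writing each Whitehead move with its $6j$-symbol coefficient as in Definition \ref{6j:defn}, and extracting the coefficient of one chosen destination tree on both sides, produces the Biedenharn--Elliot identity exactly as stated.

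The main obstacle is purely combinatorial bookkeeping: in the pentagon one must carefully track which edge of each intermediate five-leaf tree is coloured by which integer, and verify that the six arguments of each $6j$-symbol appear in the precise positions of the stated identity. Once the picture is pinned down there is no analytic subtlety; the hypothesis $A\notin S$ is used only through Proposition \ref{T:prop}, to ensure that the Jones--Wenzl projectors (and hence the coloured trees $Y_\sigma$) exist and form a basis.
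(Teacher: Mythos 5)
Your approach matches the paper's own (very terse) proof exactly: the orthogonality relation follows from performing two Whitehead moves in $T_{a_1,\ldots,a_4}$, and the Biedenharn--Elliot identity from going around the pentagon of Whitehead moves in $T_{a_1,\ldots,a_5}$, with the hypothesis $A\notin S$ entering only through Proposition~\ref{T:prop} to guarantee the coloured trees form a basis. One small slip in your pentagon bookkeeping: you describe the two routes around the cycle as one Whitehead move versus three (a cycle of length four), whereas the pentagon relation has five sides, i.e.\ two moves on the short route versus three on the long one --- the two moves on the short route change the two distinct internal edges of the five-leaf tree (hence no intermediate sum, giving the product of two $6j$-symbols on the left-hand side), while the three moves on the long route introduce exactly one intermediate summation, matching the sum over $l$ on the right.
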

\begin{proof}
The orthogonal relation follows by changing basis in $T_{a_1,a_2,a_3,a_4}$ twice. The Biedenharn-Elliot identity follows by changing basis five times in $T_{a_1,a_2,a_3,a_4,a_5}$ following the pentagon move in Fig.~\ref{pentagon:fig}.
\end{proof}

\subsection{The reduced skein module} \label{reduced:subsection}
We now consider the case $A\in S$, that is $A$ is a root of unity distinct from $\pm 1$ and $\pm i$. Recall that $r=r(A)\geqslant 2$ is the smallest integer such that $A^{4r}=1$. For such value of $A$ the quantum integer $[n]$ is non-zero for all $n<r$ but $[r]=0$, and hence the Jones-Wenzl projectors $f_1, \ldots,f_{r-1}$ are defined whereas $f_r$ is not, see Fig.~\ref{JW:fig}. Therefore ribbon graphs are defined only when all colourings are smaller or equal than $r-1$. 

Our main goal is to recover the orthogonal relation and the Biedenharn-Elliot identities when $A\in S$: to do so we will use the \emph{reduced} skein module, which was introduced in an unpublished paper by Justin Roberts.

\begin{defn}
The \emph{reduced skein} $K^{\rm red}_A(M)$ of a 3-manifold $M$ is the quotient of $K_A(M)$ by the relations that kill every skein containing a portion as in Fig.~\ref{reduced:fig}. 
\end{defn}

\begin{figure}
 \begin{center}
  \includegraphics[width = 6 cm]{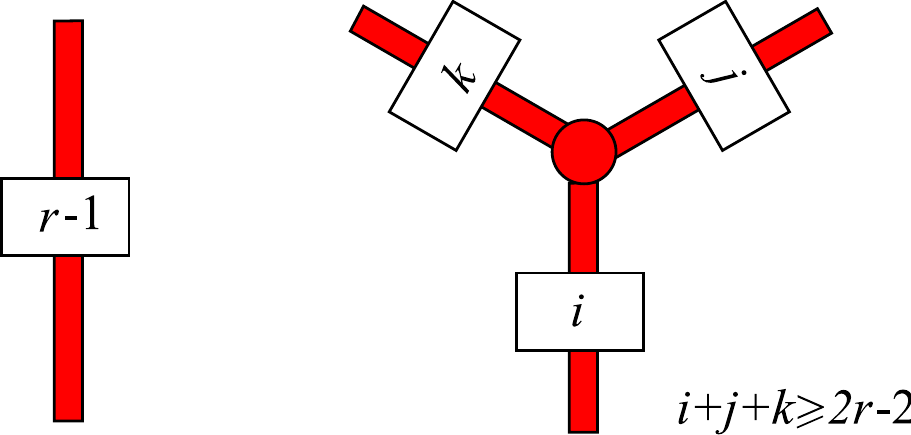}
 \end{center}
 \caption{The reduced skein vector space $K^{\rm red}_A(M)$ is constructed by quotienting $K_A(S)$ by the span of the elements containing one of these two skeins. Concerning the right triple $(i,j,k)$, note that it is defined only when $i,j,k\leqslant r-1$, and that we quotient only by the three-uples $(i,j,k)$ with $i+j+k \geqslant 2r-2$.}
 \label{reduced:fig}
\end{figure}

The crucial point here is that by killing the skeins in Fig.~\ref{reduced:fig} we do not affect the skein module of $S^3$: indeed every skein in $S^3$ containing one of the portions in Fig.~\ref{reduced:fig} is already zero \cite[Lemma 14.7]{L_book}, hence $K_A^{\rm red}(S^3) = K_A(S^3) = \matC$. This is however not true for a general 3-manifold.

Let $a_1,\ldots, a_n \leqslant r-1$ be non-negative integers smaller or equal than $r-1$ with even sum. We define as in the previous section the reduced skein module $K_A^{\rm red} (D^2) = K_A^{\rm red} (D^2\times [0,1])$ of the 3-disc with $a_1+\ldots + a_n$ marked points, its subspace $T_{a_1,\ldots, a_n}$, and a tree $Y\subset D$. 

An admissible colouring of $Y$ is \emph{$r$-admissible} if $a+b+c \leqslant 2(r-2)$ at every 3-valent vertex of $Y$, whose incident edges are coloured as $a$, $b$, and $c$. (When $Y$ is a single edge we also require that its colour is $\leqslant r-2$.)

\begin{prop}
The elements $\{Y_\sigma\}$ where $\sigma$ varies among all $r$-admissible colourings of $Y$ extending the boundary colorings $a_1,\ldots, a_n$ form a basis of $T_{a_1,\ldots, a_n}$.
\end{prop}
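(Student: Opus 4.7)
The plan is to deduce the proposition from its unreduced counterpart (Proposition \ref{T:prop}) in two steps: first, show that every admissible but non-$r$-admissible colouring gives an element of $T_{a_1,\ldots,a_n}$ that becomes zero in $K_A^{\rm red}(D^2)$; second, show that the $r$-admissible $Y_\sigma$'s remain linearly independent via the standard doubling pairing.

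For the spanning half, Proposition \ref{T:prop} already provides that the $Y_\sigma$, with $\sigma$ ranging over all admissible colourings extending the fixed boundary data, span $T_{a_1,\ldots,a_n}$ inside $K_A(D^2)$; their images therefore still span in the quotient $K_A^{\rm red}(D^2)$. If $\sigma$ is admissible but not $r$-admissible, then at some trivalent vertex of $Y$ the three incident colours $(i,j,k)$ satisfy $i+j+k \geqslant 2r-2$ (by admissibility and parity of $i+j+k$). Near such a vertex $Y_\sigma$ is obtained by joining the projectors $f_i, f_j, f_k$ through the usual triad pattern, which is precisely the right-hand forbidden skein of Fig.~\ref{reduced:fig}; thus $Y_\sigma = 0$ in the reduced module. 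The corner case in which $Y$ is a single edge coloured $r-1$ is taken care of analogously by the left-hand relation of Fig.~\ref{reduced:fig}. Consequently only the $r$-admissible $Y_\sigma$'s can contribute to a spanning family of $T_{a_1,\ldots,a_n}\subset K_A^{\rm red}(D^2)$.

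For linear independence, I would employ the bilinear pairing $\langle\cdot,\cdot\rangle$ on $T_{a_1,\ldots,a_n}$ obtained by reflecting one copy, gluing two 3-balls along their marked boundary discs, and evaluating the resulting closed skein in $K_A^{\rm red}(S^3)\isom \matC$. Doubling $Y_\sigma$ and $Y_{\sigma'}$ yields a closed coloured trivalent graph, and repeated application of the fusion rule (Fig.~\ref{fusion:fig}) together with the annihilation property $f_n \circ e_i = 0$ of Jones--Wenzl projectors shows that the evaluation vanishes unless $\sigma = \sigma'$; in that case it factors, up to sign, as the product of $\teta$-evaluations at the trivalent vertices of $Y$ divided by the product of $\cerchio$-evaluations at its internal edges. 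The $r$-admissibility conditions $a+b+c \leqslant 2r-4$, together with $a \leqslant r-2$ in the single-edge case, are exactly what is needed so that every quantum factorial appearing in the explicit formulas from Section~\ref{ribbon:subsection} involves only indices strictly less than $r$, keeping each $\teta$ and each $\cerchio$ away from the zeros of $[n]$. Hence the Gram matrix is diagonal with non-zero diagonal entries, and linear independence follows.

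The main potential obstacle is the passage to the reduced setting for this pairing argument: one must be sure that the fusion and orthogonality computations, customarily organized in $K_A(S^3)$, survive in $K_A^{\rm red}(S^3)$. This is in fact automatic, because the forbidden skeins of Fig.~\ref{reduced:fig} are already zero in $K_A(S^3)$ by \cite[Lemma 14.7]{L_book}, so $K_A^{\rm red}(S^3) = K_A(S^3) = \matC$ and the pairing is unaffected by the quotient.
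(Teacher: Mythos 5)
The independence half of your argument is sound, and the doubling pairing you describe is essentially the one the paper uses elsewhere (Propositions~\ref{orthogonal:prop} and~\ref{YMrootof1:prop}): for $r$-admissible colourings the quantities $\teta_{a,b,c}$ and $\cerchio_a$ involve only quantum integers $[m]$ with $m<r$, so the Gram matrix is diagonal with non-zero entries.

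The spanning half, however, has a genuine gap. Proposition~\ref{T:prop} is proved under the standing hypothesis $A\in\matC^*\setminus S$ and cannot be invoked at a root of unity $A\in S$. The difficulty is not cosmetic: at such $A$ the projectors $f_m$ with $m\geqslant r$ do not exist, so some admissible colourings fail to define skeins at all, and — more seriously — the fusion identity that underlies the proof of Proposition~\ref{T:prop} has coefficients $\cerchio_c/\teta_{a,b,c}$ whose denominators vanish precisely for the non-$r$-admissible triples. Consequently the claim that the $Y_\sigma$ with colours $\leqslant r-1$ span $T_{a_1,\ldots,a_n}$ inside the \emph{unreduced} $K_A(D^2)$ is not established and need not be true (the dimension of $T_{a_1,\ldots,a_n}$ in the unreduced module at a root of unity can exceed the number of such colourings). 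The fix is to work directly in the reduced module from the start: there the forbidden skeins of Fig.~\ref{reduced:fig} are zero, which makes the fusion rule valid with the sum restricted to $r$-admissible colours, and the standard resolution of a matching into coloured trees then goes through. This is exactly the content of Lickorish's Lemma~14.10, which the paper invokes (with minor variations — arbitrary $n$, and $A$ not necessarily a primitive $(4r)^{\rm th}$ root of unity) rather than reproving; your step (3), showing that non-$r$-admissible $Y_\sigma$ die in the quotient, is then absorbed into that direct argument rather than appearing as a separate reduction from the generic case.
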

\begin{proof}
This is stated in \cite[Lemmas 14,7 and 14.10]{L_book} with some variations: there $n\leqslant 4$, the complex number $A$ is a primitive $(4r)^{\rm th}$ root of unity, and the author defines $T_{a_1,\ldots, a_n}$ as ``maps to the outside''. The proof as stated there also works in our context and easily extends to any $n$. (The hypothesis that $A$ is a primitive $(4r)^{\rm th}$ root of unity is indeed necessary to construct invariants of 3-manifolds and it appears in Lickorish' book at \cite[Lemma 13.7]{L_book}: it is however not needed to prove this particular result, as one can easily check.)
\end{proof}

As in the previous section, there are two possible graphs for $T_{a_1,a_2,a_3,a_4}$ and they give rise to different bases. The change of basis is again as in Fig.~\ref{whitehead2:fig} and the quantum $6j$-symbol are defined analogously. The only difference is that in the summation of Fig.~\ref{whitehead2:fig} the integer $f$ ranges among all $r$-admissible (not ony admissible) colors. The same proof of Proposition \ref{identities:prop} shows the following.

\begin{prop} \label{identities:red:prop}
Let $A\in S$. The quantum $6j$-symbols fulfill the orthogonal relation and the Biedenharn-Elliot identity.
\end{prop}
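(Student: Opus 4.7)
My plan is to repeat the argument of Proposition \ref{identities:prop} verbatim, working in the reduced skein module $K_A^{\rm red}$ instead of $K_A$, and relying on the basis statement just proved for $T_{a_1,\ldots,a_n}$ in the reduced setting. The key point is that the two basic operations used in the classical proof — expanding a skein in the $H$-graph basis of $T_{a_1,a_2,a_3,a_4}$, and the pentagonal sequence of changes of basis in $T_{a_1,a_2,a_3,a_4,a_5}$ — are purely linear-algebraic consequences of the existence and uniqueness of the $Y_\sigma$-basis, and the latter now holds for $A\in S$ with $r$-admissible colourings in place of admissible ones.

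For the \emph{orthogonality relation}, I would fix boundary colours $a,b,d,e$ and consider the two $H$-shaped trees $Y,Y'$ inside the $4$-punctured disc, giving two bases of $T_{a,b,d,e}\subset K_A^{\rm red}(D^2\times[0,1])$ indexed respectively by the $r$-admissible inner colour $c$ and the $r$-admissible inner colour $f$. The change-of-basis matrix from the $Y$-basis to the $Y'$-basis has entries $\{{}^{a\ b\ c}_{d\ e\ f}\}$ (Definition \ref{6j:defn} applied in the reduced setting, as in the Whitehead move of Fig.~\ref{whitehead2:fig}, where the sum already ranges only over $r$-admissible values). Applying the change of basis in the opposite direction and composing, the product must be the identity; reading off the $(c,g)$-entry yields the orthogonality relation.

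For the \emph{Biedenharn--Elliot identity}, I would work in $T_{a_1,a_2,a_3,a_4,a_5}$, pick a base tree $Y_0$, and perform the five Whitehead moves corresponding to the pentagon cycle of Fig.~\ref{pentagon:fig}, each of which expresses one $r$-admissibly-coloured tree basis in terms of the next. Composing the five changes of basis gives the identity transformation on $T_{a_1,\ldots,a_5}$; equating the matrix coefficient of a given final basis vector in the two computations (along the two sides of the pentagon) produces exactly the Biedenharn--Elliot identity with all sums ranging over $r$-admissible intermediate colours. This is the step that requires the reduced module: in $K_A$ the Jones-Wenzl $f_r$ is ill-defined and the intermediate sums could in principle involve non-admissible $f$'s, but in $K_A^{\rm red}$ any such term lies in the ideal killed by the relations of Fig.~\ref{reduced:fig} and vanishes automatically from the expansion.

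The only point that demands a moment of care — and which I would expect to be the main (minor) obstacle — is confirming that both sides of each Whitehead move actually lie in the span of $r$-admissible $Y_\sigma$'s after reduction, so that the expansion coefficients are well-defined complex numbers and not merely cosets. This is immediate from the previous proposition, which asserts that $\{Y_\sigma\}_{\sigma\ r\text{-admissible}}$ is a \emph{basis} of $T_{a_1,\ldots,a_n}$; hence every element of $T_{a_1,\ldots,a_n}$ admits a unique finite expansion in that basis, and the Whitehead and pentagon identities descend unchanged from the generic $A$ case to the root-of-unity case.
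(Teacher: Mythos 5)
Your argument is correct and is precisely what the paper intends: the paper's own proof simply declares that "the same proof of Proposition~\ref{identities:prop} shows the following," i.e.\ change of basis twice in $T_{a_1,a_2,a_3,a_4}$ for orthogonality and five times around the pentagon in $T_{a_1,\ldots,a_5}$ for Biedenharn--Elliot, now with the $r$-admissible $Y_\sigma$-basis furnished by the preceding proposition. Your extra remark explaining why the reduced module is needed (so that the uniqueness of the expansion makes the Whitehead coefficients well-defined and restricts all sums to $r$-admissible colours) is exactly the point the paper leaves implicit.
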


\subsection{Conclusion} \label{conclusion:subsection}
Summing up, the quantum $6j$-symbol 
\begin{equation}\label{eq:6j}
\begin{Bmatrix}  a & b &c \\ d & e & f \end{Bmatrix}=\frac{ \raisebox{-0.5cm}{\tetracolored }\ \cerchio_c}{\teta_{a,e,f}\teta_{d,b,f}}\end{equation}
is defined for all $A\in \matC^*\setminus F$ for some finite set $F\subset S$ which depends on the parameters $a,b,c,d,e,f$ as follows. If 
$$M = \max\{a+b+c, \ c+d+e, \ a+e+f, \ b+d+f\} $$
then $F$ consists of all $A\in S$ such that $2(r(A)-2) < M$.
The right-hand of (\ref{eq:6j}) shows that the quantum $6j$-symbol is a rational function of $A$, with poles contained in $F \cup \{0, \infty\}$.  

Quantum $6j$-symbols satisfy the orthogonal relation and the Biedenharn-Elliot identity for every $A\in \matC^*$ (where sums must be taken only on admissible or $r$-admissible colorings, depending on whether $A\not \in S$ or $A \in S$).

\section{The renormalized $6j$-symbols and their properties} \label{estimate:section}
In this section we introduce and study the renormalized $6j$-symbols. We will need some crucial estimates of the quantum $6j$-symbols proved by Frohman and Kanya-Bartoszynska when $0<A<1$ is a real number \cite{FK2}. With small modifications, we will extend their proof to apply also to any complex number $A\in\matD^*$.

\subsection{Renormalized and unitary $6j$-symbols} \label{renormalized:subsection}
We introduce a different normalization of the $6j$-symbols and estimate its modulus on the unit disc.

\begin{prop} \label{planar:cor}
The evaluations of $\cerchio_a$ and $\teta_{a,b,c}$ expand in $A=0$ as
\begin{align*}
\cerchio_a & = (-1)^{a} A^{-2a} + o(A^{-2a}), \\
\teta_{a,b,c} & = (-1)^{\frac{a+b+c}2}A^{-(a+b+c)} + o(A^{-(a+b+c)}).
\end{align*}
\end{prop}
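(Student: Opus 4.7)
The plan is to read off the $A\to 0$ asymptotics directly from the definition $[n] = A^{-2n+2} + A^{-2n+6} + \cdots + A^{2n-2}$. Since $A$ tends to $0$, the dominant term is the one with the most negative exponent, giving $[n] = A^{-2n+2}(1+O(A^4))$ with leading coefficient $+1$. Multiplying these together yields $[n]! = A^{-n(n-1)}(1+o(1))$, again with leading coefficient $+1$.

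For $\cerchio_a = (-1)^a[a+1]$ the stated expansion is then immediate. For $\teta_{a,b,c}$, I would substitute the leading-order expression $[n]! \sim A^{-n(n-1)}$ into the numerator and denominator of the closed formula recalled in Section~\ref{ribbon:subsection}, so that the claim reduces to verifying the exponent identity
\[
\delta(\delta-1) + \alpha(\alpha-1) + \beta(\beta-1) + \gamma(\gamma-1) - a(a-1) - b(b-1) - c(c-1) \;=\; a+b+c,
\]
where $\alpha = \tfrac{a+b-c}{2}$, $\beta = \tfrac{b+c-a}{2}$, $\gamma = \tfrac{c+a-b}{2}$ and $\delta = \tfrac{a+b+c}{2}+1$. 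Writing $s=\tfrac{a+b+c}{2}$, so that $\alpha=s-c$, $\beta=s-a$, $\gamma=s-b$ and $\delta=s+1$, a short expansion collapses the bracketed sum of squared terms to $a^2+b^2+c^2$, which cancels the corresponding denominator contribution and leaves exactly $2s$, as required.

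The only subtle point, and the sole potential obstacle, is to verify that no cancellation occurs at leading order in the quotient of quantum factorials. Since each $[k]$ has leading coefficient $+1$, so does each $[n]!$, and hence so does the full quotient appearing in $\teta_{a,b,c}$. Consequently the sign $(-1)^{(a+b+c)/2}$ present in the closed form passes through unchanged, and the claimed expansion for $\teta_{a,b,c}$ follows.
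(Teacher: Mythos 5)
Your proof is correct and follows essentially the same route as the paper: read off the leading term of $[n]$ and hence of $[n]!$, then substitute into the closed formulas for $\cerchio_a$ and $\teta_{a,b,c}$. The only difference is that you carry out the exponent bookkeeping explicitly, which the paper leaves implicit; your identity and its verification via $s=\tfrac{a+b+c}{2}$ are correct.
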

\begin{proof} 
For the first statement recall that 
$$\cerchio_a=(-1)^{a}[a+1]=(-1)^a(A^{-2a}+A^{-2a+4}+\ldots +A^{2a}).$$
The second statement is obtained by plugging $[n]!=A^{-n(n-1)}+o(A^{-n(n-1)})$ into the formula for $\teta_{a,b,c}$, thus getting
\begin{align*}
\teta_{a,b,c} & = (-1)^{\frac{a+b+c}{2}}\frac{[\frac{a+b+c}{2}+1]![\frac{a+b-c}{2}]![\frac{a-b+c}{2}]![\frac{-a+b+c}{2}]!}{[a]![b]![c]!} \\
& =(-1)^{\frac{a+b+c}{2}}A^{-(a+b+c)}+o(A^{-(a+b+c)}).
\end{align*}
\end{proof}
The evaluations of $\cerchio_a$ and $\teta_{a,b,c}$ may have zeroes and poles only in $F\cup \{0,\infty\}$ where $F\subset S$ is the finite set consisting of all values $A$ such that $r(A)-2 < a$ and $2(r(A)-2) < a+b+c$ respectively. Therefore we can take their square roots and define the analytic functions
$$\sqrt{\cerchio_a}, \quad \sqrt{\teta_{a,b,c}}$$
on the domain $\overline\matD^* \setminus F$ by requiring that they behave respectively as $i^aA^{-a}$ and $i^{\frac{a+b+c}2}A^{-\frac{(a+b+c)}{2}}$ when $A\to 0$. (Here $\overline \matD^* = \overline \matD \setminus \{0\}$ as usual.) The finite set $F$ of course depends on the parameters $a,b,c$.

Recall from Section \ref{conclusion:subsection} that the quantum $6j$-symbol is also a rational function with poles only in $F\cup \{0,\infty\}$ where $F \subset S$ is a finite set that depends explicitly on the parameters $a,b,c,d,e,f$.

\begin{defn}[Renormalized and unitary symbols] \label{renorm:defn}
The \emph{renormalized $6j$-symbol} is the following analytic function on $\overline\matD^*\setminus F$:
\begin{equation}\label{eq:symm6j}
\begin{Bmatrix}  a & b &c \\ d & e & f \end{Bmatrix}^R=\frac{ \raisebox{-0.5cm}{\tetracolored }\sqrt{\cerchio_c\cerchio_f}}{\sqrt{\teta_{a,b,c}\teta_{a,e,f}\teta_{d,b,f}\teta_{d,e,c}}}\end{equation}
while the \emph{unitary $6j$-symbol}, introduced first in \cite{KR}, is the following:
\begin{equation}\label{eq:unit6j}
\begin{Bmatrix}  a & b &c \\ d & e & f \end{Bmatrix}^U=\frac{ \raisebox{-0.5cm}{\tetracolored }}{\sqrt{\teta_{a,b,c}\teta_{a,e,f}\teta_{d,b,f}\teta_{d,e,c}}}.\end{equation}
Both functions are defined and analytic on $\overline \matD^* \setminus F$. The finite set $F\subset S$ consists of all $A\in S$ such that $2(r(A)-2)<M$ where
$$M = \max\{a+b+c, \ c+d+e, \ a+e+f, \ b+d+f\}. $$
\end{defn}

The unitary symbol has all the symmetries of the tetrahedron and was investigated in \cite{FK2}. The renormalized $6j$-symbol has all the symmetries of the tetrahedron that preserve the pair of edges $\{c,f\}$ as a set. We can immediately prove half of Proposition \ref{exist:renormalized:prop}:

\begin{lemma}\label{sign:lem}
The renormalized $6j$-symbols satisfy the orthogonality relation and the Biedenharn-Elliot identity for all $A\in \matD^*$. Moreover:
\begin{align*}
\begin{Bmatrix}  a & b &c \\ d & e & f \end{Bmatrix}^R & =
\begin{Bmatrix}  a & e & f \\ d & b & c \end{Bmatrix}^R, \\
\begin{Bmatrix} a & b & c \\ d & e & f \end{Bmatrix}^R(A) & \in \matR \ {\rm if}\ A\in \matR^*\cup i\matR^*.
\end{align*}
\end{lemma}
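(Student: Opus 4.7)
The plan is to deduce each of the four claims by combining the explicit formula \eqref{eq:symm6j} with either the corresponding statement for the ordinary $6j$-symbols in Proposition~\ref{identities:prop} or a direct inspection of the formula.

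For the orthogonality relation and the Biedenharn--Elliot identity I would re-interpret the renormalized $6j$-symbol as the matrix coefficient of a Whitehead move in a suitably rescaled tree basis of $T_{a_1,\ldots,a_n}$. Concretely, for a tree $Y\subset D$ with admissible colouring $\sigma$ one rescales
$$\tilde Y_\sigma := \left(\prod_{\text{int.~edges }e}\cerchio_{\sigma(e)}\cdot \prod_{\text{int.~vertices }v}\teta_v(\sigma)\right)^{-1/2} Y_\sigma,$$
choosing the square roots analytically on $\overline\matD^*\setminus F$ as in Definition~\ref{renorm:defn}. A direct computation starting from \eqref{eq:6j} shows that, for the two H-trees involved in a Whitehead move, the change-of-basis coefficient in the rescaled bases is exactly the expression \eqref{eq:symm6j}: the factor $\cerchio_c/(\teta_{a,e,f}\teta_{d,b,f})$ in $O$, multiplied by $\sqrt{\cerchio_f\teta_{a,e,f}\teta_{d,b,f}}/\sqrt{\cerchio_c\teta_{a,b,c}\teta_{d,e,c}}$, simplifies to $\sqrt{\cerchio_c\cerchio_f}/\sqrt{\teta_{a,b,c}\teta_{a,e,f}\teta_{d,b,f}\teta_{d,e,c}}$. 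Composing two Whitehead moves back-to-back then yields the orthogonality relation for the renormalized $6j$-symbols, while composing five Whitehead moves around the pentagon of Fig.~\ref{pentagon:fig} yields the Biedenharn--Elliot identity, exactly as in the proof of Proposition~\ref{identities:prop}.

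The symmetry \eqref{symmetry:eqn} is immediate from \eqref{eq:symm6j}: the substitution $b\leftrightarrow e$, $c\leftrightarrow f$ is an automorphism of the tetrahedral graph, permuting the four vertex triples $(a,b,c),(a,e,f),(d,b,f),(d,e,c)$ among themselves, and thus preserves $\tetracolored$, the product $\cerchio_c\cerchio_f$, and the product $\teta_{a,b,c}\teta_{a,e,f}\teta_{d,b,f}\teta_{d,e,c}$.

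For reality, I would inspect the explicit formulas in Section~\ref{ribbon:subsection}: $\cerchio_a$, $\teta_{a,b,c}$, and $\tetracolored$ are Laurent polynomials in $A^2$ with real coefficients. From Proposition~\ref{planar:cor} the leading behaviour at $A=0$ of $\cerchio_c\cerchio_f$ is $(-1)^{c+f}A^{-2(c+f)}$, and of $\teta_{a,b,c}\teta_{a,e,f}\teta_{d,b,f}\teta_{d,e,c}$ is $(-1)^{a+b+c+d+e+f}A^{-2(a+b+c+d+e+f)}$. Admissibility at the four triples forces $a+b+d+e$ to be even, so the ratio $\sqrt{\cerchio_c\cerchio_f}/\sqrt{\teta_{a,b,c}\teta_{a,e,f}\teta_{d,b,f}\teta_{d,e,c}}$ has the form $\pm A^{a+b+d+e}h(A^2)$ with $h$ a power series in $A^2$ having real coefficients. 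Multiplying by $\tetracolored$ gives a Laurent expansion of the renormalized $6j$-symbol in $A^2$ with real coefficients in a punctured neighbourhood of $0$, and since $A^2\in\matR$ whenever $A\in\matR\cup i\matR$, this proves reality near $0$ on both axes; it then extends to the full domain $\overline\matD^*\setminus F$ by analytic continuation along those axes.

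The main obstacle is the clean identification of the rescaled Whitehead coefficient with \eqref{eq:symm6j} in the second paragraph: it is a tedious but mechanical algebraic bookkeeping of the $\cerchio$ and $\teta$ factors. Once it is done, the orthogonality relation and the Biedenharn--Elliot identity for the renormalized $6j$-symbols follow from the arguments already used for the ordinary ones.
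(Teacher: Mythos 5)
Your proof is correct, and for the orthogonality relation, the Biedenharn--Elliot identity, and the tetrahedral symmetry it is essentially the same as the paper's: the paper proves Proposition~\ref{identities:prop} by changing basis in $T_{a_1,\ldots,a_n}$ and then asserts the renormalized identities follow; your explicit introduction of the rescaled tree basis $\tilde Y_\sigma$ and the verification that the Whitehead coefficient in this basis is exactly \eqref{eq:symm6j} is precisely the computation the paper leaves implicit (and carries out later in the surface setting in Lemma~\ref{change:lemma}).

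For reality you take a somewhat different but equally valid route. The paper argues by values: it checks that $\cerchio_a$, $\teta_{a,b,c}$, $\tetracolored$ are real on $\matR^*\cup i\matR^*$, then shows the quantity under each square root sign is a \emph{positive} real there, so that the chosen analytic branch is real-valued. You instead argue by coefficients: you observe that $\cerchio_c\cerchio_f/(\teta_{a,b,c}\teta_{a,e,f}\teta_{d,b,f}\teta_{d,e,c})$ is a rational function of $A^2$ with real coefficients whose leading coefficient at $A=0$ is $+1$ (using that $a+b+d+e$ is even), hence its analytic square root has a real power series in $A^2$, and then multiply by $\tetracolored$ (also a Laurent series in $A^2$ with real coefficients) to conclude reality near $0$ when $A^2\in\matR$, extending along the axes by the identity theorem. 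The coefficient-based argument tracks the branch of the square root slightly more explicitly than the paper's ``the square is positive real'' step, which strictly speaking still requires the asymptotic normalization at $A=0$ to rule out the purely imaginary square root --- your version makes that point cleanly. One small phrasing caveat: ``extends to the full domain $\overline\matD^*\setminus F$'' should read ``extends along the real and imaginary axes in $\overline\matD^*\setminus F$'', since the symbol is of course not real off those axes; the underlying analytic-continuation argument ($\phi(A)=\overline{\phi(\bar A)}$ on the connected domain $\matD^*$ by the identity theorem, plus continuity at $\pm1,\pm i$) is sound.
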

\begin{proof}
It is easy to check that the renormalized $6j$-symbols satisfy the orthogonality and Biedenharn-Elliot identities because the quantum $6j$-symbols do, as shown by Proposition \ref{identities:prop}.

The first equality arises as a symmetry of the tetrahedron that fixes the pair $\{c,f\}$. We turn to the second equality and suppose $A\in \matR^*\cup i\matR^*$. For any $n\in \matN$ the quantum integer $[n]\in \matR$ is a real number with sign $(-1)^{n-1}$ if $A\in i\matR$ and $+1$ if $A\in \matR$. 
Hence looking at the formulas defining the evaluations of $\cerchio_a, \teta_{a,b,c}$ and $\raisebox{-0.6 cm}{\tetracolored}$ one sees immediately that their values are real. 

So to check the statement it is sufficient to show that the square of the renormalized $6j$-symbol is a positive real number. Clearly the square of \raisebox{-0.5cm}{\tetracolored } is real positive; the remaining terms are easily seen to have positive sign. Indeed if $A\in i\matR$ one checks directly that $\teta_{a,b,c}>0$ and $\cerchio_a>0$. If $A\in \matR$  
then the sign of $\teta_{a,b,c}$ is $(-1)^{\frac{a+b+c}{2}}$ so that by multiplying all these signs one gets $(-1)^{a+d+e+b}=1$ because $a+b+c$ and $c+e+d$ are even.
\end{proof}

\subsection{Estimates}
We will use an important estimate elaborated by Frohman and Kanya-Bartoszynska \cite{FK2}. We slightly modify their proof to apply for non-real values of $A$. A unitary $6j$-symbol corresponds to a colored tetrahedron: 
$$\tetracolored$$
Let $C_1$, $C_2$, $C_3$ be the sums of the colors of opposite edges of the tetrahedron, \emph{i.e.}~the numbers $a+d$, $b+e$, $c+f$, ordered so that
$$C_1\geqslant C_2 \geqslant C_3.$$
We will need the following well known:
\begin{prop}[Euler function \cite{Ap}, Section 14.3]\label{prop:euler}
Let $(t;t)_n:=\prod_{i=1}^n (1-t^i),\ n\in \mathbb{N}\cup \{\infty\}$; then $\lim_{n\to \infty} (t;t)_n=(t;t)_\infty$ uniformly on compact sets in $\matD$. Moreover $(t;t)_\infty$ is a holomorphic nowhere-vanishing function on $\matD$.
\end{prop}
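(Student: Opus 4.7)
The plan is to deduce both assertions from standard facts about infinite products of holomorphic functions together with Hurwitz's theorem. Since this is a classical statement about the Euler function, no new ideas are needed; the task is simply to organize the right three ingredients.

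First I would fix a compact set $K\subset\matD$ and set $\rho = \max_{t\in K}|t|<1$. On $K$ we then have $|{-t^i}|\leqslant \rho^i$, so
$$\sum_{i=1}^{\infty}\sup_{t\in K}|t^i| \leqslant \sum_{i=1}^{\infty}\rho^i = \frac{\rho}{1-\rho}<\infty.$$
By the standard criterion for infinite products of holomorphic functions (if $\sum |a_i(t)|$ converges uniformly on a set, then $\prod(1+a_i(t))$ does as well), the partial products $(t;t)_n = \prod_{i=1}^n(1-t^i)$ converge uniformly on $K$ to $(t;t)_\infty$. Since $K\subset\matD$ was an arbitrary compact set, the convergence is uniform on compact subsets of $\matD$, which gives the first assertion.

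Next, since each $(t;t)_n$ is a polynomial and hence holomorphic on $\matD$, and the convergence to $(t;t)_\infty$ is uniform on compact subsets of $\matD$, the Weierstrass theorem ensures that $(t;t)_\infty$ is holomorphic on $\matD$.

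Finally, for each $n$ the partial product $(t;t)_n$ is nowhere zero on $\matD$: indeed for $|t|<1$ we have $|t^i|<1$ for all $i\geqslant 1$, so no factor $1-t^i$ vanishes. By Hurwitz's theorem, the uniform-on-compacts limit of a sequence of nowhere-vanishing holomorphic functions on a connected open set is either identically zero or nowhere zero. Plugging in $t=0$ gives $(0;0)_\infty = \prod_{i=1}^\infty (1-0) = 1\neq 0$, so $(t;t)_\infty$ is not identically zero, and hence is nowhere zero on $\matD$. There is no real obstacle here; the only point deserving a moment of care is the verification of the hypothesis of the infinite product criterion, which reduces to the elementary geometric series bound above.
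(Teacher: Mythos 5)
Your proof is correct. The paper itself gives no proof of this proposition, citing Apostol instead, so there is no in-paper argument to compare against; your chain of reasoning (geometric-series bound to get uniform absolute convergence of $\sum|t^i|$ on compacts, the standard infinite-product criterion, Weierstrass for holomorphy, Hurwitz plus evaluation at $t=0$ for nowhere-vanishing) is a clean and complete version of the classical argument. One small remark: Hurwitz's theorem is slightly more than you need, since the standard theory of absolutely convergent infinite products already tells you that $\prod(1+a_i(t))$ vanishes at a point only if one of the factors $1+a_i(t)$ does, and no factor $1-t^i$ vanishes on $\matD$; but invoking Hurwitz is perfectly valid.
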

\begin{lemma}[Estimate the $6j$-symbols] \label{FK:lemma}
There is a continuous function $K\colon \matD \to \matR_+$
such that the following inequality holds for all $A\in\matD^*$: 
$$\Big| \begin{Bmatrix}  a & b & c \\ d & e & f \end{Bmatrix}^U\Big| \leqslant |A|^{\frac 12(C_1-C_2)(C_1-C_3)+C_1}K(A).$$
\end{lemma}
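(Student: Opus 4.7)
The plan is to redo the argument of Frohman--Kania-Bartoszy\'nska \cite{FK2} with two modifications adapted to non-real $A$: replacing their real-valued inequalities by absolute-value bounds via the triangle inequality on the sum defining the planar tetrahedron evaluation, and invoking Proposition \ref{prop:euler} to build the continuous factor $K(A)$ on $\matD$ out of the partial Euler products $(A^4;A^4)_n$.

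The starting point is the identity
$$[n] = A^{-2n+2}\,\frac{1-A^{4n}}{1-A^4},$$
which yields
$$|[n]!| = |A|^{-n(n-1)}\,\frac{|(A^4;A^4)_n|}{|1-A^4|^n}.$$
By Proposition \ref{prop:euler} each partial product $|(A^4;A^4)_n|$ and its reciprocal are bounded, uniformly in $n$, on any compact subset of $\matD$; so each quantum factorial factors as $|A|^{-n(n-1)}$ times a correction controlled by a continuous function of $A$. Substituting into the formulas of Section \ref{ribbon:subsection} recovers the leading behaviour $|A|^{-(a+b+c)}$ of the theta evaluation from Proposition \ref{planar:cor}. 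For the planar tetrahedron, I would apply the triangle inequality to the Masbaum--Vogel sum over $z$: the minimal $|A|$-power of any summand is attained at $z = \max\triangle_j$.

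Combining all contributions in the definition \eqref{eq:unit6j} of the unitary $6j$-symbol, the four theta factors under the square root contribute a factor $|A|^{C_1+C_2+C_3}$ to the modulus of the symbol, since $\sum_j\triangle_j = C_1+C_2+C_3$. Collecting the $|A|$-powers from the product $\prod_{i,j}[\Box_i-\triangle_j]$, from the six factorials $[a]!\cdots[f]!$ in the tetrahedron denominator, and from the multinomial at $z=\max\triangle_j$, one is left with a net $|A|$-exponent that must be identified with the claimed $\tfrac12(C_1-C_2)(C_1-C_3)+C_1$; the continuous function $K(A)$ is then assembled from the Euler-product corrections together with the residual factors $|1-A^4|^{-n}$, after the $n$'s appearing in numerator and denominator cancel sufficiently.

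The main obstacle is precisely this exponent identification. Each quantum factorial contributes a quadratic-in-its-argument power of $|A|$, so the bookkeeping reduces to summing about a dozen quadratic polynomials in $a,\ldots,f$ and recognizing the net exponent, after substituting the definitions of $\triangle_j$ and $\Box_i$, as $\tfrac12(C_1-C_2)(C_1-C_3)+C_1$. The ordering hypothesis $C_1\geq C_2\geq C_3$ enters critically: the identity of $\max\triangle_j$ depends on this ordering, and the resulting case analysis forms the combinatorial core of the Frohman--Kania-Bartoszy\'nska estimate; once the identity is established in the real case, the only genuinely new input needed here is the replacement of the term-by-term real estimate by the triangle inequality, which is exactly what makes the bound extend to all of $\matD^*$.
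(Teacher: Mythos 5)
Your overall plan matches the paper's proof: follow Frohman--Kania-Bartoszy\'nska, use $[n]! = t^{-n(n-1)/4}(t;t)_n/(1-t)^n$ with $t=A^4$, control the Euler partial products $(t;t)_n$ uniformly on compacta via Proposition~\ref{prop:euler}, and bound the Masbaum--Vogel sum term by term by the triangle inequality. But there is a concrete error in the one step you flag as ``the combinatorial core'': you claim the dominant summand in the sum over $z$ is at $z=\max\triangle_j$, and this is wrong.

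Write the $|t|$-exponent of the $z$-th summand as $P(z)$. From the explicit formula one finds $P$ is quadratic in $z$ with leading coefficient $\tfrac32>0$, hence convex, and
$$\frac{\partial P}{\partial z}=3z-(a+b+c+d+e+f)-\tfrac12.$$
On the summation range $\max_j\triangle_j\leqslant z\leqslant\min_i\Box_i$ we always have $3z\leqslant 3\min_i\Box_i\leqslant\Box_1+\Box_2+\Box_3=a+b+c+d+e+f$, so $P'(z)\leqslant-\tfrac12<0$: the whole interval lies to the \emph{left} of the parabola's vertex, so $P$ is strictly decreasing there, and its minimum (hence the summand of largest modulus, since $|t|<1$) occurs at the \emph{right} endpoint $z=\min_i\Box_i$, not at $z=\max_j\triangle_j$. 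Using $\max\triangle_j$ gives the weakest summand rather than the strongest, so the resulting exponent cannot match the claimed $\tfrac12(C_1-C_2)(C_1-C_3)+C_1$.

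The related claim about the ordering hypothesis is also misplaced: it is $\min_i\Box_i$, not $\max_j\triangle_j$, that has a clean description in terms of $C_1\geqslant C_2\geqslant C_3$. Indeed $\Box_1,\Box_2,\Box_3$ are exactly the half-sums of the pairs $\{C_i,C_j\}$, so $\min_i\Box_i=\tfrac12(C_2+C_3)$; plugging this into $P(z)+\tfrac14(a+\cdots+f)$ is what produces the closed form $\tfrac18(C_1-C_2)(C_1-C_3)+\tfrac14 C_1$ in the $|t|$-exponent, i.e.\ $\tfrac12(C_1-C_2)(C_1-C_3)+C_1$ in the $|A|$-exponent. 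The values $\triangle_1,\dots,\triangle_4$ depend on the six colours in a way that does not reduce to the ordering of the $C_i$'s, which is another signal that $\max\triangle_j$ is not the right evaluation point. Once you correct the endpoint, the rest of your outline (triangle inequality, Euler-product control, geometric series bound for the remaining summands) goes through essentially as in the paper.
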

\begin{proof}
We follow \cite[Proposition 6]{FK2}.
Let $A^4=t$ and set $(t;t)_n=\prod_{j=1}^{n}(1-t^j)$ and $(t;t)_\infty=\lim_{n\to \infty}(t,t)_n$. The only facts we need about the Euler function $(t,t)_\infty $ 
are summarized in Proposition \ref{prop:euler}.
Observe that $[n]=t^{\frac{-n+1}{2}}\frac{1-t^{n}}{1-t}$ and so 
$$[n]!=t^{-\frac{n(n-1)}{4}}\frac{(t,t)_n}{(1-t)^n}.$$
To warm-up we first prove that there exist two positive real-valued functions $T_1$, $T_2\colon \matD \to \matR_+$ such that the following inequalities hold for all $t\in\matD$:
$$ |t|^{-\frac{a+b+c}{8}}T_1(t)\leqslant \left|\sqrt{\teta_{a,b,c}}\right|\leqslant |t|^{-\frac{a+b+c}{8}} T_2(t).
$$
Indeed from the evaluation of $\teta_{a,b,c}$ we get:
$$\teta_{a,b,c}= (-1)^{\frac{a+b+c}{2}}t^{-\frac{a+b+c}{4}}\frac{(t,t)_{\frac{a+b+c}{2}+1}(t,t)_{\frac{a+b-c}{2}} (t,t)_{\frac{a-b+c}{2}} (t,t)_{\frac{-a+b+c}{2}}} {(t,t)_{a} (t,t)_{b} (t,t)_{c}}.$$
Define 
$$K_1(t) = \inf_n |(t,t)_n|, \quad K_2(t) = \sup_n |(t,t)_n|.$$
Since $(t,t)_n$ converge to $(t,t)_\infty$ uniformly on compact sets, the functions $K_1, K_2\colon \matD \to \matR$ are continuous. Since $(t,t)_\infty\neq 0$ and $(t,t)_n\neq 0$ for all $n$ we get 
$$0<K_1(t)\leqslant |(t,t)_n| \leqslant K_2(t).$$
Hence one immediately gets the claimed inequalities with $T_1(t)=K_1(t)^{-\frac{3}{2}}K_2(t)^2$ and $T_2(t)=K_2(t)^{-\frac{3}{2}}K_1(t)^2$.
It remains to estimate $\tetra$ from above, so we examine more closely the complicated formula for $\tetra$ given in Section \ref{ribbon:subsection}. Let $S_z$ be the summand corresponding to the value $z$ of the summation range; we get 
\begin{align*}
S_z & = \frac{t^{P(a,b,c,d,e,f,z)} \prod_{i=1}^3\prod_{j=1}^4 (t,t)_{\Box_i-\triangle_j}}{(t,t)_a(t,t)_b(t,t)_c(t,t)_d(t,t)_e(t,t)_f} \times \\
& \quad \frac{(-1)^z(t,t)_{z+1}}{
(t,t)_{z-\triangle_1} (t,t)_{ z-\triangle_2} (t,t)_{z-\triangle_3} (t,t)_{ z-\triangle_4} (t,t)_{\Box_1-z} (t,t)_{ \Box_2-z} (t,t)_{ \Box_3-z} (1-t) }
\end{align*}
where 
$$P(a,b,c,d,e,f,z) =\frac 32 z^2-\left((a+b+c+d+e+f)+\frac 12\right) z+ $$
$$ \frac{1}{8}\Big((a+b+c+d+e+f)^2+ab+ac+ae+af+bc+bd+bf+cd+ce+df+ef\Big).$$
Arguing as above we get:
$$|S_z|<|t|^{P(a,b,c,d,e,f,z)}\frac{K_2(t)^{13}}{K_1(t)^{14}}.$$
Now we claim that the function
$$z\mapsto P(a,b,c,d,e,f,z)$$ 
defined in the interval $\big[\max\{\triangle_i\},\min\{ \Box_j\}\big]$ attains its minimum at $z=\min\{\Box_j\}$. Indeed the derivative is strictly negative:
$$\frac{\partial P(a,b,c,d,e,f,z)}{\partial z}=3z-(a+b+c+d+e+f)-\frac 12<0$$
because $z\leqslant \Box_i$ for all $i$ implies that 
$$3z \leqslant \Box_1 + \Box_2 + \Box_3 = a+b+c+d+e+f.$$
Summing up, we get: 
$$\left|\sum_{z=\max\{\triangle_i\} }^{\min\{\Box_j\}}S_z\right|<\sum_z |t|^{P(a,b,c,d,e,f,z)}\frac{K_2(t)^{13}}{K_1(t)^{14}}\leqslant |t|^{P(a,b,c,d,e,f,\min\{\Box_j\})}\frac{K_2(t)^{13}}{K_1(t)^{14}}\cdot \frac 1{1-|t|}$$
where the last inequality follows from $\sum_n |t|^n = \frac 1{1-|t|}$ since the function $z\mapsto P(a,b,c,d,e,f,z)$ is strictly increasing.
We plug all the inequalities in Equation \eqref{eq:unit6j} and get:
$$\left|\begin{Bmatrix}  a & b & c \\ d & e & f \end{Bmatrix}^U\right| \leqslant |t|^{P(a,b,c,d,e,f,\min\{\Box_j\})+\frac{a+b+c+d+e+f}{4}} K(t).$$
Now we note that $\min\{\Box_j\} = \frac{C_2+C_3}2$ and hence the exponent
$$P(a,b,c,d,e,f,\min\{\Box_j\}) + \frac {a+b+c+d+e+f}4$$
equals
$$ \frac 38(C_2+C_3)^2 - \frac 12\left(C_1 + C_2 +C_3 + \frac 12\right)(C_2 + C_3) + $$
$$\frac 18\big((C_1+C_2+C_3)^2+C_1C_2 + C_2C_3+C_3C_1 + 2(C_1+C_2+C_3)\big) =$$
$$\frac 18 (C_1-C_2)(C_1-C_3)+\frac 14 C_1.$$
Therefore 
\begin{align*}
\left|\begin{Bmatrix}  a & b & c \\ d & e & f \end{Bmatrix}^U\right| & \leqslant |t|^{\frac 18 (C_1-C_2)(C_1-C_3)+\frac 14 C_1} K(t) \\
& = |A|^{\frac 12(C_1-C_2)(C_1-C_3) + C_1} K(A^4)
\end{align*}
\end{proof}

It is important to note that the function $K$ in Lemma \ref{FK:lemma} does not depend on the values $a,b,c,d,e,f$ of the unitary $6j$-symbol. We can use the lemma to prove a couple of estimates on the renormalized $6j$-symbol.

\begin{cor} \label{renormalized:cor}
There is a continuous function $K\colon \matD \to \matR_+$ such that the following inequality holds:
$$\left| \begin{Bmatrix}  a & b & c \\ d & e & f \end{Bmatrix}^R\right| \leqslant |A|^{\frac 12(C_1-C_2)(C_1-C_3)+C_1-c-f}K(A).$$
\end{cor}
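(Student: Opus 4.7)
The plan is to reduce the corollary directly to Lemma \ref{FK:lemma}. Comparing the defining formulas \eqref{eq:symm6j} and \eqref{eq:unit6j} shows at once that
$$\begin{Bmatrix}  a & b & c \\ d & e & f \end{Bmatrix}^R = \begin{Bmatrix}  a & b & c \\ d & e & f \end{Bmatrix}^U\cdot \sqrt{\cerchio_c\,\cerchio_f},$$
so the corollary reduces to bounding $|\sqrt{\cerchio_c\,\cerchio_f}|$ by $|A|^{-(c+f)}$ times a continuous function on $\matD$ that does not depend on $c$ or $f$.

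For this bound I would use $\cerchio_a = (-1)^a[a+1]$ together with the geometric-series identity
$$[a+1] = A^{-2a} + A^{-2a+4} + \dots + A^{2a} = A^{-2a}\,\frac{1-A^{4(a+1)}}{1-A^4}.$$
Since $|A^{4(a+1)}|<1$ for $A\in \matD$, the numerator has modulus at most $2$, yielding
$$|\cerchio_a| \leqslant \frac{2\,|A|^{-2a}}{|1-A^4|}.$$
Recalling that $|\sqrt{z}|=\sqrt{|z|}$ holds for any branch of the square root, taking square roots and multiplying the bounds for $c$ and $f$ gives
$$|\sqrt{\cerchio_c\,\cerchio_f}| \leqslant |A|^{-(c+f)}\cdot\frac{2}{|1-A^4|}.$$

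Multiplying this estimate by the bound in Lemma \ref{FK:lemma}, the exponent of $|A|$ becomes exactly $\tfrac{1}{2}(C_1-C_2)(C_1-C_3)+C_1-(c+f)$, and the factor $K'(A):=2K(A)/|1-A^4|$ is continuous and positive on $\matD$ because $|A^4|<1$ there, so $1-A^4\neq 0$. There is no real obstacle in this argument: the only small point to verify is the continuity of the new prefactor, which is immediate; the substantive work has already been absorbed into Lemma \ref{FK:lemma}.
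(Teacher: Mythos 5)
Your proposal is correct and follows exactly the paper's own route: factor the renormalized symbol as the unitary symbol times $\sqrt{\cerchio_c\cerchio_f}$, apply Lemma \ref{FK:lemma} to the first factor, and bound $|\sqrt{\cerchio_c\cerchio_f}|$ by $|A|^{-(c+f)}$ times a continuous function of $A$ alone. The only difference is that you supply the explicit computation $[a+1]=A^{-2a}(1-A^{4(a+1)})/(1-A^4)$ giving $K'(A)=2/|1-A^4|$, whereas the paper simply asserts that such a bound holds; your details are accurate.
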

\begin{proof}
It suffices to multiply the unitary $6j$-symbol by $\sqrt{\cerchio_c\cerchio_f}$ and get a contribution of type
$$\left|\sqrt{\cerchio_c\cerchio_f}\right| \leqslant |A|^{-c-f}K'(A)$$
for some continuous function $K'\colon \matD \to \matR_+$.
\end{proof}

\begin{cor}\label{analyticat0:cor}
The renormalized $6j$-symbol expands at $A=0$ as follows:
$$\begin{Bmatrix}  a & b & c \\ d & e & f \end{Bmatrix}^R 
= \left\{ \begin{array}{lr}
1 + O(A) & {\rm \ if\ } c+f = \max\{a+d, b+e\}, \\
O(A) & {\rm if\ } c+f \neq \max\{a+d, b+e\}.
\end{array} \right.
$$
\end{cor}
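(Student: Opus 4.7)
The plan is to upgrade the inequality of Corollary \ref{renormalized:cor} to an actual asymptotic expansion at $A=0$. Keep the notation $C_1\geqslant C_2\geqslant C_3$ for the sorted multiset $\{a+d,\,b+e,\,c+f\}$.

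The case $c+f\neq\max\{a+d,b+e\}$ falls out of Corollary \ref{renormalized:cor} with no extra work: in that situation $c+f<C_1$ as integers, so the exponent $\frac12(C_1-C_2)(C_1-C_3)+C_1-c-f$ is already at least $1$, and the bound directly gives $O(A)$.

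For the remaining case $c+f=\max\{a+d,b+e\}$ one automatically has $c+f=C_1=C_2$ (since $\{C_2,C_3\}=\{a+d,b+e\}$), and Corollary \ref{renormalized:cor} only yields boundedness near $0$, so the true leading term must be extracted. I would refine the argument of Lemma \ref{FK:lemma}: as $P(z)$ is strictly decreasing on the summation range, only $z^*:=(C_2+C_3)/2=\min_i\Box_i$ contributes at leading order; and since every factor $(t;t)_n$ and every $(1-t)^k$ appearing in the expressions for $[n]!$ evaluates to $1$ at $t=0$, a straightforward bookkeeping shows that the tetrahedron evaluation has leading behaviour $(-1)^{z^*}A^{4P(z^*)}$. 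Combining this with the expansions $\sqrt{\cerchio_a}=i^aA^{-a}(1+O(A))$ and $\sqrt{\teta_{a,b,c}}=i^{(a+b+c)/2}A^{-(a+b+c)/2}(1+O(A))$, provided by Proposition \ref{planar:cor} together with the sign convention of Section \ref{renormalized:subsection}, the definition of the renormalized symbol produces an explicit leading monomial.

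The delicate step is the sign cancellation. In Case 1 the $A$-exponent collapses to $0$, so the leading coefficient is $(-1)^{z^*}$ times an $i$-power assembled from the four square-roots of $\teta$ and from $\sqrt{\cerchio_c\cerchio_f}$. Admissibility of the four triples around the tetrahedron forces $a+b+d+e$ to be even, which turns that $i$-power into $(-1)^{(a+b+d+e)/2}$; moreover, $C_1=c+f$ in Case 1 yields $C_2+C_3=(a+d)+(b+e)=a+b+d+e$, so $z^*=(a+b+d+e)/2$ and the two signs cancel, leaving $+1$. This parity bookkeeping is the only non-routine part of the argument.
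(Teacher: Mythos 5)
Your approach is essentially the paper's: observe that the exponent in Corollary~\ref{renormalized:cor} vanishes precisely when $c+f=\max\{a+d,b+e\}$, handle the non-vanishing case by the estimate, and in the vanishing case extract the leading coefficient of the dominant summand $z^*=\min_i\Box_i$, then carry out the $i$-power and sign cancellation. Your sign computation agrees with the paper's, including the use of the parity of $a+b+d+e$ and the identification $z^*=\tfrac{a+b+d+e}{2}$ when $c+f=C_1=C_2$.

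There is one genuine, if small, gap in your dispatch of the first case. You claim that $c+f\neq\max\{a+d,b+e\}$ implies $c+f<C_1$. This is false when $c+f>\max\{a+d,b+e\}$: in that sub-case $c+f=C_1$ and the term $C_1-c-f$ contributes nothing. The exponent is nonetheless positive, but for a different reason: then $C_1>C_2\geqslant C_3$ strictly, so $\tfrac12(C_1-C_2)(C_1-C_3)>0$ (and in fact $\geqslant 2$, since admissibility forces $a+d$, $b+e$, $c+f$ to share the same parity, whence $C_1-C_2$ and $C_1-C_3$ are even). The cleaner statement, and the one the paper uses, is that the exponent vanishes if and only if $c+f=C_1=C_2$, which is equivalent to $c+f=\max\{a+d,b+e\}$; in all other cases it is strictly positive, and since the renormalized symbol is analytic at $0$ this already yields $O(A)$. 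You should rephrase your first case along these lines rather than asserting $c+f<C_1$.
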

\begin{proof}
In the estimate of Corollary \ref{renormalized:cor} the exponent of $|A|$ is always non-negative:
$${\frac 12(C_1-C_2)(C_1-C_3)+C_1-c-f}\geqslant 0$$
since $C_1 = \max\{a+d, b+e, c+f\}\geqslant c+f$. The exponent vanishes if and only if 
$$c+f = C_1 = C_2,$$ 
that is precisely when $c+f = \max\{a+d, b+e\}$. The proof of Lemma \ref{FK:lemma} shows that the renormalized quantum $6j$-symbol is a finite sum (along the summand $z$) of rational functions, whose leading term at $A=0$ is uniquely determined by the dominating summand $z=\min\Box_j$. It therefore suffices to compute the coefficient of the leading term of this summand as
$$\frac{i^{c+f}}{i^{a+b+c+d+e+f}}  (-1)^{\min \Box_j} A^{\frac 12{(C_1-C_2)(C_1-C_3)+C_1}}.$$
When $c+f = \max\{a+d, b+e\}$ we get $\min \Box_j = \frac{a+b+d+e}2$ and hence the coefficient of the leading term is
$$\frac{i^{c+f}}{i^{a+b+c+d+e+f}}  (-1)^{\frac{a+b+d+e}2} =1.$$
Therefore the renormalized $6j$-symbol expands as $1+O(A)$ as stated.
\end{proof}
In particular the renormalized $6j$-symbol can be extended at $A=0$ to an analytic function on $\overline \matD \setminus F$. The following corollary concludes the proof of Proposition \ref{exist:renormalized:prop}.
\begin{cor} \label{Q:cor}
We have
$$\left| \begin{Bmatrix}  a & b & c \\ d & e & f \end{Bmatrix}^R\right| \leqslant |A|^{Q(a,b,c,d,e,f)}K(A)$$
for some non-negative function $Q\colon \matN^6 \to \matN$ such that for every $a,b,c,d,e,n\in \matN$ it holds 
$$\#\big\{f\ \big|\ Q(a,b,c,d,e,f)=n\big\}\leqslant 12.$$
\end{cor}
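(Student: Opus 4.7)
The plan is to set $Q(a,b,c,d,e,f) := \tfrac{1}{2}(C_1-C_2)(C_1-C_3) + C_1 - c - f$, which is precisely the exponent already appearing in Corollary \ref{renormalized:cor}; with this choice the claimed estimate of the renormalized $6j$-symbol is immediate from that corollary. It then remains to verify three properties of $Q$: non-negativity, integrality, and the uniform bound on the cardinality of the fibers $\{f : Q = n\}$.

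Non-negativity was already observed in the proof of Corollary \ref{analyticat0:cor}, because $C_1 \geqslant c+f$ by definition of the ordering $C_1 \geqslant C_2 \geqslant C_3$. For integrality, the admissibility of each of the four triangular faces of the coloured tetrahedron forces the sums $a+b+c$, $c+d+e$, $a+e+f$ and $b+d+f$ to all be even; adding these two at a time yields
$$a+d \equiv b+e \equiv c+f \pmod 2,$$
so that $C_1-C_2$ and $C_1-C_3$ are both even, whence $\tfrac{1}{2}(C_1-C_2)(C_1-C_3) \in \matN$. For six-tuples that fail admissibility the renormalized $6j$-symbol vanishes, so $Q$ may be redefined arbitrarily there without affecting the inequality; this freedom is used to keep each fiber $\{Q = n\}$ finite.

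The substantive step is the fiber bound. Fix $a,b,c,d,e,n$, set $p := a+d$, $q := b+e$, $u := c+f$, and assume $p \geqslant q$ by symmetry. Then $Q$ depends on $f$ only through $u$, and a short case analysis according to the position of $u$ relative to $p$ and $q$ gives
$$Q = \tfrac{1}{2}(u-p)(u-q) \quad \text{if } u \geqslant p, \qquad Q = (p-u)\cdot\tfrac{p-q+2}{2} \quad \text{if } u \leqslant p$$
(the subcases $u \leqslant q$ and $q \leqslant u \leqslant p$ produce the same linear formula, as a direct substitution shows). On the first regime $Q$ is a quadratic in $u$ whose derivative $u - \tfrac{p+q}{2}$ is non-negative throughout, so $Q$ is monotonic and at most one admissible $f$ satisfies $Q = n$. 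On the second regime $Q$ is strictly decreasing and linear in $u$, so again at most one admissible $f$ works. Hence at most two admissible values of $f$ achieve $Q(f) = n$; choosing $Q$ on non-admissible $f$ so as not to hit the value $n$ more than a controlled number of times, the bound $\leqslant 12$ is comfortably verified (in fact the argument shows something much stronger, but $12$ is all that will be needed in the sequel).
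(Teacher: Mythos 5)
Your proof is correct and follows essentially the same strategy as the paper: take $Q$ to be the exponent already produced in Corollary~\ref{renormalized:cor}, then bound the fiber size by examining $f\mapsto Q$ as a piecewise polynomial of degree $\leqslant 2$ in $f$, non-constant on each piece. The paper simply enumerates the $6$ possible orderings of $(a+d,b+e,c+f)$ and counts at most $2$ preimages per ordering, giving $6\times 2=12$; you instead fix $p=a+d$, $q=b+e$ and observe that once $p\geqslant q$ is assumed there are really only two regimes in $u=c+f$ (the subcases $u\leqslant q$ and $q\leqslant u\leqslant p$ collapsing to one linear formula), each monotone, giving a fiber bound of $2$ rather than $12$. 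That is a genuine tightening, though the paper's cruder count is enough for its purposes. Your remarks on integrality (via the parity of $a+d$, $b+e$, $c+f$) and on redefining $Q$ on non-admissible colourings are technical side-points that the paper does not bother to spell out, and your handling of them is acceptable if a bit informal in the last sentence.
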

\begin{proof}
By Corollary \ref{renormalized:cor} it is sufficient to prove that the function 
$$Q(a,b,c,d,e,f):= \frac 12(C_1-C_2)(C_1-C_3)+C_1-c-f$$ 
is as claimed. 
Recall that $(C_1,C_2, C_3)$ are defined as the three-uple $(a+d,b+e,c+f)$ re-ordered in descending order. So there are $6$ cases for this re-ordering and in each such case, after fixing $a,b,c,d,e$ the function $f\mapsto Q(a,b,c,d,e,f)$ is polynomial of degree $\leqslant 2$ and non constant, hence it has at most $2$ preimages for every $n\in \mathbb{N}$.
\end{proof}

\section{Finite representations at roots of unity} \label{finite:section}
We construct here the analytic family $\{\rho_A\}_{A\in S}$ of finite-dimensional representations by readapting the same cocycle construction from Section \ref{repres:section} to a root-of-unity finite-dimensional context. We also prove the second statement of Theorem \ref{main2:teo}.

These finite-dimensional representations are not new: they are isomorphic to the ``Hom'' version of the well-known quantum representations constructed in \cite{BHMV}; in Theorem \ref{tqft:teo} we clarify the exact relation.

\subsection{Construction of the representations $\calH_r$}\label{hr:sub}
Let $A\in S$ be a fixed root of unity and set $r=r(A)$. Fix a triangulation $x$ of $\Sigma$. Recall from the introduction that $\calM_r^x$ is the set of all multicurves on $\Sigma$ having a representative that intersects every triangle of $x$ in at most $r-2$ arcs, and that $\calH_r^x\subset\calH$ is the subspace of all functions supported on $\calM_r^x$. (We include here the triangulation $x$ in the notation.)

Recall from Proposition \ref{normal:prop} that by isotoping a multicurve in normal position with respect to $x$ we get a 1-1 correspondence between multicurves and admissible colorings on $x$. Recall from Section \ref{reduced:subsection} that an admissible coloring is \emph{$r$-admissible} if 
$$a+b+c\leqslant 2(r-2)$$
at every triangle colored with $a,b,c$.
This condition implies in particular that $a\leqslant r-2$ for every colored edge $a$ on $x$. It is easy to check that the elements in $\calM_r^x$ correspond precisely to the $r$-admissible colorings of $x$. In particular, they are finite in number.

We now define for every pair $x,y$ of triangulations an isomorphism
$$c_A(y,x)\colon \calH_r^x \to \calH_r^y.$$
The isomorphism is defined exactly as in Section \ref{cocycle:subsection}: we first consider a pair of triangulations $x$, $y$ related by a flip as in Fig.~\ref{flip_colored:fig} and define
$$c_A(y,x)(\delta_\gamma) = \lambda_1\delta_{\delta_{\gamma_1}} + \ldots + \lambda_k\delta_{\gamma_k}$$
on every multicurve $\gamma$ corresponding to an $r$-admissible coloring as in Section \ref{cocycle:subsection}, the only difference being that we only consider multicurves $\gamma_i$ corresponding to $r$-admissible colorings of $y$. We define the coefficient $\lambda_i$ as above as the renormalized $6j$-symbol
$$\begin{Bmatrix} a & b& c \\ d & e & f \end{Bmatrix}^R = \frac{ \raisebox{-0.5cm}{\tetracolored }\sqrt{\cerchio_c\cerchio_f}}{\sqrt{\teta_{a,b,c}\teta_{a,e,f}\teta_{d,b,f}\teta_{d,e,c}}}$$
evaluated in $A$: this is well-defined because everything is $r$-admissibly colored and hence $A\not \in F$, see Definition \ref{renorm:defn}.

\begin{prop} The renormalized $6j$-symbols satisfy the orthogonality relation and the Biedenharn-Elliot identity.
\end{prop}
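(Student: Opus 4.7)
The plan is to reduce this statement to the analogous result for the unnormalized quantum $6j$-symbols at $A\in S$ (Proposition \ref{identities:red:prop}), following the pattern of Lemma \ref{sign:lem}. Concretely, I would write
$$\begin{Bmatrix}a&b&c\\d&e&f\end{Bmatrix}^R = \begin{Bmatrix}a&b&c\\d&e&f\end{Bmatrix}\cdot \Phi(a,b,c,d,e,f),$$
where $\Phi$ is the ratio of $\sqrt{\cerchio_\bullet}$ and $\sqrt{\teta_{\bullet,\bullet,\bullet}}$ factors read off from Definition \ref{renorm:defn}, and then check that the $\Phi$-factors redistribute properly on each side of the two identities.

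The first point to verify is that $\Phi$ is well-defined and nonzero on the set of $r$-admissible 6-tuples at each $A\in S$. Since the quantum integer $[n]$ vanishes at $A\in S$ only when $n$ is a positive multiple of $r$, one has $\cerchio_n\neq 0$ for $n\leqslant r-2$, and $\teta_{i,j,k}\neq 0$ whenever the bound $i+j+k\leqslant 2(r-2)$ holds. These are exactly what $r$-admissibility guarantees (see Section \ref{reduced:subsection}); the sign of each square root is fixed consistently by the normalization at $A\to 0$ of Definition \ref{renorm:defn}, extended by analytic continuation along a path from $0$ to $A$ in the complement of the discrete set where $\Phi$ could become singular.

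Given this, the identities for $\{\ldots\}^R$ follow by substituting $\{\ldots\}^R = \{\ldots\}\cdot \Phi$ into each side and collecting $\Phi$-factors: this is a formal algebraic shuffle that is identical to the one implicit in the proof of Lemma \ref{sign:lem}, and it does not depend on whether $A$ is a root of unity. The only delicate point I foresee is making sure the sums over the intermediate index stay within the $r$-admissible range. This is automatic, however, because the orthogonality identity comes from two successive Whitehead moves and the Biedenharn--Elliot identity from a pentagon of Whitehead moves inside $T_{a_1,\ldots,a_n}$, whose reduced basis at $A\in S$ is indexed precisely by $r$-admissible colorings. I therefore do not expect a substantive obstacle.
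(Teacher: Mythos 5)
Your proposal takes essentially the same route as the paper: the paper's own proof is a one-line deduction from Proposition~\ref{identities:red:prop} (``the renormalized $6j$-symbols satisfy these identities since the quantum $6j$-symbols do''), and you are spelling out that reduction, including the well-definedness of the rescaling factor on $r$-admissible data and the observation that the reduced basis keeps the sums within the $r$-admissible range. One small caution: ``collecting $\Phi$-factors'' is cleanest for Biedenharn--Elliot, where the extra $\cerchio$ and $\teta$ factors telescope and the $l$-dependent pieces cancel; for the orthogonality relation the $\Phi$-factors do not literally pull out of the $f$-sum, and it is tidier to argue via Lemma~\ref{change:lemma} and the orthonormality of $\{\hat Y_\sigma\}$ (or to track which argument slots appear in the second symbol), but this is a presentational point rather than a gap.
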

\begin{proof}
The renormalized $6j$-symbols satisfy these identities since the quantum $6j$-symbols do, by Proposition \ref{identities:red:prop}.
\end{proof}

We can now define as in Section \ref{repres:section} an isomorphism 
$$c_A(y,x):\calH_r^x \to \calH_r^y$$ 
by choosing a finite sequence $x=x_1,\ldots ,x_n=y$ of flips relating $x$ and $y$ and setting 
$$c_A(y,x)=c_A(x_n,x_{n-1})\circ \cdots \circ c_A(x_2,x_1).$$ 

\begin{prop} The linear map $c_A(y,x)$ is well-defined.
\end{prop}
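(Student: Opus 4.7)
The plan is to mirror the proof of Lemma \ref{bounded:cor} from the infinite-dimensional setting, with the only novelty being that everything must be carried out inside the reduced ($r$-admissible) world. Two sequences of flips joining $x$ to $y$ differ, by Theorem \ref{pentagon:teo}, by a finite combination of three local moves: (1) insertion or deletion of a pair of consecutive flips on the same quadrilateral, (2) commutation of two flips supported on disjoint quadrilaterals, and (3) the pentagon relation. It therefore suffices to verify that $c_A(y,x)$ is invariant under each of these three moves.

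For move (2) the two flips act nontrivially on disjoint edges of the triangulation, so the associated renormalized $6j$-symbols factor on disjoint components of each $r$-admissible coloring, and the corresponding operators commute trivially. For move (1), writing out the definition of the two consecutive flips in the same quadrilateral yields exactly the orthogonality sum for the renormalized $6j$-symbols associated to the four edges of that quadrilateral; by the proposition stated just above, this sum evaluates to $\delta_{cg}$ at our $A\in S$, collapsing the pair of flips to the identity. For move (3), unwinding the five compositions appearing around the pentagon in Fig.~\ref{pentagon:fig} yields on each side exactly the two members of the Biedenharn-Elliot identity, which again holds at $A\in S$ by the preceding proposition.

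The one point that has to be checked carefully, and that is absent from Lemma \ref{bounded:cor}, is that the orthogonality and Biedenharn-Elliot identities used here are the \emph{reduced} ones, in which all summations run over $r$-admissible colorings only. This is precisely the content of Proposition \ref{identities:red:prop}, which was proved via the reduced skein module $K_A^{\rm red}$: it guarantees that when one enforces $r$-admissibility at every trivalent vertex of every intermediate tree, no term is illegitimately added or dropped as one passes from one side of each identity to the other. Consequently every intermediate coloring encountered while transporting a basis vector $\delta_\gamma\in\calH_r^x$ along one flip sequence is $r$-admissible, and the final linear combination in $\calH_r^y$ coincides with the one obtained along any other sequence.

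The only conceivable obstacle would be an intermediate $r$-non-admissible coloring appearing in one of the local identities, which could spoil the local rearrangement; but exactly this is ruled out by the reduced form of the two identities, so the argument closes with no further work.
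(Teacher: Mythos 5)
Your proof is correct and takes the same route as the paper's one-line proof, which simply declares the argument analogous to Lemma~\ref{bounded:cor}; you have expanded that analogy faithfully, reducing well-definedness to invariance under the three moves of Theorem~\ref{pentagon:teo} and invoking the reduced orthogonality and Biedenharn--Elliot identities. You have also usefully made explicit the one point the paper leaves implicit, namely that Proposition~\ref{identities:red:prop} guarantees the identities hold with sums restricted to $r$-admissible colorings, so no terms are gained or lost when the flip path is altered.
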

\begin{proof}
The proof is analogous to that of Lemma \ref{bounded:cor}.
\end{proof}

The family of linear maps $c_A$ that we have just constructed is a cocycle in some weaker sense than that stated in Definition \ref{cocycle:defn}: points (1)-(3) are fulfilled, but each map $c_A(y,x)$ is only defined on some finite-dimensional subspace $\calH_r^x$ of $\calH$ which depends on $x$. We can analogously fix a triangulation $x$ and construct a finite-dimensional representation 
$$\rho_A\colon \MCG(\Sigma)\to \GL(\calH_r^x)$$ 
by setting 
$$\rho_A(g)=c_A(x,g x) \rho(g).$$

\subsection{Unitarity}\label{sub:unitarity}
The finite-dimensional representations just introduced are unitary only in some specific cases.

\begin{lemma}
If $A= \pm \exp(\frac{i\pi}{2r})$  
the renormalized $6j$-symbol
$$\begin{Bmatrix} a & b & c \\ d & e & f \end{Bmatrix}(A)$$
is a real number.
\end{lemma}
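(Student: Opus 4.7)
My plan is to mimic the proof of Lemma \ref{sign:lem} (the case $A\in\matR^*\cup i\matR^*$), observing that the specific roots of unity $A=\pm\exp(\frac{i\pi}{2r})$ behave in a very similar way. The first step is to note that $A^2=e^{i\pi/r}$ in both sign cases, so
$$[n]\;=\;\frac{A^{2n}-A^{-2n}}{A^2-A^{-2}}\;=\;\frac{\sin(n\pi/r)}{\sin(\pi/r)}$$
is real for all $n\in\matN$, and strictly positive for $1\leqslant n\leqslant r-1$. Because the symbol is evaluated on $r$-admissible data, the colours satisfy $a,b,c,d,e,f\leqslant r-2$ and $(a+b+c)/2+1\leqslant r-1$ at every triangular face of the tetrahedron. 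Hence all the quantum factorials appearing in $\cerchio_a$ and in each $\teta_{a,b,c}$ are strictly positive products of real numbers, which gives $\cerchio_a\in\matR$ with sign $(-1)^a$ and $\teta_{a,b,c}\in\matR$ with sign $(-1)^{(a+b+c)/2}$. The planar tetrahedron evaluation is a finite sum of rational expressions in the (real) quantum integers, so it is also real; here the summation index may force some $[n]$ with $n\geqslant r$ to appear in the multinomials, but these quantum integers remain real (possibly zero or negative), which is all that is needed.

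Once everything is known to be real, I would square the defining formula \eqref{eq:symm6j}. The squared renormalized $6j$-symbol equals
$$\frac{(\text{planar tetrahedron})^{2}\cdot \cerchio_c\,\cerchio_f}{\teta_{a,b,c}\,\teta_{a,e,f}\,\teta_{d,b,f}\,\teta_{d,e,c}}\;\in\;\matR,$$
and its sign can be read off from the previous step. The numerator contributes $(-1)^{c+f}$ and the denominator contributes $(-1)^{a+b+c+d+e+f}$, so the total sign is $(-1)^{a+b+d+e}$. Since $a+b+c$ and $c+d+e$ are even by admissibility, so is their sum $a+b+d+e+2c$, forcing $a+b+d+e$ to be even. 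The squared symbol is therefore a non-negative real number, and this forces the renormalized $6j$-symbol itself to belong to $\matR$.

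The step I expect to require the most attention is the sign bookkeeping, together with the related subtlety that the analytically-defined square roots $\sqrt{\cerchio_c\,\cerchio_f}$ and $\sqrt{\teta_{a,b,c}\teta_{a,e,f}\teta_{d,b,f}\teta_{d,e,c}}$ could individually be pure imaginary; so one cannot conclude reality factor by factor but must argue via the positivity of the squared renormalized symbol. The parity condition $a+b+c\equiv 0\pmod 2$ at every face of the tetrahedron is exactly what makes the combined sign come out $+1$. The rest of the argument is a direct adaptation of the techniques already used in Lemma \ref{sign:lem}.
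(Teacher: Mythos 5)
Your proof is correct and follows the same route as the paper: the paper's proof simply states that at $A=\pm\exp(\frac{i\pi}{2r})$ ``every quantum integer $[n]$ is a positive real number'' and then refers back to the $A\in\matR$ case of Lemma~\ref{sign:lem}. You are actually slightly more careful than the paper here: the paper's blanket positivity claim is not literally true, since $[r]=0$ and $[n]<0$ for $r<n<2r$, and such $[n]$ can indeed appear in the numerator of the planar tetrahedron evaluation via the $[z+1]$ factor in the multinomial; you correctly note that $r$-admissibility only guarantees positivity of the $[n]$ that enter the circle and theta evaluations (which is what the sign bookkeeping needs), while for the tetrahedron one only needs reality, not positivity. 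Your sign computation $(-1)^{c+f}\cdot(-1)^{-(a+b+c+d+e+f)}=(-1)^{a+b+d+e}=+1$ and your remark that the individual square roots could a priori be purely imaginary, so one must argue via positivity of the squared symbol, are exactly the points being made in the proof of Lemma~\ref{sign:lem} that the paper implicitly reuses.
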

\begin{proof}
For such values of $A$ every quantum integer $[n]$ is a positive real number; the proof then goes exactly as when $A\in\matR$ in Lemma \ref{sign:lem}.
\end{proof}

\begin{cor}
If $A= \pm \exp(\frac{i\pi}{2r})$ the representation $\rho_A$ is unitary.
\end{cor}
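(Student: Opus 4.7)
The plan is to mimic the argument of Lemma \ref{orthogonal:lemma}, which established unitarity of $c_A(y,x)$ on $\check\calH$ for $A\in\matR\cup i\matR$, now in the finite-dimensional root-of-unity setting. Since $\rho_A(g)=c_A(x,gx)\rho_0(g)$ and $\rho_0$ is (the restriction to $\calH_r^x$ of) the permutation representation on multicurves, $\rho_0$ is automatically unitary; hence it is enough to show that the cocycle $c_A(y,x)\colon \calH_r^x\to \calH_r^y$ is a unitary isomorphism for every pair $x,y$ of triangulations.

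By the cocycle property (decomposition along a sequence of flips) and the fact that a composition of unitaries is unitary, it suffices to treat the case in which $x$ and $y$ are related by a single flip as in Fig.~\ref{flip_colored:fig}. In that case, exactly as in the proof of Lemma \ref{bounded:lemma}, a partial coloring $\sigma$ of the common edges determines a finite-dimensional subspace $V_\sigma\subset\calH_r^x$ (and a corresponding subspace in $\calH_r^y$) which is invariant under $c_A(y,x)$, and on each such block the operator is represented by the matrix
\[
M_{ij}=\begin{Bmatrix} a & b & i \\ c & d & j \end{Bmatrix}^R
\]
for the fixed boundary colors $a,b,c,d$ prescribed by $\sigma$ (with $i,j$ ranging over $r$-admissible values). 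Thus $c_A(y,x)$ splits as an orthogonal direct sum of finite matrices, and unitarity of the whole operator is equivalent to unitarity of every such block.

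The orthogonality relation for renormalized $6j$-symbols, together with the symmetry \eqref{symmetry:eqn}, gives the identity $\,^{t}\!M\,M=\id$ on each block (this is precisely what was used in Lemma \ref{orthogonal:lemma}), hence $\,^{t}c_A(y,x)\,c_A(y,x)=\id$. By the preceding lemma, at $A=\pm\exp(i\pi/2r)$ every entry $M_{ij}$ is a real number, so $\,^{t}\!M=M^*$ and the identity above becomes $M^*M=\id$. Therefore each block $M$ is unitary, $c_A(y,x)$ is unitary, and consequently so is $\rho_A$.

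The argument is essentially formal once the key lemma (reality of the renormalized $6j$-symbols at $A=\pm\exp(i\pi/2r)$) is in hand; the only point that deserves care is checking that the block-diagonal decomposition of $c_A(y,x)$ used in the infinite-dimensional context restricts correctly to the finite-dimensional subspaces $\calH_r^x$ and $\calH_r^y$, i.e.\ that $r$-admissibility of a column index $i$ is preserved among the $r$-admissible row indices $j$ appearing with nonzero coefficient. This is immediate from the fact that the renormalized $6j$-symbol is only evaluated on $r$-admissible tuples in the construction of Section~\ref{hr:sub}, so no obstruction arises.
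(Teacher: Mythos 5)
Your proof is correct and follows essentially the same route as the paper, which simply says the argument is analogous to Lemma \ref{orthogonal:lemma}: reduce to a single flip, use orthogonality and symmetry of the renormalized $6j$-symbols to get $^{t}c_A(y,x)\,c_A(y,x)=\id$, then use reality at $A=\pm\exp(i\pi/2r)$ (the preceding lemma) to upgrade the transpose to the adjoint. Your explicit invocation of the block decomposition from Lemma \ref{bounded:lemma} and the remark that the $r$-admissible orthogonality relation (Proposition \ref{identities:red:prop}) is what makes the argument carry over are both correct and supply exactly the detail the paper leaves implicit.
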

\begin{proof}
The proof is analogous to that of Lemma \ref{orthogonal:lemma}.
\end{proof}

\subsection{Analytic convergence}\label{sub:teo1.5}
We can prove Theorem \ref{main2:teo} as a consequence of a lemma.

\begin{lemma} \label{analytic:lemma}
Let $x,y$ be two triangulations of $\Sigma$ and $v, v'$ be two vectors of $\check\calH$. There is a finite set $F\subset S$ such that the matrix coefficient
$$\langle c_A(x,y)v, v' \rangle$$
makes sense and varies analytically on $\overline \matD\setminus F$.
\end{lemma}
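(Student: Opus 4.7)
The approach is to reduce the statement to a finite sum of products of renormalized $6j$-symbols and then invoke the analyticity properties established in Section \ref{estimate:section}.

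Since $v, v' \in \check\calH$ have finite support and the inner product is sesquilinear, it suffices by linearity to treat the case $v = \delta_\gamma$ and $v' = \delta_{\gamma'}$ for two fixed multicurves $\gamma, \gamma' \in \calM$. Fix a path $x = x_1, x_2, \ldots, x_n = y$ of triangulations related by flips. Inserting the orthonormal basis $\{\delta_\mu\}_{\mu \in \calM}$ between the factors of $c_A(x,y) = c_A(x_n, x_{n-1}) \circ \cdots \circ c_A(x_2, x_1)$ gives
$$\langle c_A(x,y)\, \delta_\gamma,\, \delta_{\gamma'}\rangle \;=\; \sum_{\gamma_2, \ldots, \gamma_{n-1}} \prod_{i=1}^{n-1} \langle c_A(x_{i+1},x_i)\, \delta_{\gamma_i},\, \delta_{\gamma_{i+1}}\rangle,$$
with $\gamma_1 = \gamma$ and $\gamma_n = \gamma'$. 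By the construction of Section \ref{cocycle:subsection}, each single-flip factor is either zero or a single renormalized $6j$-symbol: it is nonzero precisely when $\gamma_i$ and $\gamma_{i+1}$ give the same coloring on the edges common to $x_i$ and $x_{i+1}$, the two colors on the flipped edge being constrained by the triangle inequality at the neighboring triangles (compare Fig.~\ref{flip_colored:fig}).

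Two consequences follow. First, iterating the triangle inequality along the path, the colorings of any intermediate $\gamma_i$ compatible with a nonzero term are bounded by a constant $N = N(\gamma, \gamma', \mathrm{path})$, so the outer sum is effectively finite. Second, the only $A$-dependence is through finitely many renormalized $6j$-symbols, each of which is analytic on $\overline\matD \setminus F_i$ for an explicit finite subset $F_i \subset S$ by Definition \ref{renorm:defn} (specifically, $F_i$ consists of those $A\in S$ with $2(r(A)-2) < M$ where $M$ is the maximum triangle sum of that symbol). Setting $F$ to be the union of the finitely many sets $F_i$ arising, the product and the finite sum of such products are analytic on $\overline\matD \setminus F$.

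It remains to check that this expansion really computes the matrix coefficient in both regimes. For $A \in \overline\matD \setminus S$ this is the definition of $c_A(x,y)$ from Section \ref{cocycle:subsection}. For $A \in S$ with $r=r(A)$, the cocycle of Section \ref{hr:sub} is defined on $\calH_r^x$ by the same renormalized $6j$-symbol formula, but with intermediate sums restricted to $r$-admissible colorings. Since all colorings in our expansion are bounded by $N$, the $r$-admissibility condition $a+b+c \leqslant 2(r-2)$ holds automatically as soon as $r(A) \geqslant N+2$; enlarging $F$ by the finite set of $A \in S$ failing this bound (which also ensures $\gamma, \gamma' \in \calM_r^x \cap \calM_r^y$) makes the two formulas agree on $\overline\matD \setminus F$. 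The main obstacle is precisely this last reconciliation between the infinite-dimensional and finite-dimensional definitions: the substantive content is that the bound $N$ depending on $\gamma, \gamma'$ and the path absorbs the discrepancy up to finitely many exceptional roots of unity.
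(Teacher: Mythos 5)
Your proof is correct and follows essentially the same route as the paper's: reduce to $v=\delta_\gamma$, $v'=\delta_{\gamma'}$, express the matrix coefficient as a finite combination of renormalized $6j$-symbols, and invoke Definition~\ref{renorm:defn} for their analyticity on $\overline\matD\setminus F$. You are more explicit where the paper is terse — the paper reduces to a single flip and says ``the general case easily follows'', whereas you carry out the path decomposition, bound the intermediate colorings via the triangle inequality, and explicitly reconcile the infinite-dimensional definition of $c_A$ with the $r$-admissibly truncated one used at roots of unity; this last point is a genuine gap in the paper's one-line dismissal, and your observation that a uniform bound $N$ on intermediate colorings makes the two formulas agree for $r(A) \geqslant N+2$ is exactly the right way to close it.
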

\begin{proof}
Since both $v$ and $v'$ are finite linear combinations of $\delta_\gamma$'s it suffices to consider the case $v=\delta_\gamma$ and $v'=\delta_{\gamma'}$ for some multicurves $\gamma$ and $\gamma'$. It also suffices to consider the case where $x$ and $y$ are related by a flip, since the general case easily follows.

The subspaces $\calH_r$ exhaust $\check\calH$ and hence both $\delta_{\gamma}$ and $\delta_{\gamma'}$ are contained in $\calH_r$ for sufficiently big $r$, hence the matrix coefficient makes sense after excluding finitely many values $F\subset S$ of $A$. When it exists, the matrix coefficient is either zero or a renormalized quantum $6j$-symbol evaluated at $A$, and in both cases it depends analytically on $A\in \overline \matD\setminus F$.
\end{proof}

\begin{rem} \label{poles:rem}
Lemma \ref{analytic:lemma} says that the matrix coefficient $f(A) = \langle c_A(x,y)v, v' \rangle$ 
coincides with an analytic function $h$ evaluated at $A$ except at finitely many points $F$. We note that various non-continuous behaviours can occur at a point $A_0\in F$, indeed all these possibilities can hold:
\begin{itemize}
\item the matrix coefficient $f(A_0)$ is not defined because $v$ does not belong to $\calH_{r(A)}$,
\item the matrix coefficient $f(A_0)$ is defined but $f(A_0) \neq h(A_0)$: in that case $h(A_0) = \lim_{A \to A_0} f(A)$ may be infinite or finite.
\end{itemize}
\end{rem}

When $v$ and $v'$ belong to the standard basis of $\check\calH$ the analytic coefficient is particularly nice:
\begin{lemma} \label{roots:lemma}
If $v = \delta_\gamma$ and $v' = \delta_{\gamma'}$ then $h(A) = \langle c_A(x,y)v, v' \rangle$ is the square root of a rational function, except on finitely many points $F\subset S$.
\end{lemma}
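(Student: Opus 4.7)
The plan is to absorb every square root appearing in the renormalized $6j$-symbols into a global weight attached to each (triangulation, coloring) pair, so that what survives in the matrix coefficient is manifestly rational. For a triangulation $z$ of $\Sigma$ and an admissible coloring $\sigma$ of its edges, I would set
$$W(z,\sigma) := \frac{\prod_{e} \sqrt{\cerchio_{\sigma(e)}}}{\prod_{T} \sqrt{\teta_{\sigma(T)}}},$$
where $e$ runs over the edges of $z$, $T$ over its triangles, and $\sqrt{\cerchio_a}$, $\sqrt{\teta_{a,b,c}}$ are the analytic branches fixed in Section \ref{renormalized:subsection}. By construction $W(z,\sigma)$ is the square root of a rational function of $A$, defined and analytic on $\overline{\matD}\setminus F$ for some finite $F\subset S$.

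The first step would be a local computation: for a single flip $x\to y$ as in Fig.~\ref{flip_colored:fig}, the flip replaces exactly one edge and two adjacent triangles, so the contribution of $W(y,\gamma_y)/W(x,\gamma_x)$ is supported on those edges and triangles and matches precisely the square-root factor in the formula \eqref{eq:symm6j}. A direct comparison with Definition \ref{renorm:defn} then yields an identity of the form
$$\begin{Bmatrix} a & b & c \\ d & e & f \end{Bmatrix}^R = \frac{W(y,\gamma_y)}{W(x,\gamma_x)}\cdot R_{xy}(a,b,c,d,e,f;A),$$
where $R_{xy}$ is a rational function of $A$ (essentially the planar tetrahedron divided by two theta networks).

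The second step will be a telescoping argument. Given a path of flips $x=x_1,\ldots,x_n=y$ and a choice of intermediate colorings $\sigma_1=\gamma,\sigma_2,\ldots,\sigma_n=\gamma'$, the product of the associated renormalized $6j$-symbols will telescope because all internal weights $W(x_i,\sigma_i)$ for $1<i<n$ cancel, leaving a factor $W(y,\gamma')/W(x,\gamma)$ times a product of rational functions. Summing over the (finitely many) admissible intermediate colorings yields
$$\langle c_A(y,x)\delta_\gamma,\delta_{\gamma'}\rangle = \frac{W(y,\gamma')}{W(x,\gamma)}\cdot R(A),$$
with $R(A)$ a finite sum of products of rational functions, hence rational. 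Since $W(y,\gamma')/W(x,\gamma)$ is a square root of a rational function, the matrix coefficient is too, as claimed.

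The main obstacle I expect is keeping the analytic branches of the various square roots coherent throughout the telescoping: the local identity in the first step, and the telescoping in the second, have to hold as equalities of honest single-valued analytic functions rather than just up to sign. Fortunately, the branches of $\sqrt{\cerchio_a}$ and $\sqrt{\teta_{a,b,c}}$ have already been pinned down in Section \ref{renormalized:subsection} by their asymptotic behaviour at $A=0$ (Proposition \ref{planar:cor}), so it suffices to verify the local identity once by matching leading terms, after which the telescoping is automatic.
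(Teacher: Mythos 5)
Your proposal is correct and essentially reproduces the paper's argument: the paper's proof also writes $h(A)$ as a rational function times the localized square-root weights $\prod_e\sqrt{\cerchio_e}/\prod_v\sqrt{\teta_v}$ at the two endpoint triangulations, established by induction on the number of flips, which is exactly your telescoping of the weights $W(z,\sigma)$. (The only cosmetic difference is that the paper's displayed formula presents the two endpoint weights as a product rather than a ratio $W(y,\gamma')/W(x,\gamma)$; since the two differ by the rational factor $W(x,\gamma)^2$, both formulations say the same thing.)
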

\begin{proof}
Let $\sigma$ and $\sigma'$ be the colorings on $Y$ corresponding to $\gamma$ and $\gamma'$. It is easy to prove by induction on the number of flips relating $x$ and $y$ that 
$$h(A) = g(A) \cdot \frac {\prod_{e}\sqrt{\cerchio_{e}}}{\prod_{v}\sqrt{\teta_{v}}} 
\cdot \frac {\prod_{e'}\sqrt{\cerchio_{e'}}}{\prod_{v'}\sqrt{\teta_{v'}}}.$$
where $g(A)$ is a rational function and the products are taken on the edges $e,v$ of $y$ colored by $\sigma$ and the edges $e',v'$ of $x$ colored by $\sigma'$. 
The ribbon graphs $\cerchio_e$ and $\teta_v$ are colored respectively as $e$ and as the edges incident to $v$.
\end{proof}

We can now deduce the second statement of Theorem \ref{main2:teo} from Lemma \ref{analytic:lemma}:

\begin{cor}\label{cor:main2teo}
For any pair $v,v'\in \check\calH$ of vectors and any element $g\in\MCG(\Sigma)$ there is a finite set $F \subset S$ such that the matrix coefficient 
$$\langle\rho_A(g)v,v'\rangle$$
is defined and varies analytically on $\overline \matD \setminus F$. 
\end{cor}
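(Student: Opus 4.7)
The plan is to reduce Corollary \ref{cor:main2teo} to Lemma \ref{analytic:lemma} by unfolding the definition of $\rho_A$. By construction, in both the infinite-dimensional regime $A \in \overline\matD \setminus S$ and the finite-dimensional regime $A \in S$, the representation takes the form
$$\rho_A(g) = c_A(x, gx)\,\rho_0(g),$$
where $\rho_0(g)$ simply permutes the orthonormal basis $\{\delta_\gamma\}_{\gamma\in\calM}$ of $\calH$. In particular $\rho_0(g)$ sends $\check\calH$ into itself, so the vector $w := \rho_0(g)v$ again lies in $\check\calH$ whenever $v$ does. Consequently
$$\langle \rho_A(g)v, v'\rangle \;=\; \langle c_A(x, gx)\,w,\, v'\rangle,$$
with both $w$ and $v'$ in $\check\calH$.

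Next I would invoke Lemma \ref{analytic:lemma} applied to the pair of triangulations $x$ and $gx$ together with the vectors $w, v' \in \check\calH$. The lemma produces a finite set $F \subset S$ such that $\langle c_A(x, gx) w, v'\rangle$ is defined and analytic on $\overline\matD \setminus F$. Since $F$ depends only on $w, v'$ and on the triangulations $x, gx$ (hence on $v, v', g$ and the fixed reference triangulation $x$), this is exactly the content of the corollary.

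The only bookkeeping to be careful about is that Lemma \ref{analytic:lemma} is stated for the single-flip building block of the cocycle and extended to arbitrary pairs of triangulations by composing along a path in the triangulation graph; choosing any such path $x = x_1, x_2, \ldots, x_n = gx$, the matrix coefficient becomes a finite sum of finite products of the form $\langle c_A(x_{i+1}, x_i)\delta_{\gamma_i}, \delta_{\gamma_{i+1}}\rangle$, each of which is a renormalized $6j$-symbol (or zero). Each such symbol is analytic on $\overline\matD$ outside a finite subset of $S$ by Proposition \ref{exist:renormalized:prop}, so the union of these exceptional sets over the finitely many terms appearing in the expansion of $\langle c_A(x, gx)w, v'\rangle$ is still finite and serves as the required $F$.

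There is no real obstacle: once one observes that $\rho_0(g)$ preserves $\check\calH$, Lemma \ref{analytic:lemma} does all the work and the corollary is essentially a restatement of it. The mildly delicate point — and the reason one must exclude a finite subset of $S$ rather than get analyticity on all of $\overline\matD$ — is precisely the phenomenon noted in Remark \ref{poles:rem}: at roots of unity $A_0 \in S$ with $r(A_0)$ too small, the vectors $w$ and $v'$ may fail to lie in $\calH_{r(A_0)}^x$, so the matrix coefficient need not be defined there even though the analytic continuation exists on a neighborhood.
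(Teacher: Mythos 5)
Your proof is correct and is precisely the reduction the paper intends: the corollary is presented as an immediate deduction from Lemma~\ref{analytic:lemma}, and your observation that $\rho_0(g)$ preserves $\check\calH$ so that $\langle\rho_A(g)v,v'\rangle=\langle c_A(x,gx)\,\rho_0(g)v,\,v'\rangle$ falls under the lemma's scope is exactly the missing bookkeeping. Nothing to add.
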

We can also prove half of Theorem \ref{faithfullness:teo}. Thanks to analyticity, faithfulness of the multicurve representation $\rho_0$ is enough to guarantee asymptotic faithfulness of the finite representations $\{\rho_A\}_{A \in S}$:

\begin{cor}
For every non-central element $g\in \MCG(\Sigma)$ there is a finite set $F\subset S$ such that $\rho_A(g)\neq \id$ for every $A\in S\setminus F$.
\end{cor}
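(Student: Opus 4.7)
The plan is to combine Corollary \ref{cor:main2teo} with the classical fact that the permutation representation $\rho_0$ is faithful modulo the center of $\MCG(\Sigma)$, and then to upgrade mere analyticity to the stronger rigidity provided by Lemma \ref{roots:lemma}. Since $g$ is non-central, I will fix once and for all a witness multicurve $\gamma\in\calM$ with $g\cdot\gamma\neq\gamma$; such a $\gamma$ exists because $\MCG(\Sigma)$ acts faithfully modulo its center on isotopy classes of multicurves, a classical statement I would simply cite.

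First I would examine the single scalar matrix coefficient
$$f(A):=\langle\rho_A(g)\delta_\gamma,\delta_\gamma\rangle=\langle c_A(x,gx)\,\delta_{g\cdot\gamma},\,\delta_\gamma\rangle.$$
By Corollary \ref{cor:main2teo} there is a finite set $F'\subset S$ such that $f$ is defined and analytic on $\overline\matD\setminus F'$. Applying Lemma \ref{orthogonal:lemma} along any sequence of flips joining $x$ to $gx$ yields $c_0(x,gx)=\id$, so $f(0)=\langle\delta_{g\cdot\gamma},\delta_\gamma\rangle=0$. On the other hand, if $\rho_A(g)=\id$ on $\calH_{r(A)}$ for some $A\in S\setminus F'$ with $r(A)$ large enough that $\delta_\gamma\in\calH_{r(A)}$, then necessarily $f(A)=1$. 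The task is therefore reduced to showing that the set $\{A\in S\setminus F':f(A)=1\}$ is finite.

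At this step mere analyticity on $\overline\matD\setminus F'$ is insufficient, since zeros of $f-1$ inside $\matD$ are isolated but could \emph{a priori} accumulate on $\partial\matD\supset S$. To bypass this I would invoke Lemma \ref{roots:lemma}, which is available because both $\delta_\gamma$ and $\delta_{g\cdot\gamma}$ are basis vectors: it yields that $f(A)^{2}$ agrees with a rational function $R(A)$ on $\overline\matD\setminus F'$. Since $R(0)=f(0)^{2}=0\neq 1$, the rational function $R-1$ is not identically zero and has only finitely many zeros in $\matC$. Setting $F:=F'\cup\{A\in S:R(A)=1\}$ provides the finite set of the statement, since then $f(A)\neq 1$ for every $A\in S\setminus F$, forcing $\rho_A(g)\neq\id$.

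The main obstacle in the plan is precisely this jump from analyticity to rational-function rigidity: without Lemma \ref{roots:lemma}, one cannot rule out an accumulation of ``bad'' roots of unity on the boundary circle. The remaining ingredient, the coincidence of the kernel of the $\MCG(\Sigma)$-action on multicurves with the center, is standard and would be quoted rather than proved.
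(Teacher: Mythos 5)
Your proof is correct and follows essentially the same route as the paper's: fix a multicurve $\gamma$ with $g\gamma\neq\gamma$, examine $f(A)=\langle\rho_A(g)\delta_\gamma,\delta_\gamma\rangle$, use $f(0)=0$ and the rational-function rigidity from Lemma \ref{roots:lemma} to conclude that $f(A)=1$ can hold at only finitely many $A$. Your remark that plain analyticity would not rule out accumulation of solutions on $\partial\matD$ is exactly the reason the paper invokes Lemma \ref{roots:lemma} rather than merely Corollary \ref{cor:main2teo}; passing to $R=f^2$ and bounding the zero set of $R-1$ is an equivalent reformulation of the paper's ``square root of a rational function is constant or finite-to-one'' step.
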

\begin{proof}
We know that $\rho_0$ is the multicurve representation: since $g$ is non central there is a multicurve $\gamma$ such that $g(\gamma)\neq \gamma$ and hence 
$$\langle \rho_0(\delta_\gamma), \delta_\gamma \rangle = \langle \delta_{g(\gamma)}, \delta_\gamma \rangle = 0.$$ 

We know from Lemma \ref{roots:lemma} that after excluding finitely many values $F\subset S$ the function $h(A) = \langle \rho_A(g)(\delta_\gamma),\delta_\gamma\rangle$ is the square root of a rational function. Such a function is either constant or finite-to-one, hence $h^{-1}(1)$ contains either everything or finitely many points. The first case is excluded since $h(0)=0$ and therefore $\langle \rho_A(g)(v),v\rangle = 1$ only for finitely many values of $A$. Since $\langle \rho_A(g)(v),v\rangle = 1 \Leftrightarrow \rho_A(g)(v) = v$ we are done.
\end{proof}

\section{Re-interpretation in terms of the skein algebra of a surface}\label{relationskein:sec}
We have defined our family $\{\rho_A\}_{A\in \overline\matD \setminus S}$ of infinite-dimensional representations by fixing a Hilbert space $\calH=\ell^2(\calM)$ and perturbing the permutation representation $\rho_0$ by means of an analytic family $c_A$ of cocycles. The same family of representations on the dense subspace $\check \calH$ may be described from a different viewpoint which makes use of a well-known geometric object in quantum topology, the \emph{Kauffman skein algebra} of a surface. We introduce this viewpoint here, which will allow us to 
deduce easily that the representations are faithful (modulo the center of $\MCG(\Sigma)$) for all $A\in\matD$, a fact that is not obvious in the original context. 

\subsection{The skein algebra}
Let $A\in\matC^*$ be a non-zero complex number. We have defined in Section \ref{module:subsection} the Kauffman skein module $K_A(M)$ of $M$ as the $\matC$-vector space generated by all isotopy classes of framed links in $M$ modulo the Kauffman bracket relations shown in Fig.~\ref{Kauffman_bracket:fig}. 

Let $\Sigma$ be our punctured surface: in this section we really consider the marked points as punctures, so that $\Sigma$ is homeomorphic to the interior of a compact surface $\overline\Sigma$ with boundary. The skein module $K_A(\Sigma)$ is by definition 
$$K_A(\Sigma) = K_A(\Sigma \times [0,1]).$$
This module is equipped with a natural associative algebra structure over $\matC$: the product $L\cdot L'$ of two framed links $L$ and $L'$ is defined by taking $L\cup L'$ after pushing $L$ and $L'$ respectively inside $\Sigma\times [1-\varepsilon, 1]$ and $\Sigma\times [0, \varepsilon]$. This algebra is commutative if and only if $A=\pm 1$.

A multicurve in $\Sigma$ determines a framed link in $\Sigma\times \frac 12$ with the horizontal framing induced by $\Sigma$, and hence an element of $K_A(\Sigma)$. Przytycki has shown \cite{P} that multicurves generate $K_A(\Sigma)$ as a vector space:
\begin{prop}\label{prop:multicurvebasis}
Multicurves form a basis for the vector space $K_A(S)$.
\end{prop}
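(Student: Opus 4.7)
The plan is to prove spanning and linear independence separately. The spanning part is essentially a direct application of the Kauffman relations, while linear independence is the genuine obstacle and requires a confluence-type argument.

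For the spanning statement, I would take an arbitrary framed link $L \subset \Sigma \times [0,1]$ and use a small isotopy to put it in a position where the projection $\pi\colon L \to \Sigma$ has only transverse double points and the framing is the blackboard framing of $\pi(L)$. At every crossing, apply the first Kauffman bracket relation of Figure \ref{Kauffman_bracket:fig}, expressing $L$ as a finite $\matC$-linear combination of crossingless framed diagrams in $\Sigma$. Each such diagram is a disjoint collection of embedded simple closed curves; any component that bounds a disk in $\Sigma$ containing no marked point is a trivial circle, which we delete at the cost of multiplying by $-A^2 - A^{-2}$ using the second Kauffman relation. What remains is by definition a multicurve, so multicurves span $K_A(\Sigma)$.

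For linear independence, fix a triangulation $x$ of $\Sigma$ with vertices at the marked points. By Proposition \ref{normal:prop}, the normal-form bijection sends each multicurve $\gamma$ to a distinct admissible coloring $\sigma(\gamma)$ of the edges of $x$, and distinct admissible colorings correspond to distinct isotopy classes of crossingless diagrams. The question of linear independence therefore reduces to showing that the natural map from the free vector space on isotopy classes of crossingless diagrams on $\Sigma$ to $K_A(\Sigma)$ is injective, which is equivalent to confluence of the reduction rules induced by the Kauffman bracket relations.

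The main obstacle is precisely this confluence statement, which is the content of Przytycki's theorem \cite{P}. The strategy I would follow is the standard one via Newman's diamond lemma: view the Kauffman bracket relations as a rewriting system on framed link diagrams in $\Sigma$ (resolve a crossing into a sum of two smoothings, replace a trivial loop by the scalar $-A^2 - A^{-2}$), verify local confluence by a finite case analysis on the ways two distinct reductions can overlap inside a disk in $\Sigma$, and then use termination of the reduction to deduce global confluence and uniqueness of the normal form. Since distinct multicurves are already in distinct reduced form (no crossings, no trivial components), uniqueness of normal forms yields linear independence, and combined with the spanning argument this completes the proof.
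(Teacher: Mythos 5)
The paper does not prove this proposition: it is quoted directly from Przytycki and the only ``proof'' in the paper is the citation to \cite{P}. So the question is whether your sketch is a sound blueprint for Przytycki's theorem.

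Your spanning argument is fine. The independence argument, however, has a real gap in the way the confluence is set up. The rewriting system you describe --- resolve a crossing into its two smoothings, delete a trivial loop --- has no nontrivial critical pairs: resolutions at two distinct crossings commute on the nose, deletion of two distinct trivial circles commutes, and a crossing and a trivial circle cannot overlap. So the ``finite case analysis on overlaps inside a disk'' that you plan to run is vacuous, and local confluence is automatic. What this gets you is only that the full resolution of a \emph{fixed diagram} $D$ yields a well-defined element of the free module on crossingless, trivial-loop-free diagrams. But $K_A(\Sigma)$ is defined as a quotient of the free module on \emph{isotopy classes} of framed links, not on diagrams, so to deduce independence you must also show that two diagrams $D$, $D'$ of the same framed link resolve to the same linear combination of multicurves --- that the state-sum resolution is invariant under Reidemeister II, Reidemeister III, and the framed Reidemeister I move. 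That is where the actual work lies (it is the surface analogue of the invariance of the Kauffman bracket), and it is precisely what Przytycki verifies. Your plan never addresses it: either you must enlarge the rewriting system so that the Reidemeister moves are among the rules (and then genuine critical pairs do appear and must be resolved), or you must separately check Reidemeister invariance of the resolution before invoking Newman's/Bergman's lemma. As written, the argument proves uniqueness of a normal form for each diagram but not a basis for the skein module.
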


The mapping class group $\MCG(\Sigma)$ acts naturally on $K_A(S)$ as algebra morphisms. When $A$ is not a root of unity this nice structure is also enriched by a \emph{trace}, introduced by Frohman and Kania-Bartoszy\'nska in \cite{YM}.

\subsection{The Yang-Mills trace} \label{YM:subsection}
Every inclusion of manifolds $M\subset N$ induces a linear map $K_A(M)\to K_A(N)$. The inclusion which is of interest for us here is the following: the punctured $\Sigma$ is homeomorphic to the interior of a compact surface $\overline {\Sigma}$ and hence $\Sigma\times [0,1]$ is contained in the compact handlebody $\overline\Sigma \times [0,1]$ which is in turn contained in its double, homeomorphic to the connected sum $\#_k(S^2\times S^1)$ of some $k$ copies of $S^2\times S^1$, with $k=1-\chi(\Sigma)$. Summing up we get a linear map
$$K_A(\Sigma) \to K_A\big(\#_k(S^2\times S^1)\big)$$
induced by the inclusion $\Sigma \times [0,1]\subset \#_k(S^2\times S^1)$.
Let us suppose henceforth that $A$ is not a root of unity. With that hypothesis we may use another result of Przytycki \cite{HoPr, P2}:
\begin{prop}
Suppose $A\in \matC^*$ is not a root of unity. The space $K_A\big(\#_k(S^2\times S^1)\big)$ is one-dimensional and spanned by the class of the empty link.
\end{prop}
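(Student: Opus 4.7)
My plan is to prove the proposition by induction on $k$. The base case $k=0$ gives $S^3$ and is precisely Proposition~\ref{canonical:prop}. For the inductive step I would fix a non-separating 2-sphere $S\subset \#_k(S^2\times S^1)$ isolating one $S^2\times S^1$ summand; cutting along $S$ produces $\#_{k-1}(S^2\times S^1)$ with two open balls excised. Given any framed link $L$, I would isotope it to intersect $S$ transversally in $2n$ points with $n$ minimal; the goal is to reduce $L$, modulo Kauffman bracket relations, to a link disjoint from $S$, after which the inductive hypothesis completes the argument.

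The technical heart of the proof is the reduction $n\rightsquigarrow 0$. Applying the fusion rule of Fig.~\ref{fusion:fig} to the $2n$ parallel strands crossing $S$, I would expand $L$ as a linear combination of skeins decorated by Jones-Wenzl projectors $f_{2j}$ for $0\leqslant 2j\leqslant 2n$. For each summand carrying a projector $f_i$ with $i\geqslant 1$, I would slide the $f_i$-coloured strand once around the $S^1$ direction of the chosen summand. This slide produces a closed $f_i$-coloured loop contained in a ball; the loop evaluates as $\cerchio_i=(-1)^i[i+1]$, and the identity relating the two configurations takes the form $[i+1]\cdot(\text{term with }f_i)=0$. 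Because $A$ is not a root of unity, $[i+1]\neq 0$ for every $i\geqslant 1$, so only the $f_0$-term survives, yielding a skein disjoint from $S$ as required.

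After this reduction, $L$ lies in the complement of $S$, which embeds in $\#_{k-1}(S^2\times S^1)$, and the inductive hypothesis gives $L=\lambda\cdot\emptyset$ in $K_A$. The non-triviality of $\emptyset$ and the injectivity of $\matC\cdot\emptyset\hookrightarrow K_A(\#_k(S^2\times S^1))$ I would obtain by constructing a linear evaluation $K_A(\#_k(S^2\times S^1))\to\matC$ sending $\emptyset\mapsto 1$, built from a Kirby presentation of $\#_k(S^2\times S^1)$ by $0$-surgery on a trivial $k$-component unlink in $S^3$ and the Kauffman bracket evaluation in $S^3$. This functional is unambiguous away from roots of unity, as the surgery relations then have no vanishing denominators.

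The main obstacle will be the handle-slide identity itself: making precise the claim that sliding a Jones-Wenzl projector once around the $S^1$ factor produces the scalar $[i+1]$ times the original skein. The argument is local and diagrammatic, but the recursive definition of $f_n$ in Fig.~\ref{JW:fig} together with the bookkeeping of framings demand some care, especially in verifying that the slide does not introduce additional terms that could upset the cancellation; the proof essentially reduces to a careful application of the Kauffman bracket relations of Fig.~\ref{Kauffman_bracket:fig} to a configuration supported in a neighbourhood of $S\cup(\{\text{pt}\}\times S^1)$.
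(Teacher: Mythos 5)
The paper does not prove this proposition; it is imported from Przytycki's computations (the references \cite{HoPr,P2}), so there is no in-text proof to compare against. Your overall scheme — induction on $k$, fusion along a non-separating sphere $S$, and a ``sphere lemma'' killing the summands with a nonzero colour crossing $S$ — is the standard and correct outline (it is also the content of \cite[Lemma~1]{YM}, which the paper invokes in Proposition~\ref{orthogonal:prop}). However, the two load-bearing steps are not correct as written.

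The ``slide'' step does not yield the identity you claim. Sliding the $f_i$-coloured strand once around the $S^1$-factor is an ambient isotopy, and an isotopy does not produce an extra closed loop, nor does it give any relation at all; so the sentence ``this slide produces a closed $f_i$-coloured loop contained in a ball'' has no clear content. What the relevant geometry actually gives is a \emph{framing shift}: a framed arc crossing $S$ once can be isotoped (light-bulb trick, equivalently a handle-slide over the $0$-framed surgery curve of the summand) to the same arc with framing changed by $2$. On the $f_i$-coloured strand this reads
$$\bigl(1-A^{2i(i+2)}\bigr)\cdot T=0,$$
since a unit framing change multiplies the skein by $(-1)^iA^{i(i+2)}$. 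Alternatively, encircling the strand near $S$ by a small $0$-framed $f_k$-unknot $m$, which bounds a disc in $S$ on each side (one disc meeting the strand once, the other missing it), gives $\bigl(\cerchio_k\cerchio_i-\cerchio_{ik}\bigr)\cdot T=0$ where $\cerchio_{ik}=(-1)^{i+k}[(i+1)(k+1)]$ is the Hopf link value. Either scalar vanishes only when $A$ is a root of unity, so the conclusion $T=0$ follows. The factor $[i+1]$ you propose does not come out of either argument, and I do not see a route to the identity $[i+1]\cdot T=0$; the quantum dimension $\cerchio_i$ appears in these arguments, but always as part of a \emph{difference} of two evaluations, never multiplying $T$ on its own.

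The non-triviality argument also has a gap. The evaluation $K_A\bigl(\#_k(S^2\times S^1)\bigr)\to\matC$ via a Kirby presentation requires decorating the $0$-framed surgery unlink with the element $\Omega=\sum_j\cerchio_j\,f_j$ to achieve handle-slide invariance; away from roots of unity this is an \emph{infinite} sum and the naive evaluation is \emph{not} well-defined. You have the logic reversed: it is precisely at (primitive) roots of unity that the surgery formula is a finite, convergent expression. Establishing $\emptyset\neq 0$ for generic $A$ is usually done by computing the skein module over $\matZ[A^{\pm 1}]$ (Hoste--Przytycki), or by exhibiting a left inverse to $\matC\to K_A(\#_k(S^2\times S^1))$ built from the Yang--Mills pairing — but some care is needed to avoid circularity with the very statement being proved. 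This point deserves an honest argument rather than an appeal to an ill-defined functional.
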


We have therefore constructed a linear map called the \emph{Yang-Mills trace} in \cite{YM}:
$$\YM\colon K_A(\Sigma) \to \matC$$
This map is indeed a trace, in the sense that the following equality holds
$$\YM(\alpha \cdot \beta) = \YM(\beta \cdot \alpha)$$
for any pair of skeins $\alpha$ and $\beta$. The Yang-Mills trace can then be used to define a complex bilinear form by setting
$$\langle \alpha, \beta \rangle = \YM(\alpha\cdot\beta).$$
Both the trace and the bilinear form depend on $A$ and we may use the symbols $\YM_A$ and $\langle,\rangle_A$ to stress this dependence. The mapping class group $\MCG(\Sigma)$ preserves the trace and hence the bilinear form.

\subsection{An orthogonal basis}
The multicurves basis for $K_A(\Sigma)$ is not orthogonal, \emph{i.e}~the quantity $\langle \gamma, \gamma'\rangle$ is often non-zero for a pair of multicurves $\gamma$ and $\gamma'$. We now describe an orthogonal basis for $K_A(\Sigma)$, which depends on the choice of a triangulation for $\Sigma$.

Let $x$ be a triangulation for $\Sigma$. By duality $x$ determines a trivalent spine $Y$ of $\Sigma$, which can be interpreted as a ribbon graph in $\Sigma \times \frac 12$ with the horizontal framing. We denote by $Y_\sigma$ the ribbon graph $Y$ equipped with an admissible coloring $\sigma$. Recall from Section \ref{ribbon:subsection} that  $S$ is the set of all roots of unity except $\pm 1$ and $\pm i$ and a colored ribbon graph determines a skein in $K_A(\Sigma)$ if $A\not \in S$.
 
\begin{prop} \label{basis:prop}
Suppose $A\in\matC^*\setminus S$. The elements $\{Y_\sigma\}$ where $\sigma$ varies among all admissible colorings of $Y$ form a basis for $K_A(\Sigma)$.
\end{prop}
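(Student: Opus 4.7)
The strategy is to exhibit $\{Y_\sigma\}$ as a triangular change of basis from the multicurve basis $\{\gamma\}$ of Proposition \ref{prop:multicurvebasis}. By Proposition \ref{normal:prop} and the duality between $x$ and $Y$, the admissible colorings of $Y$ are in natural bijection with the multicurves of $\Sigma$: to a coloring $\sigma$ assigning $a_e \in \matN$ to each edge $e$ of $Y$ we associate the multicurve $\gamma_\sigma$ that meets the dual edge $e^*$ of $x$ transversely in $a_e$ points. The admissibility of $\sigma$ at each 3-valent vertex of $Y$ translates exactly into the triangle inequalities and parity condition on the corresponding triangle of $x$, so this assignment $\sigma \mapsto \gamma_\sigma$ is bijective onto the set of all multicurves.

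Define a total weight $|\sigma|=\sum_e a_e$, and give admissible colorings any total ordering refining the partial order $|\sigma'|<|\sigma|$. I claim that
$$Y_\sigma = \gamma_\sigma + \sum_{|\sigma'|<|\sigma|} \lambda_{\sigma,\sigma'}\,\gamma_{\sigma'}$$
in $K_A(\Sigma)$ for suitable coefficients $\lambda_{\sigma,\sigma'}\in\matC$. Granting this, the matrix expressing the family $\{Y_\sigma\}$ in the multicurve basis is upper-triangular with diagonal entries equal to $1$; hence it is invertible and $\{Y_\sigma\}$ is a basis.

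To prove the expansion, recall from Section \ref{sub:jw} that each Jones-Wenzl projector splits as $f_{a_e}=1+i_{a_e}$ with $i_{a_e}$ lying in the ideal $I_{a_e}$ generated by the ``short return'' elements $e_1,\ldots,e_{a_e-1}$. Plugging this decomposition into every edge of the ribbon graph $Y_\sigma$ and expanding multilinearly, the term in which one chooses the summand $1$ on every edge is, by definition of the ribbon graph skein (see Fig.~\ref{ribbon:fig}), exactly the multicurve $\gamma_\sigma$: on each edge we have $a_e$ parallel strands, and the strands are matched at each vertex using the unique admissible pairing, which reproduces the normal form of $\gamma_\sigma$. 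Any other term in the expansion contains a factor $i_{a_e}$ on some edge $e$, and each monomial of $i_{a_e}$ creates at least one ``cap'' among the $a_e$ parallel strands crossing $e^*$.

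The final step is to verify that every such capped term, once simplified in $K_A(\Sigma)$ by the Kauffman relations, is a linear combination of multicurves $\gamma_{\sigma'}$ with $|\sigma'|<|\sigma|$. A cap joins two adjacent strands on edge $e^*$ into a short arc, so the resulting diagram meets $e^*$ in exactly $a_e-2$ points, while every other edge of $x$ is still met in at most its original number of points (the capped arc stays in a neighborhood of $e$ and can be isotoped transverse to $x$ without creating new intersections). Resolving any remaining crossings or null-homotopic components via the Kauffman bracket relations of Fig.~\ref{Kauffman_bracket:fig} only replaces loops by scalars and never increases intersection numbers. Therefore every such summand expands in the multicurve basis using only $\gamma_{\sigma'}$ with $|\sigma'|\leqslant |\sigma|-2<|\sigma|$, which is precisely the triangularity claimed.

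The main obstacle is the bookkeeping in this last paragraph: one must make sure that isotoping the capped diagram into normal form with respect to $x$ never re-introduces the lost intersections, and that the bigon criterion (as invoked in the proof of Proposition \ref{normal:prop}) legitimately lets us measure ``size'' by the intersection weight $|\sigma|$. Once this is settled, the triangularity and hence the basis property follow immediately.
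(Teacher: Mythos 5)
Your proof is correct and follows essentially the same route as the paper's: expand each Jones--Wenzl projector as $f_n = 1 + i_n$ with $i_n$ in the short-return ideal, observe that the leading term reproduces the normal-form multicurve $\gamma_\sigma$ and that every other term lowers the intersection weights with the dual triangulation, and conclude via a unitriangular change of basis from the multicurve basis of Proposition~\ref{prop:multicurvebasis}. The only cosmetic difference is that you order colorings by total weight $|\sigma|$ while the paper uses the coordinatewise partial order, and you supply more of the bookkeeping the paper leaves implicit (e.g.\ that a cap reduces intersection with $e^*$ by at least two and that normalizing never increases intersections, by the bigon criterion).
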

\begin{proof}
This fact is well-known to experts since $Y$ is a spine of the handlebody $\overline\Sigma \times [0,1]$ and we may use \cite{L}; we give a simple proof for completeness and to prepare the reader to the similar Proposition \ref{finite:basis:prop} below.

Recall that every admissible coloring $\sigma$ induces an admissible coloring on the dual triangulation $x$ and hence determines a multicurve in normal form, and that this construction gives a bijection between admissible colorings and multicurves by Proposition \ref{normal:prop}. We put a partial ordering on the admissible colorings by saying that $\sigma\leqslant \sigma'$ if at every edge of $Y$ the coloring of $\sigma$ is smaller or equal than the coloring of $\sigma'$. This also induces a partial ordering on multicurves (which depends of course on the triangulation $x$).

Recall from Section \ref{sub:jw} that a Jones-Wentzl projector is of the form $1_n + i$ where $i$ is an element of the Temperly-Lieb algebra generated by the elements $e_j$ containing short returns. This fact easily implies that the skein $Y_\sigma$ equals the multicurve corresponding to the coloring $\sigma$ plus a linear combination of multicurves having strictly smaller colorings. Conversely, every multicurve equals the correspondingly colored ribbon graph $Y_\sigma$ plus a combination of colorings $Y_{\sigma'}$ with $\sigma'<\sigma$. We have just constructed a change of basis between multicurves and $\{Y_\sigma\}$.
\end{proof}

The transformation from the multicurve basis (which is canonical) to the basis $\{Y_\sigma\}$ (which depends on a triangulation $x$) can be seen as an orthogonalization, in virtue of the following.
 
\begin{prop} \label{orthogonal:prop}
Suppose $A\in\matC^*$ is not a root of unity.
Let $\sigma$ and $\sigma'$ be two admissible colorings for $Y$. The following equality holds:
$$\langle Y_\sigma, Y_{\sigma'} \rangle = \delta_{\sigma, \sigma'}\prod_{e}\cerchio_{e}^{-1}\prod_{v}\teta_{v} $$
The product is taken over all vertices $v$ and edges $e$ of $Y$ and the ribbon graphs $\cerchio_e$ and $\teta_v$ are colored respectively as $e$ and as the edges incident to $v$.
\end{prop}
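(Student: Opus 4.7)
The plan is to compute the pairing $\langle Y_\sigma, Y_{\sigma'}\rangle_A = \YM_A(Y_\sigma\cdot Y_{\sigma'})$ by reducing the two-parallel-copies configuration to a single coloured copy of $Y$ via the recoupling calculus, and then invoking a vanishing property of $\YM_A$ on non-trivially coloured spines.

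Geometrically, the product $Y_\sigma\cdot Y_{\sigma'}$ is represented by two parallel copies of the ribbon graph $Y$ stacked inside $\Sigma\times[0,1]$, coloured respectively by $\sigma$ and $\sigma'$. First, at each edge $e$ of $Y$ I would apply the fusion rule of Fig.~\ref{fusion:fig} to rewrite the two parallel strands coloured $a=\sigma(e)$ and $b=\sigma'(e)$ as
$$\sum_{c} \frac{\cerchio_c}{\teta_{a,b,c}} \cdot \bigl(\text{bridge with central strand coloured } c\bigr).$$
Next, at each trivalent vertex $w$ of $Y$ I would apply an iterated recoupling (a local consequence of the Biedenharn--Elliot identity) to reduce the two parallel trivalent vertices joined by the three bridges to a single trivalent vertex, at the cost of a quantum tetrahedron factor whose six colours are the values of $\sigma,\sigma',\tau$ on the three edges meeting $w$. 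Combining these local identities globally gives an expansion
$$Y_\sigma \cdot Y_{\sigma'} = \sum_\tau \;\Bigl(\prod_{e} \frac{\cerchio_{\tau(e)}}{\teta_{\sigma(e),\sigma'(e),\tau(e)}}\Bigr)\,\Bigl(\prod_{w} \mathrm{tet}_w(\sigma,\sigma',\tau)\Bigr)\; Y_\tau,$$
the sum ranging over admissible colourings $\tau$ of $Y$ whose value at each vertex produces an admissible tetrahedron.

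The second key step is the identity
$$\YM_A(Y_\tau) = \delta_{\tau,0},$$
where $\tau=0$ denotes the all-zero colouring, so that $Y_0=\emptyset$. This is established by pushing $Y_\tau$ from $K_A(\Sigma)$ into $K_A(\#_k(S^2\times S^1))$ through the handlebody $H=\overline\Sigma\times[0,1]$: an edge of $Y$ carrying a Jones--Wenzl projector $f_n$ with $n\geqslant 1$ can be isotoped to close along the co-core of a $1$-handle of the double, and since $A$ is not a root of unity a closed $f_n$ evaluates to zero there. Only the trivial colouring survives, contributing the empty-skein generator $1$.

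Finally, I would specialize the expansion to $\tau=0$: admissibility of the triples $(\sigma(e),\sigma'(e),0)$ at every edge forces $\sigma(e)=\sigma'(e)$, so the pairing vanishes whenever $\sigma\neq\sigma'$. When $\sigma=\sigma'$, the identities $\cerchio_0=1$ and $\teta_{a,a,0}=\cerchio_a$ collapse the edge factor to $\cerchio_{\sigma(e)}^{-1}$, while each vertex tetrahedron degenerates to the theta $\teta_v$ coloured by the three edges of $Y$ incident to $v$, yielding
$$\langle Y_\sigma, Y_{\sigma'}\rangle_A \;=\; \delta_{\sigma,\sigma'}\prod_{e} \cerchio_e^{-1}\prod_{v} \teta_v$$
as required. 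The main obstacle is to make the vanishing $\YM_A(Y_\tau)=0$ for $\tau\neq 0$ fully rigorous: this requires a careful surgery computation in the skein module of $\#_k(S^2\times S^1)$, invoking the Frohman--Kania-Bartoszyńska analysis of closed Jones--Wenzl projectors on the $1$-handle generators.
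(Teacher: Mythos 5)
Your overall strategy is correct and rests on the same two ingredients as the paper's own proof: the fusion rule applied to the pair of parallel strands along each edge of $Y$, and the fact that a ribbon graph meeting a separating $2$-sphere in $\#_k(S^2\times S^1)$ transversally in one point of non-zero colour is zero. The difference is one of organisation. The paper performs the fusion at the midpoint of each edge, where the doubled compressing disc supplies a separating sphere; the sphere lemma then kills the bridge colour unless it is $0$, which either forces $\sigma(e)=\sigma'(e)$ and cuts the edge, or kills the whole term. Cutting every edge leaves immediately a disjoint union of $\teta_v$'s weighted by the product of fusion coefficients $\cerchio_e^{-1}$. You instead fuse, then recouple at every vertex to produce an expansion $Y_\sigma\cdot Y_{\sigma'}=\sum_\tau c_\tau Y_\tau$, and only afterwards invoke $\YM(Y_\tau)=\delta_{\tau,0}$. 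That is a valid variant, though more roundabout, since it asks you to control $c_\tau$ for every $\tau$ when only $\tau=0$ survives.

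Two caveats. First, the assertion that the vertex recoupling produces ``a quantum tetrahedron factor whose six colours are the values of $\sigma,\sigma',\tau$'' cannot be right as stated: that is nine colours, and the local graph around a vertex $w$ (the two parallel copies $v_1,v_2$ of $w$ joined to the three fusion vertices $u_1,u_2,u_3$, closed off by one more trivalent vertex carrying the $\tau$-legs) is the complete bipartite graph $K_{3,3}$, not a tetrahedron; its evaluation is in general a sum of products of $6j$-symbols, not a single tetrahedral evaluation. Fortunately only the specialisation $\tau=0$ matters, and there the $K_{3,3}$ does degenerate to the theta $\teta_v$ as you write, so the final formula is unaffected. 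Second, the identity $\YM(Y_\tau)=\delta_{\tau,0}$ does not require the ``careful surgery computation'' you flag as the main obstacle: it is precisely the sphere lemma \cite[Lemma 1]{YM} the paper invokes, applied to the single transverse intersection of any non-trivially coloured edge of $Y$ with a doubled compressing disc. The paper simply applies that lemma during the fusion step rather than afterwards.
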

\begin{proof}
The graph $Y$ is a spine of $\Sigma$ and is hence also a spine of the handlebody $\overline \Sigma \times [0,1]$: hence every edge of $Y$ intersects transversely in its midpoint a compressing disc of $\overline \Sigma \times [0,1]$, which doubles to a 2-sphere in the manifold $\#_k(S^2\times S^1)$.

\begin{figure}
 \begin{center}
  \includegraphics[width = 12.5 cm]{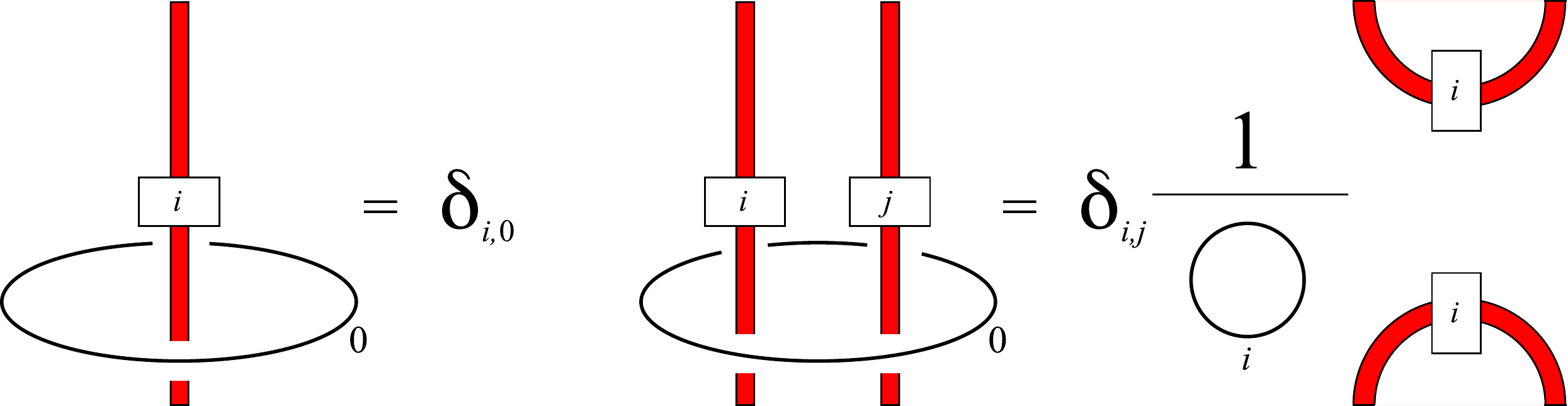}
 \end{center}
 \caption{Orthogonality of ribbon graphs intersecting an essential sphere.}
 \label{sphere:fig}
\end{figure}

We need to compute $\langle Y_\sigma, Y_{\sigma'} \rangle = \YM (Y_\sigma \cdot Y_{\sigma'}).$
Place two parallel copies of $Y_\sigma$ in $S\times [-1,1]$ one above the other and consider them as skeins in $K_A(\#_k(S^2\times S^1))$. Two parallel edges intersect in their midpoints a 2-sphere, and we can apply the move in Fig.~\ref{sphere:fig}-(right). Such a move is obtained by first performing a fusion as in Fig.~\ref{fusion:fig} and then using the following well-known fact \cite[Lemma 1]{YM}: any ribbon graph intersecting a 2-sphere in a single point with a non-zero coloring is equivalent to the empty skein $\emptyset$ (see Fig.~\ref{sphere:fig}-(left)).

If $\sigma\neq \sigma'$ the skein $Y_\sigma\cdot Y_{\sigma'}$ is hence equivalent to the empty skein. If $\sigma \neq \sigma'$ it is equivalent to a union of one $\teta_v$ graph sitting around each vertex $v$ of $Y$, divided by the evaluation of one $\cerchio_e$ at each edge $e$ of $Y$, thus yielding the equality of skeins:
$$Y_\sigma \cdot Y_\sigma = \left(\prod_{e}\cerchio_{e}^{-1}\prod_{v}\teta_{v} \right) \cdot \emptyset. $$
Since $\YM(k\cdot \emptyset) = k$ we are done.
\end{proof}

\begin{cor}
Suppose $A\in \matC^*$ is not a root of unity. The $\matC$-bilinear  
form $\langle, \rangle$ is non-degenerate.
\end{cor}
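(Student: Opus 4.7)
The plan is to combine the two preceding propositions: Proposition~\ref{basis:prop} gives us a basis $\{Y_\sigma\}$ of $K_A(\Sigma)$ indexed by the admissible colorings $\sigma$ of a fixed dual spine $Y$, and Proposition~\ref{orthogonal:prop} says this basis is orthogonal with respect to $\langle,\rangle$, with diagonal entries
\[
\langle Y_\sigma, Y_\sigma \rangle \;=\; \prod_{e}\cerchio_e^{-1}\prod_{v}\teta_v.
\]
Non-degeneracy will follow immediately once we check that each such diagonal entry is non-zero, because then any element $\alpha = \sum_\sigma c_\sigma Y_\sigma$ with $\langle \alpha,\beta\rangle = 0$ for all $\beta$ satisfies in particular $c_\tau \langle Y_\tau,Y_\tau\rangle = \langle \alpha, Y_\tau \rangle = 0$, forcing $c_\tau = 0$ for every $\tau$.

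To prove that the diagonal entries do not vanish, I would use the explicit formulas from Section~\ref{ribbon:subsection}. We have $\cerchio_a = (-1)^a [a+1]$, and $\teta_{a,b,c}$ is a signed ratio of quantum factorials of non-negative integers (the arguments $\frac{a+b+c}{2}+1, \frac{a+b-c}{2}, \frac{a-b+c}{2}, \frac{-a+b+c}{2}, a, b, c$, all non-negative by the triangle inequality and the even-sum condition). Hence both $\cerchio_a$ and $\teta_{a,b,c}$ are, up to a sign, products and quotients of values $[n]$ with $n \geq 1$. Now recall that $[n](A) = A^{-2n+2} + A^{-2n+6} + \cdots + A^{2n-2}$ is a Laurent polynomial whose roots lie in $S \cup \{\pm 1, \pm i\}$, i.e.\ among the roots of unity. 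Therefore the hypothesis that $A \in \matC^*$ is not a root of unity forces $[n](A) \neq 0$ for every $n \geq 1$, and consequently every $\cerchio_e$ and every $\teta_v$ appearing in the product is a non-zero complex number. So the diagonal entry $\langle Y_\sigma, Y_\sigma\rangle$ is non-zero for every admissible coloring $\sigma$.

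The only subtlety, which I regard as the main point to verify, is that $A$ not being a root of unity is strictly stronger than the hypothesis $A \notin S$ used for Proposition~\ref{basis:prop}: it additionally excludes $A \in \{\pm 1, \pm i\}$. This is indeed necessary for the argument since at those values some quantum integers degenerate and one would have to re-examine non-vanishing case by case (which in fact holds, but is unnecessary for the statement). With this in mind the proof is essentially a one-line deduction, so I would write it as: $K_A(\Sigma)$ admits a basis that is orthogonal with non-zero squared norms for $\langle,\rangle$, hence $\langle,\rangle$ is non-degenerate.
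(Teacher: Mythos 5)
Your proof is correct and follows exactly the paper's argument: the orthogonal basis $\{Y_\sigma\}$ from Propositions~\ref{basis:prop} and~\ref{orthogonal:prop} has non-vanishing squared norms because the quantum integers $[n]$ do not vanish when $A$ is not a root of unity, and an orthogonal basis with non-zero diagonal entries gives non-degeneracy.

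One small correction to your closing remark about the ``main subtlety'': the quantum integers do \emph{not} degenerate at $A \in \{\pm 1, \pm i\}$. As the paper states when defining $[n]$, its zeroes lie in $S$, which by definition excludes $\pm 1$ and $\pm i$ (indeed $[n](\pm 1) = n$ and $[n](\pm i) = \pm n$). The actual reason the corollary assumes ``$A$ not a root of unity'' rather than merely ``$A \notin S \cup \{0\}$'' is upstream: the Yang--Mills trace, and hence the bilinear form $\langle,\rangle$, is only defined under that hypothesis via Przytycki's result that $K_A(\#_k(S^2\times S^1))$ is one-dimensional. So the hypothesis is needed for $\langle,\rangle$ to exist at all, not to save the non-vanishing of the diagonal entries.
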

\begin{proof}
The evaluations $\cerchio_e$ and $\teta_v$ are product of quantum integers and their inverses and therefore their zeroes and poles are contained in $S\cup \{0\}$. Since $A\not \in S\cup \{0\}$ we have constructed an orthogonal basis $\{Y_\sigma\}$ with $\langle Y_\sigma, Y_\sigma \rangle \neq 0$, and this guarantees that $\langle, \rangle$ is non-degenerate.
\end{proof}

\begin{cor} \label{YM:cor}
We have $\YM(\emptyset) = 1$ and $\YM(Y_\sigma)=0$ for any non-trivial $\sigma$.
\end{cor}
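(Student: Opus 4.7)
The plan is to derive both assertions directly from the geometric setup already established in the proof of Proposition \ref{orthogonal:prop}. Recall that $\YM$ is defined as the composition
\[
K_A(\Sigma) \longrightarrow K_A\big(\#_k(S^2\times S^1)\big) \cong \matC
\]
induced by the inclusion $\Sigma \times [0,1] \subset \#_k(S^2 \times S^1)$, where the identification with $\matC$ sends the class of $\emptyset$ to $1$. Thus the equality $\YM(\emptyset) = 1$ is tautological: the inclusion sends $\emptyset$ to $\emptyset$, whose coefficient in the one-dimensional target is $1$ by construction.

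For the second equality, suppose $\sigma$ is a non-trivial admissible coloring of $Y$, meaning that at least one edge $e$ of $Y$ is assigned a non-zero color $n\geqslant 1$ (if every edge were colored $0$, then $Y_\sigma$ would coincide with $\emptyset$, since $0$-colored edges may be deleted from a colored ribbon graph). Since $Y$ is a spine of $\overline\Sigma$, the edge $e$ meets a compressing disc $D\subset \overline\Sigma\times[0,1]$ transversely in a single point, and doubling sends $D$ to a 2-sphere $S \subset \#_k(S^2\times S^1)$ which intersects $Y_\sigma$ transversely in a single point carrying the nonzero color $n$.

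Now I would invoke the sphere-vanishing principle already used in the proof of Proposition \ref{orthogonal:prop} (Fig.~\ref{sphere:fig}-(left), proved as \cite[Lemma 1]{YM}): any ribbon graph meeting an essential 2-sphere transversely in a single point with non-zero color represents the zero class in the skein module of the ambient manifold. Applied to $Y_\sigma$ and $S$, this yields $Y_\sigma = 0$ in $K_A(\#_k(S^2\times S^1))$, which under the identification $K_A(\#_k(S^2\times S^1)) \cong \matC$ means $\YM(Y_\sigma) = 0$.

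The only conceptual point to verify is the interpretation of ``non-trivial $\sigma$'' as a coloring not identically zero, which we handled above; once this is clarified the argument is essentially a one-line consequence of the geometry already set up. No new estimates or computations on $6j$-symbols are required, and no hypothesis beyond $A$ not being a root of unity (which is needed for $\YM$ itself to be defined) is used.
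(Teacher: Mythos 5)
Your proof is correct, and it rests on the same geometric ingredient as the paper's, but you missed a shortcut. The paper's own proof is a one-liner: since the bilinear form is $\langle \alpha,\beta\rangle = \YM(\alpha\cdot\beta)$ and $\emptyset = Y_0$ (the zero coloring), one has $\YM(Y_\sigma) = \YM(Y_\sigma \cdot \emptyset) = \langle Y_\sigma, Y_0\rangle$, and then Proposition \ref{orthogonal:prop} with $\sigma' = 0$ gives $\delta_{\sigma,0}\prod_e\cerchio_{e}^{-1}\prod_v\teta_v$, which is $1$ when $\sigma = 0$ (all circles and thetas zero-colored evaluate to $1$) and $0$ otherwise. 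Your argument instead re-derives the $\sigma' = 0$ case of that proposition directly: you apply the sphere-vanishing lemma of Fig.~\ref{sphere:fig}-(left) to a single copy of $Y_\sigma$ crossing an essential $2$-sphere once at a nonzero color. That is precisely the same key fact used inside the proof of Proposition \ref{orthogonal:prop}, so the mathematical content coincides — your route is simply a little longer because it does not invoke the already-established orthogonality formula. Two small remarks: (i) your reading of ``non-trivial $\sigma$'' as ``not identically zero'' is the intended one, and the observation that a $0$-colored ribbon graph is $\emptyset$ is exactly why $\emptyset = Y_0$; (ii) the phrase in the paper that a graph meeting a sphere once with nonzero color is ``equivalent to the empty skein'' is an imprecise wording — the correct conclusion, which you state correctly, is that the skein class is the zero element of the skein module.
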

\begin{proof}
Use $\YM(Y_\sigma) = \langle Y_\sigma, \emptyset \rangle$.
\end{proof}

In the definition of $\YM_A$ we needed to impose that $A$ is not a root of unity. We can now use Corollary \ref{YM:cor} as a definition to extend $\YM_A$ at the values $A=\pm 1$ and $\pm i$. (The move in Fig.~\ref{whitehead2:fig} easily shows that this definition does not depend on the triangulation $x$ as quantum integers are non-zero at $\pm1, \pm i$.) 

\subsection{Renormalized $6j$-symbols revisited} \label{revisited:subsection}
It is now easy to connect the representation of $\MCG(\Sigma)$ on $K_A(\Sigma)$ with the representation $\rho_A$ on the dense subspace $\check\calH$ defined in Section \ref{repres:section}. Suppose that $A\in \overline \matD^*\setminus  S$. Fix a triangulation $x$ and consider its dual ribbon graph $x$. We renormalize the skein $Y_\sigma$ by defining
$$\hat Y_\sigma = \frac {\prod_{e}\sqrt{\cerchio_{e}}}{\prod_{v}\sqrt{\teta_{v}}} Y_\sigma.$$
We use here the analytic functions $\sqrt{\cerchio}$ and $\sqrt {\teta}$ on $\overline \matD \setminus S$ introduced in Section \ref{renormalized:subsection}. Proposition \ref{orthogonal:prop} implies that
$$\langle \hat Y_\sigma, \hat Y_{\sigma'}\rangle = \delta_{\sigma, \sigma'}.$$
In other words $\{\hat Y_\sigma\}_\sigma$ form an orthonormal basis with respect to the complex bilinear form $\langle, \rangle$. The basis depend on a triangulation $x$, but the change of basis is a familiar formula:

\begin{lemma} \label{change:lemma}
Let $x$ and $x'$ be two triangulations related by a flip as in Fig~\ref{flip_colored:fig} and $\{\hat Y_\sigma\}, \{\hat Y'_{\sigma'}\}$ be the corresponding orthonormal bases. We have
$$\hat Y'_{\sigma'} = \sum_\sigma \begin{Bmatrix} a & b & c \\ d & e & f \end{Bmatrix}^R \hat Y_\sigma.$$
\end{lemma}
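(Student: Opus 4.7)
The plan is to compute the matrix coefficient $\langle \hat Y'_{\sigma'}, \hat Y_\sigma\rangle$ and identify it with the renormalized $6j$-symbol. Proposition~\ref{orthogonal:prop} together with the definition of $\hat Y_\sigma$ makes $\{\hat Y_\sigma\}$ an orthonormal basis for the complex bilinear form $\langle\cdot,\cdot\rangle$, and the same holds for $\{\hat Y'_{\sigma'}\}$. Bilinearity therefore gives
$$\hat Y'_{\sigma'} = \sum_\sigma \langle \hat Y'_{\sigma'}, \hat Y_\sigma\rangle\,\hat Y_\sigma,$$
so the lemma reduces to the identity $\langle \hat Y'_{\sigma'}, \hat Y_\sigma\rangle = \begin{Bmatrix}a & b & c \\ d & e & f\end{Bmatrix}^R$.

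To evaluate this pairing I apply the Whitehead move of Definition~\ref{6j:defn} locally inside the flip disk. Since $Y$ and $Y'$ coincide outside this disk, the local identity globalizes to
$$Y'_{\sigma'} = \sum_\tau \begin{Bmatrix}a & b & \tau \\ d & e & f\end{Bmatrix} Y_{\sigma(\tau)},$$
where $\sigma(\tau)$ is the admissible colouring of $Y$ which agrees with $\sigma'$ off the flipped edge and assigns $\tau$ to it. Pairing both sides with $Y_\sigma$ and invoking the orthogonality part of Proposition~\ref{orthogonal:prop} to kill every term with $\tau\ne c$, one obtains
$$\langle Y'_{\sigma'}, Y_\sigma\rangle = \begin{Bmatrix}a & b & c \\ d & e & f\end{Bmatrix}\prod_{e}\cerchio_e^{-1}\prod_{v}\teta_v,$$
the products ranging over the edges and vertices of $Y$.

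The remaining step is to multiply by the normalization factors $M(\sigma)=\prod_e\sqrt{\cerchio_e}/\prod_v\sqrt{\teta_v}$ and $M'(\sigma')$, defined analogously for $Y'$, so as to produce $\langle \hat Y'_{\sigma'}, \hat Y_\sigma\rangle$. Because $\sigma$ and $\sigma'$ agree on every edge and vertex outside the flip, all ``shared'' contributions in $M(\sigma)M'(\sigma')$ combine with the integer weights in $\prod\cerchio_e^{-1}\prod\teta_v$ and cancel exactly. What survives is concentrated at the flip and consists of half-integer powers of $\cerchio_c$, $\cerchio_f$ and of the four thetas $\teta_{a,b,c}$, $\teta_{d,e,c}$, $\teta_{a,e,f}$, $\teta_{d,b,f}$ at the four vertices internal to the two H-graphs. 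Substituting the closed form~(\ref{eq:6j}) for the ordinary $6j$-symbol and collecting the surviving square roots then reproduces exactly the defining expression~(\ref{eq:symm6j}) for $\begin{Bmatrix}^R$.

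The main obstacle is the bookkeeping in this last step: three distinct pieces (the pair of normalizations $M(\sigma)M'(\sigma')$, the orthogonality weight from Proposition~\ref{orthogonal:prop}, and the closed form~(\ref{eq:6j})) each contribute tetra-, theta-, and circle-dependent factors, and one has to verify that the half-integer powers combine in such a way that the asymmetric-looking~(\ref{eq:6j}) produces the symmetric expression~(\ref{eq:symm6j}). This is a direct but slightly delicate algebraic verification; once it is settled, the lemma follows.
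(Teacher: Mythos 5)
Your proposal is correct and lands on the same key fact as the paper — the Whitehead move drives the change of basis, and the normalization prefactors convert the ordinary $6j$-symbol into the renormalized one. The paper's own proof is a one-liner: starting from $Y'_{\sigma'}=\sum_\sigma \{6j\}Y_\sigma$ it simply observes that replacing each $Y$ by $\hat Y$ multiplies the coefficient by $M'(\sigma')/M(\sigma)$, where $M(\sigma)=\prod_e\sqrt{\cerchio_e}/\prod_v\sqrt{\teta_v}$, and the shared factors cancel to leave exactly the local square roots appearing in~\eqref{eq:symm6j}. Your route instead reduces the claim to the identity $\langle\hat Y'_{\sigma'},\hat Y_\sigma\rangle=\{6j\}^R$ and then evaluates the bilinear pairing via Proposition~\ref{orthogonal:prop}; this buys you a conceptually transparent characterization of $\{6j\}^R$ as a normalized matrix coefficient (equivalently, $\langle Y'_{\sigma'},Y_\sigma\rangle$ divided by the geometric mean of $\langle Y_\sigma,Y_\sigma\rangle$ and $\langle Y'_{\sigma'},Y'_{\sigma'}\rangle$), at the cost of a slightly longer chain. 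Two small remarks. First, once you write the coefficient as $\langle\hat Y'_{\sigma'},\hat Y_\sigma\rangle$, you do not actually need to re-invoke the Whitehead move and orthogonality to compute $\langle Y'_{\sigma'},Y_\sigma\rangle$: the pairing can be evaluated directly by the sphere-move argument of Proposition~\ref{orthogonal:prop}, which produces one tetrahedron $\tetra$ at the flip disk times the shared circle and theta factors; that is a shorter path to the same expression. Second, you defer the final ``delicate algebraic verification'' that the surviving half-integer powers reproduce~\eqref{eq:symm6j}; since that is precisely the content of the lemma, you should carry it out explicitly — it does close, since from $\langle Y'_{\sigma'},Y_\sigma\rangle=(\text{shared})\,\tetra$, $\langle Y_\sigma,Y_\sigma\rangle=(\text{shared})\,\cerchio_c^{-1}\teta_{a,b,c}\teta_{d,e,c}$ and $\langle Y'_{\sigma'},Y'_{\sigma'}\rangle=(\text{shared})\,\cerchio_f^{-1}\teta_{a,e,f}\teta_{d,b,f}$ one immediately gets $\tetra\sqrt{\cerchio_c\cerchio_f}/\sqrt{\teta_{a,b,c}\teta_{a,e,f}\teta_{d,b,f}\teta_{d,e,c}}$, but this should be said rather than asserted.
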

\begin{proof}
The very definition of the quantum $6j$-symbols from Fig.~\ref{whitehead2:fig} says that
$$Y'_{\sigma'} = \sum_\sigma \begin{Bmatrix} a & b & c \\ d & e & f \end{Bmatrix} Y_\sigma.$$
By renormalizing we replace each $Y$ with a $\hat Y$ and each quantum $6j$-symbol with a renormalized $6j$-symbol.
\end{proof}

Fix $A\in \overline\matD^*\setminus S$ and let 
$$\Psi_A\colon K_A(\Sigma) \to \check\calH$$
be the linear isomorphism which sends the skein $\hat Y_\sigma$ to $\delta_\gamma$, where $\gamma$ is the multicurve corresponding to the coloring $\sigma$.

\begin{cor} \label{psi:cor}
The map $\Psi_A$ defines an isomorphism between the representation $\rho_A^K$ and $\rho_A$ for any $A\in \overline \matD^* \setminus S$. 
\end{cor}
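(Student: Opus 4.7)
The plan is to show that $\Psi_A$ intertwines $\rho_A^K$ and $\rho_A$ by introducing, for any triangulation $y$ of $\Sigma$, the analogous map $\Psi_A^y$, and decomposing the verification into a geometric part and a change-of-basis part. Writing $\Psi_A^x := \Psi_A$ to emphasize the dependence on $x$, and setting $y := g(x)$, I would establish the two identities
\begin{align*}
\Psi_A^y \circ \rho_A^K(g) &= \rho_0(g) \circ \Psi_A^x, \\
\Psi_A^x \circ (\Psi_A^y)^{-1} &= c_A(x,y).
\end{align*}
Combined with the definition $\rho_A(g) = c_A(x,g(x)) \rho_0(g)$ from Section \ref{cocycle:subsection}, these would yield $\Psi_A^x \circ \rho_A^K(g) = \rho_A(g) \circ \Psi_A^x$, which is the desired intertwining.

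The first identity is a direct verification. The diffeomorphism $g$ carries the trivalent spine $Y$ dual to $x$ to the spine $g(Y)$ dual to $y$, and the normalization $\hat Y_\sigma = \bigl(\prod_e \sqrt{\cerchio_e}/\prod_v\sqrt{\teta_v}\bigr) Y_\sigma$ depends only on the colors of edges and triples at vertices, so $\rho_A^K(g)(\hat Y^x_\sigma) = \hat Y^y_{g_*\sigma}$, where $g_*\sigma$ is the pushforward coloring. Because intersection numbers with edges are preserved by homeomorphisms, Proposition \ref{normal:prop} identifies the multicurve associated to $g_*\sigma$ on $y$ with $g(\gamma)$, where $\gamma$ is the multicurve associated to $\sigma$ on $x$. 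Applying $\Psi_A^y$ therefore produces $\delta_{g(\gamma)} = \rho_0(g)(\delta_\gamma) = \rho_0(g) \Psi_A^x(\hat Y^x_\sigma)$.

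For the second identity I would first treat the case in which $x$ and $y$ are related by a single flip. Lemma \ref{change:lemma} expresses $\hat Y^y_{\sigma'_f}$ as $\sum_c \begin{Bmatrix}a&b&c\\d&e&f\end{Bmatrix}^R \hat Y^x_{\sigma_c}$, so $\Psi_A^x \circ (\Psi_A^y)^{-1}$ sends $\delta_{\gamma_f}$ to $\sum_c \begin{Bmatrix}a&b&c\\d&e&f\end{Bmatrix}^R \delta_{\gamma_c}$; on the other hand the cocycle sends $\delta_{\gamma_c}$ to $\sum_f \begin{Bmatrix}a&b&c\\d&e&f\end{Bmatrix}^R \delta_{\gamma_f}$ by the definition from Section \ref{cocycle:subsection}. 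As matrices, these two are transposes of one another, and the orthogonality relation of Proposition \ref{exist:renormalized:prop} implies that the transpose of $c_A(y,x)$ equals $c_A(y,x)^{-1} = c_A(x,y)$. For a general pair $(x,y)$ I would pick a sequence of flips $x = x_0, x_1, \ldots, x_n = y$, telescope the single-flip identity, and recognize the resulting composition as $c_A(x,y)$ via the cocycle relation $c_A(x_0,x_n) = c_A(x_0,x_1) \circ \cdots \circ c_A(x_{n-1},x_n)$.

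The main subtle point to track will be the bookkeeping in the single-flip step: although Lemma \ref{change:lemma} and the definition of the cocycle use the same renormalized $6j$-symbols, their sums range over different indices (over $c$ versus over $f$), so the corresponding matrices differ by transposition. It is precisely the orthogonality relation built into the cocycle to make it well-defined that reconciles the two viewpoints and yields the equality $\Psi_A^x \circ (\Psi_A^y)^{-1} = c_A(x,y)$.
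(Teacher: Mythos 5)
Your proposal is correct and takes essentially the approach the paper intends: the paper states the corollary immediately after Lemma \ref{change:lemma} with no separate proof, precisely because the change-of-basis formula there is the same matrix (after transposition and the cocycle/orthogonality relation) as the one defining $c_A$ in Section \ref{cocycle:subsection}. Your decomposition into the geometric intertwining $\Psi_A^y\circ\rho_A^K(g)=\rho_0(g)\circ\Psi_A^x$ and the basis-change identity $\Psi_A^x\circ(\Psi_A^y)^{-1}=c_A(x,y)$, followed by telescoping over a flip sequence, is a clean and faithful expansion of the implicit argument.
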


We therefore get the remaining half of Theorem \ref{faithfullness:teo}:

\begin{cor}\label{fidelity:cor}
Let $A\in \overline \matD\setminus S$. The representation $\rho_A$ on $\check \calH$ is faithful modulo the center.
\end{cor}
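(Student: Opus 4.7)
The approach is to use the isomorphism $\Psi_A$ from Corollary \ref{psi:cor} to transfer the question from $\check\calH$ to the skein algebra $K_A(\Sigma)$, where the multicurve basis makes the action of $\MCG(\Sigma)$ essentially a permutation, and faithfulness (modulo center) becomes visible geometrically.

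First, I would reduce the statement to faithfulness modulo the center of the natural representation $\rho_A^K$ of $\MCG(\Sigma)$ on $K_A(\Sigma)$. For $A\in \overline\matD^*\setminus S$ this reduction is immediate from Corollary \ref{psi:cor}, since $\Psi_A$ is a $\MCG(\Sigma)$-equivariant linear isomorphism between $K_A(\Sigma)$ and $\check\calH$. The remaining value $A=0$ can be treated directly: $\rho_0$ is by definition the multicurve representation on $\check \calH\subset \ell^2(\calM)$, so the same argument applied below to the basis $\{\delta_\gamma\}$ works verbatim.

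Second, I would invoke Proposition \ref{prop:multicurvebasis}: the set of multicurves forms a $\matC$-basis of $K_A(\Sigma)$. From the very definition of the skein module and of the $\MCG(\Sigma)$-action, a mapping class $g$ sends the basis element represented by a multicurve $\gamma$ (with its horizontal framing inherited from $\Sigma$) to the basis element represented by $g(\gamma)$. Hence $\rho_A^K$ realizes $\MCG(\Sigma)$ as a permutation representation on the multicurve basis.

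Third, I would use the classical geometric fact that the action of $\MCG(\Sigma)$ on the set $\calM$ of isotopy classes of multicurves is faithful modulo the center: for every non-central $g\in \MCG(\Sigma)$ there exists a simple closed curve $\gamma\in \calM$ with $g(\gamma)\neq \gamma$. Given such $\gamma$, the images $\gamma$ and $g(\gamma)$ are distinct basis elements of $K_A(\Sigma)$, so $\rho_A^K(g)$ is not the identity. Transferring through $\Psi_A$ (or directly using the definition of $\rho_0$ when $A=0$) gives $\rho_A(g)\neq \id$, as required.

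There is no real obstacle here: the work has all been done in establishing $\Psi_A$ and in the classical theory of $\MCG(\Sigma)$. The only point worth being careful about is keeping track of the two separate cases $A=0$ (where one argues directly on the basis $\{\delta_\gamma\}$ of $\check\calH$) and $A\neq 0$ (where one argues on the multicurve basis of $K_A(\Sigma)$ and then transports via $\Psi_A$); both rely on the same underlying geometric input that non-central mapping classes move some simple closed curve.
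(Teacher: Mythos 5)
Your proof is correct and follows essentially the same route as the paper: transfer to the skein algebra via $\Psi_A$, use the fact that multicurves form a basis (Proposition \ref{prop:multicurvebasis}), and invoke the classical fact that non-central mapping classes move some multicurve. Your explicit separation of the $A=0$ case is a welcome clarification, since $K_A(\Sigma)$ and $\Psi_A$ are only defined for $A\neq 0$, a point the paper's one-line proof glosses over.
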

\begin{proof}
It is well-known that every non-central element $g\in \MCG(\Sigma)$ permutes some multicurve $\gamma$ and hence, by Proposition \ref{prop:multicurvebasis}, acts non-trivially on $K_A(\Sigma)$.
\end{proof}

\subsection{Unitary representations}\label{subsec:unitaryreps}
The bilinear product $\langle, \rangle$ on $K_A(\Sigma)$ does not induce a hermitian form on $K_A(\Sigma)$ for general values of $A$, but it does when $A\in \matR^*$, as we now shortly see.

Suppose $A\in\matR^*$. The skein relations in Fig.~\ref{Kauffman_bracket:fig} involve only real coefficients and may be used to define a real algebra $K_A^{\matR}(\Sigma)$ and a canonical isomorphism $K_A(\Sigma) = K_A^{\matR}(\Sigma)\otimes_\matR \matC$. The bilinear form $\langle \cdot, \cdot \rangle$ induces a real scalar product on $K_A^\matR(\Sigma)$ which extends to a \emph{hermitian} form $\langle \cdot, \cdot \rangle_H$ on the $\mathbb{C}$-vector space $K_A(\Sigma)$.

\begin{prop}
Suppose $A\in \matR^*$. The hermitian form $\langle, \rangle_H$ on $K_A(\Sigma)$ is positive-definite and $\{\hat Y_\sigma \}$ is an orthonormal basis.
\end{prop}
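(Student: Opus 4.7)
My plan is to verify both assertions simultaneously by computing the matrix of $\langle\cdot,\cdot\rangle_H$ in the basis $\{\hat Y_\sigma\}$ and showing it equals the identity. Once this is done, orthonormality is automatic, and positive-definiteness follows because $\{\hat Y_\sigma\}$ is a basis by Proposition \ref{basis:prop}.

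First I would note that for $A\in\matR^*$ the Kauffman relations and the recursive definition of the Jones-Wenzl projectors have only real coefficients, so every $Y_\sigma$ lies in the real subalgebra $K_A^{\matR}(\Sigma)$. Writing $\hat Y_\sigma = c_\sigma Y_\sigma$ with the (possibly non-real) complex scalar $c_\sigma = \prod_e \sqrt{\cerchio_e}/\prod_v \sqrt{\teta_v}$, the extension formula for the hermitian form gives
$$\langle \hat Y_\sigma,\hat Y_{\sigma'}\rangle_H \;=\; c_\sigma\,\overline{c_{\sigma'}}\,\langle Y_\sigma, Y_{\sigma'}\rangle.$$
By Proposition \ref{orthogonal:prop} the off-diagonal terms vanish. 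For the diagonal, combining $\langle Y_\sigma, Y_\sigma\rangle = \prod_v \teta_v/\prod_e \cerchio_e$ with $|c_\sigma|^2 = \prod_e|\cerchio_e|/\prod_v|\teta_v|$ and using the fact that every quantum integer is positive on $\matR^*$ (so that $\cerchio_a,\teta_{a,b,c}$ are nonzero reals), the computation reduces to
$$\langle \hat Y_\sigma,\hat Y_\sigma\rangle_H \;=\; \prod_e\operatorname{sign}(\cerchio_e)\cdot \prod_v\operatorname{sign}(\teta_v).$$

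The main (and really only) obstacle is this sign computation, which is a combinatorial parity check on the graph $Y$. From the explicit evaluations one has $\operatorname{sign}(\cerchio_a)=(-1)^a$ and $\operatorname{sign}(\teta_{a,b,c})=(-1)^{(a+b+c)/2}$ (the latter exponent being an integer by admissibility). The decisive identity is
$$\sum_v \frac{a_v+b_v+c_v}{2} \;=\; \sum_e a_e,$$
which holds because each edge of the trivalent graph $Y$ has exactly two endpoints and therefore contributes its color twice to the left-hand side. Taking this identity modulo $2$ gives $\prod_v\operatorname{sign}(\teta_v)=\prod_e\operatorname{sign}(\cerchio_e)$, so the two sign products cancel to $+1$. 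Hence $\langle\hat Y_\sigma,\hat Y_\sigma\rangle_H=1$, which establishes orthonormality of $\{\hat Y_\sigma\}$ and, since this is a basis, positive-definiteness of $\langle\cdot,\cdot\rangle_H$.
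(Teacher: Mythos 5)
Your proof is correct and follows essentially the same route as the paper's: establish that $Y_\sigma$ is real, invoke Proposition \ref{orthogonal:prop}, and then reduce everything to the sign computation $\prod_e\operatorname{sign}(\cerchio_e)\cdot\prod_v\operatorname{sign}(\teta_v)=1$. The only difference is presentational: where the paper informally attributes a net factor of $(-1)^{2a}=1$ to each edge of color $a$, you make the underlying double-counting explicit via the identity $\sum_v\tfrac{a_v+b_v+c_v}{2}=\sum_e a_e$, which is a cleaner way to justify the same cancellation.
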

\begin{proof}
The element $Y_\sigma$ is real, \emph{i.e.}~it belongs to $K_A^{\matR}(\Sigma)$ because every quantum integer $[n]$ is a real number when evaluated at $A\in\matR^*$ and hence $Y_\sigma$ is a combination of framed links with real coefficients. 

We know from Proposition \ref{orthogonal:prop} that
$$\langle Y_\sigma, Y_{\sigma'} \rangle = \delta_{\sigma, \sigma'}\prod_{e}\cerchio_{e}^{-1}\prod_{v}\teta_{v}. $$
The sign of $\teta_{a,b,c}$ is $(-1)^{\frac{a+b+c}{2}}$ and the sign of $\cerchio_a$ is $(-1)^a$ so that by multiplying all these signs we get a contribution of $(-1)^{2a}=1$ for every edge of $Y$ colored with $a$, and hence $1$ as a result. Therefore $\langle, \rangle_H$ is positive-definite and indeed $\hat Y_\sigma$ is an orthonormal basis for $\langle, \rangle_H$.
\end{proof}

\begin{prop}\label{prop:isometryskein}
Suppose $A\in\matR^*$. The map $\Psi_A$ defines an isometry between $K_A(\Sigma)$ and $\check\calH$.
\end{prop}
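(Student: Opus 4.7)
The plan is to observe that $\Psi_A$ has been defined precisely by matching two orthonormal bases, so there is essentially nothing left to check once the previous proposition is in hand. First, recall that Proposition \ref{normal:prop} gives a canonical bijection between admissible colorings $\sigma$ of the spine $Y$ dual to $x$ and multicurves $\gamma \in \calM$; therefore $\Psi_A\colon \hat Y_\sigma \mapsto \delta_\gamma$ is a well-defined linear bijection from $K_A(\Sigma)$ to $\check\calH$.

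Next, I would invoke the previous proposition, which states that for $A\in\matR^*$ the family $\{\hat Y_\sigma\}_\sigma$ is an orthonormal basis of $K_A(\Sigma)$ with respect to the positive-definite hermitian form $\langle\cdot,\cdot\rangle_H$. On the other side, by the very definition of $\check\calH \subset \ell^2(\calM)$, the family $\{\delta_\gamma\}_{\gamma\in\calM}$ is an orthonormal basis with respect to the standard hermitian inner product. Since $\Psi_A$ sends one orthonormal basis bijectively onto the other, it extends by $\matC$-sesquilinearity to an isometry on all finite linear combinations, i.e.\ on all of $K_A(\Sigma)$.

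Concretely, for vectors $v=\sum_\sigma \alpha_\sigma \hat Y_\sigma$ and $w=\sum_\sigma \beta_\sigma \hat Y_\sigma$ in $K_A(\Sigma)$ one has
\[
\langle v, w\rangle_H \;=\; \sum_\sigma \alpha_\sigma \overline{\beta_\sigma} \;=\; \langle \Psi_A(v), \Psi_A(w)\rangle,
\]
where the right-hand inner product is the standard one on $\check\calH$. This is the required isometry property.

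There is no real obstacle here; the only point to be careful about is that $\Psi_A$ is defined using the orthonormalized vectors $\hat Y_\sigma$ (rather than the raw $Y_\sigma$), which is why the normalizing factor $\prod_e \sqrt{\cerchio_e}/\prod_v \sqrt{\teta_v}$ used in Section \ref{revisited:subsection} is exactly what is needed to match the canonical orthonormal basis $\{\delta_\gamma\}$ on the $\ell^2$ side. Once this bookkeeping is noted, the conclusion follows.
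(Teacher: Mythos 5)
Your proof is correct and matches the paper's own one-line argument exactly: $\Psi_A$ sends the orthonormal basis $\{\hat Y_\sigma\}$ (orthonormal by the preceding proposition, for $A\in\matR^*$) to the orthonormal basis $\{\delta_\gamma\}$ of $\check\calH$, hence is an isometry. The extra bookkeeping you spell out is accurate but not a different approach.
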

\begin{proof}
The map sends the orthonormal basis $\{\hat Y_\sigma\}$ to the orthonormal basis $\{\delta_\gamma\}$.
\end{proof}

This concludes the proof of Proposition \ref{skein:prop}:

\begin{cor}
If $A\in [-1,0)\cup (0,1]$ the representation $\rho_A$ is isometric to the natural action of $\MCG(\Sigma)$ on the Hilbert space $\overline{K_A(\Sigma)}$.
\end{cor}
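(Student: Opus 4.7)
The plan is to assemble the preceding pieces and pass to Hilbert completions. For $A \in [-1,0)\cup(0,1]$ we have $A \in \overline\matD \cap \matR^* \setminus S$, so every construction of the preceding subsections is applicable. By Proposition \ref{prop:isometryskein} the linear map $\Psi_A \colon K_A(\Sigma) \to \check\calH$, characterised by $\Psi_A(\hat Y_\sigma) = \delta_\gamma$ where $\gamma$ is the multicurve dual to $\sigma$, is an isometry between the pre-Hilbert space $(K_A(\Sigma), \langle\cdot,\cdot\rangle_H)$ and the pre-Hilbert subspace $\check\calH \subset \calH$. By Corollary \ref{psi:cor} the same map intertwines the natural action of $\MCG(\Sigma)$ on $K_A(\Sigma)$ with the representation $\rho_A$ on $\check\calH$.

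The second step is to extend everything by continuity. The natural $\MCG(\Sigma)$-action on $K_A(\Sigma)$ is induced by self-diffeomorphisms of $\Sigma$, which by functoriality preserve the Yang-Mills trace (defined via the topologically invariant double) and hence the hermitian form $\langle\cdot,\cdot\rangle_H$; so this action consists of isometries of $K_A(\Sigma)$ and extends uniquely to a unitary representation on the Hilbert completion $\overline{K_A(\Sigma)}$. On the other side, $\rho_A$ is already known to be a unitary representation of $\MCG(\Sigma)$ on $\calH$ for every real $A\in\overline\matD$, by the cocycle construction of Section \ref{repres:section} (cf.~Theorem \ref{unitary:teo}). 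The isometry $\Psi_A$, being defined on the dense subspace $K_A(\Sigma)$ and having dense image $\check\calH$, extends uniquely to a unitary isomorphism $\overline{\Psi_A} \colon \overline{K_A(\Sigma)} \to \calH$, and the intertwining identity
$$\Psi_A(g\cdot v) \,=\, \rho_A(g)\,\Psi_A(v)$$
valid on the dense subspace passes to the completion by continuity of all three operators involved.

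I do not anticipate any serious obstacle, since all substantive work has been carried out earlier: orthogonality of the $\hat Y_\sigma$ basis was established in Proposition \ref{orthogonal:prop}, positive-definiteness of $\langle\cdot,\cdot\rangle_H$ over $\matR^*$ in the proposition preceding Proposition \ref{prop:isometryskein}, and unitarity of $\rho_A$ on $\calH$ in Section \ref{repres:section}. The only mild technical point is the $\MCG(\Sigma)$-equivariance of the Yang-Mills trace, which follows immediately from the fact that $\YM$ is defined through the natural inclusion $\Sigma\times[0,1] \hookrightarrow \#_k(S^2\times S^1)$ and that a self-diffeomorphism of $\Sigma$ doubles to a self-diffeomorphism of this ambient manifold, under which skein modules are canonically invariant.
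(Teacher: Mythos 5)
Your proposal is correct and follows the same route the paper intends: the corollary is meant to be an immediate consequence of Proposition \ref{prop:isometryskein} (isometry $\Psi_A\colon K_A(\Sigma)\to\check\calH$ of pre-Hilbert spaces), Corollary \ref{psi:cor} (equivariance of $\Psi_A$), and the standard extension of a dense isometric intertwiner to the completions. The one small inaccuracy is the citation of Theorem \ref{unitary:teo} for unitarity at $A=\pm 1$ --- that theorem treats only $A\in\matD$; for the boundary points one uses Lemma \ref{orthogonal:lemma} (or, more economically, observes that unitarity of $\rho_A$ on $\check\calH$ already follows from the isometry and equivariance of $\Psi_A$ together with $\MCG(\Sigma)$-invariance of $\YM$, making the appeal to Section \ref{repres:section} unnecessary).
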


We now turn to Corollary \ref{character:cor}:

\begin{cor}
The representation $\rho_{-1}$ is isometric to the $\SU(2)$-character variety representation.
\end{cor}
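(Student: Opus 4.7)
The plan is to bootstrap from Proposition \ref{skein:prop} already established for $A=-1$: it yields an isometric isomorphism between $\rho_{-1}$ and the $\MCG(\Sigma)$-action on the Hilbert completion $\overline{K_{-1}(\Sigma)}$ of the Kauffman algebra equipped with the hermitian form induced by $\YM_{-1}$. Thus the task reduces to constructing a $\MCG(\Sigma)$-equivariant Hilbert space isometry between $\overline{K_{-1}(\Sigma)}$ and $L^2(X,\mu)$.

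First I would invoke the Bullock--Charles--March\'e theorem \cite{Bu, ChMa} cited in the introduction, which provides a $\MCG(\Sigma)$-equivariant algebra isomorphism $\Phi\colon K_{-1}(\Sigma)\to T$ onto the algebra of trace functions on the $\SU(2)$-character variety, concretely sending a simple closed curve $\gamma$ to the trace function $-\tr\circ{\rm ev}_\gamma$. Since $T$ is a dense $*$-subalgebra of $L^2(X,\mu)$ and the action of $\MCG(\Sigma)$ on $L^2(X,\mu)$ is by precomposition on trace functions, $\Phi$ is automatically $\MCG(\Sigma)$-equivariant.

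The crucial step is to check that $\Phi$ is an isometry between the two hermitian forms, i.e.\ that
\[
\langle\alpha,\beta\rangle_H \;=\; \int_X \Phi(\alpha)\,\overline{\Phi(\beta)}\,d\mu
\qquad \forall\,\alpha,\beta\in K_{-1}(\Sigma).
\]
By polarization and since $A=-1$ is real (so the complex bilinear form $\langle\cdot,\cdot\rangle_{-1}$ and the hermitian form $\langle\cdot,\cdot\rangle_H$ agree on the real subalgebra), this reduces to the identity $\YM_{-1}(\alpha\cdot\beta)=\int_X\Phi(\alpha)\Phi(\beta)\,d\mu$. This is the classical statement that at $A=-1$ the Yang--Mills trace is the integration against the Atiyah--Bott/Haar measure on the character variety; I would either cite this from the Frohman--Kania-Bartoszy\'nska interpretation of $\YM$, or else prove it directly by evaluating both sides on the orthonormal basis $\{\hat Y_\sigma\}$ coming from a pants-type spine $Y$ dual to a triangulation: Proposition \ref{orthogonal:prop} gives orthonormality on the skein side, while on the character variety side the $\Phi(\hat Y_\sigma)$ are (up to normalization) the functions arising from the Peter--Weyl decomposition of $L^2(\SU(2))$ applied fiberwise along the pants decomposition, which form an orthonormal basis of $L^2(X,\mu)$.

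Once the isometry is established on the dense subspace $K_{-1}(\Sigma)$, it extends by continuity to an isometric $\MCG(\Sigma)$-equivariant Hilbert space isomorphism $\overline{\Phi}\colon\overline{K_{-1}(\Sigma)}\to L^2(X,\mu)$, and composing with the isometry of Proposition \ref{skein:prop} at $A=-1$ yields the desired identification of $\rho_{-1}$ with the $\SU(2)$-character variety representation. The main obstacle is the identification of $\YM_{-1}$ with Haar integration: although well known, making it rigorous requires either quoting the Fock--Rosly / Witten formula in this setting or verifying orthonormality of the Peter--Weyl basis directly, which is where essentially all the analytic content of the corollary lies.
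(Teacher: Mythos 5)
Your plan follows the paper's route essentially step for step: Proposition~\ref{skein:prop} to reduce to the skein algebra, the Bullock--Charles--March\'e isomorphism $i\colon K_{-1}(\Sigma)\to T$, density of $T$ in $L^2(X,\mu)$ by Peter--Weyl, and then the isometry check. Where you and the paper diverge slightly is in how the isometry check is set up and then completed. You propose either to quote that $\YM_{-1}$ equals Haar integration or to verify orthonormality of $\{\Phi(\hat Y_\sigma)\}$ in $L^2(X,\mu)$, and you correctly flag this as the one nontrivial step. The paper's reduction is a bit lighter: since both hermitian forms are trace-induced on the respective real algebras ($\YM$ on $K_{-1}^{\matR}(\Sigma)$, $\int_X$ on $T^{\matR}$) and $i$ is an \emph{algebra} isomorphism, preserving the form is equivalent to preserving the trace, i.e.\ $\YM(\alpha)=\int_X i(\alpha)$, and by linearity it suffices to test this on the basis $\{Y_\sigma\}$. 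Corollary~\ref{YM:cor} gives $\YM(\emptyset)=1$ and $\YM(Y_\sigma)=0$ for $\sigma\neq 0$, so everything reduces to showing $\int_X i(Y_\sigma)=0$ for $\sigma\neq 0$ --- a statement about one vector at a time, rather than pairwise orthonormality. The paper then discharges this via the explicit spin-network formula for $i(Y_\sigma)$ from \cite{CoMa}, writing $X=G^{E(Y)}/G^{V(Y)}$ and using Schur orthogonality in the form $\int_G\rho_n(g)\,dg=0$ for $n\neq 0$ applied edge by edge. So your intuition (``Peter--Weyl applied fiberwise along a spine'') is exactly the mechanism used, but you would need the concrete Costantino--March\'e formula to turn it into a proof, and the paper's trace-preservation reduction saves you from having to establish full orthonormality of the image basis directly.
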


\begin{proof}
Recall that 
$$X=\Hom\big(\pi_1(\Sigma),\SU(2)\big)/_{\SU(2)}$$
and the $\SU(2)$-character variety representation is the natural representation of $\MCG(\Sigma)$ on the Hilbert space $L^2(X,\mu)$ where $\mu$ is the probability measure induced by the Haar measure on $\SU(2)$. Bullock \cite{Bu} and Charles-March\'e \cite{ChMa} have constructed a $\MCG(\Sigma)$-equivariant algebra isomorphism 
$$i\colon K_{-1}(\Sigma) \longrightarrow T$$
where $T\subset L^2(X,\mu)$ is the algebra generated by the trace functions on $X$. The isomorphism $i$ sends a skein represented by a simple curve $\gamma$ to $-tr_{\gamma}(\cdot)$. 
The subspace $T$ is dense in $L^2(X,\mu)$: indeed $T$ coincides with the algebra of all regular functions on the $\SL_2(\matC)$-representation variety restricted to $X$, see \cite[Theorem 6.1]{Ma}, which is dense in $L^2(X,\mu)$ by Peter-Weyl theorem.

It only remains to verify that $i$ is an isometry. To do so, we note that both algebras $K_{-1}(\Sigma)$ and $T$ may be obtained by $\matC$-tensoring a real algebra $K_{-1}^\matR(\Sigma)$ and $T^\matR$: the former was noted at the beginning of Section \ref{subsec:unitaryreps} and the latter is due to the fact that trace functions are real on $X$. The scalar product on each algebra is then induced by a trace: it is the Yang-Mills trace for $K_{-1}^\matR(\Sigma)$ and the volume form $\int_X$ for $T^\matR$.
Therefore to prove that $i$ is an isometry it suffices to prove that $i$ preserves the traces, that is:
\begin{equation} \label{traces:eqn}
\YM(\alpha) = \int_X i(\alpha)
\end{equation}
for every skein $\alpha$. Let $Y$ be a trivalent spine for $\Sigma$. Since the ribbon graphs $\{Y_\sigma\}$ form a basis for $K_{-1}(\Sigma)$ and traces are linear, it suffices to prove (\ref{traces:eqn}) for $\alpha = Y_\sigma$. According to Corollary \ref{YM:cor} we need to prove that
$$\int_Xi(Y_0) = 1, \quad \int_Xi(Y_\sigma) = 0 \ \forall \sigma\neq 0$$
where $0$ denotes the zero coloring. The first equality is obvious since $i(Y_0) = i(\emptyset) = 1$ is the constant function and $\int_X 1 = 1$.

We now turn to the second equality. 
Set $G=\SU(2)$. Note that $X=G^{E(Y)}/G^{V(Y)}$ where $E(Y)$ and $V(Y)$ denote the set of edges and vertices in $Y$ respectively.
We use a formula provided in \cite{CoMa} (see formula (2), page 13 and the comments about inserting holonomies at the end of page 15) for the evaluation of the function $i(Y_{\sigma})$ at a point $g\in X$ which is represented by matrices $g_e,\ e\in E(Y)$: $$i(Y_\sigma)( \{g_e\}_{e\in E(Y)})=h\big(\bigotimes_{e\in E(Y)} (id_{\sigma(e)}\otimes \rho_{\sigma(e)}(g_e)) \omega_{\sigma(e)},\bigotimes_{v\in V(Y)} \epsilon_{\sigma(e_i),\sigma(e_j),\sigma(e_k)}\big)$$ where $h$ is a hermitian form on $\bigotimes_{e\in E(Y)} (V_{\sigma(e)}\otimes V_{\sigma(e)}^*)$ and $\omega_{\sigma(e)}$, $\epsilon_{\sigma(e_i),\sigma(e_j),\sigma(e_k)}$ are specific vectors belonging respectively to $V_{\sigma(e)}\otimes V_{\sigma(e)}^*$ and to $V^{\pm}_{\sigma(e_i)}\otimes V^{\pm}_{\sigma(e_j)}\otimes V^{\pm}_{\sigma(e_k)}$ (where $e_i,e_j,e_k$ are the edges touching $v$ and we use the notation $V^+_{\sigma(e_i)}=V_{\sigma(e_i)}$, $V^-_{\sigma(e_i)}=V^*_{\sigma(e_i)}$,  and the sign is $+$ iff $e_i$ ends at $v$). 
To write the above formula we implicitly lifted the function $i(Y_\sigma)$ to a $G^{V(Y)}$-invariant function on $G^{E(Y)}$. To conclude remark that by linearity one has:
\begin{align*}
\int_{G^{E(Y)}}h\big(\bigotimes_{e\in E(Y)} (id_{\sigma(e)}\otimes \rho_{\sigma(e)}(g_e)) \omega_{\sigma(e)},\bigotimes_{v\in V(Y)} \epsilon_{\sigma(e_i),\sigma(e_j),\sigma(e_k)}\big)=\\ 
h\big(\bigotimes_{e\in E(Y)} \big(id_{\sigma(e)}\otimes \int_{G_e} \rho_{\sigma(e)}(g_e)dg_e\big) \omega_{\sigma(e)},\epsilon_{\sigma(e_i),\sigma(e_j),\sigma(e_k)}
\big)\end{align*}
But the endomorphism $ \int_{G_e} \rho_{\sigma(e)}(g_e)dg_e$ of $V_{\sigma(e)}$ is $0$ as soon as $\sigma(e)\neq 0$. 
\end{proof}
\subsection{Finite-dimensional representations} \label{finite:subsection}
The finite-dimensional representations can be similarly interpreted using the \emph{reduced} Kauffman bracket.

In what follows we will always suppose that $A\in S$ is a primitive $(4r)^{\rm th}$ root of unity, that is $A=\exp(\frac{i\pi k}{2r})$, with $(k,2r)=1$. This mild hypothesis is sometimes needed to prove some crucial lemma and is hence usually assumed in the literature: see for instance Lemma 13.7 in \cite[Chapter 13]{L_book} or \cite{S}.

Roberts has proved that $K^{\rm red}_A(M)$ is a finite-dimensional vector space which depends only on $\partial M$ up to isomorphism (see \cite{S} for a proof), and that may be obtained only by killing the portions in Fig.~\ref{reduced:fig}-(left), the killing of right portions being redundant (unpublished).

In particular we have $K_A^{\rm red}(M) \isom K_A^{\rm red}(S^3) \isom \matC$ for every closed manifold $M$ including of course $\#_h(S^2\times S^1)$, and therefore we still have a Yang-Mills trace
$$\YM\colon K_A^{\rm red}(\Sigma) \longrightarrow K_A^{\rm red}(\#_h(S^2\times S^1)) \cong \matC$$
and hence a complex bilinear product $\langle,\rangle$ as in Section \ref{YM:subsection} \cite{YM}. Let $x$ be a triangulation for $\Sigma$ and $Y$ its dual ribbon graph. An analogue of Proposition \ref{orthogonal:prop} holds:
\begin{prop}\label{YMrootof1:prop}
Let $\sigma$ and $\sigma'$ be two $r$-admissible colorings for $Y$. The following equality holds:
$$\langle Y_\sigma, Y_{\sigma'} \rangle = \delta_{\sigma, \sigma'}\prod_{e}\cerchio_{e}^{-1}\prod_{v}\teta_{v} $$
\end{prop}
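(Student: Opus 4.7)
The plan is to mimic the proof of Proposition \ref{orthogonal:prop} in the reduced skein setting. First I would observe that the Yang-Mills trace is already available: since $K_A^{\rm red}(\#_k(S^2\times S^1))\cong \matC$ is one-dimensional, the inclusion $\Sigma\times[0,1]\hookrightarrow \overline\Sigma\times[0,1]\hookrightarrow \#_k(S^2\times S^1)$ induces a well-defined map $\YM\colon K_A^{\rm red}(\Sigma)\to \matC$, and the product $\langle Y_\sigma, Y_{\sigma'}\rangle=\YM(Y_\sigma\cdot Y_{\sigma'})$ is computed by stacking two parallel copies of the colored ribbon graphs at different heights in $\Sigma\times[-1,1]$ and evaluating the resulting skein in the closed manifold.

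Next I would use the same geometric dissection: every edge $e$ of the spine $Y$ meets a compressing disc of the handlebody $\overline\Sigma\times[0,1]$ in a single point, and the double of this disc is an essential 2-sphere $S_e\subset \#_k(S^2\times S^1)$. The two parallel strands coming from $Y_\sigma$ and $Y_{\sigma'}$, colored $\sigma(e)$ and $\sigma'(e)$, intersect $S_e$ transversely in two points. I would then apply the fusion rule of Fig.~\ref{fusion:fig} at every $S_e$: this rule is valid in $K_A^{\rm red}$ because it is the $c=0$ special case of the Whitehead move, which holds by Proposition \ref{identities:red:prop} with the sum now ranging over $r$-admissible colors. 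Thus the two parallel strands through $S_e$ are replaced by a sum over $r$-admissible $k$ of a single strand colored $k$ transverse to $S_e$.

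The decisive ingredient is the reduced analogue of the ``killing'' lemma of \cite[Lemma 1]{YM}: any colored ribbon graph meeting an essential 2-sphere transversely in a single point with a \emph{non-zero} color is the zero skein in $K_A^{\rm red}(\#_k(S^2\times S^1))$. Its proof uses only the Kauffman relations and the recursive definition of the Jones-Wenzl projectors $f_n$ for $n\leqslant r-1$, all of which survive in the reduced module when $A$ is a primitive $(4r)^{\rm th}$ root of unity; so only the $k=0$ summand from each fusion contributes. If $\sigma(e)\neq \sigma'(e)$ for some edge $e$, the triple $(\sigma(e),\sigma'(e),0)$ is not admissible and the $k=0$ term is absent, whence $\langle Y_\sigma,Y_{\sigma'}\rangle=0$. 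If $\sigma=\sigma'$, the fusion coefficient at each edge contributes a factor $\cerchio_e^{-1}$ and what remains at each vertex $v$ is a $\teta_v$ graph evaluated in $S^3$, yielding
$$Y_\sigma\cdot Y_\sigma = \Bigl(\prod_e \cerchio_e^{-1}\prod_v \teta_v\Bigr)\cdot \emptyset$$
in $K_A^{\rm red}(\#_k(S^2\times S^1))$, and $\YM(k\cdot\emptyset)=k$ gives the claimed formula.

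The main subtlety is bookkeeping $r$-admissibility: one must check that throughout the fusion process every intermediate colored ribbon graph is $r$-admissible, so that the Jones-Wenzl projectors involved are defined and the reduced sphere-killing lemma applies. This follows from the $r$-admissibility of $\sigma,\sigma'$ together with the restriction of the fusion sum to $r$-admissible colors built into the definition of the reduced Whitehead move in Section \ref{reduced:subsection}; the rest of the argument is then a word-for-word transcription of the proof of Proposition \ref{orthogonal:prop}.
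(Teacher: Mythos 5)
Your proposal is correct and takes essentially the same approach as the paper, whose proof is a one-liner noting that the argument of Proposition~\ref{orthogonal:prop} carries over because the moves in Fig.~\ref{sphere:fig} remain valid in the reduced skein module (citing \cite[Lemma~2]{S}). You have simply unpacked that citation, verifying that the fusion rule (the $c=0$ case of the reduced Whitehead move of Proposition~\ref{identities:red:prop}) and the sphere-killing lemma survive in $K_A^{\rm red}$, and that $r$-admissibility is maintained throughout.
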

\begin{proof}
The proof of Proposition \ref{orthogonal:prop} applies also in this setting, since the moves in Fig.~\ref{sphere:fig} are also valid in the reduced skein module, see for instance \cite[Lemma 2]{S}.
\end{proof}

Multicurves form a canonical basis of $K_A(\Sigma)$. The reduced skein module $K_A^{\rm red}(\Sigma)$ does \emph{not} have a canonical basis: to construct some nice basis we need to fix a triangulation $x$ and its dual spine $Y$.

\begin{prop} \label{finite:basis:prop}
Each of the following finite sets form a basis for $K_A^{\rm red}(\Sigma)$:
\begin{enumerate}
\item the set of all multicurves that intersect (in normal form) every triangle of $x$ in $\leqslant r-2$ arcs, \label{multicurve:basis:item}
\item the set $\{Y_\sigma\}$ where $\sigma$ varies among all $r$-admissible colorings on $Y$.
\end{enumerate}
The second basis is orthogonal with respect to $\langle, \rangle$.
\end{prop}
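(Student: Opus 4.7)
The plan is to adapt the proof of Proposition~\ref{basis:prop} to the reduced setting, proceeding in four stages.

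First, I would identify the index sets of (1) and (2) via Proposition~\ref{normal:prop}: a multicurve in normal form corresponds to an admissible coloring of $x$, and the number of arcs inside a triangle with edges colored $a,b,c$ equals $(a+b+c)/2$, so the condition ``$\leqslant r-2$ arcs per triangle'' is exactly the $r$-admissibility inequality $a+b+c\leqslant 2(r-2)$ at every triangle. Combined with the triangle inequality $a+b\geqslant c$ and the even-sum condition this also forces every individual edge color to be at most $r-2$, which in particular ensures that all Jones--Wenzl projectors used to build $Y_\sigma$ are well-defined.

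Second, I would establish orthogonality, and hence linear independence, of the set~(2) directly from Proposition~\ref{YMrootof1:prop}, which gives
$$\langle Y_\sigma, Y_{\sigma'}\rangle = \delta_{\sigma,\sigma'}\prod_e\cerchio_e^{-1}\prod_v\teta_v.$$
For $r$-admissible $\sigma$ every quantum integer $[n]$ appearing in the evaluations of $\cerchio_e$ and $\teta_v$ has index $n\leqslant r-1$, and since $A$ is a primitive $(4r)^{\rm th}$ root of unity all such $[n]$ are non-zero, so every diagonal entry is non-zero.

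Third, I would carry over the triangular change-of-basis argument from the proof of Proposition~\ref{basis:prop}. From the decomposition $f_n = 1_n + i_n$ with $i_n$ in the ideal generated by short-return elements one obtains
$$Y_\sigma = \gamma_\sigma + \sum_{\sigma'<\sigma} c_{\sigma'}\,\gamma_{\sigma'},$$
where $\gamma_\sigma$ denotes the multicurve attached to $\sigma$ and $<$ is the edgewise partial order. Since any $\sigma'<\sigma$ inherits $r$-admissibility from $\sigma$, this relation restricts to an invertible triangular change of basis between the linear spans of (1) and (2).

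The main obstacle is step four: showing that (2) actually spans $K_A^{\rm red}(\Sigma)$. My plan here is to first observe that every $Y_\sigma$ with $\sigma$ non-$r$-admissible vanishes in the quotient $K_A^{\rm red}(\Sigma)$, since at any vertex $v$ of $Y$ with incident colors $(i,j,k)$ satisfying $i+j+k\geqslant 2r-2$ the skein $Y_\sigma$ locally exhibits precisely the trivalent configuration on the right of Fig.~\ref{reduced:fig} and is killed by the defining relations. To then produce an actual spanning argument I would invoke the local tree-basis statement from Section~\ref{reduced:subsection}, applied triangle by triangle along the dual ribbon graph $Y$: cutting $\Sigma\times[0,1]$ along the rectangles dual to the edges of $x$ reduces the computation of $K_A^{\rm red}(\Sigma)$ to the reduced skeins of the triangle-prisms, whose bases are parameterized by $r$-admissible colorings of local trees, and reassembling along $Y$ yields (2) as a spanning set. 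Combined with step three this shows (1) is also a basis, and step two supplies the claimed orthogonality of (2).
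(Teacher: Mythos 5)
Your first three steps are correct and match the paper exactly: the identification of the index set via normal form, the orthogonality and hence linear independence of $\{Y_\sigma\}$ from Proposition~\ref{YMrootof1:prop}, and the upper-triangular change of basis coming from $f_n = 1 + i_n$. The problem is step four, the spanning argument.

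The paper's spanning argument is considerably simpler and avoids the issues in yours. It starts from Przytycki's result (Proposition~\ref{prop:multicurvebasis}) that multicurves form a basis of the \emph{unreduced} module $K_A(\Sigma)$; since $K_A^{\rm red}(\Sigma)$ is a quotient of $K_A(\Sigma)$, the images of multicurves automatically span the reduced module. Then, working directly with multicurves (not with the $Y_\sigma$), one uses the killed relations of Fig.~\ref{reduced:fig} to rewrite any multicurve that meets some triangle of $x$ in more than $r-2$ arcs as a combination of strictly smaller multicurves in the edgewise order; iterating, only $r$-admissible multicurves survive. Your approach tries instead to show directly that $\{Y_\sigma\}_{\sigma\ r\text{-admissible}}$ spans, and it has two gaps. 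First, the observation that ``$Y_\sigma$ vanishes for non-$r$-admissible $\sigma$'' is incomplete: for admissible $\sigma$ with some edge color $\geqslant r$ the element $Y_\sigma$ is not merely zero, it is undefined (the Jones--Wenzl projector $f_n$ does not exist for $n\geqslant r$), so you cannot start from ``all admissible $Y_\sigma$ span and the bad ones die.'' Second, the cut-and-reassemble step along triangle prisms is not an elementary fact: reduced skein modules do not satisfy a naive Mayer--Vietoris or tensor decomposition under cutting along surfaces, and the local tree-basis statement from Section~\ref{reduced:subsection} is a statement about the disk, not a gluing theorem. Making your step four rigorous would require a genuine (and nontrivial) gluing/surgery argument for $K_A^{\rm red}$, which the proposition does not need: the quotient map from $K_A(\Sigma)$ already hands you a spanning set for free.
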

\begin{proof}
Multicurves generate $K_A(\Sigma)$ and hence also generate $K_A^{\rm red}(\Sigma)$. Put every multicurve in normal form with respect to $x$ and order them as in Proposition \ref{basis:prop}: we say that $\gamma \leqslant \gamma'$ if their colors are $c\leqslant c'$ at every edge. If the multicurve intersects a triangle in more than $r-2$ arcs, we can use the killed portions in Fig.~\ref{reduced:fig} to express the multicurve as a linear combinations of smaller multicurves. Therefore every multicurve can be expressed as a linear combination of multicurves corresponding to $r$-admissible colorings on $x$, \emph{i.e.}~intersecting every triangle in $x$ in at most $r-2$ arcs. 

We claim that these multicurves are actually linearly independent, hence they are a basis. Indeed, arguing as in Proposition \ref{basis:prop}, one can express each multicurve $\gamma$ inducing an $r$-admissible coloring  $\sigma$ as $Y_\sigma$ plus a linear combination of colored ribbon graphs $Y_{\sigma'}$ with $\sigma'<\sigma$. But by Proposition \ref{YMrootof1:prop} the vectors $Y_\sigma$ are independent. 
\end{proof}

We can now define $\hat Y_\sigma$ renormalizing $Y_\sigma$ as in Section \ref{revisited:subsection}, so that Lemma \ref{change:lemma} also holds in this context, everything restricted only to $r$-admissible colorings. We can similarly define
$$\Psi_A\colon K_A^{\rm red}(\Sigma) \to \calH_r$$
by sending $\hat Y_\sigma$ to $\delta_\gamma$ where $\gamma$ is the multicurve corresponding to the coloring $\sigma$. Let $\rho_A^K$ denote the natural representation of $\MCG(\Sigma)$ on $K_A^{\rm red}(\Sigma)$.

\begin{cor} \label{isom:finito:cor}
The map $\Psi_A$ defines an isomorphism between the representation $\rho_A^K$ and $\rho_A$ for every $(4r)^{\rm th}$ root of unity $A$.
\end{cor}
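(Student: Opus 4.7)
The plan is to follow the line of the proof of Corollary \ref{psi:cor}. By Proposition \ref{finite:basis:prop} the elements $\{\hat Y_\sigma\}$, indexed by $r$-admissible colorings of the dual spine $Y$ of $x$, form a basis of $K_A^{\rm red}(\Sigma)$, and $\Psi_A$ sends this basis bijectively onto the basis $\{\delta_\gamma\}_{\gamma\in\calM_r^x}$ of $\calH_r$. Hence $\Psi_A$ is by construction a linear isomorphism, and the only point left is the $\MCG(\Sigma)$-equivariance.

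To verify equivariance I will first observe that Lemma \ref{change:lemma} carries over verbatim to the reduced setting: when $x$ and $y$ are two triangulations related by a flip, the change from the orthogonal basis $\{\hat Y^x_\sigma\}$ to $\{\hat Y^y_{\sigma'}\}$ of $K_A^{\rm red}(\Sigma)$ is encoded by the same renormalized $6j$-symbols, the only modification being that one now sums over $r$-admissible values of the flipped label. This is the reduced analog of the Whitehead move of Fig.~\ref{whitehead2:fig}, and it matches exactly the definition of the cocycle $c_A(y,x)$ given in Section \ref{hr:sub}. Writing $\Psi^x,\Psi^y$ for the isomorphisms associated to the two triangulations, one reads off
$$c_A(y, x) \circ \Psi^x = \Psi^y.$$
Concatenating along any flip-path and using the cocycle property established in Section \ref{hr:sub}, the identity $c_A(y, x) = \Psi^y \circ (\Psi^x)^{-1}$ then holds for arbitrary pairs $x, y$.

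The second ingredient is the tautological behavior of the $\MCG(\Sigma)$-action with respect to the two bases. Since $g$ is an orientation-preserving diffeomorphism, it carries the spine $Y$ dual to $x$ to the one dual to $gx$ preserving colorings, so $\rho_A^K(g)(\hat Y^x_\sigma) = \hat Y^{gx}_\sigma$; similarly the $x$-coloring of a multicurve $\gamma$ equals the $gx$-coloring of $g\gamma$, so $\rho_0(g)(\delta_{\gamma(\sigma,x)}) = \delta_{\gamma(\sigma, gx)}$. Combining these two facts:
$$\Psi^x\bigl(\rho_A^K(g)(\hat Y^x_\sigma)\bigr) = \Psi^x(\hat Y^{gx}_\sigma) = c_A(x, gx)\bigl(\Psi^{gx}(\hat Y^{gx}_\sigma)\bigr) = c_A(x, gx)\,\rho_0(g)\bigl(\Psi^x(\hat Y^x_\sigma)\bigr) = \rho_A(g)\bigl(\Psi^x(\hat Y^x_\sigma)\bigr),$$
which is the desired equivariance. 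The only real, and minor, obstacle is the first step: confirming that the reduced change-of-basis formula coincides with the cocycle definition. This ultimately amounts to comparing the reduced renormalized $6j$-symbols, as introduced in Section \ref{reduced:subsection}, with those appearing in the cocycle construction of Section \ref{hr:sub}, and follows from Proposition \ref{identities:red:prop}.
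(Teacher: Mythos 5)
Your proof is correct and fills in exactly the argument the paper leaves implicit: it uses Proposition \ref{finite:basis:prop} to identify the orthonormal bases on both sides, the reduced version of Lemma \ref{change:lemma} together with the orthogonality relations (Proposition \ref{identities:red:prop}) to match the change-of-basis with the cocycle $c_A$, and then the tautological pushforward action of $\MCG(\Sigma)$ on colored spines and on multicurve labels to conclude equivariance. This is the same route the paper takes for Corollary \ref{psi:cor} and then transports wholesale to the reduced setting, so no further comment is needed.
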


\subsection{Finite dimensional representations and TQFT}
As mentioned in the introduction, the finite-dimensional representations $\{\rho_A\}_{A\in S}$ 
are not new, at least when $A$ is a primitive $(4r)$-th root of unity: they are determined by the well-known Reshetikhin-Turaev representations, see \cite{BHMV}. 
  
\begin{teo}\label{tqft:teo}
Let $A$ be a $(4r)^{\rm th}$ root of unity. The following isomorphism of representations of $\MCG(\Sigma)$ holds:
$$\calH_r\cong \bigoplus_{i_1,\ldots, i_n} \End(V(\Sigma ; i_1,\ldots, i_n))$$ where $i_1,\ldots, i_n$ range over all the colorings of the $n$ punctures of $\Sigma$ with values in $\{0,1,\ldots, r-2\}$ and $V(\Sigma;i_1,\ldots, i_n)$ is the module associated to $(\Sigma;i_1,\ldots, i_n)$ by the Reshetikhin-Turaev TQFT at level $r$. 
\end{teo}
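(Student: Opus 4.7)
\emph{Plan of proof.} By Corollary \ref{isom:finito:cor} the representation $\calH_r$ of $\MCG(\Sigma)$ is $\MCG(\Sigma)$-equivariantly isomorphic to the reduced skein module $K_A^{\rm red}(\Sigma)$ with its natural action. Consequently it suffices to produce an equivariant isomorphism
$$K_A^{\rm red}(\Sigma) \cong \bigoplus_{i_1,\ldots,i_n} \End\big(V(\Sigma;i_1,\ldots,i_n)\big).$$
This will be obtained in two steps: first a decomposition of $K_A^{\rm red}(\Sigma)$ according to colorings of the punctures, and second a TQFT factorization of each summand.

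\emph{Step 1: puncture decomposition.} Near each puncture $p_k$ pick a small annular neighborhood $A_k\subset\Sigma$ with core curve parallel to the puncture. The reduced skein module $K_A^{\rm red}(A_k\times[0,1])$ is spanned by the Jones-Wenzl idempotents $f_0,f_1,\ldots,f_{r-2}$ wrapped around the core, and there is a resolution of the identity $\id = \sum_{i=0}^{r-2} e_i$ where each $e_i$ is a suitably normalized multiple of $f_i$. Inserting this resolution near each puncture yields an orthogonal decomposition
$$K_A^{\rm red}(\Sigma) = \bigoplus_{(i_1,\ldots,i_n)} K_A^{\rm red}(\Sigma;i_1,\ldots,i_n),$$
where $K_A^{\rm red}(\Sigma;i_1,\ldots,i_n)$ is the subspace of skeins carrying $f_{i_k}$ encircling $p_k$. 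Since mapping classes permute punctures, they permute the summands correspondingly, so this decomposition is $\MCG(\Sigma)$-equivariant.

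\emph{Step 2: TQFT factorization.} Fill the punctures with open disks to obtain a closed surface $\overline\Sigma$ carrying marked points $p_1,\ldots,p_n$, and view $\Sigma\times[0,1]$ as the gluing of two copies $H^+,H^-$ of a handlebody with boundary $\overline\Sigma$ along $\overline\Sigma\times\{\frac12\}$, with colored arcs in each $H^\pm$ joining $p_k$ colored by $i_k$ to a fixed interior spine. By the BHMV skein-theoretic construction of the $\SU(2)$ Reshetikhin--Turaev TQFT \cite{BHMV}, the spaces $K_A^{\rm red}(H^+;i_1,\ldots,i_n)$ and $K_A^{\rm red}(H^-;i_1,\ldots,i_n)$ are precisely $V(\Sigma;i_1,\ldots,i_n)$ and its dual (the pairing is realized by stacking in the doubled manifold and applying the Yang-Mills trace, as in Proposition \ref{YMrootof1:prop}). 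The standard gluing/factorization argument for skein TQFTs then supplies a canonical isomorphism
$$K_A^{\rm red}(\Sigma;i_1,\ldots,i_n) \cong V(\Sigma;i_1,\ldots,i_n)\otimes V(\Sigma;i_1,\ldots,i_n)^* \cong \End\big(V(\Sigma;i_1,\ldots,i_n)\big).$$

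\emph{Equivariance and the main obstacle.} A diffeomorphism $g$ of $\Sigma$ extends to a diffeomorphism of $\Sigma\times[0,1]$ that restricts to $g$ on both copies of $\overline\Sigma$; it thus induces simultaneously the TQFT operator $\rho(g)$ on the $V$-factor arising from $H^+$ and the operator $\rho(g)$ on the $V$-factor underlying $H^-$ (whose dual is the second factor). Under the identification $V\otimes V^*\cong \End(V)$, this produces the conjugation action $X\mapsto \rho(g)\, X\, \rho(g)^{-1}$ on $\End(V(\Sigma;i_1,\ldots,i_n))$. The key subtlety, and the main obstacle, is the projective (anomaly) nature of the Reshetikhin--Turaev representation: $\rho(g)$ is defined only up to a scalar, but in the conjugation formula these scalars cancel, yielding a genuine linear representation. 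Checking that the natural $\MCG(\Sigma)$-action on $K_A^{\rm red}(\Sigma)$ translates precisely to this conjugation action, with the anomaly bookkeeping consistent on $H^+$ and $H^-$, is the technical heart of the argument.
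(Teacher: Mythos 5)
Your Step 2 is where the argument goes astray. The manifold $\Sigma\times[0,1]$ is not the gluing of two handlebodies $H^+$ and $H^-$ along $\overline\Sigma$: gluing two handlebodies along their full boundary produces the closed manifold $\#_k(S^2\times S^1)$ (whose reduced skein is one-dimensional), not $\Sigma\times[0,1]$, which is itself a single handlebody since $\Sigma$ is punctured. There is no TQFT ``cutting axiom'' that applies here, and the identification $K_A^{\rm red}(\Sigma;i_1,\ldots,i_n)\cong V\otimes V^*$ is not an instance of factorization. The correct mechanism, which is what the paper uses, is the \emph{action} map: fix a handlebody $H$ with $\partial H=\Sigma$ and colored arcs; pushing a skein $\alpha\in K_A^{\rm red}(\Sigma)$ into a collar of $\partial H$ and stacking it over an element of $K_{A,i_1,\ldots,i_n}^{\rm red}(H)$ defines an algebra homomorphism
$$\phi\colon K_A^{\rm red}(\Sigma)\longrightarrow \bigoplus_{i_1,\ldots,i_n}\End\big(K_{A,i_1,\ldots,i_n}^{\rm red}(H)\big).$$
The two substantive steps in the paper are then: (a) surjectivity of $\phi$, obtained by adapting Roberts' argument for closed surfaces to the punctured case, and (b) the equality of dimensions $\dim K_A^{\rm red}(\Sigma)=\sum\dim\End(\cdot)$, which follows from counting $r$-admissible colorings of a spine of $\Sigma$ versus a doubled spine of $H$ (both controlled by the Verlinde formula). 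Your proposal supplies neither of these; ``standard gluing/factorization'' is doing all the work and is precisely the statement that has to be proved.

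Your Step 1 (decomposition by inserting the idempotent resolution $\id=\sum e_i$ in the reduced skein of each peripheral annulus) is a legitimate preliminary step, and it is a genuinely different organization from the paper's (which does not decompose the source first but reads the decomposition off the direct sum in the target). It is, however, not strictly needed once the action map $\phi$ is in place, since the summand structure falls out of $\phi$. Your final remarks on the anomaly and why the conjugation action on $\End(V)$ is a genuine linear representation are correct and worth keeping; the paper is terse on this point.

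In short: replace the cutting picture in Step 2 by the action of the skein algebra on the skein of a fixed colored handlebody, and fill in surjectivity (Roberts) and the Verlinde dimension count. Without those two ingredients there is no proof.
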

\begin{proof}
Fix a handlebody $H$ with $\partial H = \Sigma$ and for each coloring $i_1,\ldots, i_n$ consider the corresponding reduced skein module $K_{A,i_1,\ldots,i_n}^{\rm red}(H)$ as a model for $V(\Sigma;i_1,\ldots, i_n)$. By pushing skeins inside $H$ we get an algebra map
$$\phi_{i_1,\ldots,i_n}\colon K_A^{\rm red}(\Sigma) \to \End(K_{A,i_1,\ldots,i_n}^{\rm red}(H)).$$
These maps collect as
$$\phi\colon K_A^{\rm red}(\Sigma) \to \bigoplus_{i_1,\ldots, i_n} \End(K_{A,i_1,\ldots,i_n}^{\rm red}(H)).$$
The map $\phi$ is surjective: to prove this fact one only needs to adapt Roberts' argument \cite[Theorem 6.5]{Ro} for closed surfaces to the punctured case. We conclude by proving that
$$\dim(K_A^{\rm red}(\Sigma))=\sum_{i_1,\ldots, i_n} \dim \End(K_{A,i_1,\ldots,i_n}^{\rm red}(H)).$$ By Proposition \ref{finite:basis:prop} the integer $\dim(K_A^{\rm red}(\Sigma))$ is the number of $r$-admissible colorings of a spine $Y$ of $\Sigma$ (considered as a punctured surface). Let $Z$ be a spine of $H$ having its 1-valent vertices at the marked points in the boundary $\partial H$: a basis for $K_{A,i_1,\ldots,i_n}^{\rm red}(H)$ is given by all the $r$-admissibly colored $Z$'s extending the fixed colors $i_1,\ldots, i_n$ at the marked points. Therefore $ \dim \End(K_{A,i_1,\ldots,i_n}^{\rm red}(H))$ is the square of such number, in other words it is the number of all $r$-admissible colorings of the graph $\overline Z$ obtained by doubling $Z$ along the 1-valent vertices, that are colored as $i_1,\ldots, i_n$ at the (now interior) marked points. 

Summing up on colorings we get that $\sum_{i_1,\ldots i_n} \dim \End(K_{A,i_1,\ldots,i_n}^{\rm red}(H))$ equals the number of $r$-admissible colorings of $\overline Z$. Since $\overline Z$ and $Y$ are trivalent graphs with the same Euler characteristic, they have the same number of $r$-admissible colorings (given by the Verlinde formula), and we are done.
\end{proof}

\section{Detection of pseudo-anosov maps}\label{sec:anosov}
In this section we prove Theorem \ref{main:dilatation:teo}. We will tacitly assume Corollary \ref{isom:finito:cor} and work with the natural representation $\rho_A^K$ of $\MCG(\Sigma)$ on the reduced skein module $K_A^{\rm red}(\Sigma)$ for some $(4r)^{\rm th}$ root of unity $A$. We will consider the marked points in $\Sigma$ as punctures, as customary in Nielsen-Thurston theory.

A key tool will be the following general lemma.

\begin{lemma} \label{key:lemma}
Let $\varphi\in \MCG(\Sigma)$ and $\gamma$ be a simple closed curve such that $\varphi(\gamma)$ and $\gamma$ are not isotopic. Let $x$ be an ideal triangulation for $\Sigma$ such that each triangle of $x$ intersects $\gamma$ and $\varphi(\gamma)$ in at most $N$ arcs and $A$ be a primitive $(4r)^{\rm th}$ root of unity. If $r>\frac N2 +1$ then $\rho_A^K(\varphi)\neq \id$.
\end{lemma}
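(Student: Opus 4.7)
The plan is to exhibit $\gamma$ and $\varphi(\gamma)$ as two distinct vectors of the explicit multicurve basis of $K_A^{\rm red}(\Sigma)$ described in Proposition \ref{finite:basis:prop}(\ref{multicurve:basis:item}). Once both skeins belong to this basis, the fact that $\rho_A^K(\varphi)$ sends one basis vector to the other—different—basis vector will immediately force $\rho_A^K(\varphi)\neq\id$. Nothing deep beyond Propositions \ref{normal:prop} and \ref{finite:basis:prop} will be needed.

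The first step is to translate the ``at most $N$ arcs per triangle'' hypothesis into the $r$-admissibility condition $a+b+c\leqslant 2(r-2)$ from Section \ref{reduced:subsection}. By Proposition \ref{normal:prop}, isotoping $\gamma$ into normal form with respect to $x$ produces an admissible colouring $\sigma_\gamma$ whose value on each edge is the geometric intersection number with $\gamma$; the same is true for $\varphi(\gamma)$ and a colouring $\sigma_{\varphi(\gamma)}$. The hypothesis gives a linear bound on the triangle sums $a+b+c$ of these two colourings, and the numerical condition $r>\tfrac{N}{2}+1$ is tailored precisely so that $2(r-2)$ is at least that bound. Hence both $\sigma_\gamma$ and $\sigma_{\varphi(\gamma)}$ are $r$-admissible colourings of $x$.

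Once this bookkeeping is done, Proposition \ref{finite:basis:prop}(\ref{multicurve:basis:item}) tells us that both $\gamma$ and $\varphi(\gamma)$ sit inside the explicit multicurve basis of $K_A^{\rm red}(\Sigma)$. The non-isotopy hypothesis $\varphi(\gamma)\not\sim\gamma$ ensures they are distinct basis vectors. Since $\rho_A^K$ is by construction the geometric action of $\MCG(\Sigma)$ on the reduced skein module, $\rho_A^K(\varphi)$ sends the basis vector $\gamma$ to the basis vector $\varphi(\gamma)$; as these disagree, $\rho_A^K(\varphi)\neq\id$. Via Corollary \ref{isom:finito:cor} this is equivalent to $\rho_A(\varphi)\neq\id$, which is what the lemma asserts.

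The only delicate point is the first step: one has to fix the precise counting convention for ``arcs in a triangle'' (proper arcs, versus endpoints on the triangle's boundary) so that the inequality $r>\tfrac{N}{2}+1$ aligns cleanly with the $r$-admissibility bound $a+b+c\leqslant 2(r-2)$. Apart from this purely combinatorial matching, the argument is immediate and uses only the machinery already set up in Sections \ref{reduced:subsection} and \ref{finite:section}.
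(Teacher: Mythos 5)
Your argument follows the paper's proof essentially verbatim: isotope both curves to normal form with respect to $x$ (the isotopy does not increase edge--intersections, so the bound on arcs per triangle is preserved), observe that the hypothesis makes $\gamma$ and $\varphi(\gamma)$ $r$-admissible and hence two \emph{distinct} elements of the multicurve basis of $K_A^{\rm red}(\Sigma)$ from Proposition~\ref{finite:basis:prop}-(\ref{multicurve:basis:item}), and conclude $\rho_A^K(\varphi)\neq\id$. So the route is the same.

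However, the ``delicate point'' you flag in your last paragraph is not a detail to wave away: with the convention used in the rest of the paper (number of arcs per triangle $=(a+b+c)/2$, and $\calM_r$ consists of curves with $\leqslant r-2$ arcs per triangle), the hypothesis ``$\leqslant N$ arcs'' yields $a+b+c\leqslant 2N$, which satisfies $r$-admissibility $a+b+c\leqslant 2(r-2)$ precisely when $N\leqslant r-2$, i.e.\ $r\geqslant N+2$ --- not $r>N/2+1$. The stated bound $r>N/2+1$ only aligns if ``$N$ arcs'' is read as ``$N=a+b+c$,'' i.e.\ as an edge-intersection count per triangle, in which case $a+b+c\leqslant N< 2r-2$ together with the parity of $a+b+c$ forces $a+b+c\leqslant 2(r-2)$. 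You should commit to one of these conventions and run the arithmetic to the end rather than asserting that the inequality ``aligns cleanly''; as it stands, the central numerical step is the one step not actually proved.
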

\begin{proof}
Since $\gamma$ and $\varphi(\gamma)$ are not isotopic they are both nontrivial and can be isotoped into normal form with respect to $\Delta$, via an isotopy which does not increase the intersections of $\gamma$ and $\varphi(\gamma)$ with the edges of $x$: hence the resulting normal curves $\gamma$ and $\varphi(\gamma)$ intersect every triangle of $x$ again in at most $N$ arcs. These curves are therefore $r$-admissible (recall the definitions in Subsection \ref{hr:sub}) for any $r$ such that $N<2r-2$, \emph{i.e.} such that $r> \frac N2 +1$.
Hence they form different elements of the multicurve basis of $K_A^{\rm red}(\Sigma)$ described in Proposition \ref{finite:basis:prop}-(\ref{multicurve:basis:item}) and therefore $\rho_A(\varphi) \neq \id$.
\end{proof}

When $\varphi$ is pseudo-Anosov we now show how to construct an appropriate $\gamma$ and $x$ using train tracks. 

\subsection{Train tracks}
Let $\Sigma$ be as usual a punctured surface of negative Euler characteristic.
By the Nielsen-Thurston classification an element $\varphi\in\MCG(\Sigma)$ is either of finite order, reducible, or pseudo-Anosov. A pseudo-Anosov mapping class determines a (projective class of) stable $\calL^+$ and unstable $\calL^-$ measured laminations together with a real number $\lambda>1$ called \emph{dilatation}, such that $\varphi(\calL^+) = \lambda\calL^+$ and $\varphi(\calL^-) = \lambda^{-1}\calL^-$. Laminations are nicely encoded using some combinatorial objects called \emph{train tracks}.

A train track $\tau\subset \Sigma$ is a smooth complex where every vertex is trivalent and modeled as a \emph{switch}. The complement $\Sigma\setminus \tau$ is a disjoint union of connected surfaces with piecewise-smooth boundary, and we require that the double of each such surface along its smooth boundary has negative Euler characteristic. The edges of $\tau$ are called \emph{branches}.

A (transverse) \emph{measure} on a train track $\tau$ is the assignment of a non-negative real \emph{weight} to each branch which satisfies the switch condition $a = b+c$ at every vertex. A measure on $\tau$ determines a measured lamination in $\Sigma$. If the weights are natural numbers the measured lamination is simply a multicurve. The space of all weights on $\tau$ is a cone denoted by $V_\tau$, which can be seen as a subcone of the (piecewise-linear) space of all measured laminations in $\Sigma$.

A train track $\tau'$ is \emph{carried} by a train track $\tau$ if $\tau'$ may be smoothly immersed into $\tau$. If this holds we use the symbol $\tau' < \tau$ and notice that the smooth immersion induces an embedding of cones $V_{\tau'}\subset V_\tau$. An important case occurs when $\varphi$ is a diffeomorphism of $\Sigma$ and $\varphi(\tau)$ is carried by $\tau$: in that case $\varphi$ acts on the cone $V_\tau$ and this action is nicely encoded by a square \emph{incidence matrix}, whose definition we now recall following \cite{Pen}.

At every branch of $\tau$ we fix an interior point and a fiber in its tie-neighborhood, called the \emph{central tie} over the branch. We fix a smooth map $h\colon \Sigma\to \Sigma$ which homotopes $\varphi(\tau)$ inside $\tau$ keeping the branches transverse to the ties. Let $b_1,\ldots, b_n$ be the branches of $\tau$. The incidence matrix is a $n\times n$ matrix $M$ whose element $M_{ij}$ is the number of intersections of $h\circ \varphi(b_i)$ with $b_j$. (The matrix thus depends on the choice of $h$.)

The matrix $M$ is a nice and concrete object which describes the action of $\varphi$ on the cone $V_\sigma$. The following result was proved by Papadopoulos and Penner \cite[Theorem 4.1]{PP}.

\begin{teo} \label{track:teo}
Let $\varphi$ be a pseudo-Anosov diffeomorphism of $\Sigma$. There is a train track $\tau$ which carries $\varphi(\tau)$ and the incidence matrix is Perron-Frobenius.
\end{teo}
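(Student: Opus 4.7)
The plan is to build $\tau$ directly from the invariant foliations of $\varphi$. First I would fix a Markov partition of $\Sigma$ for $\varphi$, that is, a decomposition into rectangles $R_1,\ldots,R_n$ whose sides are arcs in the stable and unstable foliations and such that $\varphi$ maps each $R_i$ across a union of $R_j$'s respecting the foliation structure. Existence of such partitions is classical for pseudo-Anosov maps, though care is required near the singularities of the foliations and near the punctures of $\Sigma$.

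Next I would produce the train track $\tau$ by collapsing each $R_i$ onto an arc parallel to $\calL^+$; the branches of $\tau$ correspond to the rectangles and the switches occur where horizontal sides of rectangles meet along singular leaves. The carrying relation $\varphi(\tau) < \tau$ is then essentially built into the construction: because $\varphi(R_i)$ is a union of rectangles $R_j$ stacked along the $\calL^+$-direction, the branch corresponding to $R_i$ maps smoothly into the union of branches for the $R_j$'s it crosses. The condition that the doubles of the complementary regions of $\Sigma\setminus\tau$ have negative Euler characteristic follows from $\chi(\Sigma)<0$ together with the fact that the rectangles of a Markov partition avoid accidental small complementary discs.

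With this setup the incidence matrix $M$ has $M_{ij}$ equal to the number of crossings of $\varphi(R_i)$ through $R_j$, which agrees with the central-tie definition in the text. The Perron--Frobenius property then splits into two parts: non-negativity, which is immediate, and primitivity (some power is strictly positive). Primitivity is equivalent to topological mixing of the subshift of finite type associated with the Markov partition, and this is a standard consequence of the dynamics of a pseudo-Anosov map: any arc transverse to $\calL^-$ gets stretched by factors of $\lambda^k$ under iteration and therefore eventually crosses every rectangle. One then reads off the dilatation $\lambda$ as the Perron eigenvalue, with the weights of $\calL^+$ as the associated positive eigenvector.

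The main obstacle I expect is the careful construction of the Markov partition near the singularities of the foliations and near the punctures, and ensuring that the collapsing procedure produces a genuine \emph{trivalent} smooth train track rather than a complex with higher-valence switches coming from foliation singularities. One would need to subdivide or generically perturb the partition, then verify that both the carrying relation $\varphi(\tau)<\tau$ and primitivity of $M$ are preserved by such refinements. These technicalities are the reason it is natural to invoke the result of Papadopoulos and Penner rather than redo the construction from scratch.
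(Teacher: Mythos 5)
The paper offers no proof of this statement: it is quoted as a cited result, namely \cite[Theorem 4.1]{PP}, so there is no internal argument to compare against. Your Markov-partition sketch is a reasonable route to the same conclusion: collapse the rectangles of a Markov partition onto arcs parallel to $\calL^+$ to obtain a graph $\tau$ with $\varphi(\tau)<\tau$, read off the incidence matrix from crossing numbers of $\varphi(R_i)$ through $R_j$, and obtain primitivity from topological mixing of the pseudo-Anosov. You also correctly identify where the real work lives: producing the partition in the presence of prong singularities and punctures, and combing out the high-valence switches that arise at a $k$-prong singularity so that the result is a genuine trivalent train track in the sense the paper uses, all while keeping both the carrying relation and the Perron--Frobenius property intact. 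Since you ultimately defer to Papadopoulos--Penner for these technicalities, your proposal lands in the same place as the paper, just with the expository scaffolding that the paper leaves implicit by citing \cite{PP} outright.
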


Recall that a $n\times n$ matrix $M$ with non-negative entries is Perron-Frobenius if some iterate $M^k$ has only strictly positive entries. A Perron-Frobenius matrix has a unique positive eigenvector up to scaling which corresponds here to the unstable lamination $\calL^+$. The dilatation $\lambda>1$ is its eigenvalue, which is also the largest real eigenvalue of $M$.
We will need the following result by Ham and Song \cite[Lemma 3.1]{HS}:

\begin{lemma} \label{PF:lemma}
Let $M$ be a $n\times n$ Perron-Frobenius matrix with integer entries with $\lambda > 1$ its largest eigenvalue. Then
$$\lambda^n \geqslant |M|-n+1$$
where $|M|$ denotes the sum of all entries of $M$.
\end{lemma}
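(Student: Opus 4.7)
The plan is to combine the Perron--Frobenius theorem with a walk-counting argument in the directed multigraph $G$ associated to $M$. View $M$ as the adjacency matrix of $G$ on $n$ vertices, with $M_{ij}$ directed edges from $i$ to $j$, so that $|M|$ equals the edge count of $G$ and $(M^k\mathbf{1})_i$ counts walks of length $k$ starting at $i$. Since $M$ is primitive, $G$ is strongly connected and aperiodic, and by Perron--Frobenius there is a positive right eigenvector $v$ with $Mv=\lambda v$; normalise so that $\min_i v_i=1$ and fix an index $i_0$ with $v_{i_0}=1$. The componentwise inequality $v\geqslant \mathbf{1}$ gives the clean spectral estimate
\[
(M^n\mathbf{1})_{i_0}\;=\;\sum_j M^n_{i_0,j}\;\leqslant\;\sum_j M^n_{i_0,j}\,v_j\;=\;(M^nv)_{i_0}\;=\;\lambda^n v_{i_0}\;=\;\lambda^n,
\]
so the number of walks of length $n$ starting at $i_0$ is at most $\lambda^n$.

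The combinatorial heart would then be the reverse inequality $(M^n\mathbf{1})_{i_0}\geqslant |M|-(n-1)$. To each edge $e=(i,j)$ of $G$ I would associate a distinct walk of length $n$ from $i_0$ whose final edge is $e$, by picking a walk of length $n-1$ from $i_0$ to $i$ and appending $e$; different edges give different walks (distinguishable by the last edge). Strong connectivity of $G$ (diameter $\leqslant n-1$) together with aperiodicity of the cycle-length set ($\gcd=1$) ensures that such a length-$(n-1)$ walk exists from $i_0$ for every vertex $i$ outside a small exceptional set, which heuristically comprises the $n-1$ edges of a spanning in-arborescence rooted at $i_0$ (exactly the edges for which the length parity/congruence obstruction fails). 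Chaining this with the previous estimate yields $|M|-n+1\leqslant (M^n\mathbf{1})_{i_0}\leqslant \lambda^n$, which is the desired bound.

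The main obstacle is the graph-theoretic step: making rigorous the claim that the exceptional set of edges has cardinality at most $n-1$, rather than something larger that would blow up with $|M|$. This requires choosing $i_0$ carefully (the index where the Perron eigenvector attains its minimum seems natural) and using the primitivity of $M$ to track precisely which vertices admit walks from $i_0$ of length exactly $n-1$. Once the spanning-arborescence heuristic is converted into a clean combinatorial statement, the lemma follows. An alternative route, which sidesteps the graph combinatorics entirely, would be to apply $\mathbf{1}^\top(\cdot)\mathbf{1}$ to the Cayley--Hamilton identity for $M$ and use integrality of the coefficients together with Perron--Frobenius sign information to isolate the leading $\lambda^n$; but one still has to control the error terms, and the correction $n-1$ appears only after careful bookkeeping.
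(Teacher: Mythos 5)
Your upper bound $(M^n\mathbf{1})_{i_0}\leqslant(M^nu)_{i_0}=\lambda^n$ with $u_{i_0}=\min_i u_i=1$ is correct and is the right first step. The gap is in the lower bound $(M^n\mathbf{1})_{i_0}\geqslant|M|-n+1$, and it is not just a bookkeeping issue: the combinatorial claim you are trying to prove is false. Your parametrization puts $e$ as the \emph{last} edge, so you need a walk of length \emph{exactly} $n-1$ from $i_0$ to the tail of $e$, for all but at most $n-1$ edges $e$. Take $n=3$ and
$$M=\begin{pmatrix}0&1&0\\0&0&a\\b&b&0\end{pmatrix},\qquad a,b\geqslant 2.$$
This is primitive (cycles of lengths $2$ and $3$), and one checks that the right Perron vector satisfies $u_1<u_2$, $u_1<u_3$, so $i_0=1$. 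The only vertex reachable from $1$ by a walk of length $n-1=2$ is vertex $3$, and the edges whose tail is not $3$ number $1+a$, which is unbounded as $a\to\infty$. So the exceptional set is not of size $\leqslant n-1$, and the ``spanning in-arborescence / parity obstruction'' heuristic does not repair this; aperiodicity controls \emph{sufficiently long} walks, not walks of length exactly $n-1$ (the Wielandt exponent can be of order $n^2$). The lemma of course still holds here, since $\lambda^3=ab(1+\lambda)>2ab\geqslant a+2b-1=|M|-n+1$.

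The idea is salvageable if you drop the requirement that $e$ be the last edge. Fix a spanning out-tree $T$ rooted at $i_0$ (it exists by strong connectivity and has $n-1$ edges), write $d(i)$ for the depth of $i$ and $P_i$ for the tree path from $i_0$ to $i$. For each of the $|M|-(n-1)$ non-tree edges $e=(i,j)$, let $W_e$ be the walk that follows $P_i$ (length $d(i)\leqslant n-1$), crosses $e$, and then continues \emph{deterministically} (say, always the lexicographically smallest out-edge, which exists since every vertex has out-degree $\geqslant1$) until it has length $n$. If $W_{e_1}=W_{e_2}$ with $d(i_1)\leqslant d(i_2)$, compare the $(d(i_1)+1)$-th edge: in $W_{e_1}$ it is the non-tree edge $e_1$, while in $W_{e_2}$ it is a tree edge if $d(i_1)<d(i_2)$ and it is $e_2$ if $d(i_1)=d(i_2)$; either way $e_1=e_2$. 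Hence $e\mapsto W_e$ is injective and $(M^n\mathbf 1)_{i_0}\geqslant|M|-n+1$, which with your spectral estimate gives the lemma. (Your Cayley--Hamilton alternative does not look workable: the coefficients of the characteristic polynomial of a nonnegative matrix have no usable sign pattern, and there is no obvious way to ``isolate $\lambda^n$'' from $\mathbf 1^\top p(M)\mathbf 1=0$.)
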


\subsection{Finite representations}
Let $\Sigma$ be a punctured surface and $\varphi\in\MCG(\Sigma)$ be a pseudo-Anosov mapping class. If we puncture $\Sigma$ at a point inside each polygonal complementary region of $\calL^+$ (or equivalently $\calL^-$) we obtain a punctured surface $\Sigma^\circ$ and a restriction map $\varphi^\circ\colon \Sigma^\circ \to \Sigma^\circ$ well-defined up to isotopy, \emph{i.e.}~a new mapping class $\varphi^\circ \in \MCG(\Sigma^\circ)$. The map $\varphi^\circ$ is still pseudo-Anosov and has the same invariant laminations and dilatation $\lambda$ as the old map $\varphi$.
We first study the action of $\varphi^\circ$ on $K_A^{\rm red}(\Sigma^\circ)$:

\begin{prop} \label{dilatation:prop}
Let $A=\exp(\frac{i\pi k}{2r})$, with $(k,2r)=1$. If
$$r > \frac 12 \big(\lambda^{-3\chi(\Sigma^\circ)}-3\chi(\Sigma^\circ)+1\big)$$
 then $\rho_A^K(\varphi^\circ)\neq \id.$
\end{prop}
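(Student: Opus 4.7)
My plan is to apply Lemma \ref{key:lemma} to $\varphi^\circ$ with a carefully chosen essential simple closed curve $\gamma$ and an ideal triangulation $x$ of $\Sigma^\circ$. The stated hypothesis rewrites as $r > N/2 + 1$ for
$$N := \lambda^{-3\chi(\Sigma^\circ)} - 3\chi(\Sigma^\circ) - 1,$$
so it suffices to produce $\gamma$ and $x$ such that every triangle of $x$ meets each of $\gamma$ and $\varphi^\circ(\gamma)$ in at most $N$ arcs, with $\varphi^\circ(\gamma)$ not isotopic to $\gamma$.

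First I would invoke Theorem \ref{track:teo} to pick a train track $\tau \subset \Sigma^\circ$ carrying $\varphi^\circ(\tau)$ with Perron--Frobenius incidence matrix $M$. Because we have punctured inside every polygonal complementary region of $\calL^+$, every complementary region of $\tau$ in $\Sigma^\circ$ contains exactly one puncture (either a new one inside a polygon or an original one at a cusp) and has Euler characteristic $0$. Trivalency of switches gives $V_\tau = 2n/3$ where $n$ is the number of branches, so $\chi(\Sigma^\circ) = \chi(\tau) = V_\tau - n = -n/3$, yielding $n = -3\chi(\Sigma^\circ)$. Lemma \ref{PF:lemma} then gives $|M| \leq \lambda^n + n - 1 = N$.

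I then take $x$ to be the ideal triangulation dual to $\tau$: vertices at the punctures (one per complementary region), edges $e_i$ transverse to the branches $b_i$ of $\tau$, and one triangle around each trivalent switch. For any multicurve $\alpha$ carried by $\tau$ with weight vector $\mathbf{w} = (w_1, \dots, w_n)$, one has $|\alpha \cap e_i| = w_i$; in the triangle $T$ around a switch with large branch of weight $c$ and small branches of weights $a, b$ (so $a + b = c$), the number of normal arcs of $\alpha$ in $T$ is $(a+b+c)/2 = c = \max\{a,b,c\}$, whence $|\alpha \cap T| \leq \|\mathbf{w}\|_\infty$. I then take $\gamma$ to be a simple closed curve carried by $\tau$ with $\mathbf{w} \in \{0,1\}^n$: such a curve exists because the graph on branches whose edges are the smooth transitions at switches has Euler characteristic $n - 2V_\tau = -n/3 < 0$ and minimum degree $2$, hence contains a simple cycle which traces out a smoothly embedded loop in $\tau$. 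Then $|\gamma \cap T| \leq 1$, and since the action of $\varphi^\circ$ on the weight space of $\tau$ is encoded by $M$, the entries of the weight vector $M\mathbf{w}$ of $\varphi^\circ(\gamma)$ are non-negative combinations of the $w_i \in \{0,1\}$ with $M$-coefficients, so
$$|\varphi^\circ(\gamma) \cap T| \leq \|M\mathbf{w}\|_\infty \leq \max_i \sum_j M_{ij} \leq |M| \leq N.$$
Pseudo-Anosov maps fix no isotopy class of essential multicurve, so $\varphi^\circ(\gamma) \neq \gamma$, and Lemma \ref{key:lemma} finishes the argument.

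The hardest step is obtaining a carried simple closed curve of $\ell^\infty$-weight exactly $1$: a generic vertex cycle of $\tau$ can have weight $2$, which would lose a factor of $2$ in the final estimate and fail to match the polynomial bound. The smooth-transition graph argument above handles this, after also checking (straightforwardly, using that $\tau$ carries the minimal lamination $\calL^+$) that the resulting $\gamma$ can be chosen essential and not peripheral around any $\varphi^\circ$-fixed puncture.
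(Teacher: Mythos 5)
Your setup and conclusion mirror the paper's proof: Theorem \ref{track:teo}, the count $n = -3\chi(\Sigma^\circ)$, Lemma \ref{PF:lemma}, the dual triangulation, and Lemma \ref{key:lemma}. Where you diverge is in the construction of the carried curve $\gamma$, and there the argument has a real gap.

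You claim that the graph whose vertices are branches and whose edges are smooth transitions has minimum degree $2$ and negative Euler characteristic, hence contains a simple cycle, and that such a cycle yields a carried simple closed curve with weights in $\{0,1\}$. The last step does not follow. A branch $b$ which is the ``large'' branch at a switch $s$ contributes \emph{two} transition edges at the $s$-end of $b$ (one to each small branch). A simple cycle through the vertex $b$ uses exactly two incident edges, but nothing in your graph-theoretic argument prevents those two edges from both being at the $s$-end: the cycle would then ``enter and exit $b$ at the same endpoint'' without traversing $b$, which does not correspond to a smooth closed curve carried by $\tau$. To make this work you would need a cycle in which the two chosen edges at each visited vertex lie at opposite endpoints of the branch, and minimum degree plus negative Euler characteristic do not give you that. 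In fact the existence of a $\{0,1\}$-weighted carried simple closed curve is not true for arbitrary train tracks (the vertex cycles in the sense of Penner--Harer genuinely need weight $2$ in some cases), so any proof would have to use special properties of the track from Theorem \ref{track:teo}; you don't supply such an argument.

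Your motivation for wanting weights in $\{0,1\}$ --- that a weight-$2$ curve ``would lose a factor of 2 and fail to match the polynomial bound'' --- is also not correct, and removing this misconception dissolves the need for the sharper curve. The paper takes $\gamma$ with weights in $\{0,1,2\}$ (produced by the elementary ``walk until you revisit a direction'' argument), but it does \emph{not} bound $\|M\gamma\|_\infty$ directly. Instead it bounds the $\ell^1$-norm $|M\gamma| \leqslant 2|M|$, and then uses the switch condition to pass to the $\ell^\infty$-norm without a further factor of $2$: if $a=b+c$ in a triangle $T$ then the colour sum over $T$ equals $2\max\{a,b,c\}$ which is the arc count, so a triangle achieving the maximal colour $m$ already has colour sum $2m \leqslant |M\gamma| \leqslant 2|M|$, whence the maximal arc count is $\leqslant |M|$. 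This is exactly the bound you obtain, so the paper's $\{0,1,2\}$-weighted curve matches the stated inequality with no loss. The correct fix to your proof is therefore to replace your transition-graph construction with the walk argument producing a $\{0,1,2\}$-curve, and to replace the $\ell^\infty$ estimate $\|M\mathbf w\|_\infty \leqslant \max_i\sum_j M_{ij}$ by the $\ell^1$ estimate $|M\mathbf w|\leqslant 2|M|$ followed by the switch-condition halving.
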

\begin{proof}
Theorem \ref{track:teo} says that there is a train track $\tau\subset \Sigma$ which carries $\varphi(\tau)$ with Perron-Frobenius $n\times n$ incidence matrix $M$, where $n$ is the number of edges of $\tau$. The complementary regions of $\calL^+$ are homeomorphic to the complementary regions of $\tau$, therefore $\tau$ is a spine of $\Sigma^\circ$. In particular the number $n$ of edges of $\tau$ may be calculated as $n=-3\chi(\Sigma^\circ)$.
The dilation $\lambda$ is the largest real eigenvalue of $M$ and Lemma \ref{PF:lemma} implies that 
$$\lambda^{-3\chi(\Sigma^\circ)} \geqslant |M|+3\chi(\Sigma^\circ)+1.$$
It is easy to prove that every train track $\tau$ carries a simple closed curve $\gamma$ which induces a weight 0, 1, or 2 on every edge of $\tau$ (simply start from any point of $\tau$ and keep walking until you come back to a point that you have already crossed in the same direction).
Let us identify $\gamma$ with its weights $n$-vector, having entries in $\{0, 1, 2\}$. The image $\varphi(\gamma) = M\gamma$ is another vector (because $M$ is Perron-Frobenius) and we have
$$\big|\varphi(\gamma)\big| = \big|M\gamma \big| \leqslant 2 |M| \leqslant 2\big(\lambda^{-3\chi(\Sigma^\circ)}-3\chi(\Sigma^\circ)-1\big)$$
where $|v|$ denotes again the sum of the entries of $v$. 

Let now $x$ be the ideal triangulation of $\Sigma^\circ$ dual to the 3-valent spine $\tau$. By construction $\gamma$ is a normal curve in $x$ which intersects the edges in $\leqslant 2$ points, and the switch condition $a=b+c$ holds at every triangle of $x$: therefore $\gamma$ intersects every triangle of $x$ in at most two arcs, and is hence $r$-admissible for any $r\geqslant 4$.

The curve $\varphi(\gamma)$ is also a normal curve in $x$ whose intersection numbers with the edges of $x$ sum up to a quantity smaller or equal than $K = 2\big(\lambda^{-3\chi(\Sigma^\circ)}-3\chi(\Sigma^\circ)-1\big)$.
The switch condition easily implies that the color at each edge is smaller or equal than $\frac K 2$, and that every triangle in $x$ intersects $\varphi(\gamma)$ in at most $\frac K 2$ arcs. By Lemma \ref{key:lemma} the endomorphism $\rho_A^K(\varphi)$ is non-trivial for all
\begin{align*}
r>\frac K4 +1 & = \frac 12 \big(\lambda^{-3\chi(\Sigma^\circ)}-3\chi(\Sigma^\circ)-1\big) +1 \\
& = \frac 12 \big(\lambda^{-3\chi(\Sigma^\circ)}-3\chi(\Sigma^\circ)+1\big).
\end{align*}
\end{proof}
A slightly weaker version of Proposition \ref{dilatation:prop} holds for the original pseudo-Anosov $\varphi$. 

\begin{teo} \label{dilatation:teo}
Let $\Sigma$ be a punctured surface and $\varphi\colon \Sigma \to \Sigma$ a pseudo-Anosov map with dilatation $\lambda > 1$. Let $A=\exp(\frac{\pi i k}{2r})$ with $(k,2r)=1$. If
$$r>-6\chi(\Sigma)\big(\lambda^{-9\chi(\Sigma)}-9\chi(\Sigma)-1\big) +1$$
then $\rho_A^K(\varphi)\neq \id$ 
\end{teo}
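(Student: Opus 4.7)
The plan is to reduce Theorem~\ref{dilatation:teo} to Proposition~\ref{dilatation:prop} by transferring its construction from $\Sigma^\circ$ back to $\Sigma$ via a modification of the triangulation. Let $s$ denote the number of added punctures going from $\Sigma$ to $\Sigma^\circ$, and let $k_i$ be the number of prongs at each added interior singularity of $\calL^+$. The Euler--Poincar\'e formula $\sum_{\mathrm{sing}}(k-2) = -2\chi(\widehat\Sigma)$ on the closed surface $\widehat\Sigma$ obtained by filling every puncture, combined with $k\geq 1$ at puncture-singularities and $k_i\geq 3$ at interior ones, yields
$$s \leq -2\chi(\Sigma),\qquad \sum_{i=1}^{s}k_i\leq -6\chi(\Sigma),\qquad -\chi(\Sigma^\circ)\leq -3\chi(\Sigma).$$
In particular the exponent $-3\chi(\Sigma^\circ)$ appearing in Proposition~\ref{dilatation:prop} is majorized by $-9\chi(\Sigma)$.

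Next I would invoke the proof of Proposition~\ref{dilatation:prop}: a Papadopoulos--Penner train track $\tau\subset \Sigma^\circ$, its dual ideal triangulation $x^\circ$ of $\Sigma^\circ$, and a carried simple closed curve $\gamma$ of weight at most $2$ on every edge of $\tau$, so that $\varphi(\gamma)$ carries weight at most $K/2 \leq \lambda^{-9\chi(\Sigma)}-9\chi(\Sigma)-1$ on every edge. Then I would convert $x^\circ$ into an ideal triangulation $x$ of $\Sigma$ as follows: around each added puncture $p_i$, its star is a topological disk $D_i$ with boundary a $k_i$-gon; replace the $k_i$ triangles of $x^\circ$ meeting at $p_i$ by a fan triangulation of $D_i$ into $k_i-2$ triangles emanating from a single boundary vertex, thereby removing $p_i$ as a vertex.

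The hard part is bounding $N$, the maximum number of normal arcs of $\gamma$ or $\varphi(\gamma)$ in any triangle of $x$. Triangles of $x$ unchanged from $x^\circ$ contain at most $K/2$ arcs by the estimate in Proposition~\ref{dilatation:prop}. For a new triangle $T'$ inside some $D_i$, its three edges are one boundary edge of $D_i$ (weight $\leq K/2$) and two fan diagonals; since the arcs of $\varphi(\gamma)$ in $D_i$ are disjoint and each crosses a diagonal at most once in normal form, each diagonal weight is bounded by the total number $W_i/2$ of such arcs in $D_i$, and $W_i \leq k_i\cdot K/2$ is the total weight on $\partial D_i$. Summing the three edge-weights and dividing by two yields at most $(1+k_i)K/4$ arcs in $T'$; combining $1+k_i\leq -12\chi(\Sigma)$ (valid since $\chi(\Sigma)\leq -1$) with the bound on $K/2$ gives
$$N \leq -6\chi(\Sigma)\bigl(\lambda^{-9\chi(\Sigma)}-9\chi(\Sigma)-1\bigr).$$

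Finally, $\gamma$ can be chosen essential in $\Sigma$ (the walking-tour construction admits a minor adjustment to a different carried simple closed curve if the output happens to be trivial or peripheral in $\Sigma$), so that $\varphi(\gamma) \not\sim \gamma$ in $\Sigma$ since pseudo-Anosov maps fix no essential simple closed isotopy class. Lemma~\ref{key:lemma} then gives $\rho_A^K(\varphi)\neq \id$ whenever $r > N/2+1$, which is implied by the hypothesis of the theorem; Corollary~\ref{isom:finito:cor} transfers this conclusion to $\rho_A(\varphi)\neq\id$.
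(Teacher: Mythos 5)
You take a genuinely different route from the paper. The paper removes the interior vertices of $x^\circ$ by collapsing a spanning forest of subtrees of the $1$-skeleton rooted at the original punctures (then collapsing the resulting monogons and bigons), and bounds the \emph{total} intersection of $\gamma$ with the new triangulation by the crude factor $-6\chi(\Sigma)$; you instead excise each added vertex $p_i$ locally by a fan retriangulation of its star $D_i$ and estimate arcs per triangle directly. The local idea is attractive, but as written it has two genuine gaps.

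First, the stars $D_i$ cannot in general be retriangulated independently. If two added punctures $p_i,p_j$ are adjacent in $x^\circ$, then $D_i$ and $D_j$ share triangles, so fanning one destroys the other; and a fan diagonal created while excising $p_i$ carries weight up to $W_i/2$, which can be much larger than $K/2$, so if such a diagonal ends up on $\partial D_j$ your weight bound for the next excision compounds multiplicatively, with no control in your argument. You also implicitly need the fan apex to be an original puncture (otherwise the excision leaves a new interior vertex), but $\partial D_i$ need not contain one. The paper's forest collapse is designed exactly so that all interior vertices die simultaneously and this iteration is avoided. Second, you use $k_i$ in two incompatible ways: the Euler--Poincar\'e bound $\sum_i k_i\leqslant -6\chi(\Sigma)$ takes $k_i$ to be the number of prongs of the singularity at $p_i$, while the fan construction needs $k_i$ to be the valence $d_i$ of $p_i$ in $x^\circ$, i.e.\ the number of branches of $\tau$ traversed by the boundary cycle of the complementary region dual to $p_i$. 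These coincide only if each smooth boundary arc of that region is a single branch; in general $d_i>k_i$. The bound that does hold, $d_i\leqslant \sum_v\mathrm{val}(v)=-6\chi(\Sigma^\circ)\leqslant -18\chi(\Sigma)$, happens to still give the theorem's threshold if one redoes the final arithmetic, but the inequality $1+k_i\leqslant -12\chi(\Sigma)$ as you justify it is not established, and without resolving the first issue the per-triangle estimate does not stand.
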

\begin{proof}
The proof of Theorem \ref{dilatation:prop} shows that there is an ideal triangulation $x^\circ$ for $\Sigma^\circ$ and a simple closed curve $\gamma \subset \Sigma^\circ$ such that both $\gamma$ and $\varphi^\circ(\gamma)$ intersect all the edges of $x^\circ$ in less than $K^{\circ}= 2\big(\lambda^{-3\chi(\Sigma^\circ)}-3\chi(\Sigma^\circ)-1\big)$ points. Since $\gamma$ is carried by a train track $\tau$ dual to $x_0$, it is a non-trivial curve in $\Sigma$.

The ideal triangulation $x^\circ$ of $\Sigma^\circ$ may not be an ideal triangulation of $\Sigma$ because it may contain $h> 0$ interior vertices which form precisely the set $\Sigma\setminus \Sigma^\circ$. The complementary regions of $\tau$ in $\Sigma$ correspond to the $h$ interior vertices in $\Sigma\setminus \Sigma^\circ$. The definition of train track forces each such complementary region to be a surface with piecewise smooth boundary, whose double has negative Euler characteristic: a simple Euler characteristic count then shows that $h \leqslant -2\chi(\Sigma)$ and hence $\chi(\Sigma^\circ)= \chi(\Sigma)-h \geqslant 3\chi(\Sigma)$. So $\gamma$ and $\varphi^\circ(\gamma)$ intersect the edges of $x^{\circ}$ in at most 
$$K=2\big(\lambda^{-9\chi(\Sigma)}-9\chi(\Sigma)-1\big) \geqslant  2\big(\lambda^{-3\chi(\Sigma^\circ)}-3\chi(\Sigma^\circ)-1\big) $$ 
points.

We now construct an ideal triangulation $x$ for $\Sigma$ starting from $x^\circ$ as follows. The triangulation $x^\circ$ has some $n$ ideal vertices $p_1,\ldots, p_n$ and some $h$ interior vertices. Let $T_1,\ldots T_n$ be some maximal disjoint subtrees of the $1$-skeleton of $x^\circ$ such that $p_i\in T_i$. Every vertex of $x^\circ$ is contained in a unique tree $T_i$. 
By collapsing each tree $T_i$ to its root $p_i$ we transform the triangulation $x^\circ$ into an ideal cellularization of $\Sigma$, possibly containing some bigons or monogons, which can be collapsed recursively to get a true ideal triangulation $x$ of $\Sigma$: each $2$-cell of the cellularization has at most $3$-sides and collapsing monogons or bigons creates cellularizations with the same properties; when no monogons and bigons are left, one gets a triangulation.

We claim that if a normal multicurve $\gamma$ intersects $t$ times $x^\circ$ then it will intersect $x$ in at most $-6\chi(\Sigma)t$ points: indeed before the collapse one can isotope $\gamma$ away from the trees and the intersections will at most get multiplied by twice the number of edges of $x$ (contractions of monogons and bigons only decrease the intersections), which is $-6\chi(\Sigma)$. 

We have found a non-trivial curve $\gamma$ such that both $\gamma$ and $\varphi(\gamma)$ intersect the edges of $x$ in at most 
$$-12\chi(\Sigma)\big(\lambda^{-9\chi(\Sigma)}-9\chi(\Sigma)-1\big)$$ 
points. Since $\gamma$ is non-trivial and $\varphi$ is pseudo-Anosov the curves $\varphi(\gamma)$ and $\gamma$ are not isotopic and we may apply Lemma \ref{key:lemma}, hence we get $\rho_A^K(\varphi)\neq\id$ as soon as
$$r > -6\chi(\Sigma)\big(\lambda^{-9\chi(\Sigma)}-9\chi(\Sigma)-1\big)+1.$$ 

\end{proof}

\section{Irreducible components at $A=0$} \label{sec:irreducibles}

We prove here Proposition \ref{orthogonalsubspaces:prop} and Theorem \ref{irreducibles:teo}. 

\subsection{$\matZ_2$-homology}
For any element $\alpha \in H_1(\Sigma, \matZ_2)$ define $\calM^\alpha\subset \calM$ as the set of all multicurves that represent $\alpha$. The splitting $\calM = \cup_{\alpha\in H_1(\Sigma, \matZ_2)} \calM^\alpha$ induces an orthogonal splitting
$$\calH = \bigoplus_{\alpha \in H_1(\Sigma, \matZ_2)} \calH^\alpha$$
where
$\calH^\alpha\subset \calH$ consists of all functions that are supported only on $\calM^\alpha$.
We define analogously $\calH_r^{x,\alpha} = \calH_r^x \cap \calH^\alpha$.

\begin{prop} Let $x$ and $y$ be ideal triangulations of $\Sigma$. The map $c_A(y,x)\colon \calH \to \calH$ preserves each factor $\calH^\alpha$ when $A\in \overline\matD \setminus S$ and sends $\calH_r^{x,\alpha}$ into $\calH_r^{y,\alpha}$ when $A\in S$ with $r=r(A)$.
\end{prop}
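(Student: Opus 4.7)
The plan is to reduce to a single flip and then observe that a flip only modifies a multicurve inside two adjacent triangles, with boundary behaviour fixed, so the $\matZ_2$-homology class cannot change.

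First I would reduce to the case in which $y$ is obtained from $x$ by a single flip: for a general pair one chooses a path $x=x_1,\ldots,x_n=y$ in the triangulation graph and writes $c_A(y,x) = c_A(x_n,x_{n-1})\circ\cdots\circ c_A(x_2,x_1)$, so it suffices to show that each $\calH^\alpha$ (respectively each $\calH_r^{x_i,\alpha}$) is preserved under one flip. Fix then a flip relating $x$ and $x'$, performed inside the two triangles $T_1\cup T_2$ sharing the flipped edge. For a multicurve $\gamma$ represented on $x$ as in Fig.~\ref{flip_colored:fig}-(left), the cocycle $c_A(x',x)$ sends $\delta_\gamma$ to a linear combination $\sum_f \lambda_f\,\delta_{\gamma_f}$, where the multicurves $\gamma_f$ are obtained from $\gamma$ by replacing its portion in $T_1\cup T_2$ with a normal family of arcs having the same endpoints $a,b,d,e$ on $\partial(T_1\cup T_2)$ and $f$ crossings of the new diagonal.

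The key step is then a homological one. By construction $\gamma$ and $\gamma_f$ coincide outside of the immersed quadrilateral $Q = T_1\cup T_2$, and their restrictions to $Q$ have the same $\matZ_2$-boundary on $\partial Q$; hence the chain $\gamma+\gamma_f$ (mod $2$) is a $1$-cycle in $\Sigma$ supported in the image of $Q$. Since $Q$ is an abstract disc, the inclusion $Q\hookrightarrow \Sigma$ factors through a contractible space, and any $1$-cycle in $Q$ bounds a $2$-chain there; pushing forward to $\Sigma$ shows that $\gamma+\gamma_f$ is null-homologous in $H_1(\Sigma,\matZ_2)$. Therefore $[\gamma_f]=[\gamma]$ for every $f$, proving that $c_A(x',x)$ maps $\calH^\alpha$ into itself.

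Finally, for $A\in S$ with $r=r(A)$, the very definition of the finite-dimensional cocycle in Section \ref{hr:sub} restricts the summation to $r$-admissible colorings of $x'$, so $c_A(x',x)$ sends $\calH_r^x$ into $\calH_r^{x'}$. Combining this with the homological argument above yields $c_A(x',x)\bigl(\calH_r^{x,\alpha}\bigr)\subset \calH_r^{x',\alpha}$, and composing along a flip sequence gives the general statement. The only subtle point is ensuring the disc argument survives when $T_1$ and $T_2$ share additional edges in $\Sigma$, but this causes no trouble: one works with the abstract disc $T_1\cup_e T_2$ and pushes forward via the tautological map to $\Sigma$, which is all that is needed for the null-homology conclusion.
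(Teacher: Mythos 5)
Your proof is correct and follows essentially the same route as the paper: the paper reduces to a single flip and then simply asserts that the multicurves attached to the left and right colorings of Fig.~\ref{flip_colored:fig} are $\matZ_2$-homologous. You supply the missing homological detail, namely that $\gamma$ and $\gamma_f$ agree outside the flipped quadrilateral $Q$ and that their symmetric difference lifts to a $1$-cycle in the abstract disc $\widetilde Q = T_1\cup_e T_2$, whose pushforward therefore bounds in $\Sigma$; the observation that one should argue with the abstract disc rather than its (possibly non-embedded) image is exactly the right way to dispose of the case where $T_1$ and $T_2$ share extra edges, since $p\colon\widetilde Q\to\Sigma$ is injective on the interiors of the triangles and of their arcs, so $p_*$ sends the bounding $2$-chain to a $2$-chain in $\Sigma$ whose boundary is $\gamma+\gamma_f$ mod $2$.
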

\begin{proof}
We can suppose without loss of generality that $x$ and $y$ are related by a flip as in Fig.~\ref{flip:fig}: the multicurves determined by the left and right colouring in Fig.~\ref{flip:fig} are homologous.
\end{proof}

We get Proposition \ref{orthogonalsubspaces:prop} as a corollary.

\begin{cor}
The closed subspaces $\calH^0$ and $\calH^{\neq 0}$ are $\rho_A$-invariant for every $A \in \overline \matD\setminus S$. Similarly $\calH_r^0$ and $\calH^{\neq 0}_r$ are $\rho_A$-invariant for any $A\in S$ with $r(A)=r$.
\end{cor}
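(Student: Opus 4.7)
The plan is to deduce the corollary as an immediate consequence of the preceding proposition about the cocycle $c_A$, combined with the defining formula $\rho_A(g) = c_A(x, gx)\,\rho_0(g)$. The only new ingredient needed is to understand how the permutation piece $\rho_0(g)$ interacts with the $\matZ_2$-homology splitting.

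First I would observe that every mapping class $g \in \MCG(\Sigma)$ induces a $\matZ_2$-linear automorphism $g_*$ of $H_1(\Sigma, \matZ_2)$, and therefore $\rho_0(g)$ sends $\delta_\gamma$ to $\delta_{g\gamma}$ with $[g\gamma] = g_*[\gamma]$. Since $g_*(0) = 0$, this shows that $\rho_0(g)$ preserves $\calM^0$ and hence $\calH^0$; it also permutes the collection $\{\calM^\alpha\}_{\alpha \neq 0}$ among itself, so it preserves the orthogonal sum $\calH^{\neq 0} = \bigoplus_{\alpha \neq 0} \calH^\alpha$.

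Next I would invoke the preceding proposition, which states that for $A \in \overline\matD \setminus S$ the operator $c_A(x, gx)$ preserves each individual factor $\calH^\alpha$, and in particular preserves both $\calH^0$ and $\calH^{\neq 0}$. Composing, the operator $\rho_A(g) = c_A(x, gx)\,\rho_0(g)$ preserves both closed subspaces, giving the first half of the corollary. For the finite-dimensional statement with $A \in S$ and $r = r(A)$, the diffeomorphism $g$ sends the set $\calM_r^x$ bijectively to $\calM_r^{gx}$ (combinatorial type relative to the triangulation is preserved), and the splitting by homology class is preserved along the way; thus $\rho_0(g)\colon \calH_r^x \to \calH_r^{gx}$ respects the $\matZ_2$-homology decomposition, and the preceding proposition then says $c_A(x,gx)$ sends $\calH_r^{gx,\alpha}$ into $\calH_r^{x,\alpha}$ for each $\alpha$. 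Combining again gives $\rho_A(g)$-invariance of $\calH_r^0$ and $\calH_r^{\neq 0}$.

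There is essentially no obstacle here: the entire substance lives in the preceding proposition about the cocycle, whose proof reduces to the observation — read off directly from Fig.~\ref{flip:fig} — that a flip exchanges two colorings representing the \emph{same} multicurve, and more generally that the finitely many multicurves appearing in $c_A(y,x)(\delta_\gamma)$ all share the $\matZ_2$-homology class of $\gamma$. Once this is in hand, the corollary is a one-line composition argument.
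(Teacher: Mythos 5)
Your proof is correct and follows the same approach as the paper: factor $\rho_A(g) = c_A(x,gx)\,\rho_0(g)$, observe that $\rho_0(g)$ respects the $\matZ_2$-homology decomposition (because $g_*$ is an automorphism fixing $0$), and invoke the preceding proposition for the cocycle factor. The paper's own proof is a one-liner asserting exactly this composition; you have simply spelled out the (genuinely easy) details, including the slightly more delicate bookkeeping of $\calH_r^{x,\alpha} \to \calH_r^{gx,g_*\alpha} \to \calH_r^{x,g_*\alpha}$ in the root-of-unity case, which the paper leaves implicit.
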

\begin{proof}
The image $\rho_A(g)$ of a mapping class $g\in\MCG(\Sigma)$ is defined as $c_A(x,gx)\rho_0(g)$, and both $c_A(x,gx)$ and $\rho_0$ preserve the two subspaces.
\end{proof}

\subsection{The multicurve representation}
We now turn to the multicurve representation $\rho_0$. 
It splits into infinitely many representations
$$\calH = \oplus_O \calH_O$$
corresponding to the splitting of $\calM = \cup_{O\in \calO} \calM_O$ into orbits. 

\begin{teo}
Every factor $\calH_{O}$ splits into finitely many orthogonal irreducibles. 
\end{teo}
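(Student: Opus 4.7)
The plan is to realise each factor $\calH_O$ as a quasi-regular representation of $G = \MCG(\Sigma)$ and then invoke a general commensurator criterion to control its decomposition.

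First I would fix an orbit $O \in \calO$ and pick a representative multicurve $\gamma \in \calM_O$; setting $H = \St_G(\gamma)$, the $G$-equivariant orbit-stabilizer bijection $\calM_O \cong G/H$ carries the orthonormal basis $\{\delta_{\gamma'}\}_{\gamma' \in \calM_O}$ onto the canonical basis of $\ell^2(G/H)$, so that the restriction of $\rho_0$ to $\calH_O$ is unitarily equivalent to the quasi-regular representation $\lambda_{G/H}$. This is routine and reduces the statement to a purely group-theoretic one about the pair $(G,H)$.

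The second step is to apply the commensurator criterion of Burger and de la Harpe \cite{BD}: the commutant of $\lambda_{G/H}$ has dimension at most $[\Comm_G(H):H]$, so whenever this index is finite the representation $\lambda_{G/H}$ decomposes as an orthogonal sum of only finitely many irreducible subrepresentations. The problem is thus further reduced to showing finiteness of the commensurator index $[\Comm_G(H):H]$.

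The third step is to invoke Paris's theorem \cite{Pa}, which asserts precisely that the stabilizer of an isotopy class of multicurve in the mapping class group has finite index in its commensurator inside $\MCG(\Sigma)$. Chaining the three steps yields Theorem \ref{irreducibles:teo}. The empty multicurve is a trivial special case, as then $H = G$ and $\calH_O \isom \matC$ is itself irreducible; for non-empty multicurves Paris's result applies directly. I expect the only genuine obstacle to be checking that Paris's hypotheses cover our exact setting (punctured surface, multicurves considered up to isotopy, possibly including puncture-parallel components and multiple parallel copies of the same curve), which should amount to unwinding definitions rather than any substantive new argument.
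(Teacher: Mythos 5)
The overall structure of your argument---realize $\calH_O$ as $\ell^2(G/\St_\gamma)$, apply the Mackey / Burger--de la Harpe commensurator criterion, and reduce to finiteness of $[\Comm_G(\St_\gamma):\St_\gamma]$---is exactly the paper's. But your invocation of Paris's theorem is imprecise in a way that leaves a real gap, not just a definitional check. Paris does \emph{not} prove that the stabilizer of an arbitrary multicurve has finite commensurator index; what he proves, using Burger--de la Harpe, is that the stabilizers of the action of $\MCG(\Sigma)$ on the \emph{curve complex} $C(\Sigma)$ are \emph{self}-commensurating, i.e.\ $\Comm_G(\St_\sigma)=\St_\sigma$ for each simplex $\sigma$. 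A simplex of $C(\Sigma)$ corresponds to a multicurve with no two parallel components and no component bounding a once-punctured disc. A general $\gamma\in\calM$ can have both, and its stabilizer $\St_\gamma$ is then a proper (though finite-index) subgroup of the simplex stabilizer, so Paris's result cannot be ``invoked directly'' on $\gamma$.

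The paper bridges this by passing from $\gamma$ to the reduced multicurve $\gamma^\circ$ obtained by deleting all components that bound a once-punctured disc and keeping a single representative from each maximal family of parallel curves; this $\gamma^\circ$ genuinely defines a simplex of $C(\Sigma)$. One then observes that $\St_\gamma$ has finite index in $\St_{\gamma^\circ}$ (the quotient only permutes parallel copies and acts on once-punctured discs), and uses the general fact that a subgroup and any finite-index subgroup of it have the same commensurator, giving
$$\Comm_G(\St_\gamma)=\Comm_G(\St_{\gamma^\circ})=\St_{\gamma^\circ},$$
whence $[\Comm_G(\St_\gamma):\St_\gamma]=[\St_{\gamma^\circ}:\St_\gamma]<\infty$. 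This chain---the $\gamma^\circ$ construction, the finite-index claim, and the commensurator-invariance under finite-index passage---is the substantive content you waved away as ``unwinding definitions''; without it the appeal to Paris does not apply to the stabilizers you actually need.
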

\begin{proof}
Let $\gamma\in \calM_O$ be a fixed multicurve and $\St_\gamma<\MCG(\Sigma)$ its stabilizer. 

Recall that the \emph{commensurator} $\Comm_G(H)$ of a subgroup $H<G$ is the subgroup of $G$ consisting of all the elements $g\in G$ such that $g^{-1}Hg\cap H$ has finite index in both $g^{-1}Hg$ and $H$. It is easy to prove that if $H'<H$ is a finite-index subgroup then
$$\Comm_G(H') = \Comm_G(H).$$
Of course $\Comm_G(H)>H$ and a standard theorem of Mackey \cite{Mac} states that the unitary action of $G$ on $\ell^2(G/H)$ is irreducible if and only if $\Comm_G(H) = H$: when this holds we say that $H$ is \emph{self-commensurating}. If $H$ has finite index in $\Comm_G(H)$ the representation of $G$ on $\ell^2(G/H)$ splits into finitely many orthogonal irreducibles, see \cite[Section 2]{BD}. 

Since $\calM_O$ is in natural 1-1 correspondence with $\MCG(\Sigma)/\St_\gamma$
the action of $\MCG(\Sigma)$ on $\overline{\calH_O}$ is isometric to the unitary action on $\ell^2(\MCG(\Sigma)/\St_\gamma)$. It remains to prove that $\St_\gamma$ has finite index in its commensurator.

Fr every multicurve $\gamma$ we define the multicurve $\gamma^\circ$ as the multicurve obtained from $\gamma$ by removing some components as follows:
\begin{itemize}
\item delete all components that bound a disc with one marked point;
\item take only one representative for any maximal set of parallel curves.
\end{itemize}
As a result, every component in $\Sigma\setminus \gamma^\circ$ has negative Euler characteristic and more importantly $\gamma^\circ$ identifies a simplex in the curve complex $C(\Sigma)$. Paris has shown \cite{Pa} using a result of Burger - De La Harpe \cite{BD} that the stabilizers of the action of $\MCG(\Sigma)$ on $C(\Sigma)$ are self-commensurating, that is we have
$$\Comm_{\MCG(\Sigma)}(\St_{\gamma^\circ}) = \St_{\gamma^\circ}.$$
The stabilizer $\St_\gamma$ is a finite-index subgroup of $\St_{\gamma^\circ}$ and this implies that they have the same commensurator $\St_{\gamma^\circ}$. Therefore $\St_\gamma$ is a finite-index subgroup in its commensurator and hence the unitary representation on $\ell^2(\MCG(\Sigma)/\St_\gamma)$ splits into finitely many orthogonal irreducible components.
\end{proof}

\end{document}